\newcommand{\XXX}{{\mathcal X}}
\newcommand{\FFF}{{\mathcal F}}
\newcommand{\LLL}{{\mathcal L}}
\renewcommand{\epsilon}{\varepsilon}
\newcommand{\tempred}[1]{#1}
\def\les{\lesssim}
\def\eps{\varepsilon}
\renewcommand*{\div}{\ensuremath{\mathrm{div\,}}}
\newcommand{\norm}[1]{\left \| #1 \right\|} 
\newcommand{\snorm}[1]{\bigl\| #1 \bigr\|} 
\newcommand{\abs}[1]{\left|#1\right|}
\newcommand{\sabs}[1]{\bigl|#1\bigr|}
\newcommand{\RR}{\mathbb R}
\newcommand{\OO}{\mathcal O}
\newcommand{\RSZ}{\mathcal R}
\renewcommand*{\tilde}{\widetilde}
\renewcommand*{\bar}{\overline}
\newcommand{\Ncal}{{\mathsf{N}}}
\newcommand{\Tcal}{{\mathsf{T}}}
\newcommand{\Vcal}{{\mathsf{v}}}
\newcommand{\Jcal}{{\mathsf{J}}}
\newcommand{\acal}{{\mathsf{a}}}
\newcommand{\bcal}{{\mathsf{b}}}
\newcommand{\xcal}{{\text{x}}}
\newcommand{\tcal}{{\text{t}}}
\newcommand{\scal}{{\text{k}}}
\newcommand{\EE}{\mathcal E}
\newcommand{\PP}{\mathcal P}
\newcommand{\ppp}{\psi}
 \newcommand*{\Id}{\ensuremath{\mathrm{Id\,}}}
\newtheorem{theorem}{Theorem}[section]
\newtheorem{lemma}[theorem]{Lemma}
\newtheorem{proposition}[theorem]{Proposition}
\newtheorem{corollary}[theorem]{Corollary}
\theoremstyle{definition}
\newtheorem{definition}[theorem]{Definition}
\newtheorem{remark}[theorem]{Remark}
\numberwithin{equation}{section}
\def\sound{S}
\def\p{\partial}
\def\hh{\mathcal{H}}
\def\pp{\mathcal{P}}
\def\uu{U}
\def\ckg{\check \gamma}
\def\modckg{{ \abs{\ckg}} }
\def\tt{\Tcal}
\def\nn{\Ncal}
\def\mrg{\mathring{\zeta}}
\def\mru{\mathring{u}}
\def\mrs{\mathring{\sigma}}
\def\gui{\p^\gamma U_i}
\def\comm#1#2{{\llbracket#1,#2\rrbracket}}
\def\tu{\tilde{u} }
\def\XX{{\scriptstyle \mathcal{X} }}
\def\pw{\Phi_{\scriptscriptstyle W}}
\def\pz{\Phi_{\scriptscriptstyle Z}}
\def\pa{\Phi_{\scriptscriptstyle U}}
\def\pu{\Phi_{\scriptscriptstyle U}}
\def\pzy{\pz^{\scriptscriptstyle y_0}}
\def\pay{\pa^{\scriptscriptstyle y_0}}
\def\tu{\tilde{u}}
\title{{\bf Shock formation and vorticity creation for 3d Euler}}
\author{
{Tristan Buckmaster}\thanks{\footnotesize Department of Mathematics, 
Princeton University, Princeton, NJ 08544,
 \href{buckmaster@math.princeton.edu}{buckmaster@math.princeton.edu} }
\and  
{Steve Shkoller}\thanks{\footnotesize Department of Mathematics, UC Davis, Davis, CA 95616, \href{shkoller@math.ucdavis.edu}{shkoller@math.ucdavis.edu}.}
\and 
{Vlad Vicol}\thanks{\footnotesize Courant Institute of Mathematical Sciences, New York University, New York, NY 10012, \href{vicol@cims.nyu.edu}{vicol@cims.nyu.edu}. }
}
\date{} %
\begin{document}

\maketitle

\begin{abstract}
\noindent
We analyze the shock formation process for the 3d non-isentropic Euler equations with the ideal gas law, in which sounds waves interact with entropy waves to produce vorticity.  Building on our theory for isentropic flows in \cite{BuShVi2019a,BuShVi2019b}, we give a constructive proof of shock formation from  smooth initial data.  Specifically, we prove that there exist smooth solutions to the non-isentropic Euler equations which form a  generic  stable shock with explicitly computable blowup time, location, and direction. This is achieved by establishing the asymptotic stability of a  generic shock profile in modulated self-similar variables,  controlling the interaction of wave families via: (i) pointwise bounds along Lagrangian trajectories, (ii) geometric vorticity structure, and (iii) high-order energy estimates in Sobolev spaces.
\end{abstract}

\renewcommand{\baselinestretch}{0.65}\normalsize
\setcounter{tocdepth}{1}

\tableofcontents
\renewcommand{\baselinestretch}{1.0}\normalsize

\section{Introduction}
The three-dimensional Euler equations of gas dynamics, introduced by Euler in~\cite{Euler1757}, are a hyperbolic system of five coupled equations, and can be written as
\begin{subequations}
\label{eq:Euler}
\begin{align}
\rho\left(\partial_\tcal u + (u \cdot \nabla_{\!\xcal} )u \right) + \nabla_{\!\xcal} p(\rho, \scal) &= 0 \,,  \label{eq:momentum} \\
\partial_\tcal \rho  +  (u \cdot \nabla_{\!\xcal}) \rho +  \rho\operatorname{div}_\xcal  u &=0 \,,  \label{eq:mass} \\
\partial_\tcal \scal  +  (u \cdot \nabla_{\!\xcal})\scal&=0 \,,  \label{eq:entropy}
\end{align}
\end{subequations}
for spacial variable $\xcal=(\xcal_1,\xcal_2,\xcal_3) \in \mathbb{R}^3  $, temporal  variable $\tcal \in \mathbb{R}  $, velocity
 $u :\mathbb{R}^3  \times \mathbb{R}  \to \mathbb{R}^3$, density
 $\rho: \mathbb{R}^3 \times \mathbb{R}  \to \mathbb{R}  _+$, and entropy
 $\scal: \mathbb{R}^3  \times \mathbb{R}$.   The pressure\footnote{The evolution equation for $\rho$ can be replaced with
 the equation for pressure given by $\partial_\tcal p  +  (u \cdot \nabla_{\!\xcal}) p + \upgamma p\operatorname{div}_\xcal  u=0$.}
  $p=p(\rho, \scal):\mathbb{R}^3 \times \mathbb{R}  \to \mathbb{R}  _+$  is a function of both density and entropy, with equation-of-state given by the ideal gas law
$$
 p(\rho,\scal) = \tfrac{1}{\upgamma} \rho^\upgamma e^\scal\,,
$$
where the adiabatic constant $\upgamma >1$.   If smooth initial conditions are prescribed at an initial time $\tcal_0$, then a classical solution to
\eqref{eq:Euler} exists up to a finite time $T_*$, the lifespan,  when a singularity or blowup develops \cite{Si1985}.   The mechanism of blowup 
for smooth solutions to \eqref{eq:Euler} as $t \to T_*$, including rate, direction, locus,  and profile is heretofore unknown.

Our primary aim  is  thus the detailed analysis of the formation  of the first shock or blowup  for smooth solutions to \eqref{eq:Euler}.   We shall prove that for an open
set of initial conditions, smooth solutions to \eqref{eq:Euler} evolve steepening wavefronts and form an asymptotically self-similar cusp-type first shock
with explicit rate, location, and direction.
The major difficulty in the analysis of the non-isentropic Euler dynamics stems from the interaction of sound waves, entropy waves, and vorticity waves. 
Non-isentropic flows
can have a misalignment of density and entropy gradients,  thus leading to dynamic vorticity creation, even from
irrotational initial data.  

To highlight the challenge created by the interaction of different wave families, we must examine the evolution of the vorticity vector  which we   shall now 
derive.  To do so, it is convenient to write the Euler equations using the sound speed.
We introduce the adiabatic exponent
$$
\alpha = \tfrac{ \upgamma -1}{2}  \,
$$
so that
the sound speed $c(\rho) = \sqrt{ \sfrac{\p p}{\p \rho} }$ can be written as
$c= e^{{\frac{\scal}{2}} } \rho^ \alpha$, and $p= \tfrac{1}{\gamma} \rho c^2$.
We define the scaled sound speed $\sigma$ by
\begin{equation}\label{sigma1}
\sigma =  \tfrac{1}{\alpha } c= \tfrac{1}{\alpha }  e^{{\frac{\scal}{2}} } \rho^ \alpha \,,
\end{equation} 
and write the  Euler equations \eqref{eq:Euler}  as a system for $(u, \sigma , \scal)$ as follows:
\begin{subequations}
\label{eq:Euler2}
\begin{align}
\p_\tcal u + (u \cdot \nabla_{\!\xcal}) u + \alpha \sigma  \nabla_{\!\xcal} \sigma  &=  \tfrac{\alpha }{2\gamma} \sigma^2  \nabla_{\!\xcal} \scal \,,  \label{eq:momentum2} \\
\partial_\tcal \sigma + (u \cdot \nabla_{\!\xcal}) \sigma  + \alpha \sigma \operatorname{div}_\xcal u&=0 \,,  \label{eq:mass2} \\
\partial_\tcal \scal  +  (u \cdot\nabla_{\!\xcal}) \scal &=0 \,.  \label{eq:entropy2}
\end{align}
\end{subequations}

We let $\omega= \operatorname{curl}_{\xcal} u$ denote the vorticity vector, and 
define the {\it specific vorticity vector}  by $ \zeta = \tfrac{\omega}{\rho} $.  A straightforward computation shows that $\zeta$ is a solution to
\begin{align} 
\p_\tcal \zeta  + (u \cdot \nabla_{\!\xcal }) \zeta - \left( \zeta  \cdot \nabla_{\!\xcal} \right) u  
=  \tfrac{\alpha }{\gamma} \tfrac{\sigma}{\rho} \nabla_{\!\xcal} \sigma \times  \nabla_{\!\xcal} \scal   \,.   \label{specific-vorticity}
\end{align} 
The term term $\tfrac{\alpha }{\gamma} \tfrac{\sigma}{\rho} \nabla_{\!\xcal} \sigma \times  \nabla_{\!\xcal} \scal  $ on the right side of \eqref{specific-vorticity}
can also be written as $\rho^{-3}  \nabla_{\!\xcal} \rho \times \nabla_{\!\xcal} p$ and is referred to as {\it baroclinic torque}.   Clearly, the potential vorticity, the
component of $\zeta$ in the direction of the density gradient,  can only be generated by vortex stretching, whereas baroclinic vorticity modes are produced from the interaction of acoustic waves and entropy waves.     This (baroclinic) vorticity production is the fundamental
mechanism for the excitation and stabilization of 
both the Rayleigh-Taylor and Richtmyer-Meshkov instabilities, and plays a fundamental role in atmospheric science as well as numerous flows of engineering significance.   

Of course, it  is possible to  simplify the Euler dynamics in a manner that still retains the steepening of sound waves, but removes complications associated
to the interaction of different wave families.  This can be achieved by  considering  the subclass of flows for  which the entropy is a constant;  such flows 
are called {\it isentropic}, and the pressure is
a function of density alone:  $p=\tfrac{1}{\upgamma} \rho^\upgamma$.     Note,  that for isentropic flow, baroclinic torque vanishes, and thus the
specific vorticity $\zeta$ is Lie advected as a vector field. Acoustic modes can no longer interact with entropy waves to create vorticity; rather, vorticity is merely advected.  As such, two further subclasses of flows exist: irrotational flow and flow with advected vorticity.  For irrotational flow, only sound waves
propagate, while for initial data with vorticity, there is an interaction between acoustic modes and vorticity modes that must be carefully analyzed, as 
controlling the growth of vorticity is essential to the study of shock formation.  For non-isentropic dynamics, the presence of baroclinic torque creates a
fundamentally new challenge in the estimation of the growth of vorticity.   Why? Because as the first shock forms, the magnitude of baroclinic torque becomes infinite!
Due to some  cancellations using geometric coordinates adapted to the steepening wave front, even though  baroclinic torque blows up, we 
shall prove that  the vorticity remains  bounded up to the time of shock formation;  furthermore,  {\it irrotational} 
initial data can be chosen with non-zero baroclinic torque such that vorticity is instantaneously produced and remains non-trivial throughout the shock formation process.

By a significant extension of the methodology we developed in  \cite{BuShVi2019a,BuShVi2019b}, we shall prove the following:
\begin{theorem}[Rough statement of  the main theorem]
For an open set of smooth initial data  with a maximally negative gradient of size $\OO({\sfrac{1}{\eps}} )$, for $\eps>0$ sufficiently small, there exist smooth solutions to the non-isentropic 3d Euler equations \eqref{eq:Euler} which form a shock singularity at time  $T_*=\OO(\eps)$. 
The first singularity occurs at a single point in space, whose location can be explicitly computed, along with the precise time at which it occurs.
The blowup profile is shown to be a cusp with $C^ {\sfrac{1}{3}} $ regularity, and the singularity is given by an asymptotically self-similar shock profile which is stable with
respect to the $H^m(\RR^3)$ topology for $m\ge 18$.   If an irrotational initial velocity is prescribed, vorticity is instantaneously produced, and remains 
bounded and non-trivial up to the blowup time $T_*$.  
\end{theorem}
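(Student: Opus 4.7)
The plan is to adapt the modulated self-similar framework developed in \cite{BuShVi2019a,BuShVi2019b} for isentropic flows to the non-isentropic system \eqref{eq:Euler2}, the new content being the control of the interaction of the three wave families (acoustic, entropy, and vorticity) and the apparently singular baroclinic source on the right-hand side of \eqref{specific-vorticity}. First I would choose initial data whose leading quadratic structure distinguishes one point and one direction at which the shock will form, and introduce Riemann-type variables $w = u \cdot n + \sigma$ and $z = u \cdot n - \sigma$ along the chosen normal $n$, together with the two transverse components of $u$ and the entropy $\scal$. Rewriting \eqref{eq:Euler2} in these variables decouples the system into a fast family (the $w$ equation, along the $u+c$ characteristic, carrying the blowing-up gradient), a slow family ($z$ along $u-c$), and material transport (tangential velocity and $\scal$), with lower-order coupling through pressure, geometry, and baroclinic torque. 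Passing to shock-adapted coordinates around a modulated central trajectory $\xi(\tcal)$, I would rescale $y = (x - \xi(\tcal))/(T_*-\tcal)^{\sfrac{3}{2}}$ in the shock direction and $(T_*-\tcal)^{\sfrac{1}{2}}$ transversely, set $s = -\log(T_*-\tcal)$, and recast the problem as the asymptotic stability, as $s \to \infty$, of the explicit Burgers self-similar profile $\bar W$, with a finite set of modulation parameters absorbing the unstable directions (blowup time, location, speed, orientation, curvature).

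Once the formulation is fixed, the argument is a single bootstrap with three coupled layers of estimates. The first layer consists of pointwise $L^\infty$ bounds for $w,z$, the tangential velocity components, and $\scal$, together with their first self-similar derivatives, obtained by integration along the fast, slow, and material Lagrangian trajectories of the system; here the self-similar damping $\sfrac{3}{2}$ in the $w$ equation (and $\sfrac{1}{2}$ in the transverse directions) is what closes the pointwise estimates. The second layer is the geometric $L^\infty$ control of the specific vorticity $\zeta$, obtained after decomposing $\zeta$ into components along and orthogonal to the modulated shock frame. The third layer is a high-order energy estimate in $H^m$ with $m \geq 18$, the order being chosen large enough to supply Moser-type nonlinear estimates for the full system while still leaving room for the damping from the self-similar rescaling to close the estimates with a margin.

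The hardest step is the vorticity layer. The baroclinic source on the right of \eqref{specific-vorticity} involves $\nabla\sigma$, whose dominant contribution contains $\nabla w$, which is of size $\OO(\eps^{-1})$ as $\tcal \to T_*$, so that the source is unbounded in $L^\infty$. I would extract a cancellation by writing the gradients in the modulated shock frame: the singular part of $\nabla\sigma$ is essentially parallel to the shock normal, while $\nabla\scal$ is materially transported and therefore controlled by its initial value and the transverse geometry, so the cross product $\nabla\sigma \times \nabla\scal$ loses precisely the unbounded contribution and only the $\OO(1)$ slow parts of $\nabla\sigma$ survive in the relevant components. Integrating the resulting identity along the material derivative $\partial_\tcal + u \cdot \nabla_{\xcal}$ then yields uniform bounds for $\zeta$ all the way up to $T_*$, despite the torque itself being unbounded pointwise. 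A parallel geometric splitting is needed for the top-order vorticity in the energy layer, where the commutators of the curl with the flow map would otherwise destroy the $H^m$ estimate.

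Once the bootstrap closes, the modulation parameters converge to finite limits, producing the explicit blowup time and location; convergence of $w$ to $\bar W$ in self-similar variables produces the $C^{\sfrac{1}{3}}$ cusp at a single point in the direction $n(T_*)$; and the openness of the admissible set in $H^m$ follows from the continuous dependence on initial data built into the bootstrap. The final claim about irrotational data is then immediate from \eqref{specific-vorticity} evaluated at $\tcal_0$: if $\zeta(\tcal_0) \equiv 0$ but $\nabla\sigma(\tcal_0) \times \nabla\scal(\tcal_0)$ is not identically zero, then $\partial_\tcal \zeta(\tcal_0) \not\equiv 0$, so vorticity is produced instantly, and the uniform bound from the second layer prevents it from returning to zero before $T_*$.
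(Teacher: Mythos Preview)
Your overall architecture matches the paper's, but the heart of the non-isentropic argument---the vorticity control---contains a genuine gap. You claim that in the shock frame ``the cross product $\nabla\sigma \times \nabla\scal$ loses precisely the unbounded contribution and only the $\OO(1)$ slow parts of $\nabla\sigma$ survive in the relevant components.'' This is false. Decomposing the torque as in \eqref{11bottlesdownto7}, only the \emph{normal} component of $\nabla\sigma\times\nabla\scal$ is free of $\p_\Ncal\sigma$; the tangential components are $\p_{\Tcal^3}\sigma\,\p_\Ncal\scal - \p_\Ncal\sigma\,\p_{\Tcal^3}\scal$ and its analogue, and these genuinely blow up like $(T_*-t)^{-1}$ (the paper states this explicitly in the introduction). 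Since it is precisely the tangential components $\zeta\cdot\Tcal^\nu$ that must be propagated by transport---the normal component $\zeta\cdot\Ncal$ being recovered algebraically from tangential derivatives of the tangential velocity---no pointwise cancellation in the cross product saves you.

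The actual mechanism is that the \emph{time integral} of $\p_\Ncal\sigma$ along material trajectories is small, and this comes from conservation of mass rather than geometry of the cross product. The Jacobian of the material flow equals $\rho_0/\rho$, which is uniformly close to $1$ by the sound-speed bootstrap, so $\int_{-\eps}^t (\div_{\tilde x}\mru)\circ\varphi\,dt'$ is $\OO(\eps^{1/9})$; since the tangential divergence is bounded, this forces $\int_{-\eps}^t \p_{x_1}(\mru\cdot\Ncal)\circ\varphi\,dt'$ to be small, and a separate asymptotic non-positivity argument for $\p_1 W$ upgrades this to control of the absolute value. This is the content of Lemma~\ref{lem:p1Wu}, and it is what closes the Gr\"onwall for $\zeta\cdot\Tcal^\nu$ despite the torque being pointwise infinite. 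Your vorticity-creation argument has a related gap: instantaneous production at $t_0$ plus an \emph{upper} bound on $\zeta$ does not prevent $\zeta$ from vanishing again before $T_*$; the paper instead tracks the material trajectory landing at the blowup point and proves a quantitative \emph{lower} bound $-\int_{-\eps}^{T_*}\p_{\tilde x_1}\tilde w\circ X\,dt\gtrsim\eps^{1/3}$ (Lemma~\ref{lem:dont:read:this}), which the Duhamel formula then converts into a lower bound on $|\zeta|$ at the shock.
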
 
A precise statement of the main result will be given below as Theorem \ref{thm:main:S-S}.

\subsection{Prior results}

In one space dimension,  the theory of finite-time blowup of smooth solutions and shock formation to the Euler equations is well established.   The literature
is too vast to provide a review here. See, 
for example,  \cite{Ri1860}, \cite{Lax1964}, \cite{John1974}, \cite{Li1979}, \cite{Ma1984}, \cite{Da2010}.   In contrast, in multiple space dimensions,
only the
{\it isentropic} shock formation problem has been studied:  shock formation was established for  irrotational flows by \cite{Ch2007} and  \cite{ChMi2014}
(see also  \cite{Ch2019}), 
 for 2d  isentropic flows with vorticity by
\cite{LuSp2018} and \cite{BuShVi2019a}, and for 3d isentropic flows with vorticity by \cite{BuShVi2019b}.   
For {\it non-isentropic} flow in multiple space dimensions, prior to this paper, it 
was  {\em only}  known  that  $C^1$ solutions have a finite lifespan \cite{Si1985}. 
The main result of this paper is  the detailed analysis of the shock formation process.  

As we noted above, one of the major difficulties in the analysis of non-isentropic flows is due to the interaction of multiple wave families: 
sound waves, vorticity waves, and entropy waves.   Indeed, the analysis of quasilinear hyperbolic systems with multiple wave speeds is just emerging.
As stated in \cite{Sp2018}, prior to the results
in \cite{LuSp2018, Sp2018, BuShVi2019a,BuShVi2019b},  there have been no constructive proofs of shock formation for a quasilinear hyperbolic system in more than one spatial dimension, featuring multiple wave speeds. We note that the irrotational (isentropic) Euler equations  can be written as a scalar 
quasilinear wave equation with only one wave speed; formation of shocks for systems with a single wave speed have been studied by 
\cite{Al1999a,Al1999b,Ch2007,ChMi2014,Sp2016,MiYu2017,Mi2018}.

Finally, we mention that there are other possible blowup mechanisms for the Euler equations; for example,
a precise characterization of implosion for spherically symmetric isentropic flow has  recently been 
given  in \cite{MeRaRoSz2019a,MeRaRoSz2019b}.

 \subsection{Main ideas in the proof}

Because of the presence of multiple wave speeds, multiple wave families,  and their nonlinear interactions, the Euler dynamics offer a rich tapestry of dynamic
behavior, and yet when zooming-in on the formation of the first shock, the Euler solution  shares  fundamental features with the wave-steepening blowup of the 
3d Burgers solution.    For this reason,
our study of the mechanism of shock formation for smooth solutions of \eqref{eq:Euler2} as $t \to T_*$ makes use of a blowup profile $\bar W(y)$, 
one example of a stable stationary solution to the 3d self-similar Burgers equation
\begin{align}
 -\tfrac 12 \bar W + \left( \tfrac{3}{2} y_1+ \bar W \right) \partial_{y_1} \bar W+ \tfrac{1}{2} y_2  \partial_{y_2} \bar W   + \tfrac{1}{2} y_3 \partial_{y_3} \bar W     = 0 
\label{eq:Burgers:self:similar}
\end{align}
which has an explicit representation.
If we consider the 3d Burgers equation $\p_t v + v \cdot \nabla v=0$ in physical spacetime variables $(\xcal,t)$, then a smooth solution $v=(v_1,v_2,v_3)$ which forms a first shock at $t=T_*$ is given by\footnote{In fact,  as  established in Appendix \ref{sec:delay}, there are many closely related  stable self-similar solutions to the Burgers equations which allow for a slight modification of 
$v_1$.} 
\begin{align} 
v_1(\xcal_1,  \xcal_2,\xcal_3 ,t) =  (T_*- t)^ {\frac{1}{2}} \bar W\left( \tfrac{\xcal_1}{(T_*-t)^ {\frac{3}{2}} }, \tfrac{ \xcal_2}{(T_*-t)^ {\frac{1}{2}} } , 
\tfrac{ \xcal_3}{(T_*-t)^ {\frac{1}{2}} }\right)  \label{v1}
\end{align} 
with $v_2=0$ and $v_3=0$.  Explicit properties of the blowup profile $\bar W(y)$ together with the solution for 
$v_1(\xcal, \tcal)$ give precise information of the blowup mechanism as $t \to T_*$, including the blowup time $T_*$, the blowup location $\xcal=0$, and the
blowup direction $e_1$.     We note that we have made a particular choice of direction for our Burgers solution $v$; specifically, we have chosen to let the wave steepen
along the $e_1$ blowup direction, whereas we could have used the profile $\bar W$ to form a blowup in any direction.

Although the non-isentropic Euler system is significantly more complicated, we are nevertheless able to use the Burgers stationary solution $\bar W$ to
describe the blowup mechanism for smooth solutions of \eqref{eq:Euler2} as $t \to T_*$.   This requires a number coordinate and variable transformations
that are constructed upon two geometric principles: first, we build into our transformations a family of time-dependent modulation functions whose purpose is
to fight against the destabilizing action of the finite-dimensional symmetry groups of the Euler equations, and second, we design a coordinate system which both
follows and deforms with the steepening Euler solution. 

Let us now elaborate on these ideas.   The blowup profile $\bar W(y)$ has an explicit formula which shows that $y=0$ is a global minimum for 
$\p_{y_1}W(y)$, and with the following properties verified: $\bar W(0)=0$, $\p_{y_1} \bar W(0)
=-1$, $\p_{y_2}\bar W(0)=\p_{y_3}\bar W(0)=0$, $ \nabla_y ^2 \bar W(0)=0$, and
\begin{align} 
\nabla ^2 \p_{y_1}\bar W(0) >0 \,. \label{genericity}
\end{align} 
Positive-definiteness of the Hessian of  $\p_1 W$ at $y=0$ is a {\it genericity condition} for the blowup mechanism, and has been used
in the study of  blowup for quasilinear wave equations \cite{Al1999a} and discussed in  \cite{CaErHoLa1993,Ch2007} as an important selection criterion for stable shocks.

Returning now to the identity \eqref{v1}, if the initial time is fixed to be $t_0 = -\eps$ for $\eps>0$, we can set  $T_* =0$; the initial condition for $v_1$ is then given by
$v_1(\xcal, -\eps) =\eps^{ {\frac{1}{2}} } \bar W( \eps^ {-\frac{3}{2}} \xcal_1,  \eps^ {-\frac{1}{2}} \xcal_2,  \eps^ {-\frac{1}{2}} \xcal_3)$, hence with
$(y_1, y_2, y_3) = ((-t)^{-\frac{3}{2}} \xcal_1, (-t)^{-\frac{1}{2}} \xcal_2,  (-t)^{-\frac{1}{2}} \xcal_3)$, we see that the
properties of $\bar W(y)$ at $y= 0$ show that $v_1(0, -\eps) =0$, $\p_{\xcal_1} v_1(0, -\eps) = \tfrac{1}{\eps}\p_{y_1} \bar W(0)= -\tfrac{1}{\eps}$, 
$\p_{\xcal_2} v_1(0, -\eps) =0$, $\p_{\xcal_3} v_1(0, -\eps) =0$, $ \nabla _{\!\xcal}^2 v_1(0,-\eps) =0$ and the genericity condition \eqref{genericity} is also
satisfied so that $\nabla_{\!\xcal} ^2 \p_{\xcal_1}v_1(0,-\eps) >0$.   We see that for the 3d Burgers equation, if we start with a maximally negative slope equal
to $ -{\tfrac{1}{\eps}} $ at time $t=-\eps$ and $\xcal=0$, then the first shock occurs at time $T_*=0$ and $\xcal=0$, and by virtue of \eqref{v1}, the blowup
mechanism is self-similar 
\begin{align} 
\p_{\xcal_1} v_1(0,t) = \tfrac{1}{T_*-t} \p_1 \bar W(0) = - \tfrac{1}{T_*-t} \,.
\label{dv1}
\end{align} 
Of course, no such formula as \eqref{v1} exists for the Euler equations, but we can nevertheless use the properties
of $\bar W$ to develop a new type of stability theory for the Euler equations in self-similar variables.

Thus, the first step in our proof of shock formation for the non-isentropic Euler equations is the mapping of the physical  spacetime coordinates 
$(\xcal,\tcal)$ to {\it modulated} self-similar spacetime coordinates  $(y,s)$, together with a succession of transformations that map
the original variables $(u,\sigma,\scal)$ into  geometric Riemann-like variables $(W,Z,A,K)$, in which the  dynamically dominant variable $W(y,s)$ mimics 
the properties of $\bar W(y)$ near the blowup location $y=0$.  The use of modulation functions for the analysis of  self-similar dispersive equations
was pioneered in \cite{Merle96,MeZa97} and used more recently in \cite{MeRa05,MeRaSz2018}.
 The initial data is prescribed at self-similar time $s_0 = -\log\eps$, and we require
$\p^\gamma W(y,-\log\eps)$ to verify the same conditions as $\p^\gamma\bar W(y)$ at the point $y=0$ for all multi-indices $\abs{\gamma} \le 2$.  
 Just as we noted above, we are now making a choice of blowup direction; the initial data is chosen so that its maximal negative slope is in the $e_1$ direction,
 but unlike the Burgers solution, the rotational symmetry of the Euler dynamics does not preserve this direction.  In fact, the various symmetries
of the Euler equations prevent these conditions on $\p^\gamma W(0,s)$
to be maintained under the natural evolution, and for this reason, ten time-dependent modulation functions are used to ensure that 
$\p^\gamma W(0,s)=\p^\gamma \bar W(0)$ for  $\abs{\gamma} \le 2$ and
for all $s\ge -\log\eps$.     Of these ten modulation functions, seven of them are associated to symmetries of the Euler equations (see Section 1.3 in 
\cite{BuShVi2019b}), and three of the modulation functions are associated to a spatially quadratic time-dependent parameterization 
$f(t, \xcal_2,\xcal_3) = \phi_{22}(t) \xcal_2^2+ 2\phi_{23}(t) \xcal_2\xcal_3 + \phi_{33}(t) \xcal_3^2$ of the steepening front, where the matrix $\phi_{\mu\nu}(t)$ modulates the
curvature, and denotes the induced second-fundamental form.   Associated to this  parametric surface $f( \xcal_2,\xcal_3,t) $ is a time-dependent
orthonormal basis $(\Ncal, \Tcal^2,\Tcal^3)$ representing the normal and tangential directions.   The steepening front moves in the $\Ncal$ direction
and the dominant Riemann variable is defined as $w = u \cdot \Ncal + \sigma$.   With respect to coordinates $x$ which themselves depend on $f$, the
variable $w(x,t)$ is associated to the dominant self-similar variable $W(y,s)$ by a formula which is analogous to \eqref{v1}:
\begin{align*}
w(x_1,x_2,x_3,t) =  (\tau(t)- t)^ {\frac{1}{2}}  W\left( \tfrac{x_1}{(\tau(t)-t)^ {\frac{3}{2}} }, \tfrac{x_2}{(\tau(t)-t)^ {\frac{1}{2}} } , 
\tfrac{x_3}{(\tau(t)-t)^ {\frac{1}{2}} }, s \right), \  -s=\log(\tau(t)-t)\,,
\end{align*} 
where $\tau(t)$ modulates the blowup time and converges to $T_*$ as $t\to T_*$.   Differentiating $w$ in the direction $\Ncal$ of the steepening front,
it can be shown that
\begin{align}
\p_\Ncal w(\xi(t), t) = e^s \p_{y_1}W(0,s)  = -\tfrac{1}{\tau(t) -t}  \to - \infty  \qquad \text{ as } \qquad t\to T_*  \,,  \label{cool-cool}
\end{align}
where $\xi(t)$ modulates the blowup location.   The blowup \eqref{cool-cool} is the geometric analogue of \eqref{dv1}, and requires a well-defined
limit as $t \to T_*$ which, in turn, requires that $W(y,s)$ remains  well defined for all $-\log\eps \le s$.

It therefore  becomes clear that in order to establish  stable self-similar shock formation, we must prove global existence  of solutions to the  
Euler equations in self-similar coordinates $(y,s)$, and the majority of our work is devoted to this end.   The understanding of the
damping/anti-damping structure of the  Euler equations in 
self-similar coordinates $(y,s)$ along Lagrangian trajectories is key to our analysis;   the undifferentiated Euler equations have anti-damping terms, but upon 
spatial differentiation, damping emerges,
and the more derivatives that are applied,  the stronger the damping becomes.   A consequence of this observation is that pointwise bounds for lower-order 
derivatives cannot rely on either damping or  traditional Eulerian-type analysis, but rather on  sharp (lower) bounds on the motion of the three families of
trajectories associated to the three waves speeds present.   In self-similar coordinates, almost all of the trajectories in these
three wave families {\em escape to infinity}  and having sharp rates-of-escape for each family can be combined with spatial decay properties of
the Riemann-type function $W(y,s)$ to close a system of highly coupled bootstrap bounds for derivatives up to order two.

On the other hand, it is not possible to close estimates for the Euler equations using only pointwise bounds due to inherent derivative loss, and higher-order energy estimates must therefore be employed. Modified energy estimates are performed for a system of variables comprised of $U$, $ \sound  e^{-\frac{ K }{2 \upgamma}}$, and $e^\frac{ K }{2 \upgamma}$, where $U$, $S$, and $K$ are the  self-similar versions of $u$, $\sigma$, and $\scal$, respectively. The use of these variables removes the hyperbolic degeneracy associated to vanishing
density. Combined with  the  weighted pointwise bounds for lower-order derivatives, we prove global existence in a modified $\dot H^m$-norm, $m \ge 18$. 

While for the subclass of irrotational flows  the above two types of estimates suffice, 
for rotational flows it is essential to obtain uniform bounds for the vorticity all the way to the blowup time.   Even for isentropic dynamics, in which the specific vorticity is  Lie advected, 
analysis in self-similar coordinates appears top create logarithmic losses in temporal decay (see \cite{BuShVi2019b}).  Instead, the  specific vorticity $\zeta$ is estimated in physical coordinates using geometric components $(\zeta \cdot \Ncal, \zeta \cdot \Tcal^2, \zeta \cdot \Tcal^3)$, which yield a  cancellation at highest order. For the non-isentropic dynamics, an additional difficulty arises because the vorticity equation \eqref{specific-vorticity} is forced by the baroclinic torque $ \tfrac{\alpha }{\gamma} \tfrac{\sigma}{\rho} \nabla_{\!\xcal} \sigma \times  \nabla_{\!\xcal} \scal $, which blows up as $t \to T_*$. Indeed, from formula \eqref{11bottlesdownto7} below, and the bounds established in Sections~\ref{sec:vorticity:sound} and~\ref{sec:creation}, we may show that the tangential components of the baroclinic torque term satisfy 
$$
 \sabs{  ( \tfrac{\sigma}{\rho}  \Tcal^\cdot  \cdot  \nabla_{\!\xcal} \sigma \times  \nabla_{\!\xcal} \scal )(\xi(t),t)} 
 \gtrsim \tfrac{1}{T_* -t} \,.
$$
A main feature of our proof is to show that in spite of the fact that the Lie-advection for the specific vorticity is forced by a diverging term, $\zeta$ remains uniformly bounded up to $T_*$. This is achieved by noting that the divergence of the velocity gains a space derivative when integrated along trajectories with speed $u$, and by taking advantage of certain cancellations which arise due to our geometric framework.

Finally, we examine baroclinic vorticity production.   We prove that even if the initial velocity is irrotational, vorticity is instantaneously produced due to the
baroclinic torque, and our analysis shows that this created vorticity remains non-trivial  in an open neighborhood of the steepening front all the way up to the
first shock.  We thus provide a constructive proof of shock formation for Euler in the regime in which vorticity is  created, and not simply Lie advected.

\subsection{Outline} In Section \ref{sec:variables}, we introduce a succession of variable changes and Riemann-type variables which allow then allow us
to write the Euler equations in modulated self-similar coordinates.   A precise specification of the data and the statement of the main results is then given
in Section \ref{sec:results}.   In Section \ref{sec:bootstrap}, we introduce the bootstrap assumptions for the modulation functions as well as the primary variables solving the self-similar Euler equations; these bootstrap assumptions consist of carefully chosen weighted (in both space and time) bounds.   
A fundamental aspect of our proof requires a detailed estimates for the rates of escape of the trajectories corresponding to the different wave speeds, and 
Section \ref{sec:Lagrangian} is devoted to this analysis.    In Section
\ref{sec:vorticity:sound}, we establish pointwise bounds for the vorticity, and in Section \ref{sec:creation} we show that there exists irrotational initial velocity
fields from which  vorticity is  created and  remains non-trivial at the first shock.
Energy estimates in self-similar variables are established in Section \ref{sec:energy}, using the modified variables \eqref{eq:thunder:variables}.   In
Section \ref{sec:preliminary}, we establish weighted (pointwise) estimates for functions appearing in the forcing, damping, and transport of the differentiated
Euler system.  In turn, these weighted bounds allow us to close the bootstrap assumptions for $W$, $Z$, $A$, $K$ and their partial derivatives, and this
is achieved in Sections \ref{sec:Z:A}--\ref{sec:W}, while in Section \ref{sec:constraints}, we close the bootstrap bounds for the dynamic modulation functions.
Finally, in Section \ref{sec:conclusion-of-proof}, we explain how all of the obtained bounds are used to prove Theorem \ref{thm:main:S-S}; in particular,
we show that  
$\lim_{s\to \infty } W(y,s)  = \bar W_ \mathcal{A} (y) $ for any fixed $y \in \RR^3$, where $\bar W_ \mathcal{A} (y)$ is a stable stationary solution to the
self-similar 3d Burgers equations. A  family of such stationary solutions is constructed in  Appendix \ref{sec:toolshed}, which also contains an interpolation
inequality that is used throughout the paper, as well as some detailed computations leading to the evolution equations for the modulation functions.

\section{Transforming the Euler equations into geometric self-similar variables}
\label{sec:variables}

We now make a succession of variable transformations for both dependent and independent variables. 
We begin by rescaling time as
\begin{align}
\tcal \mapsto  \tfrac{1+ \alpha }{2} \tcal = t\,.
\label{eq:time:rescale}
\end{align}
We next introduce ten modulation variables 
which satisfy a coupled
system of ODEs that will be given in \eqref{eq:dot:phi:dot:n}--\eqref{eq:dot:kappa:dot:tau}.   For each time $t$, they are defined as follows:
\begin{subequations} 
\label{mod-def}
\begin{alignat}{2}
&R(t) \in \mathbb{S}^2 \qquad  &&\text{ rotation matrix from } e_1 \text{ to  the direction of steepening front } n(t) \,, \\
&\xi(t) \in \mathbb{R}^3 && \text{ translation vector used to fix the location of the developing shock }  \,,  \\
&\phi(t) \in  \mathbb{R}^3 && \text{ 2x2 symmetric matrix giving the curvature of the  developing shock front }  \,,  \\
& \tau(t) \in \mathbb{R}  && \text{ scalar used to track exact the blowup time } \,, \\
&\kappa(t) \in \mathbb{R}  && \text{ scalar used to fix the speed of the developing shock} \,.
\end{alignat} 
\end{subequations} 
The matrix $R(t)$ is defined in terms of   two time-dependent rotation angles $n_2(t)$ and $n_3(t)$ as follows.
We define $n(t) = (\sqrt{1- n_2^2(t)+n_3^2(t)}, n_2(t),n_3(t)) $
and a skew-symmetric matrix $\tilde R$ whose first row is the vector $(0,-n_2,-n_3)$, first column is $(0,n_2,n_3)$, and has $0$ entries otherwise. In terms of $\tilde R$, we define the rotation matrix
\begin{align}
  R(t)  
= \Id + \tilde R(t) + \frac{1 - e_1 \cdot n(t) }{\abs{e_1 \times n(t)}^2} \tilde R^2(t)   \,.
\label{eq:R:def}
\end{align}
It is the two angles $n_2(t)$ and $n_3(t)$ whose evolution is given in \eqref{eq:dot:phi:dot:n}.

Using these modulation functions, 
we next proceed to make a succession of transformations
of both the independent and dependent variables, finally arriving at a novel modulated self-similar form of the dynamics.

\subsection{Rotating the direction and translating the location of the steepening wavefront} 
\label{sec:time:dependent:coord}

We introduce the new independent  variable
\begin{align}
 \tilde x = R^T(t) (\xcal-\xi(t))
 \label{eq:tilde:x:def}
\end{align}
and  corresponding dependent variables as
\begin{align}
\tu ( \tilde x ,t) =   R^T(t)   u(\xcal ,t) \, ,
\qquad 
\tilde \sigma ( \tilde x ,t) =    \sigma(\xcal,t)\,, \qquad 
\tilde \scal ( \tilde x ,t) =    \scal(\xcal,t)  \, .
 \label{eq:tilde:u:def}
\end{align}
It follows that \eqref{eq:Euler2} is transformed to
\begin{subequations}
\label{eq:new:Euler}
\begin{align}
 \tfrac{1+ \alpha }{2} \p_t \tilde u - \dot Q\tilde u + \Big( ( \tilde v  + \tilde u)\cdot \nabla_{\! \tilde x} \Big) \tilde u 
 + \alpha \tilde \sigma   \nabla_{\! \tilde x}  \tilde \sigma  &= \tfrac{\alpha }{2\gamma} \tilde \sigma^2  \nabla_{\!\tilde x} \tilde\scal \\
  \tfrac{1+ \alpha }{2}\partial_t \tilde \sigma + \Big(( \tilde v  + \tilde u) \cdot \nabla_{\! \tilde x}\Big)\tilde \sigma +  \alpha   \tilde \sigma  \div_{\! \tilde x} \tilde u&=0\\
 \tfrac{1+ \alpha }{2}\partial_t \tilde \scal + \Big(( \tilde v  + \tilde u) \cdot \nabla_{\! \tilde x}\Big)\tilde \scal &=0 
 \label{eq:entropy3}
\end{align}
\end{subequations}
where 
\begin{align} 
\dot Q = \dot R^T R\ \ \text{ and } \ \
\tilde v(\tilde x,t) := \dot Q  \tilde x  - R^T \dot \xi \,. \label{tildev}
\end{align}

The  density and pressure in this rotated and translated frame are given by
\begin{align}
\tilde \rho ( \tilde x ,t) =    \rho(\xcal,t)\,, \qquad 
\tilde p ( \tilde x ,t) =    p(\xcal,t)
 \label{eq:tilde:p:def}
\end{align}
satisfy
\begin{subequations} 
\begin{align} 
 \tfrac{1+ \alpha }{2}\partial_t \tilde \rho + \Big(( \tilde v  + \tilde u) \cdot \nabla_{\! \tilde x}\Big)\tilde \rho +     \tilde \rho  \div_{\! \tilde x} \tilde u&=0\,,
 \label{tdensity} \\
 \tfrac{1+ \alpha }{2}\partial_t \tilde p + \Big(( \tilde v  + \tilde u) \cdot \nabla_{\! \tilde x}\Big)\tilde p +  \upgamma   \tilde p  \div_{\! \tilde x} \tilde u&=0\,,
 \label{tpressure}
\end{align} 
\end{subequations} 
and we also have the alternative form of the  momentum equation
\begin{align} 
 \tfrac{1+ \alpha }{2} \p_t \tilde u - \dot Q\tilde u + \Big( ( \tilde v  + \tilde u)\cdot \nabla_{\! \tilde x} \Big) \tilde u 
    + ( \alpha \tilde \sigma)^ {-\frac{1}{ \alpha }} e^ {\frac{\tilde\scal}{2\alpha }}\nabla_{\!\tilde x} \tilde p =0 \,.
\label{eq:new:momentum:alt}
\end{align} 
This follows from the form of the momentum equation given by
$\p_\tcal u + (u \cdot \nabla_{\!\xcal}) u +  ( \alpha \sigma)^ {-\frac{1}{ \alpha }} e^ {\frac{\scal}{2\alpha }}  \nabla_{\!\xcal}p  =  0$
where,  from \eqref{sigma1}, we have used that $\rho ^{-1} =
( \alpha \sigma)^ {-\frac{1}{ \alpha }} e^ {\frac{\scal}{2\alpha }} $.

Similarly, defining the transformed {\em specific vorticity} vector  $\tilde \zeta$ by
\begin{equation}\label{vort00}
\tilde \zeta ( \tilde x ,t) =   R^T(t)   \zeta(\xcal ,t) 
 \,,
\end{equation} 
we have that $\tilde \zeta$ solves 
\begin{align}
 \tfrac{1+ \alpha }{2} \p_t \tilde \zeta - \dot Q \tilde \zeta + \Big( ( \tilde v + \tilde u)\cdot \nabla_{\! \tilde x}\Big)\tilde \zeta 
 - \Big(\tilde \zeta \cdot \nabla_{\! \tilde x}\Big) \tilde u &=  \tfrac{\alpha }{\gamma}\tfrac{ \tilde \sigma}{ \tilde \rho} \nabla_{\!\tilde x} \tilde \sigma \times  \nabla_{\! \tilde x}\tilde \scal   \,. \label{tvorticity}
\end{align}
Deriving \eqref{tvorticity} from \eqref{specific-vorticity} fundamentally uses that  $\dot Q$ is skew-symmetric, and the fact that the cross product  is invariant to rotation.

\subsection{Coordinates adapted to the shape of the steepening wavefront}
\label{sec:Riemann:sheep}

We next define a quadratic surface over the $\tilde x_2$-$\tilde x_3$ plane given by the graph
\begin{equation}\label{surface}
\left( f(\tilde x_2, \tilde x_3, t), \tilde x_2, \tilde x_3\right)  \,,
\end{equation} 
which approximates the steepening shock, and where
\begin{align}
 f(\check{\tilde x},t) = \tfrac{1}{2} \phi_{\nu\gamma}(t) \tilde x_\nu\tilde x_\gamma  \,. \label{def_f}
\end{align}

Associated to the parameterized surface \eqref{surface}, we define the unit-length normal and tangent  vectors\footnote{ As we noted in
\cite{BuShVi2019b}, 
 $(\nn, \tt^2,\tt^3)$ defines an orthonormal basis and  $ \Tcal^2 \times  \Tcal^3=\Ncal$,  $\Ncal \times \Tcal^2= \Tcal^3$ 
and  $\Ncal \times \Tcal^3= -\Tcal^2$.}
\begin{align} 
\Ncal = \Jcal^{-1} (1, -f,_2, -f,_3)\,, \ 
\Tcal^2 =  \left( \tfrac{f,_2}{\Jcal}, 1 - \tfrac{(f,_2)^2}{\Jcal(\Jcal+1)} \,,  \tfrac{- f,_2f,_3}{\Jcal(\Jcal+1)}\right) \, , \
\Tcal^3 =   \left( \tfrac{f,_3}{\Jcal},  \tfrac{- f,_2f,_3}{\Jcal(\Jcal+1)} \,,  1 - \tfrac{(f,_3)^2}{\Jcal(\Jcal+1)} \right) \,,  
\label{tangent}
\end{align} 
where
$\Jcal = ( 1+  |f,_2|^2+ |f,_3|^2)^ {\frac{1}{2}}$.\footnote{Here and throughout the paper we are using the notation $\varphi_{,\mu}=\p_{x_\mu}\varphi$, and $\p_\mu \varphi = \p_{y_\mu} \varphi$.}

In order to `flatten'   the developing shock front, we make one further transformation of the  independent  space variables\footnote{ Note that  only
 the $\tilde x_1$ coordinate is modified.} 
\begin{equation}\label{x-sheep}
x_1 = \tilde x_1 - f(\tilde x_2,\tilde x_3,t)\,, \qquad x_2 = \tilde x_2\,, \qquad x_3 = \tilde x_3 \,,
\end{equation}
and define the transformed dependent variables by
\begin{subequations} 
\label{usigma-sheep}
\begin{align} 
\mru(x ,t) & =\tilde u (\tilde x, t) = \tilde u (x_1+ f(x_2,x_3,t),x_2,x_3,t)\,, \\
\mrs(x ,t) & =\tilde \sigma (\tilde x, t)  = \tilde \sigma  (x_1+ f(x_2,x_3,t),x_2,x_3,t)\,,  \label{sigma-sheep} \\ 
\mathring \rho(x ,t) & =\tilde \rho (\tilde x, t)  = \tilde \rho  (x_1+ f(x_2,x_3,t),x_2,x_3,t) \,, \\
\mathring \scal(x ,t) & =\tilde \scal (\tilde x, t)  = \tilde \scal  (x_1+ f(x_2,x_3,t),x_2,x_3,t) \,, \\
\mathring p(x ,t) & =\tilde p (\tilde x, t)  = \tilde p  (x_1+ f(x_2,x_3,t),x_2,x_3,t) \,.
\end{align} 
\end{subequations} 
We shall also make use of the $ \alpha $-dependent parameters
\begin{align} 
\beta_1 = \beta_1(\alpha)= \tfrac{1}{1+\alpha}, \qquad \beta_2 = \beta_2(\alpha)=\tfrac{1-\alpha}{1+\alpha}, \qquad \beta_3 = \beta_3(\alpha)= \tfrac{\alpha}{1+\alpha}, \qquad \beta_4 = \beta_4(\alpha)= \tfrac{\beta_3(\alpha)}{1+2\alpha}\,,
\label{eq:various:beta}
\end{align} 
where  
$0 \le \beta_i = \beta_i(\alpha) < 1$.

Using the time rescaling from \eqref{eq:time:rescale}, the system \eqref{eq:new:Euler} can be written as
\eqref{usigma-sheep} as
\begin{subequations}
\label{Euler_sheep}
\begin{align}
& \p_t \mru  - 2\beta_1\dot Q \mru + 2\beta_1  ( -\tfrac{\dot f }{2 \beta_1 } +\Jcal  v \cdot \nn + \Jcal \mru \cdot \nn  ) \p_1\mru  
 +2\beta_1(v_\nu + \mru_\nu)\p_\nu \mru    + 2\beta_3 \mrs( \Jcal \Ncal   \p_1 \mrs +   \delta^{\cdot \nu}    \p_\nu \mrs ) \notag \\
& \qquad \qquad =  \beta_4 \mathring \sigma^2 (\Jcal \Ncal \p_1 \mathring \scal + \delta ^{ \cdot \nu} \p_\nu \mathring \scal) \,,  \\
&
 \partial_t \mrs + 2 \beta_1 ( -\tfrac{\dot f }{2 \beta_1 }+ \Jcal v \cdot \nn + \Jcal \mru \cdot \nn  )\p_1\mrs   + 2\beta_1(v_\nu + \mru_\nu)\p_\nu \mrs
 + 2\beta_3 \mrs\left( \p_1 \mru \cdot \nn \Jcal + \p_\nu \mru_\nu \right) =0 \,,\\
 &
 \partial_t \mathring\scal + 2 \beta_1 (  -\tfrac{\dot f }{2 \beta_1 }+ \Jcal v \cdot \nn + \Jcal \mru \cdot \nn  )\p_1\mathring \scal  + 2\beta_1(v_\nu + \mru_\nu)\p_\nu \mathring \scal =0 \,,
\end{align}
\end{subequations}
where in analogy to \eqref{usigma-sheep}, we have denoted
\begin{align}
v(x,t) =  \tilde v (\tilde x, t ) = \tilde v (x_1+ f(x_2,x_3,t),x_2,x_3,t)\,.
\label{eq:v:x:t:def}
\end{align}
Note in particular the identity $v_i(x,t) = \dot Q_{i1}(x_1 + f(\check x,t)) + \dot Q_{i\nu} x_\nu - R_{ji} \dot \xi_j$. 
 The density equation \eqref{tdensity}  becomes
\begin{align} 
 \partial_t \mathring \rho + 2 \beta_1 ( -\tfrac{\dot f }{2 \beta_1 } + \Jcal v \cdot \nn + \Jcal \mru \cdot \nn  )\p_1\mathring \rho   + 2\beta_1(v_\nu + \mru_\nu)\p_\nu\mathring \rho
 + 2\beta_1 \mathring\rho\left( \p_1 \mru \cdot \nn \Jcal + \p_\nu \mru_\nu \right) =0 \,,\label{density-sheep}
\end{align} 
 the pressure equation \eqref{tpressure} is transformed to
\begin{align} 
 \partial_t \mathring p + 2 \beta_1 ( -\tfrac{\dot f }{2 \beta_1 } + \Jcal v \cdot \nn + \Jcal \mru \cdot \nn  )\p_1\mathring p   + 2\beta_1(v_\nu + \mru_\nu)\p_\nu \mathring p
 + 2\beta_1\upgamma \mathring p \left( \p_1 \mru \cdot \nn \Jcal + \p_\nu \mru_\nu \right) =0 \,, \label{pressure-sheep}
\end{align} 
and the alternative form of the momentum equation \eqref{eq:new:momentum:alt} is written as
\begin{align} 
& \p_t \mru  - 2\beta_1\dot Q \mru + 2\beta_1  ( -\tfrac{\dot f }{2 \beta_1 }+\Jcal  v \cdot \nn + \Jcal \mru \cdot \nn  ) \p_1\mru  
 +2\beta_1(v_\nu + \mru_\nu)\p_\nu \mru 
  \notag   \\
  & \qquad\qquad\qquad\qquad\qquad
  +2 \beta_1 ( \alpha \mathring \sigma)^ {-\frac{1}{ \alpha }} e^ {\frac{\mathring\scal}{2\alpha }}( \Jcal \Ncal   \p_1 \mathring p +   \delta^{\cdot \nu}    \p_\nu \mathring p ) =0 \,.
 \label{momentum-sheep-alt}
\end{align} 
Similarly, the transformed specific vorticity vector is
\begin{equation}\label{sv-sheep}
\mrg(x,t) = \tilde \zeta(\tilde x,t) = \tilde \zeta (x_1+ f(x_2,x_3,t),x_2,x_3,t)\,, 
\end{equation} 
so that the equation \eqref{tvorticity} becomes
\begin{align} 
&\p_t \mrg - 2\beta_1\dot Q \mrg 
+ 2 \beta_1 ( -\tfrac{\dot f }{2 \beta_1 } + \Jcal  v \cdot \Ncal + \Jcal \mru \cdot \Ncal) \p_1 \mrg + 2 \beta_1 ( v_\nu + \mru_\nu) \p_\nu \mrg  -2\beta_1 \Jcal \Ncal \cdot \mrg \p_1 \mru  - 2 \beta_1 \mrg_\nu \p_\nu \mru  \notag \\
 & \qquad \qquad =  \tfrac{\alpha }{\gamma}\tfrac{ \mathring \sigma}{ \mathring \rho} \nabla_{\!\tilde x} \mathring \sigma \times  \nabla_{\! \tilde x}\mathring \scal \,.
\label{svorticity_sheep}
\end{align} 
Note that the gradient appearing on the right side is with respect to $\tilde x$.
We record for later use that
\begin{align} 
 \nabla_{\! \tilde x} \mathring \sigma\! \times\!  \nabla_{\!  \tilde x}\mathring \scal 
 = \left(\p_{\Tcal^2} \mathring \sigma \p_{\Tcal^3} \mathring \scal - \p_{\Tcal^3} \mathring \sigma \p_{\Tcal^2} \mathring \scal\right)\Ncal
 +
  \left(\p_{\Tcal^3} \mathring \sigma \p_{\Ncal} \mathring \scal - \p_{\Ncal} \mathring \sigma \p_{\Tcal^3} \mathring \scal\right)\Tcal^2
  +  \left(\p_{\Ncal} \mathring \sigma \p_{\Tcal^2} \mathring \scal - \p_{\Tcal^2} \mathring \sigma \p_{\Ncal} \mathring \scal\right)\Tcal^3 \,,
 \label{11bottlesdownto7}
\end{align} 
where
$$
\p_{\Ncal}  = \Ncal \cdot  \nabla_{\! \tilde x} \ \text{ and }  \  \p_{\Tcal^\nu}  = \Tcal^\nu \cdot  \nabla_{\! \tilde x} \,.
$$

\subsection{Riemann variables adapted to the shock geometry}
Just as for the isentropic Euler equations that we analyzed in \cite{BuShVi2019b}, the non-isentropic
Euler system \eqref{Euler_sheep} has a rad geometric structure arising from the use of Riemann-type variables,  defined by
\begin{align} 
w = \mru \cdot \Ncal + \mrs  \,, \qquad  z =  \mru   \cdot \Ncal - \mrs   \,, \qquad  a_\nu = \mru \cdot \Tcal^\nu
\label{tildeu-dot-T}
\end{align} 
so that
\begin{align} 
\mru \cdot \Ncal  = \tfrac{1}{2} ( w+ z) \,, \qquad  \mrs = \tfrac{1}{2} (  w-   z) \,.  \label{tildeu-dot-N}
\end{align} 
The Euler sytem \eqref{Euler_sheep} can be written in terms of  $(  w,   z,  a_2,   a_3, \scal)$ as\footnote{The time
rescaling \eqref{eq:time:rescale} sets the coefficient of $w \p_1 w$ in \eqref{alrightalrightalright} to $1$, which provides a convenient
framework to study the $w$ equation as a perturbation of Burgers-type evolution.}
\begin{subequations}
\label{eq:Euler-riemann-sheep}
\begin{align}
&\p_t  w +  \left(2 \beta_1 (-\tfrac{\dot f }{2 \beta_1 } +\Jcal v \cdot \nn) + \Jcal w + \beta_2 \Jcal  z  \right) \p_1  w
+ \left(2 \beta_1 v_\mu + w \Ncal _\mu - \beta_2 z \Ncal _\mu  + 2\beta_1 a_\nu \Tcal^\nu_\mu\right) \p_\mu w \notag \\
&
\quad = - 2\beta_3 \mrs \Tcal^\nu_\mu \p_\mu a_\nu +  2 \beta_1 a_\nu \Tcal^\nu_i   \dot{\Ncal_i} + 2 \beta_1  \dot Q_{ij}  a_\nu \Tcal^\nu_j \Ncal _i  
+2 \beta_1  \left( v_\mu + \mru\cdot \Ncal  \Ncal _\mu +  a_\nu \Tcal^\nu_\mu\right)  a_\gamma \Tcal^\gamma_i 
\Ncal_{i,\mu}  \notag\\
&\quad \qquad 
- 2\beta_3 \mrs  (a_\nu \Tcal^\nu_{\mu,\mu}
+  \mru\cdot \Ncal  \Ncal_{\mu,\mu} ) 
+  \beta_4 \mathring \sigma^2 (\Jcal \p_1 \mathring \scal + \Ncal_\mu \p_\mu  \mathring \scal) 
 \,, \label{alrightalrightalright}\\
&\p_t  z +  \left(2\beta_1 (-\tfrac{\dot f }{2 \beta_1 }  + \Jcal v \cdot \nn)   +\beta_2 \Jcal  w + \Jcal z \right) \p_1  z
+ \left(2 \beta_1  v_\mu + \beta_2  w  \Ncal_\mu
+z \Ncal _\mu + 2 \beta_1  a_\nu \Tcal^\nu_\mu\right) \p_\mu z + 
 \notag \\
&
\quad = 2\beta_3 \mrs \Tcal^\nu_\mu \p_\mu a_\nu
+2 \beta_1   a_\nu \Tcal^\nu_i   \dot{\Ncal_i} +2 \beta_1  \dot Q_{ij}  a_\nu \Tcal^\nu_j \Ncal _i  
+2 \beta_1  \left( v_\mu +  \mru\cdot \Ncal  \Ncal _\mu +  a_\nu \Tcal^\nu_\mu\right)  a_\gamma \Tcal^\gamma_i \Ncal_{i,\mu}  \notag\\
&\quad  \qquad 
+ 2\beta_3 \mrs  (a_\nu \Tcal^\nu_{\mu,\mu}
+  \mru\cdot \Ncal  \Ncal_{\mu,\mu} )
+  \beta_4 \mathring \sigma^2 (\Jcal \p_1 \mathring \scal + \Ncal_\mu \p_\mu \mathring \scal) 
\,, 
\\
&\p_t  a_\nu +  \left(2 \beta_1(-\tfrac{\dot f }{2 \beta_1 } +\Jcal v\cdot \nn) +\beta_1 \Jcal w + \beta_1 \Jcal z   \right) \p_1  a_\nu 
+ 2 \beta_1  \left(v_\mu + {\tfrac{1}{2}} ( w+ z) \Ncal _\mu + a_\gamma \Tcal^\gamma_\mu \right)  \p_\mu a_\nu \notag\\
&\quad  = -2\beta_3\mrs \Tcal^\nu_\mu  \p_\mu \mrs
+   2\beta_1  \left(   \mru\cdot \Ncal  \Ncal_i +  a_\gamma \Tcal^\gamma_i\right) \dot\Tcal^\nu_i  
 +   2\beta_1  \dot Q_{ij} \left( (\mru\cdot \Ncal  \Ncal _j  +  a_\gamma  \Tcal^\gamma_j \right) \Tcal^\nu _i   \notag \\
 &\quad  \qquad 
  +  \beta_1 \left( v_\mu + \mru\cdot \Ncal \Ncal _\mu 
 +  2a_\gamma \Tcal^\gamma_\mu\right)  \left(  \mru\cdot \Ncal \Ncal_i +  a_\gamma \Tcal^\gamma_i\right) \Tcal^\nu_{i,\mu}
 + \beta_4 \mathring \sigma^2 \Tcal^\nu_\mu \p_\mu \mathring \scal
 \,,\\
 &
 \partial_t \mathring\scal + 2 \beta_1 ( -\tfrac{\dot f }{2 \beta_1 }  + \Jcal v \cdot \nn + \Jcal \mru \cdot \nn  )\p_1\mathring \scal  + 2\beta_1(v_\nu + \mru_\nu)\p_\nu \mathring \scal =0\,.
 \end{align}
\end{subequations}

\subsection{Euler equations in modulated self-similar Riemann-type variables}
\label{sec:s:s:variables}
Finally, to facilitate the analysis of shock formation, we  introduce the (modulated) 
self-similar variables:
\begin{subequations}
\label{eq:y:s:def}
\begin{align}
 s &= s(t)=-\log(\tau(t)-t) \, , \label{s(t)}\\
  y_1 &=   y_1(  x_1,t)=\frac{  x_1}{(\tau(t)-t)^{\frac32}} =   x_1 e^{\frac{3s}{2}} \,, \\   
  y_j &=   y_j(  x_j ,t)=\frac{  x_j}{(\tau(t)-t)^{\frac12}} =   x_j e^{\frac s2}\,, \qquad \mbox{for} \qquad j\in \{2,3\}  
 \,.
\end{align}
\end{subequations}

Using the self-similar variables $y$ and $s$, we rewrite the functions $w$, $z$, $a_\nu$,  $\mathring \scal$, and $v$,  defined in \eqref{tildeu-dot-T} and
 \eqref{eq:v:x:t:def},  as 
\begin{subequations}
\label{eq:ss:ansatz}
\begin{align}
w(  x,t)&=e^{-\frac s2}  W(  y,s)+\kappa (t) \,, \label{w_ansatz}  \\
z(  x,t)&=  Z(  y,s) \,,  \label{z_ansatz}   \\
a_\nu(  x ,t)&= A_\nu (  y,s) \, , \label{a_ansatz} \\
\mathring \scal(  x ,t)&= K(  y,s) \,, \label{s_ansatz} \\
v(x,t) &= V(y,s)\,, \label{v_ansatz}
\end{align}
\end{subequations}
 so that 
\begin{equation}\label{def_V}
V_i(y,s) =  \dot Q_{i1} \left( e^{-\frac{3s}{2}}y_1 +  {\tfrac 12 e^{-s} \phi_{\nu\mu} y_\nu y_\mu} \right)+e^{-\frac s2} \dot Q_{i\nu}y_\nu  - R_{ji} \dot \xi_j \, .
\end{equation} 
Introducing the parameter
$$\beta_\tau = \beta_\tau(t) = \tfrac{1}{1-\dot \tau(t)}\,,$$
the Euler system~\eqref{eq:Euler-riemann-sheep} is written in self-similar coordinates as
\begin{subequations} 
\label{euler-ss}
\begin{align} 
( \p_s- \tfrac{1}{2} )  W 
+  \left( g_{ W}  + \tfrac{3}{2}   y_1  \right)  \p_{1}  W
+  \left(  h_{W}^\mu + \tfrac{1}{2} y_\mu \right) \p_{\mu} W  
&=   {F}_{ W} - e^{-\frac s2} \beta_\tau \dot \kappa  \label{eq:euler:ss:a} \\
\p_s  Z + \left( g_{ Z}+\tfrac{3}{2}  y_1   \right) \p_{1}  Z 
+  \left(  h_{Z}^\mu + \tfrac{1}{2} y_\mu \right) \p_{\mu} Z  &= {F} _{ Z} \label{eq:euler:ss:b}\\
\p_s  A_\nu + \left( g_{U}+ \tfrac{3}{2} y_1 \right) \p_{1}   A_\nu
+  \left(  h_{U}^\mu + \tfrac{1}{2} y_\mu \right) \p_{\mu} A_\nu &= {F}_{A \nu}  \label{eq:euler:ss:c} \\
 \p_s K + (g_U + \tfrac{3}{2} y_1) \p_{1} K+ (h^\nu_U + \tfrac{1}{2} y_\nu) \p_{\nu}K&=0\label{eq:euler:ss:d} \,,
\end{align} 
\end{subequations} 
where the $y_1$ transport functions are defined by
\begin{subequations}
\label{eq:g:def}
\begin{align}
g_{ W}&=  \beta_\tau \Jcal W + \beta_\tau e^{\frac s2} \left( - \dot f +  \Jcal \left(\kappa  + \beta_2  Z + 2\beta_1    V \cdot \nn \right)  \right) = \beta_\tau  \Jcal W +  {G}_{ W} \label{eq:gW}  \\
g_{ Z}&=   \beta_2 \beta_\tau \Jcal  W + \beta_\tau e^{\frac s2} \left( - \dot f +  \Jcal \left(\beta_2 \kappa  +   Z  + 2\beta_1  V \cdot \nn \right)       \right)
=\beta_2 \beta_\tau \Jcal W + G_{ Z} \label{eq:gZ}\\
g_{U}&=  \beta_1 \beta_\tau \Jcal W + \beta_\tau e^{\frac s2} \left( - \dot f  + \Jcal   \left(\beta_1 \kappa  +  \beta_1 Z + 2  \beta_1 V \cdot \nn  \right)   \right)      
= \beta_1 \beta_\tau \Jcal  W  + G_{U}  \label{eq:gA}
\end{align}
\end{subequations}
 the $y_\nu$ transport functions are given as
\begin{subequations}
\label{eq:h:def}
\begin{align}
 h_W^\mu&= 
  \beta_\tau e^{-s} \Ncal_\mu W + \beta_\tau  e^{-\frac s2}  \left( 2    \beta_1  V_\mu + \Ncal_\mu  \kappa  - \beta_2 \Ncal_\mu Z +2 \beta_1 A_\gamma \Tcal^\gamma_\mu \right)  
\,  \label{eq:hW} \\
 h_Z^\mu&= 
 \beta_\tau \beta_2 e^{-s} \Ncal_\mu W + \beta_\tau  e^{-\frac s2}  \left(2    \beta_1  V_\mu + \beta_2 \Ncal_\mu  \kappa + \Ncal_\mu Z + 2 \beta_1 A_\gamma \Tcal^\gamma_\mu \right)  
\,  \label{eq:hZ} \\
 h_{U}^\mu&=   \beta_\tau \beta_1 e^{-s} \Ncal_\mu W +
\beta_\tau  e^{-\frac s2}  \left(2    \beta_1  V_\mu + \beta_1 \Ncal_\mu  \kappa + \beta_1 \Ncal_\mu Z + 2 \beta_1 A_\gamma \Tcal^\gamma_\mu \right)  
\,  \label{eq:hA} 
\end{align}
\end{subequations}
and the forcing functions are
\begin{subequations}
\label{eq:F:def}
\begin{align}
 {F}_{ W}&= - 2 \beta_3 \beta_\tau \sound \Tcal^\nu_\mu \partial_{\mu} A_\nu
 +  2 \beta_1 \beta_\tau  e^{-\frac s2}   A_\nu \Tcal^\nu_i \dot{\Ncal}_i
 +  2 \beta_1 \beta_\tau  e^{-\frac s2} \dot Q_{ij} A_\nu \Tcal^\nu_j \Ncal_i \notag \\
 & \quad
 + 2 \beta_1\beta_\tau e^{-\frac s2}\left(V_\mu +\Ncal_\mu  U \cdot \nn +  A_\nu \Tcal^\nu_\mu \right) A_\gamma \Tcal^\gamma_i \Ncal_{i,\mu}
 - 2 \beta_3 \beta_\tau e^{-\frac s2} \sound \left( A_\nu \Tcal^\nu_{\mu,\mu} + U\cdot \Ncal \Ncal_{\mu,\mu} \right) \notag \\
 & \quad
 + \beta _4 \beta_\tau \sound^2( \Jcal e^s \p_1 K + \Ncal_\mu \p_\mu K)
 \label{eq:FW:def}\\
 {F}_{ Z}&=  2   \beta_3 \beta_\tau e^{-\frac s2} \sound \Tcal^\nu_\mu \partial_{\mu} A_\nu
  +  2 \beta_1 \beta_\tau  e^{-s}   A_\nu \Tcal^\nu_i \dot{\Ncal}_i 
 +  2 \beta_1 \beta_\tau  e^{-s} \dot Q_{ij} A_\nu \Tcal^\nu_j \Ncal_i \notag \\
 & \quad
 + 2\beta_1 \beta_\tau e^{-s}\left(V_\mu +\Ncal_\mu  U \cdot \nn +  A_\nu \Tcal^\nu_\mu \right) A_\gamma \Tcal^\gamma_i \Ncal_{i,\mu} + 2 \beta_3 \beta_\tau e^{-s} \sound \left( A_\nu \Tcal^\nu_{\mu,\mu} + U\cdot \Ncal \Ncal_{\mu,\mu} \right) \notag  \\
 & \quad
 +  \beta _4 \beta_\tau \sound^2( \Jcal e^{\frac{s}{2}}  \p_1 K + \Ncal_\mu  e^{-\frac{s}{2}} \p_\mu K)
 \label{eq:FZ:def}\\
 {F}_{ A\nu}&= - 2  \beta_3 \beta_\tau e^{-\frac s2}  \sound T^\nu_\mu \p_{\mu} \sound
 + 2 \beta_1 \beta_\tau e^{-s} \left( U\cdot\Ncal \Ncal_i + A_\gamma \Tcal^\gamma_i\right) \dot{\Tcal}^\nu_i 
 + 2\beta_1 \beta_\tau e^{-s} \dot{Q}_{ij} (  U\cdot \Ncal \Ncal_j + A_\gamma \Tcal^\gamma_j ) \Tcal^\nu_i \notag\\
 &\quad 
 + 2\beta_1 \beta_\tau e^{-s} \left(V_\mu + U\cdot \Ncal \Ncal_\mu + A_\gamma \Tcal^\gamma_\mu\right) \left(U\cdot \Ncal \Ncal_i + A_\gamma \Tcal^\gamma_i \right) \Tcal^\nu_{i,\mu}  +   \beta_4  \beta_\tau e^{-\frac{s}{2}}  \sound^2 \Tcal^\nu_\mu \p_\mu K
 \label{eq:A:def}\,.
\end{align}
\end{subequations}
In \eqref{eq:F:def} we have also used the self-similar variants of $\mru$, $\mrs$, and $\mathring \scal$ which, together with the self-similar variant of
$\mathring p$, are given by
\begin{subequations} 
\label{trannies}
\begin{align}
\mru( x,t)&= U(  y,s) \,, \label{U-trammy}   \\
\mathring \rho(x,t) & = R(y,s)  \,, \label{R-trammy} \\
\mrs( x,t)&=  \sound(  y,s) \,, \label{Sigma-trammy}  \\
\mathring p( x,t)&=  P(  y,s) \,, \label{P-trammy}
\end{align}
\end{subequations} 
so that 
\begin{align}
U \cdot \Ncal = \tfrac 12 \left( \kappa + e^{-\frac s2} W + Z\right) \qquad \mbox{and}\qquad  \sound = \tfrac 12 \left( \kappa + e^{-\frac s2} W - Z\right) \,.
\label{eq:UdotN:Sigma}
\end{align}
The system \eqref{euler-ss} may be written as
\begin{subequations} 
\begin{align*} 
 \p_s W- \tfrac{1}{2}  W  + (\mathcal{V} _W \cdot \nabla) W  &= {F} _W  \,, \\
\p_s Z + (\mathcal{V}_Z \cdot \nabla) Z &={F} _Z \,,   \\
\p_s A_\nu +  (\mathcal{V} _U \cdot \nabla) A_\nu &={F}_{A\nu}  \,, \\
\p_s K +  (\mathcal{V} _U \cdot \nabla) K & =0 \,,
\end{align*} 
\end{subequations} 
where the  transport velocities are abbreviated as
\begin{subequations} 
\label{eq:transport_velocities}
\begin{align} 
\mathcal{V} _W &= \left(  g_W  + \tfrac{3}{2} y_1 \,,     h_W^2 + \tfrac{1}{2} y_2\,,     h_W^3 + \tfrac{1}{2} y_3 \right) \,, \\
\mathcal{V} _Z & = \left(  g_Z  + \tfrac{3}{2} y_1  \,,     h_Z^2 + \tfrac{1}{2} y_2\,,     h_Z^3 + \tfrac{1}{2} y_3 \right) \,, \\
\mathcal{V} _U & = \left(  g_U  + \tfrac{3}{2} y_1\,,     h_U^2 + \tfrac{1}{2} y_2\,,     h_U^3 + \tfrac{1}{2} y_3 \right) \label{V_U} \, .
\end{align} 
\end{subequations} 

\subsection{Self-similar Euler equations in terms of velocity, pressure, and entropy}

From \eqref{Euler_sheep}, \eqref{pressure-sheep}, \eqref{momentum-sheep-alt},  \eqref{eq:y:s:def}, \eqref{U-trammy}, \eqref{Sigma-trammy} we deduce that $(U,P,K)$ are solutions of
\begin{subequations} 
\label{USigma-euler-ss}
\begin{align} 
 &\p_s U_i - 2 \beta_1 \beta_\tau e^{-s} \dot Q_{ij} U_j + (\mathcal{V}_U \cdot  \nabla )U_i
+ 2\beta_\tau \beta_1 ( \alpha \sound)^ {-\frac{1}{ \alpha }} e^ {\frac{K}{2\alpha }} (  \Jcal \Ncal_i e^{\frac s2}  \p_{1} P
+ \delta^{i\nu} e^{-\frac s2} \p_{\nu} P)= 0 \, ,
\\
 &\p_s P +(\mathcal{V}_U \cdot  \nabla ) P
 +2\beta_\tau\beta_1 \gamma e^{\frac s2}  P \p_{1}U \cdot \nn \Jcal + 2\beta_\tau\beta_1 \gamma e^{-\frac s2} P \p_{\nu} U_\nu =0 \,,  \label{Plong-SS}\\
 &\p_s K + (\mathcal{V}_U \cdot  \nabla )K=0 \,. 
\end{align} 
\end{subequations}

For the purpose of performing high-order energy estimates, it is convenient to introduce
\begin{subequations}
\label{eq:thunder:variables}
\begin{align} 
\uu & =  U\, \label{Unew} \\
\pp & =   \sound  e^{-\frac{ K }{2 \upgamma}}  = \tfrac{1}{\alpha} (\upgamma P)^{\frac{\alpha}{\gamma}} \,, \label{Pnew} \\
\hh & =   e^\frac{ K }{2 \upgamma}   \,, \label{Snew} 
\end{align} 
\end{subequations}
and re-express the system of equations \eqref{USigma-euler-ss} as the following $(\uu, \pp, \hh)$-system:
\begin{subequations} 
\label{UPS-new}
\begin{align}
&\p_s \uu_i  + (\mathcal{V} _U \cdot \nabla) \uu_i
+  2 \beta_\tau \beta_3 \hh^2 \pp  \left( \Jcal \Ncal_i e^{\frac s2}  \p_{1} \pp
+\delta^{i\nu} e^{-\frac s2} \p_{\nu} \pp \right) 
 = 2\beta_\tau \beta_1 e^{-s} \dot Q_{ij} \uu_j 
  \, ,\\
& \p_s \pp + (\mathcal{V} _U \cdot \nabla) \pp 
+ 2 \beta_\tau\beta_3 \pp  \left( e^{\frac s2} \Jcal \Ncal \cdot  \p_{ 1} \uu   +  e^{-\frac s2}   \p_{\nu} \uu_\nu\right) =0 \,, \label{Pnew-ss}  \\
&\p_s \hh +  (\mathcal{V} _U \cdot \nabla) \hh =0\,. \label{Snew-ss}
\end{align} 
\end{subequations}

Finally, we define the self-similar variant of the specific vorticity via  
\begin{align} 
\mrg( x,t)&= \Omega(  y,s) \,.  \label{svort-trammy}
\end{align}

\subsection{Evolution of higher order derivatives}
\subsubsection{Higher-order derivatives for the $(W,Z,A,K)$-system}

We shall also need the differentiated form of the system  \eqref{euler-ss}, which we record here for convenience.  For a multi-index $\gamma \in {\mathbb N}_0^3$, we use the notation $\gamma = (\gamma_1, \check \gamma) = (\gamma_1, \gamma_2,\gamma_3)$.  We have that 
\begin{subequations} 
\label{euler_for_Linfinity}
\begin{align}
\left( \p_s + \tfrac{3\gamma_1 + \gamma_2 + \gamma_3-1}{2} + \beta_\tau \left(1 + \gamma_1  {\bf 1}_{\gamma_1\geq 2}   \right) \Jcal \p_1 W \right)\p^\gamma W   +  \left( \mathcal{V}_W \cdot \nabla\right) \p^\gamma W &= F^{(\gamma)}_W \,,
\label{euler_for_Linfinity:a}
\\
\left( \p_s + \tfrac{3\gamma_1 + \gamma_2 + \gamma_3}{2} + \beta_2 \beta_\tau  \gamma_1  \Jcal \p_1 W \right)\p^\gamma Z   +  \left( \mathcal{V}_Z \cdot \nabla\right) \p^\gamma Z &= F^{(\gamma)}_Z  \,,
 \label{euler_for_Linfinity:b}
\\
\left( \p_s + \tfrac{3\gamma_1 + \gamma_2 + \gamma_3}{2} + \beta_1 \beta_\tau  \gamma_1 \Jcal \p_1 W \right)\p^\gamma A_\nu   +  \left( \mathcal{V}_U \cdot \nabla\right) \p^\gamma A_\nu &= F^{(\gamma)}_{A\nu} \,,
 \label{euler_for_Linfinity:c}
\\
\left( \p_s + \tfrac{3\gamma_1 + \gamma_2 + \gamma_3}{2} + \beta_1 \beta_\tau  \gamma_1 \Jcal \p_1 W \right)\p^\gamma K   +  \left( \mathcal{V}_U \cdot \nabla\right) \p^\gamma K &= F^{(\gamma)}_{K} \,,
 \label{euler_for_Linfinity:d}
\end{align}
\end{subequations}
where $|\gamma|\geq 1$ and the forcing terms are  
\begin{align}
\label{eq:F:W:def}
F^{(\gamma)}_W 
&= \p^\gamma F_W 
- \sum_{0\leq \beta < \gamma} {\gamma \choose \beta}  \left(\p^{\gamma-\beta}G_W \p_1 \p^\beta W + \p^{\gamma-\beta} h_{W}^\mu \p_\mu \p^\beta W\right) \notag\\
&\quad - \beta_\tau {\bf 1}_{|\gamma|\geq 3}\sum_{\substack{1\leq |\beta| \leq |\gamma|-2 \\ \beta\le\gamma}} {\gamma \choose \beta}  \p^{\gamma-\beta} (\Jcal W)   \p_1\p^\beta W  
- \beta_\tau {\bf 1}_{|\gamma|\geq 2}\sum_{\substack{ |\beta| = |\gamma|-1 \\ \beta\le\gamma, \beta_1 = \gamma_1}} {\gamma \choose \beta}  \p^{\gamma-\beta} (\Jcal W)    \p_1\p^\beta W 
\end{align}
for the $\p^\gamma W$ evolution, and  
\begin{subequations}
\label{eq:F:ZA:def}
\begin{align}
F^{(\gamma)}_Z
&= \p^{\gamma} F_Z 
- \sum_{0\leq \beta < \gamma} {\gamma \choose \beta}  \left(\p^{\gamma-\beta}G_Z \p_1 \p^\beta Z + \p^{\gamma-\beta}h_Z^\mu \p_\mu \p^\beta Z\right) \notag\\
&\quad - \beta_2 \beta_\tau {\bf 1}_{|\gamma|\geq 2}\sum_{\substack{0\leq |\beta| \leq |\gamma|-2 \\ \beta\le\gamma}} {\gamma \choose \beta}  \p^{\gamma-\beta} (\Jcal W)   \p_1\p^\beta Z  
- \beta_2 \beta_\tau  \sum_{\substack{ |\beta| = |\gamma|-1 \\ \beta\le\gamma, \beta_1 = \gamma_1}} {\gamma \choose \beta}  \p^{\gamma-\beta} (\Jcal W)    \p_1\p^\beta Z 
\\
F^{(\gamma)}_{A\nu}
&= \p^\gamma F_{A\nu} 
- \sum_{0\leq \beta < \gamma} { \gamma \choose \beta}   \left(\p^{\gamma-\beta}G_{U} \p_1 \p^\beta A_\nu + \p^{\gamma-\beta} h_{U}^\mu \p_\mu \p^\beta A_\nu\right) \notag\\
&\quad - \beta_1 \beta_\tau  {\bf 1}_{|\gamma|\geq 2} \sum_{\substack{0\leq |\beta| \leq |\gamma|-2 \\ \beta\le\gamma}} {\gamma \choose \beta}  \p^{\gamma-\beta} (\Jcal W)   \p_1\p^\beta A_\nu
- \beta_1 \beta_\tau  \sum_{\substack{ |\beta| = |\gamma|-1 \\ \beta\le\gamma, \beta_1 = \gamma_1}} {\gamma \choose \beta}  \p^{\gamma-\beta} (\Jcal W)    \p_1\p^\beta A_\nu
\\
F^{(\gamma)}_{K}
&=  
- \sum_{0\leq \beta < \gamma} { \gamma \choose \beta}   \left(\p^{\gamma-\beta}G_{U} \p_1 \p^\beta K + \p^{\gamma-\beta} h_{U}^\mu \p_\mu \p^\beta K\right) \notag\\
&\quad - \beta_1 \beta_\tau  {\bf 1}_{|\gamma|\geq 2} \sum_{\substack{0\leq |\beta| \leq |\gamma|-2 \\ \beta\le\gamma}} {\gamma \choose \beta}  \p^{\gamma-\beta} (\Jcal W)   \p_1\p^\beta K
- \beta_1 \beta_\tau  \sum_{\substack{ |\beta| = |\gamma|-1 \\ \beta\le\gamma, \beta_1 = \gamma_1}} {\gamma \choose \beta}  \p^{\gamma-\beta} (\Jcal W)    \p_1\p^\beta K
\end{align}
\end{subequations}
for the $\p^\gamma Z$, $\p^\gamma A_\nu$, and $\p^\gamma K$ evolutions.

\subsubsection{Higher-order derivatives for $\tilde W$}
We let $\bar W(y)$ denote a particular self-similar, stable, stationary solution of the 3d Burgers equation, given by
 \begin{align}
\bar W(y) = \langle \check y\rangle W_{\rm 1d} \left(\frac{y_1}{\langle \check y\rangle^3}\right)   \label{eq:barW:def}
 \end{align} 
where $\langle \check y\rangle=  1+ y_2^2 + y_3^2 $ is the Japanese bracket, and where $W_{\rm 1d}(y_1) $ 
is the stable globally self-similar solution of the 1d Burgers equation, i.e.,   $W_{\rm 1d}(y_1) $  is a solution to $W_{\rm 1d} + W_{\rm 1d}^3=-y_1$.
We refer the reader to \cite{CaSmWa1996}, \cite{CoGhMa2018}, and Section 2.7 of \cite{BuShVi2019b} for the explicit form
of $W_{\rm 1d}(y_1) $  and for properties of $\bar W(y) $.   We note that $\bar W$ is one example from the ten-dimensional family $\bar W_ \mathcal{A} $ of stable 
stationary solutions to the self-similar 3d Burgers equation which are given by Proposition \ref{prop-stationary-burgers} in Appendix \ref{sec:delay}.   
The symmetric $3$-tensor $ \mathcal{A} $ represents $\p^\gamma \bar W_ \mathcal{A}(0) $ for $\abs{\gamma}=3$.    The function $\bar W$ is in fact equal
to $\bar W_ \mathcal{A} $ for the case that $\mathcal{A} _{111}=6$, $\mathcal{A} _{122}=\mathcal{A} _{133}=2$, and all other components vanish.

Of paramount importance to our analysis, is the evolution of the perturbation
\begin{align}
\label{eq:tilde:W:def} 
\tilde W(y,s) = W(y,s) - \bar W(y)  
\end{align}
which satisfies 
\begin{align}
  &\p_s \tilde W+(\beta_\tau \Jcal \p_1 \bar W- \tfrac{1}{2})  \tilde W  + (\mathcal{V} _W \cdot \nabla) \tilde W\notag \\
 &\qquad\qquad= F _W - e^{-\frac s2} \beta_\tau \dot \kappa +   ((\beta_\tau \Jcal -1)\bar W-G_W)\p_1 \bar W- h_W^\mu \p_\mu \bar W
 =: \tilde{F}_W \, .\label{eq:tilde:W:evo} 
\end{align}
Applying $\p^\gamma$ to \eqref{eq:tilde:W:evo}, we obtain that  $\p^\gamma \tilde W$ obeys
\begin{align}\label{eq:p:gamma:tilde:W:evo}
\left( \p_s + \tfrac{3\gamma_1 + \gamma_2 + \gamma_3-1}{2} + \beta_\tau \Jcal  \left(\p_1 \bar W + \gamma_1  \p_1 W \right) \right)\p^\gamma \tilde W   +  \left( \mathcal{V}_W \cdot \nabla\right) \p^\gamma \tilde W &= \tilde F^{(\gamma)}_W
\end{align}
for $\abs{\gamma} \geq 1$, 
where the forcing term $\tilde F^{(\gamma)}_W$ is given by
\begin{align}
\tilde F^{(\gamma)}_W 
&= \p^\gamma \tilde F_W 
- \sum_{0\leq \beta < \gamma} {\gamma \choose \beta}  \left(\p^{\gamma-\beta}G_W \p_1 \p^\beta \tilde W + \p^{\gamma-\beta} h_{W}^\mu \p_\mu \p^\beta \tilde W+\beta_{\tau}\p^{\gamma-\beta} (\Jcal \partial_1\bar W)   \p^\beta \tilde W  \right) \notag\\
&\quad - \beta_\tau {\bf 1}_{|\gamma|\geq 2}\sum_{\substack{1\leq |\beta| \leq |\gamma|-2 \\ \beta\le\gamma}} {\gamma \choose \beta}  \p^{\gamma-\beta} (\Jcal W)   \p_1\p^\beta \tilde W  
- \beta_\tau \sum_{\substack{ |\beta| = |\gamma|-1 \\ \beta\le\gamma, \beta_1 = \gamma_1}} {\gamma \choose \beta}  \p^{\gamma-\beta} (\Jcal W)    \p_1\p^\beta \tilde W \label{eq:p:gamma:tilde:F}\,.
\end{align}

\subsection{Constraints and the evolution of dynamic modulation variables}
\label{sec:constraints:initial}
The use of modulated self-similar variables allows us to ensure that the evolution of $W$ in \eqref{eq:euler:ss:a} maintains the constraints
\begin{align} 
W(0,s)=0 \,, \quad  \p_{1} W(0,s)=-1\,, \quad \check \nabla W(0,s)=0\,,  \quad
\nabla^2 W(0,s)=0 \,,
\label{eq:constraints}
\end{align} 
for all $s\geq -\log \eps$. This is achieved by choosing our $10$ time-dependent dynamic modulation  parameters $\{n_\nu\}_{\nu=2}^3, \{\xi_i\}_{i=1}^3, \kappa, \tau, \{\phi_{\nu \mu}\}_{\nu,\mu =2}^3$ to satisfy a $10$-by-$10$ coupled system of ODEs, which we describe next.

At time $t=-\eps$ the modulation parameters are defined as
\begin{align}
\kappa(-\eps) = \kappa_0, \qquad \tau(-\eps) = \xi(-\eps) = n_\mu (-\eps) =  0, \qquad  \phi_{\nu\mu}(-\eps) = \phi_{0,\nu \mu}
\label{eq:modulation:IC}
\,,
\end{align}
where $\kappa_0$ is defined in \eqref{vomey1} and $\phi_0$ is defined by \eqref{eq:phi:0:def}.
In order to determine the time derivatives of our $10$ modulation parameters, we use the explicit form of the evolution equations for $W$, $\nabla W$ and $\nabla^2 W$ (cf.~\eqref{eq:euler:ss:a} and \eqref{euler_for_Linfinity:a}), which are evaluated at $y=0$ and take into account the  constraints in \eqref{eq:constraints}. Note that in this subsection we only collect the equations which {\em implicitly} define the evolution of the modulation parameters; only in Section~\ref{sec:constraints} do we untangle the coupled nature of these implicitly defined ODEs to actually define the evolution of the constraints (cf.~\eqref{eq:dot:phi:dot:n} and \eqref{eq:dot:kappa:dot:tau}), and prove that the resulting ODEs are globally well-posed. 

Throughout the paper, for a function $\varphi(y,s)$,  we shall denote $\varphi(0,s)$ by $\varphi^0(s)$.
We make a preliminary observation regarding the value at $y=0$ for the forcing terms $F_W^{(\gamma)}$ which appear in the evolution \eqref{euler_for_Linfinity:a} for $\p^\gamma W$. Using \eqref{eq:constraints} it is not hard to check that for  any $\gamma \in {\mathbb N}_0^3$ with $\abs{\gamma} =1$ or $\abs{\gamma} = 2$ we have that
\begin{align}
F_W^{(\gamma),0} = \p^\gamma F_W^0 + \p^\gamma G_W^0 \, .
\label{eq:Gulash} 
\end{align}
Therefore, it is sufficient to know the derivatives up to order $2$ of $F_W$ and $G_W$ at $y=0$; these derivatives may be computed explicitly, and for convenience of the reader we have listed them in Appendix~\eqref{sec:explicit:crap}, see equations \eqref{Birds_are_rad} and \eqref{Bears_are_not_rad}.
Next, we turn to the evolution equations for the modulation parameters. 

First, we evaluate the equation for $W$ in~\eqref{eq:euler:ss:a} at $y=0$ to obtain a definition for $\dot \kappa$. Using \eqref{eq:euler:ss:a} and \eqref{eq:constraints} we obtain that 
\begin{align}
 - G_W^0 = F_W^0 - e^{-\frac s2} \beta_\tau \dot \kappa \qquad \Rightarrow \qquad \dot \kappa = \tfrac{1}{\beta_\tau} e^{\frac s2} \left( F_W^0 + G_W^0 \right) \, .
  \label{eq:dot:kappa:1}
\end{align}

Second, we evaluate the equation for $\p_1 W$ at $y=0$ and obtain a formula for $\dot \tau$.
Indeed, using that $-1+\beta_\tau = \frac{\dot\tau}{1-\dot\tau} = \dot \tau \beta_\tau$, we obtain from \eqref{euler_for_Linfinity:a} with $\gamma = e_1$ that 
\begin{align}
 - (1-\beta_\tau) = \p_1 F_W^0 + \p_1 G_W^0  \qquad \Rightarrow \qquad \dot \tau = \tfrac{1}{\beta_\tau} \left(\p_1 F_W^0 + \p_1 G_W^0 \right) \, .
  \label{eq:dot:tau:1}
\end{align}

Third, we turn to the evolution equation for $\check \nabla W$ at $y=0$, which allows us to compute $\dot Q_{1j}$. Evaluating \eqref{euler_for_Linfinity:a} with $\gamma = e_\nu$ at $y=0$ and using \eqref{eq:Gulash} we obtain for $\nu \in \{2,3\}$ that
\begin{align}
F_W^{0,(0,1,0)} = F_W^{0,(0,0,1)} = 0 \qquad \Rightarrow \qquad \p_\nu F_W^0 + \p_\nu G_W^0 = 0 \, .
\label{eq:check_derivative_GW}
\end{align}
It is not immediately apparent that \eqref{eq:check_derivative_GW} determines $\dot Q_{1j}$. In order to see this one has to inspect the explicit formula for $\p_\nu G_W^0$ in \eqref{eq:GW:0:c}, and to note that $\p_\nu G_W^0 = 2\beta_1 \dot Q_{1\nu} + $ terms which are all small (bounded by $\eps$ to a positive power). This is explained in \eqref{eq:dot:Q:1} below. Note that once $\dot Q_{1j}$ is known, we can determine $\dot{\check n}$ thorough an algebraic computation; this will be achieved in \eqref{eq:dot:n:def} below. 

Fourth, we analyze the evolution of $\p_1 \nabla W$ at $y=0$. This constraint allows us to compute $G_W^0$ and $h_W^{\mu,0}$, which will in turn allow us to express $\dot \xi_i$; we initially focus on computing $G_W^0$ and $h_W^{\mu,0}$. Evaluating \eqref{euler_for_Linfinity:a} with $\gamma = e_1 + e_i$ at $y=0$ for $i\in \{1,2,3\}$, and using \eqref{eq:Gulash},  we obtain
\begin{align}
G_W^0 \p_{1i1}W^0 + h_W^{\mu,0} \p_{1i\mu} W^0 &= \p_{1i} F_W^0 + \p_{1i} G_W^0 \, .
\label{eq:Hessian:emerges}
\end{align}
On the left side of the above identity we recognize the matrix  
\begin{align}
{\mathcal H}^0(s) := (\p_1\nabla^2 W)^0(s) 
\label{eq:d1W:Hessian}
\end{align}
acting on the vector with components $G_W^0$, $h_{W}^{2,0}$, and $h_{W}^{3,0}$. We will show (see~\eqref{eq:inverse:Hessian} below) that the matrix ${\mathcal H}^0$  remains very close to the matrix ${\rm diag}(6,2,2)$, for all $s\geq -\log \eps$, and thus it is invertible .
Therefore, we can express
\begin{subequations}
\label{eq:Hamburgler}
\begin{align}
G_W^0 &= ({\mathcal H}^0)^{-1}_{1i}  (\p_{1i} F_W^0 + \p_{1i} G_W^0) \label{eq:GW:def:1} \\
h_W^{\mu,0} &= ({\mathcal H}^0)^{-1}_{\mu i}  (\p_{1i} F_W^0 + \p_{1i} G_W^0) \label{eq:hj:def:1}\, .
\end{align}
\end{subequations}
Once \eqref{eq:Hamburgler} is obtained, we may derive the evolution for $\dot \xi_i$. Indeed, from \eqref{eq:hW}, \eqref{def_f} evaluated at $\tilde x =0$, the definition of $V$ in \eqref{def_V}, 
the constraints in \eqref{eq:constraints} and the identities  $\Ncal_\mu^0 = 0$, $\Tcal_\mu^{\gamma,0} = \delta_{\gamma\mu}$ we have that 
\begin{align}
\tfrac{1}{\beta_\tau} h_W^{\mu,0} &= 2 \beta_1  e^{-\frac s2} \left(  A_\mu^0  -  R_{j\mu} \dot \xi_j \right) \,, \label{eq:hj:0:a} 
\end{align} 
Similarly, from the definition of $G_W$ in \eqref{eq:gW}, \eqref{def_f}, and the constraints in \eqref{eq:constraints}, we deduce that
\begin{align}
\tfrac{1}{\beta_\tau} G_W^0 &=  e^{\frac s2} \left( \kappa + \beta_2 Z^0 - 2\beta_1 R_{j1} \dot \xi_j \right) \,.\label{eq:GW:0:a}
\end{align}
Since the matrix $R$ is orthogonal (hence invertible), it is clear that \eqref{eq:Hamburgler}, \eqref{eq:hj:0:a}, and \eqref{eq:GW:0:a} determine  $\dot \xi_j$.

Lastly, we use the evolution of $\check \nabla^2 W$ at $y=0$ in order to determine $\dot{\phi}_{\nu\gamma}$. Evaluating  \eqref{euler_for_Linfinity:a} with $\gamma = e_\nu + e_\gamma$ at $y=0$ and using  \eqref{eq:Gulash}, we obtain 
\begin{align}
\label{eq:vanilla:candle}
G_W^0 \p_{1\nu\gamma} W^0  + h_W^{\mu,0} \p_{\mu \nu \gamma}W^0  = \p_{\nu\gamma} F_W^0 + \p_{\nu\gamma} G_W^0
\end{align}
for $\nu ,\gamma \in \{2,3\}$.
Using \eqref{eq:GW:def:1} and \eqref{eq:hj:def:1} we rewrite the above identity as
\begin{align}
\p_{\nu\gamma} G_W^0 = ({\mathcal H}^0)^{-1}_{1i}  (\p_{1i} F_W^0 + \p_{1i} G_W^0)  \p_{1\nu\gamma} W^0  + ({\mathcal H}^0)^{-1}_{\mu i}  (\p_{1i} F_W^0 + \p_{1i} G_W^0)  \p_{\mu \nu \gamma}W^0  - \p_{\nu\gamma} F_W^0 \, .
\label{eq:D2:GW:def:1}
\end{align}
As with \eqref{eq:check_derivative_GW} earlier, it is not immediately clear that \eqref{eq:D2:GW:def:1} determines the evolution of $\dot{\phi}_{\nu\gamma}$. In order to see this, we need to inspect the precise definition of $\p_{\nu\gamma} G_W^0$ (cf.~\eqref{eq:GW:0:f} below), which yields that  $\dot \phi_{\nu\gamma} = - e^{\frac s2} \tfrac{1}{\beta_\tau} \p_{\nu\gamma} G_W^0 + $ terms which are smaller (by a positive power of $\epsilon$). Details are given in \eqref{eq:dot:phi:def:1} below. 

The computations in this subsection derive implicit definitions for the time derivatives of our ten modulation parameters.  In Section~\ref{sec:constraints} we will show that the resulting system of ODEs for the modulation parameters is in fact solvable globally in time.

 \section{Main results}
\label{sec:results}
\subsection{Data in physical variables $(\xcal, t)$}
\label{sec:data:real}
It is convenient to set  $t_0 = -\eps$. This corresponds to $\tcal_0 = - \frac{2}{1+\alpha} \eps$. We define  initial conditions for the modulation variables defined in \eqref{mod-def} as follows:
\begin{align} 
\kappa_0 := \kappa(-\eps)  \,, \ \ \ \ \tau_0:=\tau(-\eps)=0\,,  \ \ \ \  \xi_0:=\xi(-\eps)=0\,, \ \ \ \ \check n_0:= \check n(-\eps) =0\,, \ \ \ \  \phi_0:= \phi(-\eps)  \,,
\label{mod-ic}
\end{align} 
where 
\begin{align} 
\kappa_0 >1\,, \quad \abs{\phi_0} \le \eps \,.
\label{ic-kappa0-phi0}
\end{align} 
Next, we define the initial value for the parameterization $f$ of the front by
$$
f_0(\check \xcal)= \tfrac 12 {\phi_0}_{\nu\mu} \xcal_\nu \xcal_\mu \, ,
$$
and according to \eqref{tangent}, we define the orthonormal basis $(\Ncal_0,\Tcal_0^2,\Tcal_0^3)$ by
\begin{subequations}
\label{geom000}
\begin{align}
\Ncal_0 &=  \Jcal_0^{-1} (1, -f_{0,_2}, -f_{0,_3}), \qquad \mbox{where} \qquad \Jcal_0 = ( 1+  |f_{0,_2}|^2+ |f_{0,_3}|^2)^ {\frac{1}{2}}, 
\label{geom00}\\
\Tcal^2_0 &=  \left( \tfrac{f_{0,_2}}{\Jcal_0}, 1 -  \tfrac{(f_{0,_2})^2}{\Jcal_0(\Jcal_0+1)} \,,  \tfrac{- f_{0,_2}f_{0,_3}}{\Jcal_0(\Jcal_0+1)}\right) \, , \qquad \mbox{and} \qquad
\Tcal^3_0 =  \left( \tfrac{f_{0,_3}}{\Jcal_0},  \tfrac{- f_{0,_2}f_{0,_3}}{\Jcal_0(\Jcal_0+1)} \,,  1  -  \tfrac{(f_{0,_3})^2}{\Jcal_0(\Jcal_0+1)} \right).
\label{geom0}
\end{align} 
\end{subequations}
From \eqref{ic-kappa0-phi0} and  \eqref{geom000} we deduce  
\begin{align} 
\abs{ \Ncal_0 - e_1} \le \eps\,, \qquad \abs{ \Tcal_0^\nu - e_\nu} \le \eps \,.   \label{gerd}
\end{align} 
At $t=-\eps$, the   variable $x$ is given by 
\begin{align} 
x_1 = \xcal_1-  f_0(\check \xcal)\,,\qquad x_2 = \xcal_2\,, \qquad  x_3 = \xcal_3 \,,   \label{ic-cov}
\end{align} 
which is a consequence of \eqref{mod-ic}, \eqref{eq:tilde:x:def}, and \eqref{x-sheep}.

The remaining initial  conditions  are for the velocity field, density, and entropy which then provides us with  the rescaled sound speed:
$$
u_0(\xcal) := u(\xcal,-\eps), \qquad \rho_0(\xcal) := \rho(\xcal, -\eps) \,,  \qquad \scal_0(\xcal) := \scal(\xcal, -\eps)\,, \qquad 
\sigma_0(\xcal) := \tfrac{\rho_0^ \alpha }{\alpha } e^{{\frac{\scal_0}{2}} } \,.
$$
Following \eqref{usigma-sheep} and \eqref{tildeu-dot-T}, we introduce the Riemann-type variables at initial time $t=-\eps$ as
\begin{align} \label{eq:tilde:wza:0}
\tilde w_0(\xcal)& :=   u_0(\xcal) \cdot \Ncal_0(\check \xcal) + \sigma_0(\xcal)  \,, \qquad
\tilde z_0(\xcal) :=   u_0(\xcal) \cdot \Ncal_0(\check \xcal) - \sigma_0(\xcal)  \,, \qquad
\tilde a_{0\nu}(\xcal) :=  u_0(\xcal) \cdot \Tcal^\nu(\check \xcal) \,.
\end{align} 
Using \eqref{ic-cov} and the fact that $\tilde w_0(\xcal)=w(x,-\eps) $ and that $\check \nabla f_0 ( 0) = 0$, it follows that
\begin{align} 
\p_{x_\nu} \p_{x_\mu} w_0(0) = \p_{\xcal_\nu} \p_{\xcal_\mu} \tilde w_0(0) + \p_{\xcal_1} w_0(0) \phi_{0 \nu \mu}\,.  \label{phi0-cond}
\end{align} 
As we will explain below, we will require that 
$\partial_{\xcal_1} \tilde w_0(0) = -{\tfrac{1}{\eps}}$, $  \check\nabla_{\xcal} \tilde w_0(0) =0$ 
 $ \check\nabla^2_{x} w_0(0) =0$,  
and that
$ \abs{\check\nabla_{ \! \xcal}^2 \tilde w_0(0)} \le 1 $, and thus from \eqref{phi0-cond}, we find that
\begin{align}
\phi_{0\nu\mu} =    \eps \p_{\xcal_\nu}  \p_{\xcal_\mu} \tilde w_0(0) \,,
\label{eq:phi:0:def}
\end{align}
which shows that \eqref{ic-kappa0-phi0} holds.

In order to establish the formation of a stable self-similar shock, we shall stipulate conditions on the initial data.  It is convenient to first explain
these conditions in self-similar variables, and we now proceed to do so.

\subsection{Data in self-similar variables $(y,s)$} 
At $s= -\log \eps$ we have that $\tau_0=0$, and thus the self-similar variables $y$ are given by
\begin{align}
\label{eq:ss:y:initial}
y_1 = \eps^{-\frac 32} x_1 = \eps^{-\frac 32}\left(\xcal_1 - f_0(\check \xcal)\right)\, ,
\qquad \mbox{and} \qquad
\check y = \eps^{-\frac 12} \check x = \eps^{-\frac 12} \check \xcal\,.
\end{align}
Second, we use \eqref{eq:ss:ansatz},  \eqref{mod-ic}, and \eqref{eq:tilde:wza:0}, to define 
\begin{alignat*}{2}
W(y,-\log \eps) &= \eps^{-\frac 12} \left( \tilde w_0(\xcal) - \kappa_0 \right)\,, \qquad
&&Z(y,-\log \eps) = \tilde z_0 (\xcal)\, , \\
A_\nu(y,-\log \eps) &= \tilde a_{0\nu} (\xcal) \,, 
\qquad
&&K(y,-\log \eps) = \tilde \scal_0 (\xcal)
\, .
\end{alignat*}
This initial data is supported in the set $\XXX_0$, given by 
\begin{align}
\XXX_0 =\left\{\abs{y_1}\leq   \eps ^{-1} ,\abs{\check y}\leq  \eps^{-\frac 13} \right\}\,.
\label{eq:support-init}
\end{align}

At $y=0$, we shall mimic the behavior of $\bar W(0)$ and assume that at initial time $s=-\log\eps$, 
\begin{align}
W(0,-\log \eps) = 0\, , 
\quad 
\p_1 W(0,-\log \eps) = -1\,,
\quad 
\check \nabla W(0,-\log \eps) = 0 \,,
\quad
\nabla^2 W(0,-\log \eps) = 0\,.
\label{eq:fat:cat}
\end{align}

We define a sufficiently large parameter $M = M(\alpha,\kappa_0)\geq 1$ (which is in particular independent of $\eps$), a small length scale $\ell$, and a large length scale $\LLL$  by 
\begin{subequations}
\begin{align}
\ell &= (\log M)^{-5}\,,
\label{eq:ell:choice}
\\
\LLL &=\eps^{-\frac{1}{10}}  \, .
\label{eq:pounds}
\end{align}
\end{subequations}
For $\abs{y}\leq \ell$ we shall prove that $W$ is well approximated by its series expansion at $y=0$, while for
$\ell \leq \abs{y} \leq \LLL$ we show that $W$ and $\nabla W$ track $\bar W$ and $\nabla \bar W$, respectively. 

For the initial datum of $\tilde W = W - \bar W$ given by
$$
\tilde W (y,-\log\eps)= W(y,-\log \eps) - \bar W(y) \, ,
$$
we suppose  that for $\abs{y} \leq \LLL$,
\begin{subequations}
\label{ss-ic-1}
\begin{alignat}{2}
\eta^{-\frac 16}(y) \abs{\tilde W(y,-\log \eps)} &\leq \eps^{\frac{1}{10}} 
\label{eq:tilde:W:zero:derivative}
\\
\eta^{\frac 13}(y) \abs{\p_1 \tilde W(y,-\log \eps)} &\leq \eps^{\frac{1}{11}}
\label{eq:tilde:W:p1=1}
\\
\abs{\check \nabla \tilde W(y,-\log \eps)} &\leq \eps^{\frac{1}{12}}
\label{eq:tilde:W:check=1}
\,,
\end{alignat}
\end{subequations}
where $\eta(y) = 1 + y_1^2 + \abs{\check y}^6$.
In the smaller region $\abs{y} \leq \ell$, we assume that
\begin{align}
 \abs{\p^\gamma \tilde W(y,-\log \eps)} &\leq \eps^{\frac 18} \qquad  \mbox{ for } \abs{\gamma}=4
\label{eq:tilde:W:4:derivative}
\, ,
\end{align}
and at $y=0$, we have that
\begin{align}
\abs{\p^\gamma \tilde W(0,-\log \eps)} \leq \eps^{\frac 12- \frac{4}{2m-7}}
\qquad \mbox{ for }  \abs{\gamma}=3 \, .
\label{eq:tilde:W:3:derivative:0}
\end{align}
For $y$ in the region  $\{ \abs{y} \geq \LLL \} \cap \XXX_0$, we suppose that
\begin{subequations}
\begin{align}
 \eta^{-\frac{1}{6}}(y)\abs{W(y,-\log \eps)} &\leq  1+ \eps^ {\frac{1}{11}} 
 \label{eq:rio:de:caca:1}
\\
  \eta^{\frac 13}\left(y\right) \abs{\p_1 W(y,-\log \eps)} &\leq  1 +  \eps^{\frac{1}{12}} 
   \label{eq:rio:de:caca:2}
\\
\abs{\check \nabla W(y,-\log \eps)} &\leq   \tfrac 34 
 \label{eq:rio:de:caca:3}
\end{align}
\end{subequations}
while for the second derivatives of $W$, globally for all $y \in \XXX_0$ we shall assume that 
\begin{subequations}
\label{eq:W:gamma=2:total}
\begin{align}
\eta^{\frac 13}(y) \abs{\p^\gamma W(y,-\log \eps)} &\leq 1 \qquad  \mbox{ for }   \gamma_1 =1  \mbox{ and } \abs{\check\gamma}=1 
\label{eq:W:gamma=2:p1}
\\
\eta^{\frac 13}(y) \ppp^ {-\frac{1}{4}}(y,-\log \eps)  \abs{\p^\gamma W(y,-\log \eps)} &\leq 1 \qquad  \mbox{ for }   \gamma =(2,0,0) 
\label{eq:W:gamma=2:p2}
\\
\eta^{\frac 16}(y) \abs{\check\nabla^2 W(y,-\log \eps)} &\leq 1
\label{eq:W:gamma=2}
\,,
 \end{align}
 \end{subequations}
where $\ppp(y,-\log \eps) = \eta^{-1}(y) + \eps^3 \eta(y)$.

For the initial conditions of $Z$, $A$, and $K$, we    require that
\begin{align}
 \abs{\partial^{\gamma} Z(y,-\log \eps)} &
\leq  \begin{cases}
  \eps^{\frac 32},  &\mbox{if } \gamma_1\geq 1\mbox{ and } \abs{  \gamma}=1,2\\
  \eps , & \mbox{if } \gamma_1=0\mbox{ and } \abs{\check \gamma} =0,1,2
\end{cases} \,, \label{eq:Z_bootstrap:IC}\\
 \abs{\partial^{\gamma} A(y,-\log \eps)}&
\leq  \begin{cases}
  \eps^{\frac 32},  &\mbox{if } \gamma_1= 1\mbox{ and } \abs{\check \gamma}=0\\
  \eps , & \mbox{if } \gamma_1=0\mbox{ and } \abs{\check \gamma}=0,1,2 
\end{cases} \,, \label{eq:A_bootstrap:IC}\\
 \abs{\partial^{\gamma} K(y,-\log \eps)}&
\leq  \begin{cases}
  \eps^{2},  &\mbox{if } \gamma_1= 1 \mbox{ and } \abs{\check \gamma}=0,1\\
  \eps^{\frac{9}{4}}\eta^{- \frac{1}{15} }(y)    &\mbox{if } \gamma_1= 2\mbox{ and } \abs{\check \gamma}=0\\
   \eps , & \mbox{if } \gamma_1=0\mbox{ and }  \abs{\check \gamma}=0,1,2 
\end{cases} \,. \label{eq:S_bootstrap:IC}
\end{align}
Consequently, 
the initial specific vorticity in self-similar variables satisfies
\begin{align} 
\snorm{\Omega( \cdot ,-\log\eps) \cdot \Ncal_0}_{L^ \infty } \le \eps^ {\frac{1}{4}}   \ \ \text{ and } \ \ 
\snorm{ \Omega( \cdot ,-\log\eps) \cdot \Tcal_0^\nu}_{L^ \infty } \le 1\,,  \label{eq:svort:IC:SS}
\end{align}
and the initial scaled sound speed  satisfies
\begin{align} 
\snorm{S(y,-\log\eps) - {\tfrac{\kappa_0}{2}} }_{L^ \infty } \le \eps^{\frac{1}{7}}  \,.   \label{sigma-IC}
\end{align}

Lastly, for the Sobolev norm of the initial condition,  we suppose that for all $m \ge 18$,
\begin{align}
\eps \snorm{W( \cdot , -\log \eps)}_{\dot H^m}^2 + \snorm{Z( \cdot , -\log \eps)}_{\dot H^m}^2 + \snorm{A( \cdot , -\log \eps)}_{\dot H^m}^2
+ \snorm{K( \cdot , -\log \eps)}_{\dot H^m}^2  \leq  \eps  \,.
\label{eq:data:Hk}
\end{align}

\begin{lemma}[Initial datum suitable for vorticity creation]
\label{lem:IC:for:creation}
There exists initial datum $W(y,-\log\eps)$ with support in the set $\XXX_0$ defined in \eqref{eq:support-init}, which satisfies the bounds \eqref{ss-ic-1}--\eqref{eq:W:gamma=2:total}, and which additionally can be chosen to satisfy
\begin{align} 
 -  \tfrac12 \abs{y_1}^{-\frac 23}  \le  \p_1 W(y,-\log \eps) \le  - \tfrac 14 \abs{y_1}^{-\frac 23} 
\quad \text{ for } \quad \left\{ \eps^{-\frac{1}{10}}  \le \abs{y_1}  \le  2 \kappa_0 \eps^{-\frac{1}{2}}, \ 
\abs{\check y} \le \eps^ \frac{1}{3} \right\} .
\label{sunny-days}
\end{align} 
Moreover, associated to this choice of $W(y,-\log \eps)$,  letting $Z(y,-\log \eps) =0$ and $\phi_0=0$, there exists an $A(y, -\log\eps)$, such that 
\begin{align} 
\tilde u( \tilde x, -\eps)=U(y,-\log \eps) = \left( \tfrac{1}{2}(\eps^ {\frac{1}{2}}  W(y,-\log\eps) + \kappa_0), A_2(y,-\log\eps), A_3(y,-\log\eps) \right)  \label{vomey-vomey}
\end{align} 
is irrotational with respect to the physical space variable $\tilde x$.
\end{lemma}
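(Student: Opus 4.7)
The plan is to construct $W(\cdot,-\log\eps)$ as a truncated version of the reference Burgers profile $\bar W$ (so that all of \eqref{ss-ic-1}--\eqref{eq:W:gamma=2:total} and \eqref{sunny-days} hold), and then to construct $A$ by forcing $U$ to be the gradient of a scalar potential in the $\tilde x$-frame.

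For the first step, I would set $W(y,-\log\eps) := \chi(y)\bar W(y)$, where $\chi\in C^\infty_c(\RR^3)$ is a smooth cutoff equal to $1$ on $\{|y_1|\le \tfrac{1}{2}\eps^{-1},\,|\check y|\le \tfrac{1}{2}\eps^{-1/3}\}$ and vanishing outside $\XXX_0$. Since $\LLL=\eps^{-1/10}$ is much smaller than $\eps^{-1}$, on the ball $\{|y|\le \LLL\}$ we have $\chi\equiv 1$ and $\tilde W = W-\bar W\equiv 0$, so the near-origin bounds \eqref{ss-ic-1}, \eqref{eq:tilde:W:4:derivative}, \eqref{eq:tilde:W:3:derivative:0} and the constraints \eqref{eq:fat:cat} are inherited directly from the explicit properties of $W_{\rm 1d}$ (via the cubic equation $W_{\rm 1d}+W_{\rm 1d}^3=-y_1$). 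The outer bounds \eqref{eq:rio:de:caca:1}--\eqref{eq:rio:de:caca:3} and the global second-derivative bounds \eqref{eq:W:gamma=2:total} follow from the explicit algebraic decay of $\bar W$ and its derivatives recorded in Section~2.7 of \cite{BuShVi2019b}; the terms arising from $\nabla^k\chi\,\bar W$ are supported on a thin strip where $\eta(y)\sim \eps^{-2}$, so they fit easily inside the weighted slack. The refined lower bound \eqref{sunny-days} is the deepest of the $W$-bounds: on $|\check y|\le \eps^{1/3}$ one has $\langle\check y\rangle\in[1,1+\eps^{2/3}]$, so $\p_1\bar W$ agrees with $\p_1 W_{\rm 1d}(y_1)$ to leading order, and the implicit identity $\p_1 W_{\rm 1d}=-1/(1+3W_{\rm 1d}^2)$ combined with the asymptotic $W_{\rm 1d}(\zeta)\sim -\zeta^{1/3}\sgn(\zeta)$ yields $\p_1 W_{\rm 1d}(y_1)\sim -\tfrac{1}{3}|y_1|^{-2/3}$ for $|y_1|\ge \eps^{-1/10}$, comfortably inside $[-\tfrac{1}{2}|y_1|^{-2/3},-\tfrac{1}{4}|y_1|^{-2/3}]$ once $\eps$ is small.

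For the second step, I would use $\phi_0=0$ (which makes $f_0\equiv 0$, $\tilde x=x$, $\Ncal_0=e_1$, $\Tcal_0^\nu=e_\nu$) and $Z\equiv 0$ so that $U_1=\tfrac{1}{2}(\kappa_0+\eps^{1/2}W)$ and $A_\nu=U_\nu$. To make $U$ irrotational on $\RR^3$, I write $U=\nabla_{\tilde x}\Phi$ with
\[
\Phi(\tilde x) := \tfrac{\kappa_0}{2}\tilde x_1 + \tfrac{\eps^{2}}{2}\int_{-\eps^{-1}}^{y_1(\tilde x_1)} W(y_1',\check y(\check{\tilde x}),-\log\eps)\,dy_1'\,,
\]
well-defined because $W$ has $y_1$-support in $[-\eps^{-1},\eps^{-1}]$. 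Direct differentiation gives $\p_{\tilde x_1}\Phi=U_1$, and setting $A_\nu(y,-\log\eps):=\p_{\tilde x_\nu}\Phi = \tfrac{\eps^{3/2}}{2}\int_{-\eps^{-1}}^{y_1}\p_\nu W(y_1',\check y,-\log\eps)\,dy_1'$ for $\nu=2,3$ produces, by construction, $\mathrm{curl}_{\tilde x}U\equiv 0$. Consequently $\Omega(\cdot,-\log\eps)\equiv 0$, which trivially matches \eqref{eq:svort:IC:SS}.

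The main obstacle is to verify that this explicit $A$ is compatible with the initial-data smallness \eqref{eq:A_bootstrap:IC}. Using the identity $\p_\nu\bar W = -4y_\nu W_{\rm 1d}(\zeta)/(1+3W_{\rm 1d}^2(\zeta))$ with $\zeta=y_1/\langle\check y\rangle^3$, one has to estimate $\tfrac{\eps^{3/2}}{2}\int_{-\eps^{-1}}^{y_1}\p_\nu W\,dy_1'$ in each sub-region of $\XXX_0$ (distinguishing $|y_1|\ll \langle\check y\rangle^3$, where $W_{\rm 1d}(\zeta)\approx -\zeta$, from $|y_1|\gg \langle\check y\rangle^3$, where $W_{\rm 1d}(\zeta)\sim -\zeta^{1/3}$); the $\eps^{3/2}$ prefactor must absorb both the length of the $y_1$-integration and the pointwise size of $\p_\nu\bar W$. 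If needed, the anisotropy of $\chi$ (chosen tighter in $\check y$ than in $y_1$) or the replacement of $\bar W$ by an element of the ten-parameter family $\bar W_{\mathcal A}$ furnished by Proposition~\ref{prop-stationary-burgers} provides the additional $\check\nabla$-integrability required. The analogous bounds on $\p_1 A$ and $\check\nabla A$ reduce to one fewer integration and are strictly easier.
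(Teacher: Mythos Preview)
Your approach is the same as the paper's: take $W(\cdot,-\log\eps)$ to be a smooth truncation of $\bar W$, then build $A_\nu$ as the $\check y$-gradient of a potential obtained by integrating $U_1$ in $y_1$. Your formula for $A_\nu$ agrees with the paper's (the two differ only by the full-line integral of an odd-in-$y_1$ function, which vanishes), and your verification of \eqref{sunny-days} via the large-argument asymptotics $W_{\rm 1d}'(\zeta)\sim-\tfrac13\zeta^{-2/3}$ is exactly the paper's argument.

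The one substantive difference is the choice of cutoff scales, which the paper singles out as ``the only non-trivial part.'' The paper does \emph{not} cut off at the boundary of $\XXX_0$; it truncates much earlier, at $|y_1|\sim\eps^{-1/2-1/16}$ with transition width $\eps^{-3/4}$, and at $|\check y|\sim\eps^{-1/4}$ with width $100\,\eps^{-1/4}$. Your $\XXX_0$-sized cutoff is too loose in two places. First, on the $\check y$-transition strip one picks up $(\check\nabla\chi)\bar W$ with $|\check\nabla\chi|\sim\eps^{1/3}$ and $|\bar W|\lesssim\eta^{1/6}\sim\eps^{-1/3}$, an $O(1)$ contribution that can push $|\check\nabla W|$ past the hard constant $\tfrac34$ in \eqref{eq:rio:de:caca:3}. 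Second, and more seriously for the $A$-bounds: using the exact identity $\p_\nu\bar W=-\tfrac{2y_\nu}{\langle\check y\rangle}\,\tfrac{W_{\rm 1d}(\zeta)}{1+3W_{\rm 1d}^2(\zeta)}$ one finds that $\int_0^{L_1}\p_\nu\bar W\,dy_1'$ scales like $y_\nu\langle\check y\rangle^2\cdot(L_1/\langle\check y\rangle^3)^{2/3}$, so with your $L_1\sim\eps^{-1}$ and $|\check y|$ up to $\eps^{-1/3}$ the resulting $|A_\nu|$ is only $O(\eps^{1/2})$, not the $O(\eps)$ required by \eqref{eq:A_bootstrap:IC}. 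Your suggestion to tighten the $\check y$-cutoff is in the right direction, but the $y_1$-cutoff must be tightened simultaneously; switching to another $\bar W_{\mathcal A}$ does not help, since all members of that family share the same large-$|y_1|$ asymptotics.
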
 
\begin{proof}[Proof of Lemma~\ref{lem:IC:for:creation}] 
The proof of \eqref{sunny-days} is based on the introduction of a cutoff functions in both the $y_1$ direction and in the $\check y$ directions, and 
the multiplication of the globally self-similar profile $\bar W$ by these cutoffs. The only non-trivial part of this argument is to choose the dependence of the aforementioned cutoffs on $\eps^{-1}$.

We start by defining a cutoff function with two parameters. For $b\geq 2a > 0$ we let $\upeta[a,b](r)$ be a smooth non-increasing function which is identically equal to $1$ for $r\in [0,a]$, and vanishes identically for $r \in [a+b,\infty)$. For the purposes of this lemma we may take the piecewise linear cutoff function and mollify it  with a compactly supported mollifier with characteristic length which is $\eps$-dependent. For example, we may mollify with a mollifier of compact support at scale $\eps^{\frac{1}{10}}$ the function which equals $1$ for $r \leq a+ \eps^{\frac{1}{10}}$, equals $0$ for $\ge a+b - \eps^{\frac{1}{10}}$, and is given by $1 - (r-a - \eps^{\frac{1}{10}})( b- 2 \eps^{\frac{1}{10}})^{-1}$ for $a< r< a+b$.
In particular, we may ensure that up to a constant factor of $\eps^{\frac{1}{10}}$ the derivative of $\upeta[a,b](r)$ is given by $- b^{-1}$ on the region $r\in (a,a+b)$, and vanishes outside of this region. Similarly, the second derivative of this cutoff function is bounded by a constant multiple of $b^{-1} \eps^{-\frac{1}{10}}$ on the region where it does not vanish.

Finally, we define the initial datum $W(y,-\log \eps)$ to be a cut-off version of $\bar W$, according to
\begin{align}  
\label{eq:IC:CHOICE}
W(y,-\log\eps) 
=   \bar W(y) \,  
\upeta\left[\eps^ {-\frac{1}{2}-\frac{1}{16}} ,  \eps^ {- \frac{3}{4}}\right](\abs{y_1})\,
\upeta\left[\eps^ {-\frac{1}{4}}, 100  \eps^ {-\frac{1}{4}}\right](\abs{\check y}) \,.
\end{align} 

A lengthy but routine computation which uses   properties of the explicit function $\bar W$ (see e.g.~\cite[Equation (2.48) and Remark~3.3]{BuShVi2019b}), shows that the function $W(y,-\log\eps)$ satisfies the conditions \eqref{ss-ic-1}--\eqref{eq:W:gamma=2:total}. We omit these details, but give the proof of condition~\ref{sunny-days} which is essential for the vorticity creation argument. We   note that for $  \abs{y_1} \le  2  \kappa_0 \eps^{-\frac{1}{2}}$ we have that  $\upeta [\eps^ {-\frac{1}{2}-\frac{1}{16}} ,  \eps^ {- \frac{3}{4}} ](\abs{y_1}) = 1$, and for $\abs{\check y}\leq  \eps^{\frac{1}{3}}$ we   have $\upeta [\eps^ {-\frac{1}{4}}, 100  \eps^ {-\frac{1}{4}} ](\abs{\check y})=1$. Thus, in the region relevant for \eqref{sunny-days}, by using \eqref{eq:barW:def} we have 
\begin{align}
\p_1 W(y,-\log \eps) =  \p_1 \bar W(y) = \tfrac{1}{1 + \abs{\check y}^{2}} W_{\rm 1D}'\left( \tfrac{y_1}{(1 + \abs{\check y}^{2})^{\frac 32} }\right)
\,.
\label{eq:underrated:computation}
\end{align}
The function $W_{\rm 1D}$ is explicit, and the Taylor series of its derivative around infinity is given by $W_{\rm 1D}'(r) = - \tfrac{1}{3}  r^{-\frac 23} 
- \tfrac{1}{9} r^{-\frac 43} + \OO(r^{-\frac 83})$. Using that we are interested in a region where $\abs{\check y} \leq  \eps^{\frac{1}{3}}$, and $ \eps^{-\frac{1}{10}}  \le \abs{y_1}  \le 2  \kappa_0 \eps^{-\frac{1}{2}}$, upon choosing $\eps$ sufficiently small (so that the Taylor series expansion around infinity is the relevant one), we immediately deduce that from \eqref{eq:underrated:computation} that 
\[
- \tfrac{ 1+ \eps^{\frac{1}{10}} }{3}  \abs{y_1}^{-\frac 23} 
\leq  
\tfrac{1}{1 + \abs{\check y}^{2}} W_{\rm 1D}'\left( \tfrac{y_1}{(1 + \abs{\check y}^{2})^{\frac 32} }\right)
\leq 
- \tfrac{ 1- \eps^{\frac{1}{10}} }{3} \abs{y_1}^{-\frac 23} 
\]
in the region of relevance to \eqref{sunny-days}. This establishes the existence of $W$ satisfying \eqref{sunny-days} as well as the bounds 
\eqref{ss-ic-1}--\eqref{eq:W:gamma=2:total}.

Next, for $W(y,-\log \eps)$ given by \eqref{eq:IC:CHOICE} and  with $Z(y, -\log\eps)=0$, we shall now prove the existence of an irrotational initial velocity field 
$\tilde u(\tilde x, \eps)$ satisfying \eqref{vomey-vomey}.

We first set $\phi_0=0$ so that $\Ncal_0=e_1$,  $\Tcal_0^\nu=e_\nu$, and $\Jcal_0=1$,
and $(\tilde x_1, \tilde x_\nu)   =(\epsilon^{-\frac{3}{2}} y_1, \epsilon^{-\frac{1}{2}} y_\nu)$.
We have that   $\tilde w_0 (\tilde x, -\eps) = \eps^ {\frac{1}{2}}  W(y,-\log\eps) + \kappa_0$, and
from \eqref{vomey-vomey}, we see that
$$
\tilde u (\tilde x ,\eps)_1= \tfrac{1}{2} \tilde w_0 \,.
$$
In order to ensure that $\tilde u_1 = \p_{\tilde x_1} \Psi$,
we  define
$$\Psi(\tilde x) = \tfrac{1}{2} \int_0^{\tilde x_1} \tilde w_0(\tilde x_1', \check{\tilde x}) d\tilde x'_1  -\tfrac{1}{2} \int_0^{\infty } \tilde w_0(\tilde x_1', \check{\tilde x}) d\tilde x'_1    $$
for $\tilde x_1 >0$ and then extend $\Psi(\tilde x) $ as an even function in $\tilde x_1$.
We now define
\begin{align} 
\tilde a_\nu(\tilde x, -\eps) = \p_{\tilde x_\nu} \Psi (\tilde x)\,,  \label{i-dont-know}
\end{align} 
so that  $ \tilde u (\tilde x, -\eps) = \nabla_{\! \tilde x} \Psi(\tilde x)$, which implies that $ \operatorname{curl} _{ \tilde x} \tilde u(x, -\eps) =0$.
We write \eqref{i-dont-know} in self-similar coordinates as 
$$
A_\nu(y, -\log \eps) = -\tfrac{1}{2} \eps^ {\frac{3}{2}}  \int_{y_1}^{\infty }   \p_{\nu} W(y_1',\check y, -\log \eps) dy_1' \,.
$$
Using the definition of $W(y,-\log\eps)$ given in \eqref{eq:IC:CHOICE}, a lengthy computation shows that 
$A(y, -\log\eps)$ satisfies the 
bounds \eqref{eq:A_bootstrap:IC} and \eqref{eq:data:Hk}.
\end{proof}

\subsection{Statement of the main theorem in self-similar variables and asymptotic stability}

\begin{theorem}[Stability and shock formation via self-similar variables] 
\label{thm:main:S-S}
For $ \alpha = {\tfrac{\gamma-1}{2}}$ and $\gamma>1$, 
let $\kappa_0 = \kappa_0(\alpha)>1$ be chosen sufficiently large. 
Suppose that at initial time $s=-\log \eps$, the initial data $(W_0,Z_0,A_0,K_0) = (W,Z,A,K)|_{s=-\log\eps}$ are supported in the set $\XXX_0$ from \eqref{eq:support-init}, and obey conditions \eqref{eq:fat:cat}--\eqref{eq:data:Hk}.  Assume that the modulation functions have initial conditions
compatible with \eqref{mod-ic}--\eqref{ic-kappa0-phi0}.

There exist  $M=M(\alpha, \kappa_0)\ge 1$  sufficiently large, $\eps = \eps(\alpha,\kappa_0,M) \in (0,1)$  sufficiently small, and unique global-in-time solutions $(W,Z,A,K)$ to \eqref{euler-ss} with the following properties. $(W,Z,A,K)$ are supported in the time-dependent cylinder $\XXX(s)$  defined in \eqref{eq:support},
$$(W,Z,A,K) \in C([-\log\eps, + \infty ); H^m)\cap  C^1([-\log\eps, + \infty ); H^{m-1}) \ \text{  for  } m\ge 18\,,$$
and 
\begin{align*}
 \snorm{W( \cdot , s)}_{\dot H^m}^2\!+e^s\snorm{Z( \cdot , s)}_{\dot H^m}^2 \!+ e^s\snorm{A( \cdot , s)}_{\dot H^m}^2 \! + e^s\snorm{K( \cdot , s)}_{\dot H^m}^2
 &\!\leq\!  16 \kappa_0^2 \lambda ^{-m}e^{-s-\log \eps}\! +\! (1\! - \! e^{-s}\eps^{-1} ) M^{4m}  
\end{align*}
for a constant $\lambda=\lambda(m) \in (0,1)$.
The Riemann function $W(y,s)$ remains   close to the generic and stable self-similar blowup profile $\bar W$; upon defining the weight function
$\eta(y) = 1 + y_1^2 + \abs{\check y}^6$, we have that 
the perturbation $\tilde W = W - \bar W$ satisfies
$$ \abs{ \tilde W(y,s)} \leq \eps^{\frac{1}{11}} \eta^{\frac 16}(y) \,, \ \ 
\abs{\p_1 \tilde W(y,s)} \leq \eps^{\frac{1}{12}} \eta^{-\frac 13}(y)\,, \ \ 
\abs{\check \nabla \tilde W(y,s)} \leq \eps^{\frac{1}{13}} \,, $$
for all $\abs{y} \le \eps^{-\frac{1}{10}}$ and $s \ge -\log\eps$. Furthermore, $\p^\gamma \tilde W(0,s) =0$ for all $\abs{\gamma} \le 2$, and the bounds \eqref{eq:tildeW_decay2} and \eqref{eq:bootstrap:Wtilde3:at:0} hold. Additionally,   $W(y,s)$ satisfies the bounds given in \eqref{eq:W_decay} and \eqref{eq:W:GN}.

As $s\to \infty$, $W(y,s)$ converges to an asymptotic profile $\bar W_{\mathcal A}(y)$ which satisfies:
\begin{itemize}[itemsep=2pt,parsep=2pt,leftmargin=.15in]
\item $\bar W_{\mathcal A}$ is a  $C^\infty$ smooth solution to the self-similar 3D Burgers equation~\eqref{eq:Burgers:self:similar}.
\item $\bar W_{\mathcal A}(y)$ obeys the  genericity condition  \eqref{genericity}.
\item $\bar W_{\mathcal A}$ is 
uniquely determined by the $10$ parameters ${\mathcal A}_\alpha = \lim_{s\to\infty} \p^\alpha W(0,s)$ for $\abs{\alpha}=3$.
\end{itemize}

The amplitude of the functions $Z$, $A$, and $K$ remains $\OO(\eps)$ for all $s \ge -\log\eps$, while for each $ \abs{\gamma}\le m$, 
$\p^\gamma Z( \cdot , s) \to 0$,   $\p^\gamma A( \cdot , s) \to 0$, and $\p^\gamma K( \cdot , s) \to 0$ as $s \to + \infty $, and $Z$ and $A$ satisfy the 
bounds \eqref{eq:Z_bootstrap},  \eqref{eq:A_bootstrap},  \eqref{eq:S_bootstrap}.

The scaled sound speed $S(y,s)$ satisfies
$$
\snorm{S( \cdot , s) - \tfrac{\kappa_0}{2} }_{ L^ \infty } \le \eps^ {\frac{1}{8}}  \ \ \text{ for all } \ s \ge -\log\eps  \,.
$$
The specific vorticity $\Omega(y,s) = \mathring \zeta(x,t)$ satisfies for all $s\ge -\log\eps$,
$$
\snorm{\Omega \circ \pay (\cdot ,s) -\Omega(\cdot ,-\log\eps) }_{L^\infty}  \le \eps^ {\frac{1}{20}} 
$$
where $\pay$ is defined in \eqref{phi-flow}.  Furthermore, there exists irrotational initial data from which vorticity is instantaneously created and
remains nonzero in a neighborhood of the shock location $(0,T_*)$: see Theorem \ref{thm:vorticity:creation} for details.
\end{theorem}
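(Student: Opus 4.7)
\medskip

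\noindent\textbf{Proof plan for Theorem \ref{thm:main:S-S}.} The plan is a continuity/bootstrap argument in self-similar time $s \in [-\log\eps, +\infty)$, combined with a modulation scheme that freezes the behavior of $W$ and its derivatives up to order two at $y=0$. First, I would promote each of the quantitative assumptions on the initial data \eqref{eq:fat:cat}--\eqref{eq:data:Hk} to bootstrap hypotheses on a time interval $[-\log\eps, s_*]$, carrying the same constants but doubled (as recorded in Section~\ref{sec:bootstrap}). Short-time existence is not the issue---standard hyperbolic theory on the system \eqref{eq:Euler-riemann-sheep} in Sobolev spaces, transported back to self-similar variables via \eqref{eq:ss:ansatz}, gives a local solution---and the modulation parameters are defined via the implicit ODE system derived in Section~\ref{sec:constraints:initial}, whose solvability for $s \ge -\log\eps$ follows once the Hessian ${\mathcal H}^0(s)$ in \eqref{eq:d1W:Hessian} stays close to $\mathrm{diag}(6,2,2)$. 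The task is then to improve every bootstrap bound by a constant strictly less than one on $[-\log\eps, s_*]$, which by continuity forces $s_* = +\infty$.

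The closure proceeds in the following order. (i) Using the bootstraps, I would estimate the three families of Lagrangian flows associated with the transport velocities $\mathcal{V}_W$, $\mathcal{V}_Z$, $\mathcal{V}_U$ in \eqref{eq:transport_velocities}, establishing sharp rates of escape to infinity for trajectories started away from a small neighborhood of $y=0$ (Section~\ref{sec:Lagrangian}); these rates are the linchpin connecting the self-similar dilation $\tfrac{3}{2}y_1\p_1 + \tfrac{1}{2}y_\nu\p_\nu$ to actual integrated decay. (ii) Pointwise bounds for $\p^\gamma Z$, $\p^\gamma A$, and $\p^\gamma K$ follow by combining these Lagrangian estimates with the damping coefficient $\tfrac{3\gamma_1 + |\check\gamma|}{2} + \beta_i\beta_\tau\gamma_1\Jcal\p_1 W$ in the equations \eqref{euler_for_Linfinity:b}--\eqref{euler_for_Linfinity:d}, exploiting that $\p_1 W(0,s) = -1$ so the damping is strictly positive for $\gamma_1 \ge 1$; this gives the factor $e^{-s}$ improvement on $Z$, $A$, $K$. (iii) The pointwise bootstrap for $\tilde W = W - \bar W$ near $y=0$, in the intermediate zone $\ell \le |y| \le \LLL$, and in the outer zone $|y| \ge \LLL$ (Section~\ref{sec:W}) uses \eqref{eq:p:gamma:tilde:W:evo} together with the linearized operator $\p_s + \tfrac{|\check\gamma|+3\gamma_1-1}{2} + \beta_\tau\Jcal(\p_1\bar W + \gamma_1\p_1 W)$, whose coercivity (again using $\p_1\bar W(0) = -1$ and the genericity \eqref{genericity}) yields the required weighted decay via $\eta^{1/6}$, $\eta^{-1/3}$ weights.

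(iv) In parallel, the high-order $\dot H^m$ energy estimates for $m \ge 18$ are carried out on the system \eqref{UPS-new} in the variables $(\uu, \pp, \hh)$, which removes the hyperbolic degeneracy at vanishing density and allows a symmetric-hyperbolic-type energy identity; the damping from the self-similar rescaling of $\p_1$ dominates the commutator and forcing contributions once the lower-order pointwise bounds from steps (i)--(iii) are invoked, yielding the stated $\dot H^m$ bound (Section~\ref{sec:energy}). (v) The modulation bootstraps---i.e., smallness of $\dot\kappa, \dot\tau, \dot\xi, \dot n_\nu, \dot\phi_{\nu\mu}$---are closed by inverting \eqref{eq:Hamburgler}, \eqref{eq:hj:0:a}, \eqref{eq:GW:0:a} and \eqref{eq:D2:GW:def:1} in Section~\ref{sec:constraints}, using that all right-hand sides are $\OO(\eps^\delta)$ consequences of the now-closed pointwise estimates. (vi) Vorticity control is obtained by passing back to physical coordinates and projecting \eqref{svorticity_sheep} onto the geometric frame $(\Ncal, \Tcal^2, \Tcal^3)$: for the non-isentropic case, the singular baroclinic term \eqref{11bottlesdownto7} must be integrated along $u$-trajectories, and the key cancellation is that $\div u$ gains a spatial derivative when paired against the transport, so the $1/(T_*-t)$ divergence is integrable in time. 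The vorticity creation statement then comes from the explicit lower bound \eqref{sunny-days} in Lemma~\ref{lem:IC:for:creation}, propagated forward via the already-established pointwise control.

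Finally, the asymptotic profile is extracted as follows: once $\tilde W$ is uniformly bounded in $\dot H^m$ and decays in weighted $L^\infty$, the equation \eqref{eq:tilde:W:evo} shows $\p_s \tilde W \in L^1_s$ on compact sets of $y$, hence $W(y,s) \to \bar W_{\mathcal A}(y)$ pointwise where $\bar W_{\mathcal A}$ solves \eqref{eq:Burgers:self:similar}; the third-order Taylor coefficients at $y=0$ give the $10$ parameters ${\mathcal A}$, and uniqueness within the family follows from Proposition~\ref{prop-stationary-burgers}. The genericity \eqref{genericity} is preserved in the limit by the bootstrap on $\nabla^2\p_1 W(0,s)$. \emph{The hard part} will be step (iii)---closing the pointwise $\tilde W$ bootstrap at the matched scales $\ell$ and $\LLL$---because the damping constant degenerates exactly where $\p_1 W$ loses its sign control (the complement of the neighborhood of $y=0$ where $\bar W$ is well-approximated by its Taylor polynomial), and simultaneously the forcing $\tilde F_W^{(\gamma)}$ contains top-order terms from $G_W$ and $h_W^\mu$ which must be reabsorbed using the sharp Lagrangian escape rates from (i). The vorticity bound up to $T_*$ in spite of the divergent baroclinic torque is the second substantial difficulty, and is the principal non-isentropic obstruction absent from \cite{BuShVi2019a, BuShVi2019b}.
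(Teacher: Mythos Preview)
Your proof plan is correct and follows essentially the same bootstrap--continuation architecture as the paper: local well-posedness plus the modulation ODE system, closure of the weighted pointwise bootstraps on $(W,\tilde W, Z, A, K)$ via Lagrangian escape rates and damping along the three trajectory families, the $\dot H^m$ energy estimate on the $(\uu,\pp,\hh)$ system, closure of the modulation parameters, vorticity control in the geometric frame with the baroclinic cancellation, and finally extraction of the asymptotic profile $\bar W_{\mathcal A}$ via Proposition~\ref{prop-stationary-burgers}. The only minor clarification is that in step~(ii) the mechanism is not quite that ``$\p_1 W(0,s)=-1$ makes the damping positive''---rather, the global lower bound $\Jcal\p_1 W \ge -1$ (Lemma~\ref{lem:Jp1W}) together with the integrability of $|\p_1 W|$ along $\pz$ and $\pu$ trajectories (Corollary~\ref{cor:p1Wz}, Lemma~\ref{lem:p1Wu}) is what allows the damping integral to be controlled; and the $\dot H^m$ bound is not merely ``in parallel'' but feeds back via interpolation (Lemma~\ref{eq:higher:order:ests}) to supply the third-order pointwise estimates needed in the forcing terms of steps~(ii)--(iii).
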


For concision, the initial data was assumed to have the support property \eqref{eq:support-init} 
and satisfy the conditions \eqref{eq:fat:cat}.  By using the symmetries of the Euler equations, we can generalize these conditions to allow for data in
 a non-trivial  open set in the
$H^m$ topology.    
\begin{theorem}[Open set of initial conditions]
\label{thm:open:set:IC} 
Let $\tilde{\mathcal F}$ denote the set of initial data satisfying the hypothesis of Theorem~\ref{thm:main:S-S}.
There exists an open neighborhood of $\tilde{\mathcal F}$ in the $H^m$ topology, denoted by ${\mathcal F}$, such that for any initial data to the Euler equations taken from ${\mathcal F}$, the  conclusions of Theorem \ref{thm:main:S-S} hold. 
\end{theorem}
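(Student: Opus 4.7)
\emph{Strategy.} The plan is to exploit the built-in symmetries of \eqref{eq:Euler} together with the flexibility in the initial choice of modulation parameters. The Euler system is invariant under spatial translations ($3$ parameters $\xi_0$), rotations about $e_1$ (here we need only the two axial angles $\check n_0$), and the Galilean shift in the sound variable through $\kappa_0$; combined with the three components of the curvature parameter $\phi_0$ built into the coordinate change \eqref{x-sheep}, this furnishes $1+3+2+3=9$ real degrees of freedom. Conveniently, precisely $9$ nontrivial conditions are imposed by \eqref{eq:fat:cat} on $W(\cdot,-\log\eps)$ at the origin, the remaining condition $\partial_1 W^0 = -1$ being merely the definition of $\eps$. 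I will solve for the nine parameters via the implicit function theorem, transform the perturbed datum by the resulting symmetry, invoke Theorem \ref{thm:main:S-S} for the normalized datum, and invert the symmetry.

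\emph{Implicit function theorem step.} Given a perturbed datum $(u^{(0)},\rho^{(0)},\scal^{(0)})$ lying in a small $H^m$-ball about a reference $(u^{\mathrm{ref}},\rho^{\mathrm{ref}},\scal^{\mathrm{ref}})\in\tilde{\mathcal F}$, I define the parameter tuple $(\kappa_0,\xi_0,\check n_0,\phi_0)$ as follows: $\xi_0\in\mathbb{R}^3$ is the spatial point at which the directional derivative of $\tilde w_0$ in the direction $n_0$ attains its (strict) minimum (this enforces $\partial_1^2 W^0=0$ and $\partial_1\check\nabla W^0=0$); $\check n_0\in\mathbb{R}^2$ fixes the axial direction along which this minimum is realized (enforcing $\check\nabla W^0=0$); $\kappa_0\in\mathbb{R}$ is set to $(u^{(0)}\cdot n_0 + \sigma^{(0)})(\xi_0)$ (enforcing $W^0=0$); and $\phi_0\in\mathrm{Sym}_{2\times 2}(\mathbb{R})$ is defined via \eqref{eq:phi:0:def} (enforcing $\check\nabla^2 W^0=0$). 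At the reference data with the parameters \eqref{eq:modulation:IC}, all nine constraints hold. The Jacobian of the constraint map, ordered as $(\phi_0,\check n_0,\xi_0,\kappa_0)\mapsto(\check\nabla^2 W^0,\check\nabla W^0,\partial_1\nabla W^0,W^0)$, is block-triangular: the $\phi_0$ block is multiplication by $\partial_1 W^0 = -1$ via \eqref{phi0-cond}; the $\check n_0$ block is the identity up to $\OO(\eps)$ perturbations; the $\xi_0$ block is controlled by $\eps^{5/2}(\partial_1\nabla^2\tilde w_0)(0)\approx\eps^{-1/2}\,\mathrm{diag}(6,2,2)$, positive-definite by the genericity condition \eqref{genericity}; and the $\kappa_0$ entry is $-\eps^{-1/2}$. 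Since $m\ge 18$ gives $H^m\hookrightarrow C^4$, the constraint map is $C^1$ on $H^m$, and the implicit function theorem yields a unique parameter tuple depending continuously on the datum throughout an $H^m$-neighborhood of $\tilde{\mathcal F}$.

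\emph{Preservation of the quantitative bounds and the support condition.} After the symmetry reduction, each of the assumptions \eqref{ss-ic-1}--\eqref{eq:data:Hk} is an open condition with explicit slack factors of positive powers of $\eps$. Every operation used in the reduction---translation, rotation, Galilean shift, and the polynomial coordinate change with $\phi_0$ small---is continuous in the $H^m$ topology; Sobolev embedding controls the pointwise bounds \eqref{ss-ic-1}--\eqref{eq:W:gamma=2:total} and \eqref{eq:Z_bootstrap:IC}--\eqref{sigma-IC} by the $H^m$ norm, while \eqref{eq:data:Hk} is controlled directly. Choosing the radius of the $H^m$-neighborhood sufficiently small, depending on $\alpha,\kappa_0,M,\eps$, preserves every inequality. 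For the compact support requirement \eqref{eq:support-init}, I multiply $u^{(0)}$ by a fixed smooth cutoff adapted to an enlarged set $\XXX_0'\supset\XXX_0$; this truncation is $H^m$-small when the perturbation is, and by finite speed of propagation for \eqref{eq:Euler} on the time interval $[-\eps,T_*]$ (with $T_*=\OO(\eps)$), it does not influence the solution on the backward characteristic cone emanating from the shock point. Applying Theorem \ref{thm:main:S-S} to the reduced, truncated datum and reversing the symmetry transformation yields the open-set conclusion.

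\emph{Main obstacle.} The principal technical point is the invertibility of the $9\times 9$ Jacobian in the implicit function theorem step. Its resolution hinges on two structural features of the reference data: first, the genericity condition \eqref{genericity}, which ensures that perturbations of $\xi_0$ and $\check n_0$ actuate the second-derivative constraints; and second, the normalization $\partial_1\tilde w_0^{\mathrm{ref}}(0)=-\eps^{-1}\ne 0$, which ensures that $\kappa_0$ and $\phi_0$ act as effective controls on $W^0$ and $\check\nabla^2 W^0$. The underlying calculation is a finite-dimensional analogue of the linear-algebraic step performed in Section \ref{sec:constraints:initial} to derive the modulation ODEs \eqref{eq:dot:kappa:1}--\eqref{eq:D2:GW:def:1}, carried out at initial time rather than along the evolution.
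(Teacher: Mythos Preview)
Your proposal is correct and follows the approach the paper has in mind: the paper's own proof is a one-line reference to Theorem~3.2 of \cite{BuShVi2019b}, noting only that the entropy assumptions \eqref{eq:S_bootstrap:IC} and \eqref{eq:data:Hk} are open in $H^m$, and your argument supplies exactly the details of that referenced proof---matching the nine nontrivial constraints in \eqref{eq:fat:cat} against the nine initial modulation degrees of freedom $(\kappa_0,\xi_0,\check n_0,\phi_0)$ via the implicit function theorem, with the tenth constraint $\p_1 W^0=-1$ absorbed into the choice of $\eps$. One small wording slip: the two parameters $\check n_0$ are not ``rotations about $e_1$'' but rather the angles specifying the direction $n_0$ to which $e_1$ is rotated; the count and the role in enforcing $\check\nabla W^0=0$ are correct.
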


\subsection{Shock formation in physical variables $(\xcal, t)$}
We shall now interpret the assumptions and results of Theorem \ref{thm:main:S-S} in the  context of physical variables $(\xcal, t)$.
The function $\tilde w_0(\xcal) = w(x, -\eps) = \eps^ {\frac{1}{2}} W(y,-\log \eps)+\kappa_0$ is chosen such that 
the minimum (negative) slope of $\tilde w_0$ occurs in the $e_1$ direction, and 
$\partial_{\xcal_1} \tilde w_0$ attains its global minimum at $ \xcal=0$, and from \eqref{eq:fat:cat}, satisfies
\begin{align} 
\tilde w_0(0) = \kappa_0\,,\qquad \partial_{\xcal_1} \tilde w_0(0) = -{\tfrac{1}{\eps}} \,, \qquad  \check\nabla_{\xcal} \tilde w_0(0) =0 \,, \qquad
\nabla_{\!\xcal} \p_{\xcal_1} \tilde w_0 (0)=0\,. \label{vomey1}
\end{align} 
Of course, there are a number of additional conditions on $\tilde w_0(\xcal)$ and its partial derivatives which exactly correspond to conditions 
\eqref{ss-ic-1}--\eqref{eq:W:gamma=2:total} by the change of variables \eqref{eq:y:s:def}, but the conditions \eqref{vomey1} are fundamental to the
stable self-similar point shock formation process.

We shall assume that
the support of the initial data  $(\tilde w_0 - \kappa_0, \tilde z_0, \tilde a_0)$, is  contained in the  set  
$\XX_0= \{\abs{\xcal_1}\leq  \tfrac{1}{2}  \eps^ {\frac{1}{2}}  ,\abs{\check \xcal}\leq  \eps^ {\frac{1}{6}}  \}$,
which in turn shows that
 $u_0 \cdot \Ncal_0 - \frac{\kappa_0}{2}$, $\sigma_0 - \frac{\kappa_0}{2}$, and $u_0 \cdot \Tcal^\nu$ are compactly 
supported in $\XX_0$. In view of the coordinate transformation \eqref{ic-cov} and the bound \eqref{ic-kappa0-phi0}, the functions of $x$ defined in \eqref{eq:tilde:wza:0}, 
namely $(  w_0,   z_0,   a_0, \scal_0)$, have spatial support contained in the set  $\{\abs{x_1}\leq   \tfrac{1}{2} \eps^ {\frac{1}{2}}  
+ \eps  ,\abs{\check x}\leq  \eps^ {\frac{1}{6}} \} \subset \{ \abs{x_1}\leq   \eps^ {\frac{1}{2}}   ,\abs{\check x}\leq  \eps^ {\frac{1}{6}} \}$.  This larger set 
corresponds to the support condition \eqref{eq:support-init}
under the transformation \eqref{eq:y:s:def}.

For the initial conditions of $\tilde z_0$, $\tilde a_0$, and $\scal_0$, from \eqref{eq:Z_bootstrap:IC}--\eqref{eq:S_bootstrap:IC}, we have that\footnote{The bound for  $\p_{\xcal_1} a_0$ can be replaced by a bound that depends on $\kappa_0$, thus permitting arbitrarily large initial
vorticity.} 
\begin{alignat*}{2}
&\abs{\tilde z_0(\xcal)} \le \eps\,, \qquad  \abs{\p_{\xcal_1}\tilde z_0(\xcal)} \le 1 \,, \qquad &&\abs{ \check\nabla_{\!\xcal } \tilde z_0(\xcal)} \le   \eps^ {\frac{1}{2}} \,, \ \\
&\abs{\tilde a_0(\xcal)} \le \eps\,, \qquad  \abs{\p_{\xcal_1}\tilde a_0(\xcal)} \le 1 \,, \qquad &&\abs{ \check\nabla_{\!\xcal } \tilde a_0(\xcal)} \le  \eps^ {\frac{1}{2}}
    \,,   \\
&\abs{\tilde \scal_0(\xcal)} \le \eps\,, \qquad  \abs{\p_{\xcal_1}\tilde \scal_0(\xcal)} \le \eps^ {\frac{1}{2}} \,, \qquad &&\abs{ \check\nabla_{\!\xcal } \tilde \scal_0(\xcal)} \le  \eps^ {\frac{1}{2}}
    \,,
\end{alignat*} 
together with conditions on higher-order derivatives\footnote{We deduce from \eqref{eq:data:Hk} that at $t=-\eps$, the Sobolev norm
must satisfy
$\sum_{\abs{\gamma}=m} 
\eps^2 \snorm{ \p_x^\gamma   w_0}_{L^2}^2 + \snorm{ \p_x^\gamma   z_0}_{L^2}^2 + \snorm{ \p_x^\gamma   a_0}_{L^2}^2  
 + \snorm{ \p_x^\gamma   \scal_0}_{L^2}^2   \le 
\eps^{ {\frac{7}{2}}-(3\gamma_1+\abs{\check\gamma} )}  $.
See (3.21)--(3.22) in \cite{BuShVi2019b} for details.} that follow \eqref{eq:Z_bootstrap:IC}--\eqref{eq:S_bootstrap:IC} and \eqref{eq:data:Hk}.

The initial specific vorticity $\tilde \zeta(\tilde x,-\eps) = \mathring \zeta(x,-\eps) = \Omega(y,-\log\eps)$ satisfies condition \eqref{eq:svort:IC:SS},
and the initial scale sound speed $\tilde\sigma(\tilde x, -\eps) = \mathring \sigma(x,-\eps) = S(y,-\log\eps)$ satisfies \eqref{sigma-IC}.

We now summarize the statement of Theorem \ref{thm:main:S-S} in the physical variables.  Suppose that the initial data 
$\tilde w_0$, $\tilde z_0$, $\tilde a_0$, and $\scal_0$ satisfy the conditions stated above and that
 $ \alpha = {\tfrac{\gamma-1}{2}} >0$ is fixed.   There exist a sufficiently large $\kappa_0 = \kappa_0(\alpha) > 1$,  and a sufficiently small 
$\eps = \eps(\alpha,\kappa_0) \in (0,1)$ such that 
there exists a time $T_* = \OO(\eps^2)$ and a unique solution $(u,\rho,\scal) \in C([-\eps,T_*);H^m)\cap C^1([-\eps,T_*);H^{m-1})$ to  \eqref{eq:Euler} which blows up in an
 asymptotically self-similar fashion  at time $T_*$, at a single point  $\xi_* \in \mathbb{R}^3  $. 
In particular, the following results hold:
\begin{enumerate}[itemsep=2pt,parsep=2pt,leftmargin=.3in]
\renewcommand{\theenumi}{(\roman{enumi})}
\renewcommand{\labelenumi}{\theenumi}
\item The blowup time $T_* = \OO(\eps^2)$ and the blowup location  $\xi_*  = \OO( \epsilon )$ are explicitly computable, with $T_*$ defined by the condition
$\int_{-\eps}^{T_*} (1-\dot \tau(t)) dt = \eps $ and with the blowup location given by
$\xi_*  = \lim_{t\to T_*} \xi(t)$.  The amplitude modulation function satisfies $\abs{\kappa_*-\kappa_0} = \OO(\eps^{\frac 32})$ where $\kappa_*  = \lim_{t\to T_*} \kappa(t)$.

\item For each $t \in [-\eps, T_*)$, we have $
\sabs{\Ncal(\check {\tilde x},t) - \Ncal_0(\check \xcal)} +\sabs{\Tcal^\nu(\check {\tilde x},t) - \Tcal_0^\nu(\check \xcal)}  = \OO(\eps) \,.$

\item We have $\sup_{t\in[-\eps, T_*)} \left( \norm{\tilde u \cdot \Ncal - {\tfrac{1}{2}} \kappa_0 }_{ L^\infty }+\norm{\tilde u \cdot \Tcal^\nu}_{ L^\infty }+
\norm{\tilde \sigma - \tfrac{1}{2} \kappa_0}_{ L^\infty } +\| \zeta\|_{ L^\infty } \right) \les 1$.
\item There holds $\lim_{t \to T_*} \Ncal \cdot \nabla_{\! \tilde x} \tilde w(\xi(t),t) = -\infty $ and  $\frac{1}{2(T_*-t)} \leq  \norm{\Ncal \cdot \nabla_{\! \tilde x} \tilde w(\cdot,t)}_{L^\infty} \leq \frac{2}{T_*-t}$ as  $t \to T_*$.
\item At the time of blowup, $\tilde w( \cdot , T_*)$ has a cusp-type singularity with  $C^ {\sfrac{1}{3}} $ H\"{o}lder regularity.
\item Only the $\p_\Ncal$ derivative of $\tilde u \cdot \Ncal$ and $\tilde \rho$ blowup, while the other first order derivatives remain  bounded:
\begin{subequations} 
\label{lagavulin16}
\begin{align} 
&\quad \lim_{t\to T_*} \Ncal \cdot \nabla_{\! \tilde x} (\tilde u \cdot \Ncal) (\xi(t), t) = \lim_{t\to T_*} \Ncal \cdot \nabla_{\! \tilde x} \tilde \rho ( \xi(t), t) = -\infty \,, \\
&  \sup_{t\in[-\eps,T_*)}  \| \Tcal^\nu \cdot \nabla_{\! \tilde x}\tilde \rho(\cdot , t)\|_{ L^ \infty } +  \| \Tcal^\nu \cdot \nabla_{\! \tilde x} \tilde u(\cdot , t)\|_{ L^ \infty } + 
\|\Ncal \cdot \nabla_{\! \tilde x} (\tilde u \cdot \Tcal^\nu)(\cdot , t)\|_{ L^ \infty}  \les 1 \,.
\label{damn-thats-nice}
\end{align} 
\end{subequations} 
\item Both $\tilde \scal$ and $ \nabla _{\! \tilde x} \tilde \scal$ remain bounded:
\begin{align}
\sup_{t\in[-\eps,T_*)} \snorm{\tilde \scal(\cdot,t)}_{L^\infty} + \snorm{ \nabla _{\! \tilde x} \tilde \scal (\cdot,t)}_{L^\infty} \les \eps^{\frac 18}
\,.
\label{eq:scal:Linfinity}
\end{align}
\item Let $\p_t X(\xcal, t) = u(X(\xcal, t), t)$ with $X(\xcal, -\eps)=\xcal$ so that $X(\xcal, t)$ is the Lagrangian flow. Then there exists constants $c_1$, $c_2$ such that
$ c_1 \le \abs{ \nabla _{\! \xcal} X(\xcal, t) } \le c_2$ for all $t\in [-\eps, T_*)$.
\item The scaled sound $\tilde \sigma$ remains uniformly bounded from below and satisfies 
$$
\snorm{ \tilde \sigma ( \cdot , t) -   \tfrac{ \kappa_0 }{2}}_{ L^ \infty } \le \eps^ {\sfrac{1}{8}}  \ \ \text{ for all } \ t\in[-\eps, T_*]  \,.
$$
\item The vorticity satisfies $\snorm{\omega(\cdot, t) }_{L^\infty}  \le C_0 \snorm{\omega(\cdot,-\eps)}_{L^\infty}$   for all 
$t\in[-\eps, T_*] $ for a universal constant $C_0$, and if 
$\abs{\omega(\cdot, -\eps)} \geq c_0 >0$ on the set  $B(0,2 \eps^{\sfrac 34})$ then at  the blowup location $\xi_*$ there is  nontrivial vorticity, and moreover
\begin{align*}
\abs{\omega(\cdot, T_*)} \geq \tfrac{c_0}{C_0} \qquad \mbox{on the set} \qquad    B(0, \eps^{\sfrac 34})  \,.
\end{align*}
\end{enumerate}

\section{Bootstrap assumptions}
\label{sec:bootstrap}

As discussed above, the proof of Theorem~\ref{thm:main:S-S} consists of a bootstrap argument, which we make precise in this section.  For $M$ sufficiently 
large, depending on $\kappa_0$ and on $\alpha$, and for $\eps$ sufficiently small, depending on $M$, $\kappa_0$, and $\alpha$, we postulate that the 
modulation functions are bounded as in \eqref{mod-boot}, that $(W,Z,A,K)$ are supported in the set given by \eqref{eq:support}, that $W$ satisfies  
\eqref{eq:W_decay},  $\tilde W$ obeys \eqref{eq:tildeW_decay}--\eqref{eq:bootstrap:Wtilde3:at:0}, and that  $Z$, $A$, and $K$ are bounded as in \eqref{eq:Z_bootstrap}--\eqref{eq:S_bootstrap}. All these bounds have explicit constants in them. 
In the subsequent sections of the paper, we prove  that the these estimates in fact hold with strictly better pre-factors, which in view of a continuation argument yields the proof of Theorem~\ref{thm:main:S-S}.

\subsection{Dynamic variables}
For the dynamic modulation variables,  we assume that
\begin{subequations}
\label{mod-boot}
\begin{align}
&\tfrac 12 \kappa_0 \leq  \kappa(t)  \leq  2 \kappa_0, &&\abs{\tau(t)} \leq M \eps^2, &&&\abs{\xi(t)} \leq  M^{\frac 14}  \eps   , &&&& \abs{\check n(t)} \leq   M^2\eps^{\frac 32},    &&&&& \abs{\phi(t)} \leq M^2 \eps, 
\label{eq:speed:bound} \\
&\abs{\dot \kappa(t)} \leq  e^{-\frac{3s}{10}} , &&\abs{\dot \tau(t)} \leq  M e^{-s}, &&&|\dot \xi(t)| \leq  M^{\frac 14} , &&&& |\dot{\check{n}}(t)| \leq M^2 \eps^{\frac 12}, &&&&& |\dot{\phi}(t)| \leq M^2 ,
\label{eq:acceleration:bound}
\end{align}
\end{subequations}
for all $-\eps \leq t < T_*$.

From \eqref{tildev} and (A.4)--(A.5) in \cite{BuShVi2019b}, and the bootstrap assumptions \eqref{mod-boot}, we   obtain that
\begin{align}
|\dot Q(t)|  \leq 2 M^2 \eps^{\frac 12} \,.
\label{eq:dot:Q}
\end{align}
Also, from the $\dot \tau$ estimate in \eqref{eq:acceleration:bound},  we obtain 
\begin{align}
\abs{1-\beta_\tau} = \frac{\abs{\dot \tau}}{1-\dot\tau} \leq 2 M  e^{-s} \leq 2 M  \eps
\label{eq:beta:tau} 
\end{align}
upon taking $\eps$  sufficiently small.

\subsection{Spatial support bootstrap}
\label{subsec:support}
We shall assume that $(W,Z,A)$ have support in the set
\begin{align}
\XXX(s):=\left\{\abs{y_1}\leq  2\eps^{\frac12}  e^{\frac 32s},\abs{\check y}\leq  2 \eps^ {\frac{1}{6}} e^{\frac{s}{2}}  \right\} \qquad \text{ for all } \qquad s \ge -\log\eps \,.
\label{eq:support}
\end{align}
We introduce the weights 
\begin{align*}
\eta(y) = 1 + y_1^2 + \abs{\check y}^6 \qquad \mbox{and} \qquad \tilde \eta(y) = \eta(y) +  \abs{\check y}^2  \, ,  \label{eq:eta:def}
\end{align*}
as well as the $s$-dependent weight function
\begin{align*} 
\ppp(y,s) =  \tfrac{1}{\eta(y)}  + e^{-3s} \eta(y)    \,.
\end{align*} 
For $y \in \XXX(s)$, we note that  
\begin{equation}\label{e:space_time_conv}
\eta(y) \leq 40 \eps e^{3s} \qquad \Leftrightarrow \qquad \eta^{\frac{1}{3}}(y) \leq 4 \eps^{\frac 13}  e^{s}
 \end{equation}
for all $y \in \RR^3$. 
Since $\eta \ppp  = 1+ e^{-3s} \eta^2$, we have $e^{-3s} \eta^2 \le \eta\ppp$, and thus 
\begin{equation}\label{eq:phi:interp}
e^{-s}\les \ppp^{\frac{q}3}\eta^{-\frac13(2-q)} 
\end{equation}
holds for $1<q\le 2$.

\subsection{$W$ bootstrap}
The bootstrap assumptions on $W$ and its derivatives are
\begin{align}
 \abs{\partial^{\gamma} W(y,s)}&
\leq   \begin{cases}
(1+ \eps^{\frac{1}{20}}) \eta^{\frac 16}, & \mbox{if } \abs{\gamma}  = 0\,,  \\
\tilde \eta^{-\frac 13}\left(\tfrac y2\right) {\bf 1}_{\abs{y}\leq \LLL} + 2 \eta^{-\frac 13}(y) {\bf 1}_{\abs{y}\geq \LLL}, & \mbox{if }  \gamma_1 = 1 \mbox{ and } \abs{\check \gamma}=0\,, \\
1, & \mbox{if } \gamma_1=0 \mbox{ and }  \abs{\check \gamma} = 1\,,\\
 M^{\frac{2}{3}} \eta^{-\frac 13}, & \mbox{if }  \gamma_1 = 1 \mbox{ and } \abs{\check\gamma}=1\,, \\
M^{\frac{1}{3}}  \eta^{-\frac 13} \ppp^{\frac{1}{4}}  , & \mbox{if }  \gamma_1 = 2 \mbox{ and } \abs{\check\gamma}=0\,, \\
M \eta^{-\frac{1}{6}}, & \mbox{if } \gamma_1 = 0 \mbox{ and } \abs{\check \gamma} = 2 \,.
\end{cases}\label{eq:W_decay}
\end{align}
Next, for $\abs{y}\leq \LLL$, we assume that\footnote{While the first three bounds stated in \eqref{eq:W_decay} follow directly from the   properties of 
$\bar W$ stated in (2.48) of  \cite{BuShVi2019b}, and those of $\tilde W$ in \eqref{eq:tildeW_decay},  the estimate for 
$\p_1 W$ makes use of the fact that $ \tilde \eta^{-\frac 13}(y) + \eps^{\frac{1}{12}} \eta^{-\frac 13}(y) \leq \tilde \eta^{-1/3}(y/2)$. }
\begin{subequations}
\label{eq:tildeW_decay}
\begin{align}
\abs{ \tilde W(y,s)} &\leq \eps^{\frac{1}{11}} \eta^{\frac 16}(y) \,,
\label{eq:bootstrap:Wtilde} \\
\abs{\p_1 \tilde W(y,s)} &\leq \eps^{\frac{1}{12}} \eta^{-\frac 13}(y) \,,
\label{eq:bootstrap:Wtilde1} \\
\abs{\check \nabla \tilde W(y,s)} &\leq \eps^{\frac{1}{13}}  \,,
\label{eq:bootstrap:Wtilde2} 
\end{align}
\end{subequations}
where $\LLL$ is defined as  in \eqref{eq:pounds}. 
Furthermore, for $\abs{y} \leq \ell$ (as defined in \eqref{eq:ell:choice})  we assume that
\begin{subequations}
\label{eq:tildeW_decay2}
\begin{align}
\abs{\p^\gamma \tilde W(y,s)} &\leq (\log M)^4 \eps^{\frac {1}{10}}\abs{y}^{4-\abs{\gamma}}+M\eps^{\frac 14} \abs{y}^{3-\abs{\gamma}} \leq 2 (\log M)^4 \eps^{\frac{1}{10}}\ell^{4-\abs{\gamma}}\,, && \mbox{for all }   \abs{\gamma} \leq 3\,,
\label{eq:bootstrap:Wtilde:near:0} \\
\abs{\p^\gamma \tilde W(y,s)} &\leq \eps^{\frac{1}{10}} (\log M)^{\abs{\check \gamma}}\,, && \mbox{for all }    \abs{\gamma} = 4\,,
\label{eq:bootstrap:Wtilde4}
\end{align}
\end{subequations}
while at $y=0$,  we assume that
\begin{align}
\abs{\p^\gamma \tilde W(0,s)} &\leq \eps^{\frac14} \,, && \mbox{for all} \qquad \abs{\gamma} = 3\,,\label{eq:bootstrap:Wtilde3:at:0}
\end{align} 
for all $s\geq -\log \eps$.

\begin{lemma}[Lower bound for $\Jcal \p_1 W$]\label{lem:Jp1W}
\begin{align} 
\Jcal \p_1 W(y,s) \geq -1 \quad \mbox{and} \quad \Jcal \p_1 \bar W(y,s) \geq -1 \ \ \text{ for all } y \in \mathbb{R}^3 \,, s \ge -\log \eps    \,. \label{Jp1W}
\end{align} 
\end{lemma}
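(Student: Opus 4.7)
Since $W$ is supported in $\XXX(s)$ by \eqref{eq:support}, outside this set $\p_1 W\equiv 0$ and the bound for $W$ is trivial. Thus my plan is to first establish the inequality for $\bar W$ globally in $y$ from its explicit formula, and then to bootstrap this to $W$ by writing $W = \bar W + \tilde W$ and partitioning $\XXX(s)$ into three regions.

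\emph{Step 1: the bound for $\bar W$.} From \eqref{eq:barW:def} and the identity $W_{\rm 1d}' = -(1+3W_{\rm 1d}^2)^{-1}$, obtained by differentiating $W_{\rm 1d}+W_{\rm 1d}^3=-y_1$, one reads off
\[
\p_1 \bar W(y) = -\frac{\langle\check y\rangle^{-2}}{1+3W_{\rm 1d}^2(y_1/\langle\check y\rangle^3)}, \qquad \abs{\p_1 \bar W(y)} \leq \langle\check y\rangle^{-2}.
\]
From \eqref{tangent}, \eqref{def_f}, the bootstrap $\abs{\phi}\leq M^2\eps$ in \eqref{eq:speed:bound}, and $e^{-s}\leq\eps$, we derive $\abs{f_{,\mu}}^2 \leq M^4\eps^3 \abs{\check y}^2$, hence $\Jcal^2 \leq 1 + 2M^4\eps^3\abs{\check y}^2 \leq \langle\check y\rangle^4$ for $\eps$ sufficiently small, so $\Jcal \leq \langle\check y\rangle^2$ and $\Jcal\abs{\p_1 \bar W(y)} \leq 1$ on $\mathbb{R}^3$.

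\emph{Step 2: the bound for $W$ on $\XXX(s)$.} The support condition gives $\abs{\check x}\leq 2\eps^{\frac16}$ and the sharper estimate $\Jcal - 1 \leq CM^4\eps^3\abs{\check y}^2$. In the near-field $\abs{y} \leq \ell$, the constraints \eqref{eq:constraints} force $\tilde W, \nabla \tilde W, \nabla^2\tilde W$ to vanish at $y=0$, so \eqref{eq:bootstrap:Wtilde:near:0} yields $\abs{\p_1 \tilde W}\leq M\eps^{\frac14}\abs{y}^2+(\log M)^4 \eps^{\frac{1}{10}}\abs{y}^3$; the genericity property $\nabla^2\p_1 \bar W(0) = \mathrm{diag}(6,2,2)$ gives $\p_1 \bar W(y)\geq -1+\tfrac12\abs{y}^2$ for $\abs{y}\leq\ell$ with $\ell$ small, so
\[
\Jcal\p_1 W + 1 \geq \tfrac12\abs{y}^2 - CM^4\eps^3\abs{y}^2 - 2M\eps^{\frac14}\abs{y}^2 - 2(\log M)^4\eps^{\frac{1}{10}}\abs{y}^3 \geq 0
\]
for $\eps$ small depending on $M$. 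In the intermediate region $\ell \leq \abs{y} \leq \LLL$, a refinement of Step 1 gives
\[
\Jcal\p_1 \bar W + 1 \geq \frac{\tfrac12\abs{\check y}^2 + 3\langle\check y\rangle^2 W_{\rm 1d}^2(y_1/\langle\check y\rangle^3)}{\langle\check y\rangle^2(1+3W_{\rm 1d}^2)} \geq c\,\ell^2\eps^{\frac25},
\]
which is verified by considering the subcases $\abs{\check y}\geq \ell/\sqrt 2$ (where the first term in the numerator dominates) and $\abs{y_1}\geq \ell/\sqrt 2, \abs{\check y}<\ell/\sqrt 2$ (where one uses $W_{\rm 1d}^2(r) \gtrsim r^2$ for $\abs{r}\lesssim 1$), with the denominator bounded by $C\eps^{-\frac25}$. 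This margin dominates $\Jcal\abs{\p_1 \tilde W}\leq 2\eps^{\frac{1}{12}}\eta^{-\frac13}\leq 2\eps^{\frac{1}{12}}$ from \eqref{eq:bootstrap:Wtilde1}, since $\eps^{\frac{19}{60}}$ beats any fixed power of $\log M$ for $\eps$ small. Finally, in the far-field $\abs{y}\geq \LLL$, one has $\eta^{\frac13}\geq \LLL^{\frac23}/C \geq 3$, so \eqref{eq:W_decay} gives $\Jcal\abs{\p_1 W}\leq 3\eta^{-\frac13}\leq 1$.

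\emph{Main obstacle.} The delicate region is the intermediate one, where the margin $\Jcal\p_1\bar W + 1$ is only of order $\ell^2\eps^{\frac25}$: since $\ell^2 = (\log M)^{-10}$ is a fixed positive constant depending on $M$ (which in turn depends only on $\alpha$ and $\kappa_0$), the inequality closes by taking $\eps$ small enough depending on $M$, consistent with the bootstrap hierarchy. The bookkeeping needed to match the hierarchy of pointwise bounds on $\tilde W$ (finer near $y=0$) against the degeneracy of $\p_1\bar W + 1$ at the origin is the core mechanism of the argument.
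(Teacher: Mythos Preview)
Your overall strategy—handling $\bar W$ from the explicit formula and then partitioning $\XXX(s)$ into near-, intermediate-, and far-field zones for $W$—is sound, and both the near-field and far-field steps are correct. The paper itself does not give an argument here (it defers to Lemma~4.2 of \cite{BuShVi2019b}), so a self-contained proof along your lines is reasonable. However, the intermediate-region step does not close as written.

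You assert that the margin $\Jcal\p_1\bar W+1\geq c\,\ell^2\eps^{2/5}$ dominates the error $2\eps^{1/12}$ ``since $\eps^{19/60}$ beats any fixed power of $\log M$.'' This inequality is in the wrong direction. You need $c\,\ell^2\eps^{2/5}\geq 2\eps^{1/12}$, i.e.\ $c\,\ell^2\geq 2\eps^{1/12-2/5}=2\eps^{-19/60}$; since $\ell^2=(\log M)^{-10}$ is fixed while $\eps^{-19/60}\to\infty$ as $\eps\to 0$, this fails. The problem is that bounding the denominator $\langle\check y\rangle^2(1+3W_{\rm 1d}^2)$ globally by $C\eps^{-2/5}$ throws away too much structure.

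The repair is a finer case split. Writing $a=\abs{\check y}^2$ and $b=W_{\rm 1d}^2(y_1/\langle\check y\rangle^3)$, your displayed ratio is $\tfrac{a/2+3(1+a)b}{(1+a)(1+3b)}$. If $b\geq 1$ it is $\geq \tfrac{3b}{1+3b}\geq \tfrac34$; if $a\geq 1$ and $b\leq 1$ it is $\geq \tfrac{a}{8(1+a)}\geq\tfrac{1}{16}$. In the remaining region $a,b\leq 1$ the denominator is at most $8$, while from $W_{\rm 1d}+W_{\rm 1d}^3=-r$ one has $\abs{W_{\rm 1d}(r)}\geq \abs r/2$ when $\abs{W_{\rm 1d}}\leq 1$, so $b\geq r^2/4\gtrsim y_1^2$; hence the numerator is $\gtrsim \abs{\check y}^2+y_1^2=\abs{y}^2\geq\ell^2$. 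Thus in fact $\Jcal\p_1\bar W+1\geq c\,\ell^2$ on $\{\ell\leq\abs{y}\leq\LLL\}$ with no $\eps$-loss, and this easily absorbs $2\eps^{1/12}$. (A secondary gap: in your Case~2 you only invoke $W_{\rm 1d}^2(r)\gtrsim r^2$ for $\abs r\lesssim 1$, but $\abs r$ can reach $\LLL$; for $\abs r\gtrsim 1$ simply use $W_{\rm 1d}^2\gtrsim 1$, which lands you in the first case above.)
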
 
The proof of this lemma is given in the proof of Lemma 4.2 in \cite{BuShVi2019b}.

\subsection{$Z$ and $A$ bootstrap}
The bootstrap assumptions on $Z$, $A$, $K$, and their derivatives are:
\begin{align}
 \abs{\partial^{\gamma} Z(y,s)} &
\leq  \begin{cases}
 M^{\frac{1+\abs{\check\gamma}}{2}}  e^{-\frac 32 s },  &\mbox{if } \gamma_1\geq 1\mbox{ and } \abs{  \gamma}=1,2\\
M \eps^{ \frac{2-\abs{\check \gamma}}2 } e^{-\frac {\abs{\check \gamma}}2 s}, & \mbox{if } \gamma_1=0\mbox{ and } \abs{\check \gamma} =0,1,2 \, ,
\end{cases}\label{eq:Z_bootstrap}\\
 \abs{\partial^{\gamma} A(y,s)}&
\leq  \begin{cases}
M e^{-\frac 32 s },  &\mbox{if } \gamma_1= 1\mbox{ and } \abs{\check \gamma}=0\\
M \eps^{ \frac{2-\abs{\check \gamma}}2 } e^{-\frac {\abs{\check \gamma}}2 s}, & \mbox{if } \gamma_1=0\mbox{ and } \abs{\check \gamma}=0,1,2 \, ,
\end{cases}\label{eq:A_bootstrap} \\
 \abs{\partial^{\gamma} K(y,s)}&
\leq  \begin{cases} \eps^ {\frac{1}{4}}  e^{-\frac 32 s },  &\mbox{if } \gamma_1= 1\mbox{ and } \abs{\check \gamma}=0 \\
\eps^ \frac{1}{8}   e^{-\frac{13}{8} s } ,  &\mbox{if } \gamma_1= 1\mbox{ and } \abs{\check \gamma}=1 \\
\eps^ \frac{1}{8}  e^{-2 s } \eta^{- \frac{1}{15} }(y) ,  &\mbox{if }  \gamma_1= 2\mbox{ and } \abs{\check \gamma}=0\\
\eps^{ \frac{1}8 } e^{-\frac {\abs{\check \gamma}}2 s }, & \mbox{if } \gamma_1=0\mbox{ and } \abs{\check \gamma}= 1,2 \,.
\end{cases}\label{eq:S_bootstrap}
\end{align}

\begin{remark}
Since $K$ satisfies a transport equation, the pointwise bound 
\begin{align}
\label{eq:S_bootstrap:*}
\abs{K(y,s)} \leq \eps 
\end{align}
follows directly from the initial datum assumption \eqref{eq:S_bootstrap:IC}.
\end{remark}

\subsection{Further consequences of the bootstrap assumptions}
\label{subsection:energy}
The bootstrap bounds  \eqref{mod-boot}, \eqref{e:space_time_conv}, \eqref{eq:W_decay}--\eqref{eq:bootstrap:Wtilde3:at:0}, \eqref{eq:Z_bootstrap}, and \eqref{eq:A_bootstrap} have a number of consequences, which we collect here for future reference. The first is a global-in-time $L^2$-based Sobolev estimate:
\begin{proposition}[$\dot H^m$ estimate for $W$, $Z$, and $A$]\label{cor:L2}
For  integers  $m \geq 18$ and for a constant $\lambda = \lambda(m)$, 
\begin{subequations} 
\begin{align}
\snorm{Z( \cdot , s)}_{\dot H^m}^2 + \snorm{A( \cdot , s)}_{\dot H^m}^2  + \snorm{K( \cdot , s)}_{\dot H^m}^2
 &\leq  16 \kappa_0^2 \lambda ^{-m}\eps^{-1} e^{-2s} +e^{-s} (1 -  e^{-s}\eps^{-1} ) M^{4m}   \,, \label{eq:AZ-L2} \\
 \snorm{W( \cdot , s)}_{\dot H^m}^2
 &\leq 16 \kappa_0^2 \lambda^{-m} \eps^{-1} e^{-s} +(1 -  e^{-s}\eps^{-1}) M^{4m} \,, \label{eq:W-L2} 
\end{align}
\end{subequations} 
for all $s\ge  -\log \eps$.
\end{proposition}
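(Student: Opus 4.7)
The plan is to close a coupled $\dot H^m$ energy estimate for $W, Z, A, K$ by applying $\p^\gamma$ with $|\gamma| = m$ to the differentiated system \eqref{euler_for_Linfinity}, testing in $L^2$ against $\p^\gamma$ of the corresponding unknown, and summing over multi-indices. After integrating by parts the transport term $(\mathcal{V}_\bullet \cdot \nabla) \p^\gamma \bullet$, each equation yields
\[
\tfrac{1}{2} \tfrac{d}{ds} \snorm{\p^\gamma \bullet}_{L^2}^2 + \int D_\gamma^\bullet \,|\p^\gamma \bullet|^2 \, dy = \int F^{(\gamma)}_\bullet \, \p^\gamma \bullet \, dy,
\]
where $D_\gamma^\bullet$ combines the explicit damping coefficient from \eqref{euler_for_Linfinity} (namely $(3\gamma_1 + \gamma_2 + \gamma_3 - 1)/2$ for $W$ or $(3\gamma_1 + \gamma_2 + \gamma_3)/2$ for $Z, A, K$), the contribution $\beta_\bullet \beta_\tau \gamma_1 \Jcal \p_1 W$ (bounded below via Lemma~\ref{lem:Jp1W}), and $-\tfrac{1}{2} \nabla \cdot \mathcal{V}_\bullet$. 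Using the pointwise bootstrap bounds \eqref{eq:W_decay}--\eqref{eq:S_bootstrap} one verifies $D_\gamma^W \ge \tfrac{1}{2}$ and $D_\gamma^{Z,A,K} \ge 1 - C e^{-s/2}$; the half-unit gap at top order, together with the $e^{-s/2}$ weights distinguishing $\mathcal{V}_{Z,U}$ from $\mathcal{V}_W$ in \eqref{eq:h:def}, is precisely what produces the $e^{-s}$ versus $e^{-2s}$ asymmetry in the two stated bounds.

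For the forcing terms $F_\bullet^{(\gamma)}$ defined in \eqref{eq:F:W:def}--\eqref{eq:F:ZA:def}, I would treat each product $\p^{\gamma-\beta}\Psi \cdot \p_1 \p^\beta \bullet$ via a Moser-type decomposition: when $|\gamma - \beta| \le m/2$ one controls $\p^{\gamma-\beta}\Psi$ in $L^\infty$ via Sobolev embedding (this is where $m \ge 18$ is used) and the pointwise bootstrap, and when $|\gamma-\beta| > m/2$ one exchanges the roles and interpolates using the Gagliardo--Nirenberg inequality of Appendix~\ref{sec:toolshed}. All coefficients involving $V$, $G_\bullet$, $h_\bullet^\mu$, and the modulation functions are controlled in $L^\infty$ by \eqref{mod-boot}, producing at worst a polynomial prefactor $M^C$. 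The explicit $e^{-s/2}$ weights present in $F_Z, F_A$ and in most of $F_W$ feed only into the slower-decay slot on the right-hand side.

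The main obstacle is the baroclinic-torque contribution $\beta_4 \beta_\tau S^2 \Jcal e^s \p_1 K$ in \eqref{eq:FW:def}, whose explicit $e^s$ weight and top-order term $e^s S^2 \Jcal \p^\gamma \p_1 K$ threaten a derivative loss on $K$: integrating by parts in $y_1$ produces $\p_1 \p^\gamma W$, which is not controlled by $\snorm{W}_{\dot H^m}$. The resolution is to perform the top-order estimate for the velocity not on \eqref{euler_for_Linfinity:a} directly but on the modified $(\uu, \pp, \hh)$-system \eqref{UPS-new}, in which the entropy enters the velocity equation only as an undifferentiated coefficient $\hh^2 = e^{K/\gamma}$, the $(\uu, \pp)$ subsystem is symmetric hyperbolic (with weight determined by $\hh$), and $\hh$ itself solves the pure transport equation \eqref{Snew-ss}. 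Thus $\snorm{\hh}_{\dot H^m}$, and hence $\snorm{K}_{\dot H^m}$, is controlled by a standard transport/commutator argument along the flow of $\mathcal{V}_U$ without loss of derivative; translating back via \eqref{eq:UdotN:Sigma}, \eqref{trannies}, and \eqref{eq:thunder:variables} and using the pointwise control of $S, \kappa, \Ncal, \Tcal^\nu$ yields the bounds for $W, Z, A$ in $\dot H^m$, with the $\kappa_0^2$-sized amplitudes being absorbed into the $16\kappa_0^2 \lambda^{-m}$ prefactor.

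Combining the individual differential inequalities with an $\eps$-weight on the $W$-piece produces a schematic Gr\"onwall system of the form
\[
\tfrac{d}{ds}\bigl(\eps \snorm{W}_{\dot H^m}^2\bigr) + \eps \snorm{W}_{\dot H^m}^2 \le M^{4m}, \qquad \tfrac{d}{ds}\snorm{(Z, A, K)}_{\dot H^m}^2 + 2 \snorm{(Z, A, K)}_{\dot H^m}^2 \le e^{-s} M^{4m}.
\]
Integrating from $s_0 = -\log \eps$ with the initial bound \eqref{eq:data:Hk} produces the decaying initial-data terms $16\kappa_0^2 \lambda^{-m} \eps^{-1} e^{-s}$ and $16\kappa_0^2 \lambda^{-m} \eps^{-1} e^{-2s}$ respectively, together with the monotonically saturating forcing terms $(1 - e^{-s}\eps^{-1}) M^{4m}$ and $e^{-s}(1 - e^{-s}\eps^{-1}) M^{4m}$ appearing in \eqref{eq:W-L2}--\eqref{eq:AZ-L2}.
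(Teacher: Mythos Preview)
Your proposal correctly identifies the crucial step---the top-order estimate must be performed on the symmetric-hyperbolic $(\uu,\pp,\hh)$ system \eqref{UPS-new} so that the terms with $m+1$ derivatives cancel after integration by parts---but there is a genuine gap and one misattribution.

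\textbf{The gap: the anisotropic weights $\lambda^{|\check\gamma|}$.} You treat $\lambda$ as a prefactor appearing only in the final inequality, but in the paper it is built into the energy itself,
\[
E_m^2=\sum_{|\gamma|=m}\lambda^{|\check\gamma|}\bigl(\|\p^\gamma\uu\|_{L^2}^2+\|\hh\,\p^\gamma\pp\|_{L^2}^2+\kappa_0^2\|\p^\gamma\hh\|_{L^2}^2\bigr),\qquad \lambda=\tfrac{\delta^2}{16m^2},
\]
and this is not cosmetic. The dangerous commutator term is $\gamma_\mu\,\p_\mu g_U\,\p_1\p^{\gamma-e_\mu}\uu$: since $\check\nabla g_U\sim\beta_1\check\nabla(\Jcal W)$ is $O(1)$ by \eqref{eq:W_decay} (not small, not absorbable by a power of $M$), testing against $\p^\gamma\uu$ and summing over $|\gamma|=m$ contributes $m\,\|\check\nabla g_U\|_{L^\infty}\sim m$ to the coefficient of the energy on the right side. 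The damping is only $m-O(1)$, so without the weight the inequality does not close. The weight works because $\p_1\p^{\gamma-e_\mu}$ carries one fewer $\check\nabla$ than $\p^\gamma$, producing a gain of $\lambda^{1/2}$ and converting the $m$ into $m\lambda^{1/2}\le\delta/4$; see \eqref{FU0.5}--\eqref{FU1}. Your Moser/interpolation dichotomy does not address this: the bad term has exactly one derivative on the transport coefficient and $m$ on the unknown, so it sits outside both halves of your splitting. Incidentally, the constraint $m\ge18$ arises precisely here---it ensures that the net damping $m-\tfrac{9}{2}-(9+21\delta)$ exceeds $2$---and not from Sobolev embedding.

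\textbf{The misattribution.} The $e^{-s}$ versus $e^{-2s}$ asymmetry between \eqref{eq:W-L2} and \eqref{eq:AZ-L2} is not caused by the half-unit difference in the damping coefficients of \eqref{euler_for_Linfinity}. The paper runs a \emph{single} Gr\"onwall inequality on $E_m$, obtaining $E_m^2(s)\le e^{-2(s-s_0)}E_m^2(s_0)+3e^{-s}M^{4m-1}(1-e^{-(s-s_0)})$ (Proposition~\ref{prop:L2}). The asymmetry enters only afterward through the norm comparison of Lemma~\ref{lem:norm:equivalence}: from $W=e^{s/2}(U\cdot\Ncal+\sound-\kappa)$ one has
\[
e^{-s}\|W\|_{\dot H^m}^2+\|Z\|_{\dot H^m}^2+\|A\|_{\dot H^m}^2+\|K\|_{\dot H^m}^2\lesssim\lambda^{-m}E_m^2+e^{-2s},
\]
and the prefactor $e^{-s}$ on the $W$ piece is exactly what distinguishes \eqref{eq:W-L2} from \eqref{eq:AZ-L2}. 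Your pair of decoupled schematic inequalities (damping $1$ on $\eps\|W\|^2$, damping $2$ on $\|(Z,A,K)\|^2$) does not reflect this structure, and in fact integrating your $W$-inequality produces an extraneous factor of $\eps^{-1}$ on the $M^{4m}$ term.
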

The proof of Proposition~\ref{cor:L2}, which will be given  at the end of Section \ref{sec:energy},  relies only upon the initial data assumption \eqref{eq:data:Hk}, on the support bound \eqref{e:space_time_conv},  on  $L^ \infty $ estimates for $\p^\gamma W$, $\p^\gamma Z$, and $\p^\gamma K$ when 
$\abs{\gamma} \le 2$, on $\p^\gamma A$ pointwise bounds for $\abs{\gamma}\leq 1$, and on $\check \nabla^2 A$ bounds. That is, Proposition~\ref{cor:L2} follows directly from \eqref{eq:data:Hk} and the bootstrap assumptions \eqref{mod-boot}, \eqref{e:space_time_conv},  \eqref{eq:W_decay}, \eqref{eq:Z_bootstrap},  and \eqref{eq:A_bootstrap}.  

The reason we state Proposition~\ref{cor:L2} at this stage of the analysis  is that the $\dot{H}^m$ estimates and linear interpolation yield useful information for higher order 
derivatives of $(W,Z,A,K)$, which are  needed in order to close the bootstrap assumptions for high order derivatives. These bounds are summarized as:
\begin{lemma}\label{eq:higher:order:ests}
For integers $m \ge 18$, we have that
\begin{align}\label{eq:A:higher:order}
 \abs{\partial^{\gamma} A(y,s)}&
\les  \begin{cases}
 e^{-( \frac 32  - \frac{2\abs{\gamma}-1}{2m-5})s} ,  &\mbox{if } \gamma_1\geq 1\mbox{ and } \abs{ \gamma}=2,3\\
  e^{- (1 - \frac{\abs{\gamma}-1}{2m-7})s}    , & \mbox{if } \abs{\gamma}=3,4,5 \, ,
\end{cases} 
\end{align}

\begin{align}\label{eq:Z:higher:order}
 \abs{\partial^{\gamma} Z(y,s)}&
\les  \begin{cases}
 e^{- (\frac32 -\frac{3}{2m-7})s} ,  &\mbox{if } \gamma_1\geq 1\mbox{ and } \abs{ \gamma}=3\\
 e^{- (1 - \frac{\abs{\gamma}-1}{2m-7})s}    , & \mbox{if } \abs{\gamma}=3,4,5 \, ,
\end{cases} 
\end{align}

\begin{align}
 \abs{\partial^{\gamma} W(y,s)}&
\les  \begin{cases}
 e^{\frac{2s}{2m-7}} \eta^{-\frac 13} ,  &\mbox{if } \gamma_1 =1 \mbox{ and } \abs{\check \gamma}=2\\
 e^{\frac{s}{2m-7}} \eta^{-\frac 16} ,  &\mbox{if } \gamma_1=0\mbox{ and } \abs{\check \gamma}=3  \\
e^{\frac{3s}{2m-7}}  \eta^{-\frac 13}\ppp^ {\frac{1}{4}}   ,  &\mbox{if } \gamma_1\ge 2\mbox{ and } \abs{ \gamma}=3  \,,
\end{cases} \label{eq:W:GN}
\end{align} 

\begin{align}
\label{eq:K:higher:order}
 \abs{\partial^{\gamma} K(y,s)}&
\les  \begin{cases}
e^{-(\frac{13}{8}  - \frac{9}{4(2 m-7 )})s}  ,  &\mbox{if } \gamma_1= 1\mbox{ and } \abs{\check \gamma}=2\\
 e^{-(2-{\frac{4}{2m-7}} )s}  \eta^{ -\frac{1}{15} },  &\mbox{if }   \gamma_1\ge 2\mbox{ and } \abs{ \gamma}=3\\
e^{- (1 - \frac{\abs{\gamma}-2}{2m-7})s}    , & \mbox{if } \abs{\gamma}=3,4,5 \, ,
\end{cases} 
\end{align}
\end{lemma}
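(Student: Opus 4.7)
My plan is to prove every bound in Lemma~\ref{eq:higher:order:ests} by a Gagliardo--Nirenberg-type interpolation (the one recorded in Appendix~\ref{sec:toolshed}), trading a low-order pointwise bootstrap bound against the global $\dot H^m$ control provided by Proposition~\ref{cor:L2}. Since $s\ge -\log\eps$ implies $e^{-s}\eps^{-1}\le 1$, the bounds \eqref{eq:AZ-L2}--\eqref{eq:W-L2} reduce to the clean statements $\|Z\|_{\dot H^m}+\|A\|_{\dot H^m}+\|K\|_{\dot H^m}\les e^{-s/2}$ and $\|W\|_{\dot H^m}\les 1$, with constants depending only on $m,\kappa_0,M$. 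These are available at this point in the bootstrap argument because Proposition~\ref{cor:L2} relies only on \eqref{eq:data:Hk} together with the pointwise bootstraps at order $\le 2$, which are already assumed.

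For each of the unweighted bounds \eqref{eq:A:higher:order}, \eqref{eq:Z:higher:order}, and \eqref{eq:K:higher:order}, I would apply the standard $\mathbb{R}^3$ interpolation
$\|\partial^\gamma f\|_{L^\infty}\les \|\partial^{\gamma_0}f\|_{L^\infty}^{1-\theta}\|f\|_{\dot H^m}^\theta$ with $|\gamma|-|\gamma_0| = \theta(m-|\gamma_0|-\tfrac32)$, selecting in each case the low-order anchor $\gamma_0$ that delivers the fastest decay: $|\gamma_0|=1$ with $\|\partial_1 A\|_{L^\infty},\|\partial_1 Z\|_{L^\infty}\les e^{-3s/2}$ from \eqref{eq:A_bootstrap}--\eqref{eq:Z_bootstrap} for the first lines; and $|\gamma_0|=2$ with $\|\check\nabla^2 A\|_{L^\infty}, \|\check\nabla^2 Z\|_{L^\infty}\les e^{-s}$ for the second lines. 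The resulting decay rates $\tfrac32-\tfrac{|\gamma|-1}{m-5/2}$ and $1-\tfrac{|\gamma|-2}{m-7/2}$ are slightly stronger than those claimed, the gap absorbing the $M^{2m\theta}$ constant produced by $\|f\|_{\dot H^m}^\theta$. The estimate \eqref{eq:K:higher:order} is proved identically from \eqref{eq:S_bootstrap}, the prefactors $\eps^{1/4}$ and $\eps^{1/8}$ only improving the bound.

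The \emph{main obstacle} is the weighted bound \eqref{eq:W:GN}, because the second-order bootstraps for $W$ in \eqref{eq:W_decay} carry $y$-dependent weights ($\eta^{-1/6}$, $\eta^{-1/3}$, or $\eta^{-1/3}\ppp^{1/4}$ according to direction) while $\|W\|_{\dot H^m}\les 1$ is unweighted. My plan is to interpolate $\partial^\gamma W = \partial^{\gamma-\gamma_0}(\partial^{\gamma_0}W)$ with $|\gamma_0|=2$, $|\gamma-\gamma_0|=1$, and $\theta = 2/(2m-7)$, choosing $\gamma_0$ according to the direction profile of $\gamma$ --- purely tangential when $\gamma_1=0$, mixed when $\gamma_1=1$, purely radial when $\gamma_1\ge 2$ --- so that the corresponding second-order bootstrap in \eqref{eq:W_decay} is the one invoked. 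The interpolation then yields a bound of the form (weight at $\gamma_0$)$^{1-\theta}$ times harmless $M$-dependent constants. To convert this to the cleaner weight appearing in \eqref{eq:W:GN}, I use the support bound $\eta\le 40\eps e^{3s}$ from \eqref{e:space_time_conv}, which allows me to estimate any residual factor $\eta^{w\theta}\les e^{3sw\theta}$; this is precisely what produces the claimed growth factors $e^{ks/(2m-7)}$ with $k\in\{1,2,3\}$. The case-by-case bookkeeping is delicate --- in particular the $\ppp^{1/4}$ weight in the $\gamma_1\ge 2$ case must be propagated using $\ppp\ge \eta^{-1}$ combined with the support estimate --- but no new ideas are needed beyond the weighted interpolation already used to close the second-order bootstrap bounds.
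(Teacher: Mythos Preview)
Your plan for the unweighted bounds on $A$, $Z$, and the first and third lines of \eqref{eq:K:higher:order} is correct and is exactly how the paper (via \cite{BuShVi2019b}) proceeds: interpolate a low-order pointwise bootstrap against the $\dot H^m$ control from Proposition~\ref{cor:L2} using Lemma~\ref{lem:GN}.

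The gap is in the weighted bounds --- all three cases of \eqref{eq:W:GN} and the second line of \eqref{eq:K:higher:order}. Your claim that ``the interpolation then yields a bound of the form (weight at $\gamma_0$)$^{1-\theta}$'' is false: Gagliardo--Nirenberg applied to $\p^{\gamma_0}W$ yields only the \emph{global} estimate $\|\p^\gamma W\|_{L^\infty}\les \|\p^{\gamma_0}W\|_{L^\infty}^{1-\theta}\|W\|_{\dot H^m}^\theta$, and $\|\p^{\gamma_0}W\|_{L^\infty}$ is a number ($\les M^{1/3}$), not the pointwise weight $\eta^{-1/3}(y)\ppp^{1/4}(y)$. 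No manipulation with the support bound $\eta\le 40\eps e^{3s}$ can recover spatial decay from a constant. The missing idea --- and this is what the paper does explicitly for the $\gamma_1\ge 2$ case of \eqref{eq:W:GN} and for the second line of \eqref{eq:K:higher:order} --- is to apply Lemma~\ref{lem:GN} to the \emph{weighted} function, e.g.\ $g=\eta^{1/3}\ppp^{-1/4}\p_{11}W$, which is uniformly bounded by the bootstrap \eqref{eq:W_decay}. Then $\|\nabla g\|_{L^\infty}\les \|g\|_{L^\infty}^{1-\theta}\|g\|_{\dot H^{m-2}}^\theta$ is a genuine pointwise bound on $\nabla g(y)$, and the product rule $\eta^{1/3}\ppp^{-1/4}\nabla\p_{11}W = \nabla g - \nabla(\eta^{1/3}\ppp^{-1/4})\p_{11}W$ recovers the weighted estimate on $\nabla\p_{11}W$. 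The price is that you must now control $\|g\|_{\dot H^{m-2}}$ via Leibniz, and it is the factor $\|D^j(\eta^{1/3}\ppp^{-1/4})\|_{L^p(\XXX(s))}\les \eps^{1/3}e^s$ on the support that produces the growth $e^{ks/(2m-7)}$. Finally, your reference to ``the weighted interpolation already used to close the second-order bootstrap bounds'' is misplaced: those are closed in Section~\ref{sec:W} by composing with Lagrangian trajectories and Gr\"onwall, not by interpolation.
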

\begin{proof}[Proof of Lemma~\ref{eq:higher:order:ests}]
The bounds for \eqref{eq:A:higher:order} and \eqref{eq:Z:higher:order}, as well as the first two estimates in \eqref{eq:W:GN}
are proven in Lemma 4.4 in \cite{BuShVi2019b}.

We then consider the third estimate in \eqref{eq:W:GN} and hence  estimate $\partial^{\gamma} W(y,s)$ for the case $\gamma_1\geq 2 $ and $\abs{\gamma}=3$.  We write
\begin{align*}
\eta^{\frac{1}{3}} \ppp^{-{\frac{1}{4}} } \nabla \p_{11}W
=\underbrace{\nabla  \left( \eta^{\frac{1}{3}} \ppp^{-{\frac{1}{4}} }  \p_{11}W\right)}_{=:I}
-\underbrace{\nabla(  \eta^{\frac{1}{3}} \ppp^{-{\frac{1}{4}} } ) \; \p_{11}W}_{=:II} \,.
\end{align*}
Since $\abs{\nabla  (  \eta^{\frac{1}{3}} \ppp^{-{\frac{1}{4}} }) } \les  \eta^{  \frac 13}$, it follows from  \eqref{eq:W_decay} that
\begin{align*}
\abs{II} \les M^ {\frac{1}{3}}  \ppp^{\frac{1}{4}}  \les M\,.
\end{align*}

Now we apply Lemma~\ref{lem:GN} to the function $\eta^{\frac{1}{3}} \ppp^{-\frac 14}  \p_{11}W$, appeal to the estimate \eqref{eq:W_decay}, and to the Leibniz rule to obtain that
\begin{align*}
\abs{I} 
\les \norm{\eta^{\frac{1}{3}} \ppp^{-{\frac{1}{4}} }  \p_{11}W}_{\dot H^{m-2}}^{\frac{2}{2m-7}}\norm{\eta^{\frac{1}{3}} \ppp^{-{\frac{1}{4}} }  \p_{11}W}_{L^{\infty}}^{\frac{2m-9}{2m-7}} 
&\les M \norm{\eta^{\frac{1}{3}} \ppp^{-{\frac{1}{4}} }  \p_{11}W}_{\dot H^{m-2}}^{\frac{2}{2m-7}}\, ,
\end{align*}
where we have used that $m\ge 18$ for the last inequality as is required by Proposition~\ref{cor:L2}.   We next estimate the $\dot H^{m-2}$ norm of $\eta^{\frac{1}{3}} \ppp^{-{\frac{1}{4}} }  \p_{11}W$.   To do so, we shall use the fact that
$W(\cdot,s)$ has support in the set $\XXX(s)$ defined in \eqref{eq:support}. We find that
\begin{align}
\norm{ \eta^{\frac{1}{3}} \ppp^{-{\frac{1}{4}} }  \p_{11}W}_{\dot H^{k-2}} 
&\les \sum_{m'=0}^{m-2}\norm{D^{m-m'-2}\left(\eta^{\frac{1}{3}} \ppp^{-{\frac{1}{4}} } \right)D^{m'}\p^{\gamma''}W}_{L^2} \notag\\
&\les \sum_{m'=0}^{m-2}\norm{D^{m-m'-2}\left(\eta^{\frac{1}{3}} \ppp^{-{\frac{1}{4}} } \right)}_{L^{\frac{2(m-1)}{m-2-m'}}(\XXX(s))} \norm{D^{m'}\p^{\gamma''}W}_{L^\frac{2(m-1)}{m'+1}} \notag\\
&\les \sum_{m'=0}^{m-2}\norm{D^{m-m'-2}\left(\eta^{\frac{1}{3}} \ppp^{-{\frac{1}{4}} } \right)}_{L^{\frac{2(m-1)}{m-2-m'}}(\XXX(s))} \norm{\nabla W}_{L^\infty}^{1-\frac{m'+1}{m-1}} \norm{W}_{\dot{H}^m}^{\frac{m'+1}{m-1}} \, ,\label{e:Fauci}
\end{align}
Using \eqref{eq:W_decay} and Proposition~\ref{cor:L2}, the $W$ terms are bounded as 
$$
\norm{\nabla W}_{L^\infty}^{1-\frac{m'+1}{m-1}} \norm{W}_{\dot{H}^m}^{\frac{m'+1}{m-1}}  \les  M^{2m}
$$
for all $m \in \{0,\dots,m-2\}$.
Moreover, using that $\abs{D^{m-m'-2}(\eta^{\frac{1}{3}} \ppp^{-{\frac{1}{4}} } )} \les \eta^{\frac{1}{3}}$ together with \eqref{e:space_time_conv},
we have that
\begin{align}
\norm{D^{m-m'-2}(\eta^{\frac{1}{3}} \ppp^{-{\frac{1}{4}} } )}_{L^{\frac{2(m-1)}{m-m'-2}}(\XXX(s))}\les \eps^{\frac13} e^{s}  \,,   \label{W:ng:rad2}
\end{align} 
with the usual abuse of notation $L^{\frac{2(m-1)}{m-m'-2}}=L^{\infty}$ for $m'=m-2$. Combining the above estimates, we obtain the inequality
\begin{align}
\abs{I} &\les M^{2m} \left( \eps^{\frac13} e^{{\frac{11}{8}}s}\right)^{\frac{2}{2m-7}} \les  e^{\frac{2s}{2(2m-7)}}  \label{W:ng:rad3}
\end{align}
for $\eps$  sufficiently small. From the above estimate,  we obtain the third inequality in \eqref{eq:W:GN}.

We next consider the bounds \eqref{eq:K:higher:order}, and we begin with the case that $\gamma_1 \geq 1$ and $\abs{\check\gamma} =2$.
Applying Lemma~\ref{lem:GN}  to the function $\p_1\check\nabla K$,  and using \eqref{eq:S_bootstrap} and Proposition~\ref{cor:L2}, we have that
\begin{align*}
\norm{\partial^{\gamma} K}_{L^\infty} 
&\les \norm{K}_{\dot H^{m}}^{\frac{2}{2m-7}} \norm{\partial_1\check\nabla K}_{L^\infty}^{\frac{2m -9}{2m-7}}
\les  \left(M^{2m} e^{-\frac s2} \right)^{\frac{2}{2m-7}}  \left(\eps^{\frac18} e^{-\frac{13}{8} s}\right)^{\frac{2m -9}{2m-7}}
\les  e^{- \frac{109-26m}{8(2 m-7) }s}  \,.
\end{align*}

We next consider the second inequality in \eqref{eq:K:higher:order} and proceed to estimate 
$\abs{\eta^\frac{1}{15}  \nabla \p_{11}K}$.   We write
\begin{align*}
\eta^\frac{1}{15}   \nabla \p_{11}K
=\underbrace{\nabla  \left( \eta^\frac{1}{15}   \p_{11}K\right)}_{=:I}
-\underbrace{\nabla \eta^\frac{1}{15} \; \p_{11}K}_{=:II} \,.
\end{align*}
Since $\sabs{\nabla   \eta^\frac{1}{15}  } \le 1$, it  follows from  \eqref{eq:S_bootstrap} that
\begin{align*}
\abs{II} \les e^{-2s}\,.
\end{align*}
By Lemma~\ref{lem:GN} and \eqref{eq:S_bootstrap}, 
\begin{align*}
\abs{I} 
\les \norm{ \eta^\frac{1}{15}   \p_{11}K}_{\dot H^{m-2}}^{\frac{2}{2m-7}}\norm{ \eta^\frac{1}{15}  \p_{11}K}_{L^{\infty}}^{\frac{2m-9}{2m-7}} 
&\les M e^{-(2-{\frac{4}{2m-7}} )s} \norm{ \eta^\frac{1}{15}   \p_{11}K}_{\dot H^{m-2}}^{\frac{2}{2m-7}}\, ,
\end{align*}
Following the calculation \eqref{e:Fauci},  we have that
\begin{align*}
\norm{\eta^{\frac{1}{15}}  \p_{11}K}_{\dot H^{m-2}} 
&\les \sum_{m'=0}^{m-2}\norm{D^{m-m'-2}\eta^\frac{1}{15}  }_{L^{\frac{2(m-1)}{m-2-m'}}(\XXX(s))}
 \norm{\nabla K}_{L^\infty}^{1-\frac{m'+1}{m-1}} \norm{K}_{\dot{H}^m}^{\frac{m'+1}{m-1}} \, .
\end{align*}
Applying   \eqref{e:space_time_conv},  we obtain that
\begin{align*} 
\norm{D^{m-m'-2} \eta^\frac{1}{15}  }_{L^{\frac{2(m-1)}{m-m'-2}}(\XXX(s))}\les \eps^\frac{1}{15}   e^{\frac{1}{5}  s}  \,.
\end{align*} 
From \eqref{eq:S_bootstrap} and Proposition \ref{cor:L2}, 
\begin{align*} 
 \norm{\nabla K}_{L^\infty}^{1-\frac{m'+1}{m-1}} \norm{K}_{\dot{H}^m}^{\frac{m'+1}{m-1}} 
 \le e^{-{\frac{s}{2}} }   \,.
\end{align*} 
From the above estimates, together with \eqref{eq:S_bootstrap}, we determine that
\begin{align*}
\abs{I} 
\les M e^{-(2-{\frac{4}{2m-7}} )s}   \left(  \eps^\frac{1}{15}  e^{-{\frac{3}{10}}  } \right)^{\frac{2}{2m-7}} \les e^{-(2-{\frac{4}{2m-7}} )s}  \,.
\end{align*}
This estimate establishes the second bound in \eqref{eq:K:higher:order}.
For $\abs{\gamma} \in \{3,4,5\}$ we apply Lemma~\ref{lem:GN} to $\nabla^2 K$, and together with \eqref{eq:S_bootstrap} and Proposition~\ref{cor:L2}, 
we find that
\begin{align}
\norm{\partial^{\gamma}  K }_{L^\infty}
\les \norm{K}_{\dot H^{m}}^{\frac{2\abs{\gamma}-4}{2m-7}} \norm{\nabla^2 K}_{L^\infty}^{\frac{2m -3 -2\abs{\gamma}}{2m-7}}
\les \left(M^{2m} e^{-\frac s2} \right)^{\frac{2\abs{\gamma}-4}{2m-7}}   \left( \eps^{\frac18}e^{-s}\right)^{\frac{2m -3 -2\abs{\gamma}}{2m-7}}
\les   e^{- (1 - \frac{\abs{\gamma}-2}{2m-7})s} \, .  
\notag
\end{align}
where we have assumed that $\eps$ is taken sufficiently small.
\end{proof}

\subsection{Bounds for $U \cdot \Ncal$ and $S$}
\label{sec:spiny-anteater0}
Finally, we note that as a consequence of the definitions \eqref{eq:UdotN:Sigma}, we have the following estimates on $U\cdot \Ncal $ and $\sound$.

\begin{lemma}\label{lem:US_est}
For $y \in \XXX(s)$ we have
\begin{equation}
\abs{\p^\gamma U\cdot\Ncal}+\abs{\p^\gamma \sound}
\les  \begin{cases}
 M^{\frac 14} , & \mbox{if } \abs{\gamma}  = 0  \\
 M^{\frac{1}{3}} e^{-\frac s2}\eta^{-\frac13}, & \mbox{if }  \gamma_1=1 \mbox{ and } \abs{\check \gamma}=0 \\
e^{-\frac s2}, & \mbox{if } \gamma_1=0 \mbox{ and }  \abs{\check \gamma} = 1\\
M^{\frac{2}{3}} e^{-\frac s2}\eta^{-\frac13}, & \mbox{if }  \gamma_1 =1 \mbox{ and } \abs{ \check\gamma}=1 \\
M^{\frac{2}{3}} e^{-\frac s2}\eta^{-\frac13}\ppp^ {\frac{1}{4}} , & \mbox{if }  \gamma_1 =2 \mbox{ and } \abs{ \check\gamma}=0 \\
 M  e^{-\frac s2}\eta^{-\frac16}, & \mbox{if } \gamma_1 = 0 \mbox{ and } \abs{\check \gamma} = 2\\
e^{\left(-\frac12+\frac{3}{2m-7}\right)s} \eta^{-\frac13},  &\mbox{if } \gamma_1=1\mbox{ and } \abs{\check \gamma}=2\\
e^{\left(-\frac12+\frac{1}{2m-7}\right)s}\eta^{-\frac16} ,  &\mbox{if } \gamma_1=0\mbox{ and } \abs{ \check\gamma}=3 \\
e^{\left(-\frac12+\frac{3}{2m-7}\right)s} \eta^{-\frac 13}\ppp^{\frac{1}{4}}   ,  &\mbox{if } \gamma_1\ge 2\mbox{ and } \abs{ \gamma}=3 
\end{cases}
\label{eq:US_est}
\,.
\end{equation}
Additionally, for 
$\abs{y} \leq \ell$ and $\abs{\gamma} = 4$ we have the bound 
\[
 \abs{\p^\gamma U\cdot\Ncal}+\abs{\p^\gamma \sound} \les  e^{- \frac{s}{2}}\,. 
\]
\end{lemma}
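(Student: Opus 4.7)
The strategy is to reduce every estimate to the identities in \eqref{eq:UdotN:Sigma}, namely
\[
U\cdot \Ncal = \tfrac 12(\kappa + e^{-\frac s2}W + Z)\,, \qquad \sound = \tfrac 12(\kappa + e^{-\frac s2}W - Z)\,,
\]
and then to combine the pointwise bounds on $W$ from \eqref{eq:W_decay}, \eqref{eq:W:GN}, on $Z$ from \eqref{eq:Z_bootstrap}, \eqref{eq:Z:higher:order}, and on $\tilde W$ from \eqref{eq:tildeW_decay2} (plus the known bounds on $\bar W$), with the support inclusion $\XXX(s)\subset \{\eta(y)\leq 40\eps e^{3s}\}$ from \eqref{e:space_time_conv}.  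Since $\kappa$ is independent of $y$, the $\kappa$ term only contributes when $\abs{\gamma}=0$; in that case the bootstrap \eqref{mod-boot} gives $\kappa \leq 2\kappa_0$, while $e^{-\frac s2}\abs{W}\leq (1+\eps^{\frac{1}{20}})(40\eps)^{\frac 16}$ by \eqref{eq:W_decay} and \eqref{e:space_time_conv}, and $\abs{Z}\leq M\eps$ by \eqref{eq:Z_bootstrap}. Requiring $M\geq M(\kappa_0)$ large absorbs $\kappa_0$ into $M^{\frac 14}$, yielding the $\abs{\gamma}=0$ line.

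For $\abs{\gamma}\geq 1$ we have $\p^\gamma(U\cdot \Ncal) = \tfrac 12(e^{-\frac s2}\p^\gamma W + \p^\gamma Z)$ and similarly $\p^\gamma \sound = \tfrac 12(e^{-\frac s2}\p^\gamma W - \p^\gamma Z)$, so the proof amounts to checking that the $\p^\gamma Z$ contribution is absorbed into the $e^{-\frac s2}\p^\gamma W$ contribution in each case.  For the entries with $\abs{\gamma}\leq 2$ one reads off the $W$ bound from \eqref{eq:W_decay} (e.g.\ for $\gamma_1=1$, $\abs{\check\gamma}=0$ we get $e^{-\frac s2}\eta^{-\frac 13}$, absorbed into $M^{\frac 13}e^{-\frac s2}\eta^{-\frac 13}$), and the $Z$ bound from \eqref{eq:Z_bootstrap} (e.g.\ $\abs{\p_1 Z}\leq M^{\frac 12}e^{-\frac{3s}{2}}$).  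The absorption uses \eqref{e:space_time_conv} to convert the extra factor $e^{-s}$ into $(40\eps)^{\frac 13}\eta^{-\frac 13}$ (or $(40\eps)^{\frac 16}\eta^{-\frac 16}$ where appropriate), and then $\eps$ is taken small depending on $M$.  The same mechanism handles $\gamma_1=2,\abs{\check\gamma}=0$ using the $\ppp^{\frac 14}\eta^{-\frac 13}$ weight from \eqref{eq:W_decay} (noting $\ppp^{\frac 14}\geq (1+\eta)^{-\frac 14}\geq c\eta^{-\frac 14}$, hence $\eta^{-\frac 13}\ppp^{\frac 14}\geq c\eta^{-\frac 13}$).

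For the $\abs{\gamma}=3$ entries one proceeds identically, using the Gagliardo--Nirenberg style bounds from \eqref{eq:W:GN} for $\p^\gamma W$ and from \eqref{eq:Z:higher:order} for $\p^\gamma Z$; in each case the $Z$ decay beats $e^{-\frac s2}$ by at least $e^{-\frac s2+\OO(s/m)}$, which is absorbed into the factor of $\eta^{-\frac 13}\ppp^{\frac 14}$ (respectively $\eta^{-\frac 13}$, $\eta^{-\frac 16}$) using \eqref{e:space_time_conv} and $\eps \ll 1$ with room to spare.  Finally, for $\abs{\gamma}=4$ with $\abs{y}\leq\ell$, one splits $W=\bar W + \tilde W$; the $\tilde W$ contribution is controlled by \eqref{eq:bootstrap:Wtilde4}, which gives $\abs{\p^\gamma \tilde W}\leq \eps^{\frac{1}{10}}(\log M)^{\abs{\check\gamma}}\les 1$, while $\abs{\p^\gamma \bar W}\les 1$ on $\abs{y}\leq\ell$ follows from the explicit form of $\bar W$ in \eqref{eq:barW:def} and the smoothness of $W_{\rm 1d}$ on compact sets; multiplying by $e^{-\frac s2}$ and adding the $\p^\gamma Z$ contribution (controlled by \eqref{eq:Z:higher:order} with $\abs{\gamma}=4$) gives the required $e^{-\frac s2}$ bound.

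The only subtlety is the bookkeeping required to verify, in each of the nine cases in \eqref{eq:US_est}, that the decay rate of $\p^\gamma Z$ is fast enough to be dominated by $e^{-\frac s2}$ times the prescribed $\eta$-weight on $\p^\gamma W$ after using \eqref{e:space_time_conv}; there is no analytic obstacle, since in every case $\p^\gamma Z$ carries at least one full factor of $e^{-s}$ more than $e^{-\frac s2}\p^\gamma W$, and $e^{-s}\les \eps^{\frac 13}\eta^{-\frac 13}$ on $\XXX(s)$ by \eqref{e:space_time_conv} (and more generally $e^{-s}\les \ppp^{q/3}\eta^{-(2-q)/3}$ for $1<q\leq 2$ via \eqref{eq:phi:interp}).
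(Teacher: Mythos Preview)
Your proof is correct and takes essentially the same approach as the paper: both reduce to the identity \eqref{eq:UdotN:Sigma}, bound $\abs{\p^\gamma(U\cdot\Ncal)}\les \abs{\kappa}{\bf 1}_{\abs{\gamma}=0}+e^{-\frac s2}\abs{\p^\gamma W}+\abs{\p^\gamma Z}$, and then invoke \eqref{eq:W_decay}, \eqref{eq:bootstrap:Wtilde4}, \eqref{eq:Z_bootstrap}, Lemma~\ref{eq:higher:order:ests}, and \eqref{e:space_time_conv}. One small correction: in the $\gamma_1=2$, $\abs{\check\gamma}=0$ case your parenthetical ``$\eta^{-\frac13}\ppp^{\frac14}\geq c\eta^{-\frac13}$'' is not right (since $\ppp$ is not bounded below); the clean way to absorb the $Z$ term is to use $\ppp\geq e^{-3s}\eta$, which together with \eqref{e:space_time_conv} gives $e^{-s}\les \eps^{\frac{1}{12}}\eta^{-\frac13}\ppp^{\frac14}$ on $\XXX(s)$.
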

\begin{proof}[Proof of Lemma~\ref{lem:US_est}]
We  shall only establish the bounds for  $\p^\gamma U\cdot\Ncal$ as the  estimates for $\p^\gamma \sound$ are obtained in the identical fashion.
Since$\abs{\kappa} \leq M^{\frac 14}$,  it follows from \eqref{eq:UdotN:Sigma}  that 
$\abs{\p^\gamma U\cdot\Ncal}\les \abs{\kappa} {\bf 1}_{\abs{\gamma}=0}+e^{-\frac s 2}\abs{\p^\gamma W}+\abs{\p^\gamma Z}$.  The desired bounds
are obtained by an application of 
\eqref{eq:W_decay}, \eqref{eq:bootstrap:Wtilde4}, \eqref{eq:Z_bootstrap}, Lemma \ref{eq:higher:order:ests} and \eqref{e:space_time_conv}.
\end{proof}

\begin{proposition}[ $L ^ \infty $ bound for the sound speed]\label{prop:sound}
We have that
\begin{align} \label{Sigma-bound}
\snorm{\sound( \cdot , s) - \tfrac{\kappa_0}{2} }_{ L^ \infty } \le \eps^ {\frac{1}{8}}  \ \ \text{ for all } \ s \ge -\log\eps  \,.
\end{align} 
\end{proposition}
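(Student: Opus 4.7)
The plan is to exploit the algebraic identity \eqref{eq:UdotN:Sigma}, which gives
\[
\sound(y,s) - \tfrac{\kappa_0}{2} = \tfrac{1}{2}\bigl(\kappa(t) - \kappa_0\bigr) + \tfrac{1}{2} e^{-\frac{s}{2}} W(y,s) - \tfrac{1}{2} Z(y,s),
\]
so that the bound is reduced to controlling each of the three terms pointwise.

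First I would bound $|\kappa - \kappa_0|$. Since $\kappa(-\eps) = \kappa_0$ by \eqref{mod-ic} and $|\dot \kappa| \leq e^{-3s/10}$ by \eqref{eq:acceleration:bound}, we change variables from $t$ to $s$ using $\tfrac{ds}{dt} = \beta_\tau e^s$ together with $\tfrac{1}{2} \leq \beta_\tau \leq 2$ from \eqref{eq:beta:tau} to obtain
\[
|\kappa(t) - \kappa_0| \le \int_{-\log \eps}^{s} |\dot \kappa(s')| \beta_\tau^{-1} e^{-s'}\, ds' \les \int_{-\log \eps}^\infty e^{-\frac{13}{10} s'}\, ds' \les \eps^{\frac{13}{10}}.
\]

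Second, I handle the $e^{-s/2} W$ term. By the support bootstrap \eqref{eq:support}, for any $y$ in the support of $W$ we have from \eqref{e:space_time_conv} that $\eta^{1/6}(y) \leq 2\eps^{1/6} e^{s/2}$. Combining this with the zeroth-order bound $|W| \leq (1 + \eps^{1/20}) \eta^{1/6}$ from \eqref{eq:W_decay} yields
\[
e^{-\frac{s}{2}} |W(y,s)| \leq 2(1 + \eps^{\frac{1}{20}})\,\eps^{\frac{1}{6}} \leq 3 \eps^{\frac{1}{6}}
\]
uniformly for $y \in \XXX(s)$; outside this set $W$ vanishes identically.

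Third, the bound $|Z| \leq M \eps$ is immediate from \eqref{eq:Z_bootstrap} with $\gamma = 0$. Collecting the three estimates,
\[
\snorm{\sound(\cdot,s) - \tfrac{\kappa_0}{2}}_{L^\infty} \leq C \eps^{\frac{13}{10}} + \tfrac{3}{2} \eps^{\frac{1}{6}} + \tfrac{M}{2} \eps \leq \eps^{\frac{1}{8}},
\]
the last inequality holding for $\eps$ sufficiently small relative to $M$, since $\frac{1}{6} > \frac{1}{8}$. There is no substantive obstacle here: the proof is a direct bookkeeping exercise combining the modulation bounds \eqref{mod-boot} with the bootstrap assumptions on $W$ and $Z$ and the space-time support relation \eqref{e:space_time_conv}. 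The only thing to be careful about is the conversion between $t$ and $s$ derivatives in the $\kappa$ estimate, which is handled by the factor $\beta_\tau^{-1} e^{-s}$.
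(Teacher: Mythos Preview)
Your proof is correct and follows essentially the same approach as the paper: decompose via \eqref{eq:UdotN:Sigma} and bound the three pieces using \eqref{mod-boot}, \eqref{e:space_time_conv}, \eqref{eq:W_decay}, and \eqref{eq:Z_bootstrap}. One inconsequential slip: the change of variables is $\tfrac{ds}{dt} = \beta_\tau^{-1} e^{s}$ (equivalently $dt = \beta_\tau e^{-s}\,ds$), not $\beta_\tau e^{s}$, but since $\beta_\tau \approx 1$ this does not affect the estimate.
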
 
\begin{proof}[Proof of Proposition \ref{prop:sound}]
By \eqref{eq:UdotN:Sigma}, we have that
$$
\sound( \cdot , s) - \tfrac{\kappa_0}{2}= \tfrac{\kappa-\kappa_0}{2} + {\tfrac{1}{2}} (e^ {-\frac{s}{2}} W-Z) \,.
$$
By \eqref{mod-boot}, \eqref{e:space_time_conv},  \eqref{eq:W_decay},  and \eqref{eq:Z_bootstrap}, and the triangle inequality,
$$\snorm{\sound( \cdot , s) - \tfrac{\kappa_0}{2} }_{ L^ \infty } \les \eps^ {\frac{1}{6}} $$
which concludes the proof.
\end{proof}

\subsection{The blowup time and location}\label{sec:spiny-anteater1}
The blowup time $T_*$ is defined uniquely by the condition $\tau(T_*) = T_*$ which by \eqref{eq:modulation:IC} is equivalent to
\begin{equation}\label{good-vlad}
\int_{-\eps}^{T_*} (1-\dot \tau(t)) dt = \eps \, .
\end{equation} 
The estimate for $\dot \tau$  in \eqref{eq:acceleration:bound} shows that  for $\eps$ taken sufficiently small,
\begin{align} 
\abs{T_*} \leq 2M^2 \eps^{2} \,. \label{T*-bound}
\end{align} 

We also note here that the bootstrap assumption  \eqref{eq:acceleration:bound} and the definition of $T_*$ ensures that  $\tau(t) > t$ for all $t\in [-\eps, T_*)$.
Indeed, when $t = - \eps$, we have  that $\tau(-\eps) = 0 > -\eps$, and the function $t\mapsto \int_{-\eps}^t (1-\dot \tau) dt' - \eps = t -\tau(t)$ is strictly increasing.

The blowup location is determined by $\xi_* = \xi(T_*)$, which by \eqref{eq:modulation:IC} is the same as 
$$
\xi_* = \int_{-\eps}^{T_*} \dot \xi(t) dt \, .
$$
In view of \eqref{eq:acceleration:bound}, for $\eps$ small enough,
find  that 
\begin{align} 
\abs{\xi_*} \leq M \eps \,,\label{xi*-bound}
\end{align} 
so that the blowup location is $\OO(\eps)$ close to the origin. 

\subsection{H\"{o}lder bound for $w$}\label{sec:spiny-anteater2}
As we proved in \cite{BuShVi2019b}, the self-similar scaling \eqref{eq:y:s:def} and decay rate  \eqref{eq:W_decay}  for $W(y,s)$  show that 
$$w \in L^\infty([-\eps,T_*);C^{\sfrac 13}) \,, $$
and the $C^\alpha$ H\"older norms of $ w$, with $\alpha>\sfrac 13$,  blowup as $t\to T_*$ with a rate proportional to $(T_*-t)^{\sfrac{(1-3\alpha)}{2}}$.

\section{Bounds on Lagrangian trajectories}
\label{sec:Lagrangian}
 
\subsection{The Lagrangian flows in self-similar variables}
In self-similar variables $(y,s)$, we define   Lagrangian flows associated to the transport velocities in \eqref{eq:transport_velocities} by
\begin{subequations} 
\label{flows}
\begin{alignat}{2} 
\p_s \pw (y,s) &=  \mathcal{V}_W(\pw(y,s), s) \,, \qquad && \pw(y,s_0)=y\,,\\
\p_s \pz (y,s)  &= \mathcal{V}_Z(\pz(y,s), s) \,, \qquad && \pz(y,s_0)=y\,, \\
\p_s \pa (y,s) &=  \mathcal{V}_U(\pa(y,s), s) \,, \qquad && \pa(y,s_0)=y \,,
\end{alignat}
\end{subequations} 
for  $s_0 \geq -\log\eps$.  With $\Phi$ denoting either $\pw$, $\pz$, or $\pu$, we shall denote trajectories emanating from a point $y_0$ at time $s_0$ by
\begin{align} 
\Phi^{y_0}(s)=\Phi (y_0,s) \qquad \text{ with } \qquad \Phi (y_0,s_0)=y_0 \,. \label{traj}
\end{align}

\subsubsection{Esimates for the support and a lower bound for $\pw$}
Since the bounds for $\abs{G_W}$, $\abs{h_W}$, and $\abs{W}$ are the same as in \cite{BuShVi2019b}, the proofs of the following two lemmas
are the same as Lemma 8.1 and 8.2 in \cite{BuShVi2019b}.

The bootstrap assumption \eqref{eq:support} on the size of the support is closed via the following
\begin{lemma}[Estimates on the support]\label{lem:support}
 Let  $\Phi$ denote either $\pw^{y_0}$, $\pz^{y_0}$ or $\pa^{y_0}$.  For any
$y_0 \in \mathcal{X} _0$ defined in \eqref{eq:support-init}, we have that
\begin{subequations} 
\label{eq:upper:bound:traj}
\begin{align} 
  \abs{\Phi_1(s)} & \le  \tfrac{3}{2}  \eps^{\frac12}  e^{\frac 32s} \,,  \label{trajectory1}\\
 \abs{\check \Phi(s)} & \le  \tfrac{3}{2}   \eps^{\frac{1}{6}}  e^{\frac{s}{2}} \,,    \label{trajectory2}
\end{align} 
\end{subequations} 
for all  $s \ge -\log\eps$.
\end{lemma}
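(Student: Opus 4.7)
The plan is a bootstrap/continuation argument applied separately to each of the three flows in \eqref{flows}. Writing the transport velocities in \eqref{eq:transport_velocities} as $\mathcal{V}_\star = (\tfrac{3}{2}y_1 + g_\star,\, \tfrac{1}{2}y_\mu + h_\star^\mu)$ for $\star \in \{W,Z,U\}$, variation of parameters converts each flow ODE into
\begin{align*}
\Phi_1(s) &= e^{\frac{3}{2}(s-s_0)} y_{0,1} + \int_{s_0}^s e^{\frac{3}{2}(s-s')} g_\star(\Phi(s'),s')\, ds', \\
\check\Phi_\mu(s) &= e^{\frac{1}{2}(s-s_0)} \check y_{0,\mu} + \int_{s_0}^s e^{\frac{1}{2}(s-s')} h_\star^\mu(\Phi(s'),s')\, ds'\,.
\end{align*}
Since $y_0 \in \XXX_0$ (cf.~\eqref{eq:support-init}) and $s_0 \geq -\log\eps$, the homogeneous parts already hit the target bounds with constant $1$: $|e^{\frac{3}{2}(s-s_0)} y_{0,1}| \leq \eps^{1/2} e^{3s/2}$ and $|e^{\frac{1}{2}(s-s_0)} \check y_0| \leq \eps^{1/6} e^{s/2}$. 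The entire task will be to prove that, under the bootstrap \eqref{eq:support}, each Duhamel integral contributes only an $\OO(\eps)$ correction, comfortably absorbed into the slack $\tfrac{1}{2}$ left in \eqref{eq:upper:bound:traj}.

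To control those integrals I would first establish uniform-in-$s'$ pointwise bounds on the forcing coefficients restricted to $\XXX(s')$. From the explicit formulas \eqref{eq:g:def}--\eqref{eq:h:def}, together with the amplitude bootstraps \eqref{mod-boot}, \eqref{eq:W_decay}, \eqref{eq:Z_bootstrap}, \eqref{eq:A_bootstrap}, Proposition~\ref{prop:sound}, and the representation \eqref{def_V} of $V$, I expect to obtain
\[
|g_\star(y,s')| \lesssim e^{s'/2}, \qquad |h_\star^\mu(y,s')| \lesssim e^{-s'/2} \qquad \text{for all } y \in \XXX(s'),
\]
with implicit constants depending only on $\kappa_0$, $M$, and $\alpha$. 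The apparent growth $e^{s'/2}$ in $g_\star$ comes from the $e^{s'/2}$ factor explicit in $G_\star$ and from the pointwise bound $|\Jcal W| \lesssim \eta^{1/6} \lesssim \eps^{1/6} e^{s'/2}$ valid on the support (cf.~\eqref{e:space_time_conv}); the decay $e^{-s'/2}$ in $h_\star^\mu$ is inherited directly from the explicit prefactors $\beta_\tau e^{-s'}$ and $\beta_\tau e^{-s'/2}$ in \eqref{eq:hW}--\eqref{eq:hA}, combined with the fact that the quantities $W$, $Z$, $A_\gamma$, $\kappa$, $V$ that they multiply are at worst of size $e^{s'/2}$ on the support.

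Substituting these bounds into the Duhamel integrals yields
\[
\Bigl|\int_{s_0}^s e^{\frac{3}{2}(s-s')}g_\star\, ds'\Bigr| \lesssim e^{3s/2}\int_{s_0}^s e^{-s'}\,ds' \lesssim e^{3s/2} e^{-s_0} \leq \eps\cdot e^{3s/2},
\]
and analogously $|\int_{s_0}^s e^{(s-s')/2} h_\star^\mu\, ds'| \lesssim \eps\cdot e^{s/2}$. For $\eps$ small enough in terms of $M$ and $\kappa_0$, these Duhamel corrections are strictly smaller than $\tfrac{1}{2}\eps^{1/2}e^{3s/2}$ and $\tfrac{1}{2}\eps^{1/6}e^{s/2}$ respectively, which strictly improves the bootstrap \eqref{eq:support} and hence extends \eqref{eq:upper:bound:traj} globally in $s$ by a standard continuation argument. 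The main technical annoyance I anticipate is verifying the uniform bound $|V| \lesssim 1$ on the support: the formula \eqref{def_V} couples $V$ to potentially large trajectory coordinates through the terms $e^{-3s'/2}\dot Q_{i1} y_1$, $e^{-s'}\phi_{\nu\mu} y_\nu y_\mu$, and $e^{-s'/2} \dot Q_{i\nu} y_\nu$; however, the support bounds \eqref{trajectory1}--\eqref{trajectory2} are tailored so as to exactly neutralize these exponentials, and the modulation bootstraps \eqref{mod-boot} together with \eqref{eq:dot:Q} then reduce everything to an $\eps$-smallness count.
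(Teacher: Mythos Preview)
Your proposal is correct and follows what is essentially the standard approach: the paper does not give its own proof here but refers to Lemma~8.1 in \cite{BuShVi2019b}, which proceeds by exactly this Duhamel/bootstrap scheme (the transport estimates you invoke are the content of Lemma~\ref{lemma_g} and \eqref{eq:V:bnd} in the present paper). One minor clarification: since the lemma takes $y_0\in\XXX_0$, the relevant starting time is $s_0=-\log\eps$, which you use implicitly when you write $e^{-s_0}\leq\eps$; your parenthetical ``$s_0\geq-\log\eps$'' is harmless but should really be an equality here.
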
 

We shall also make use of the lower bound given by 
\begin{lemma} \label{lem:escape}
Let $y_0 \in \RR^3$ be such that $\abs{y_0} \geq \ell$. Let $s_0 \geq - \log \eps$. 
Then, the trajectory $\pw^{y_0}$ moves away from the origin at an exponential rate, and we have the lower bound
\begin{equation}
\abs{\pw^{y_0}(s)}\geq \abs{y_0}e^{\frac{s-s_0}{5}}\label{eq:escape_from_LA} 
\end{equation}
for all  $s \geq s_0$.
\end{lemma}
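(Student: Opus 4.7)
The plan is to establish the differential inequality
\[
\tfrac{d}{ds}\abs{\pw^{y_0}(s)}^2 \geq \tfrac{2}{5}\abs{\pw^{y_0}(s)}^2
\]
on the region where $\abs{\pw^{y_0}(s)} \geq \ell$, and then conclude \eqref{eq:escape_from_LA} via Gr\"onwall together with a continuation argument anchored at $\abs{\pw^{y_0}(s_0)} = \abs{y_0} \geq \ell$. Differentiating \eqref{flows} using the definition \eqref{eq:transport_velocities}, with the abbreviation $\Psi := \pw^{y_0}$,
\[
\tfrac{d}{ds}\abs{\Psi}^2 = 3\Psi_1^2 + \abs{\check\Psi}^2 + 2\Psi_1 g_W(\Psi,s) + 2\Psi_\mu h_W^\mu(\Psi,s).
\]
Since $3\Psi_1^2 + \abs{\check\Psi}^2 \geq \abs{\Psi}^2$, it suffices to control the forcing contributions up to $\tfrac{3}{5}\abs{\Psi}^2$, with extra room available from the cancellation of $3\Psi_1^2$ against the $\beta_\tau\Jcal W$ part of $g_W$.

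The critical cancellation is in the near-origin regime. By the constraints \eqref{eq:constraints}, Taylor expansion combined with the near-origin bootstrap \eqref{eq:bootstrap:Wtilde:near:0} and the smoothness of $\bar W$ yields $W(y,s) = -y_1 + O(\abs{y}^3)$ for $\abs{y} \leq \ell$, while \eqref{eq:beta:tau} and \eqref{mod-boot} give $\beta_\tau\Jcal = 1 + O(\eps M^4)$ there. Consequently $2\Psi_1 (\beta_\tau\Jcal W)= -2\Psi_1^2 + O(\abs{\Psi}^4 + \eps M^4 \Psi_1^2)$ in this regime, so $3\Psi_1^2 + \abs{\check\Psi}^2 + 2\Psi_1(\beta_\tau\Jcal W) = \abs{\Psi}^2 + \text{higher order errors}$. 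Away from the origin, the bootstrap bound $\abs{W} \leq 2\eta^{1/6} \leq 2 + 2\abs{y_1}^{1/3} + 2\abs{\check y}$ from \eqref{eq:W_decay}, combined with Young's inequality, absorbs the $\Psi_1\beta_\tau\Jcal W$ contribution into $\tfrac{1}{5}\abs{\Psi}^2$ for $\abs{\Psi} \geq \ell$; Lemma~\ref{lem:Jp1W} preserves the favourable sign of the cancellation on the whole of $\RR^3$.

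The remaining forcing terms $G_W$ and $h_W^\mu$ are handled by direct estimation. From the representation \eqref{eq:hW}, the bootstrap \eqref{mod-boot}, the $W$ bound \eqref{eq:W_decay}, and the support reduction \eqref{e:space_time_conv}, one obtains $\abs{h_W^\mu} \lesssim e^{-s/2}$, whence $\abs{2\Psi_\mu h_W^\mu} \lesssim \eps^{1/2}\ell^{-1}\abs{\Psi}^2$, negligible for $\eps \ll (\log M)^{-10}$. For $G_W$, the modulation equations \eqref{eq:dot:kappa:1}, \eqref{eq:GW:def:1}, and \eqref{eq:D2:GW:def:1} control $G_W^0$ and its first and second spatial derivatives at $y=0$ in terms of $\dot\kappa$, $F_W^0$, $\p F_W^0$, and $\p^2 F_W^0$, all of which are small by the bootstraps on the acceleration variables and on $\sound, A, K, Z$; a Taylor expansion together with a crude uniform bound on the support yields $\abs{2\Psi_1 G_W} \leq \tfrac{1}{10}\abs{\Psi}^2$.

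The main obstacle is the tightness of the near-origin margin: the effective linearization of the flow has eigenvalue $\tfrac{1}{2}$ in each direction, obtained by combining the $\tfrac{3}{2}\Psi_1$ self-similar scaling with the $-\Psi_1$ contribution from $W$. The target rate $\tfrac{1}{5}$ sits comfortably below $\tfrac{1}{2}$, but only if the cubic remainders from $W$ and the modulation forcing $G_W$ are tracked sharply. Fortunately, as explicitly observed just before Lemma~\ref{lem:support}, the pertinent bounds on $W$, $g_W$, and $h_W^\mu$ are identical to those appearing in \cite{BuShVi2019b}, so the argument runs parallel to the proof of Lemma~8.2 there.
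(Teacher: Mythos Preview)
Your overall strategy is correct and matches the paper's approach (which, as you note in your last paragraph, simply defers to Lemma~8.2 of~\cite{BuShVi2019b}): compute $\tfrac{d}{ds}\abs{\pw^{y_0}}^2$, identify the dominant contribution $3\Psi_1^2 + \abs{\check\Psi}^2$, and show the forcing terms do not destroy the lower bound. The handling of $G_W$ and $h_W^\mu$ is fine.

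There is, however, a genuine gap in your treatment of the $2\Psi_1\beta_\tau\Jcal W$ term on the intermediate region $\ell \le \abs{\Psi} \lesssim 1$. The crude bootstrap $\abs{W} \le 2\eta^{1/6} \approx 2$ there only yields $\abs{2\Psi_1 W} \lesssim \abs{\Psi_1} \le \abs{\Psi}$, which for $\abs{\Psi}\sim \ell$ is far larger than the $\tfrac{3}{5}\abs{\Psi}^2 \sim \ell^2$ of slack you have; no Young's inequality rescues this. Your near-origin Taylor expansion covers only $\abs{y}\le\ell$, precisely where the trajectory never enters.

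The repair is to make the cancellation $3\Psi_1^2 + 2\Psi_1 W \gtrsim \Psi_1^2$ hold on all of $\RR^3$, not just for $\abs{y}\le\ell$. Your instinct to invoke Lemma~\ref{lem:Jp1W} is correct, but the mechanism must be spelled out: integrating $\p_1 W \ge -(1+C\eps)$ in $y_1$ from the slice $\{y_1=0\}$ gives $2\Psi_1 W(\Psi) \ge -2(1+C\eps)\Psi_1^2 + 2\Psi_1 W(0,\check\Psi)$. The crucial additional input is that $\bar W(0,\check y)\equiv 0$ (immediate from~\eqref{eq:barW:def} since $W_{\rm 1d}(0)=0$), so that by the $\tilde W$ bootstrap~\eqref{eq:bootstrap:Wtilde} one has $\abs{W(0,\check y,s)} \le \eps^{1/11}\eta^{1/6}(0,\check y)$ for $\abs{\check y}\le\LLL$. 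The residual $2\abs{\Psi_1}\abs{W(0,\check\Psi)}$ now carries the small prefactor $\eps^{1/11}$ and is absorbed using $\abs{\Psi}\ge\ell$. Equivalently one may use directly that $\abs{W_{\rm 1d}(r)}\le\abs{r}$ (from $W_{\rm 1d}+W_{\rm 1d}^3=-r$) to get $\abs{\bar W(y)}\le\abs{y_1}$ and hence $y_1\bar W \ge -y_1^2$ globally. Either route supplies the missing quantitative cancellation; the bare bound $\abs{W}\le 2\eta^{1/6}$ does not.
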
 

\begin{lemma} \label{lem:escape2}
Given  $s_0 \geq - \log \eps$ and $s > s_0$, 
let $y_0 \in \RR^3$ be such that $\abs{y_0} \geq \mathcal{L} $ and  $\abs{\check\Phi_W^{y_0}(s)} \le M \eps^{\frac 12}$.
Then, we have that 
\begin{equation}
\abs{(\pw^{y_0})_1 (s')}\geq \tfrac 34 \abs{(y_0)_1}e^{\frac{3(s'-s_0)}{2}} 
\qquad \mbox{and} \qquad
\abs{\check\Phi_W^{y_0}(s')}\leq M \eps^{\frac 12} 
\label{eq:escape_from_SF} 
\end{equation}
for all  $s_0 \le s' \le s$.  
\end{lemma}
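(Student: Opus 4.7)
The plan is to control the ODE system obtained directly from \eqref{flows} and \eqref{eq:transport_velocities}:
\[ \partial_{s'}(\pw^{y_0})_1 = \tfrac{3}{2}(\pw^{y_0})_1 + g_W(\pw^{y_0},s'), \qquad \partial_{s'}\check\pw^{y_0} = \tfrac{1}{2}\check\pw^{y_0} + \check h_W(\pw^{y_0},s'), \]
with $\pw^{y_0}(s_0)=y_0$. The argument would split naturally into (a) pointwise bounds on the drift terms $g_W$ and $\check h_W$ along the trajectory via the bootstrap assumptions, (b) using the hypothesis at time $s$ to propagate the transverse bound backward to every $s'\in[s_0,s]$, and (c) using the transverse bound to establish the exponential lower bound on the first coordinate.

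For (a), the key pointwise estimates on $\XXX(s')$ are $|g_W(y,s')|\les \eta^{1/6}(y) + M^{1/4}e^{s'/2}$ and, for $\nu \in \{2,3\}$, $|\check h_W^\nu(y,s')|\les M\eps\, e^{-s'/2}$. The second bound is the delicate piece: the factor $M\eps$ is extracted by observing first that $\Ncal_\nu$ for $\nu\in\{2,3\}$ gains a factor $|\phi|\cdot|\check x|\les M^2\eps^{7/6}$ coming from $f_{,\nu}=\phi_{\nu\gamma}x_\gamma$ combined with the support bound \eqref{eq:support} and the bootstrap \eqref{mod-boot}, and second that $R_{j\nu}\dot\xi_j$ for $\nu\in\{2,3\}$ inherits smallness from $A_\nu^0$ through the modulation identity \eqref{eq:hj:0:a}. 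This eliminates the a priori $O(M^{1/4})$ contribution of $\dot\xi$ that the plain bootstrap \eqref{mod-boot} would give.

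For (b), integrating the transverse equation backward from $s$ with integrating factor $e^{-s'/2}$ produces
\[ \check\pw^{y_0}(s') = e^{(s'-s)/2}\check\pw^{y_0}(s) - \int_{s'}^{s}e^{(s'-\tau)/2}\check h_W(\pw^{y_0}(\tau),\tau)\,d\tau. \]
The first term is bounded by $M\eps^{1/2}$ by the hypothesis, while the $\check h_W$ estimate together with $s'\geq -\log\eps$ controls the integral by $M\eps\cdot e^{-s'/2}\leq M\eps^{3/2}\ll M\eps^{1/2}$. Absorbing this correction into the $M$ prefactor (taking $M$ large and $\eps$ small) yields the second bound in \eqref{eq:escape_from_SF}. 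In particular, specializing to $s'=s_0$ gives $|\check y_0|\leq M\eps^{1/2}$, which combined with $|y_0|\geq \LLL=\eps^{-1/10}$ forces $|(y_0)_1|\geq \LLL/2$.

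For (c), set $\Psi(s') = e^{-\tfrac{3}{2}(s'-s_0)}(\pw^{y_0})_1(s')$, so $\Psi(s_0) = (y_0)_1$ and $\partial_{s'}\Psi = e^{-\tfrac{3}{2}(s'-s_0)}g_W(\pw^{y_0},s')$. Running a continuity/bootstrap argument on $[s_0,s]$ under the ansatz $|(\pw^{y_0})_1(s')|\geq \tfrac34 |(y_0)_1|e^{\tfrac{3}{2}(s'-s_0)}$ together with the transverse bound from (b), one sees $\eta^{1/6}(\pw^{y_0})\les |(\pw^{y_0})_1|^{1/3}$, so the contribution of the $g_W$ estimate from (a) to $\Psi(s') - \Psi(s_0)$ is at most $C(|(y_0)_1|^{1/3}+M^{1/4}\eps^{-1/2})$. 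Both are negligible against $|(y_0)_1|\geq \eps^{-1/10}/2$ for $\eps$ sufficiently small, which closes the bootstrap with a strictly better constant (say $\tfrac78$) and so proves the first bound in \eqref{eq:escape_from_SF}. The main obstacle is the transverse drift bound in (a)--(b): preventing $\check h_W$ from breaking the $M\eps^{1/2}$ threshold requires the genuinely small prefactor $M\eps$, which only emerges after combining the support-based smallness of $\Ncal_\nu$ with the smallness of $\dot\xi_\nu$ supplied by the modulation constraints of Section~\ref{sec:constraints:initial}.
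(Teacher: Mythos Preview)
Your part (c) has a genuine gap: the contribution $M^{1/4}\eps^{-1/2}$ is \emph{not} negligible against $|(y_0)_1|\geq \tfrac12\eps^{-1/10}$, since $\eps^{-1/2}\gg\eps^{-1/10}$. Whatever the source of the $M^{1/4}e^{s'/2}$ term in your $g_W$ bound (presumably replacing $|y_1|$ by the support bound $2\eps^{1/2}e^{3s'/2}$ inside $|G_W|$, or forgetting the cancellation that kills the $e^{s/2}\kappa$ contribution), the resulting integral $\int_{s_0}^{s'}e^{-3(\tau-s_0)/2}\cdot M^{1/4}e^{\tau/2}\,d\tau\sim M^{1/4}e^{s_0/2}$ is at least $M^{1/4}\eps^{-1/2}$, and this swamps the $|(y_0)_1|$ budget. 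The repair is to keep the $|y_1|$-dependence in the $G_W$ estimate \eqref{e:G_W_estimates} along the trajectory rather than coarsening it: the term $M^{1/2}|\Phi_1|e^{-s'}$ contributes $\int M^{1/2}|\Psi|e^{-\tau}\,d\tau\les M^{1/2}\eps\,|(y_0)_1|$, which \emph{is} negligible. The paper does this implicitly by bounding $|\beta_\tau W\circ\Phi|+|G_W\circ\Phi|\leq 3|\Phi_1|^{1/3}$ (using Lemma~\ref{lem:escape} to guarantee $|\Phi_1|\gg 1$ a priori) and then integrating the scalar inequality $\tfrac{d}{ds}|\Phi_1|\geq\tfrac32|\Phi_1|-3|\Phi_1|^{1/3}$ explicitly, which yields $|\Phi_1(s')|\geq(|(y_0)_1|^{2/3}-2)^{3/2}e^{3(s'-s_0)/2}$ directly.

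Your part (b) is also more delicate than necessary. The paper uses only the crude bound $|h_W|\leq M^{1/2}e^{-s/2}$ from \eqref{e:h_estimates} (which needs no modulation identities): integrating backward gives $|\check\Phi(s')|\leq M\eps^{1/2}e^{-(s-s')/2}+M^{1/2}e^{-s'/2}(1-e^{-(s-s')})$, and since $e^{-s'/2}\leq\eps^{1/2}$ this is a convex-type combination of $M\eps^{1/2}$ and $M^{1/2}\eps^{1/2}$, hence $\leq M\eps^{1/2}$. Your route through \eqref{eq:hj:0:a} and the smallness of $R_{j\nu}\dot\xi_j$ can be made to work, but the claimed prefactor $M\eps$ is too strong: the $e^{s/2}h_W^{\mu,0}$ piece of \eqref{eq:hj:0:a} only decays like $e^{-(1/2-5/(2m-7))s}$ by \eqref{eq:GW:hW:0}, so at $s=-\log\eps$ you get roughly $M\eps^{0.3}$, not $M\eps$.
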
 
\begin{proof}[Proof of Lemma~\ref{lem:escape2}]
Fix $(y_0,s_0)$ and let us denote $(\pw^{y_0})_1(s) = \Phi_1(s)$ and 
$\check\Phi_W^{y_0}(s) = \check \Phi(s)$.  

According to \eqref{flows} and \eqref{eq:transport_velocities}, we have that 
$
\p_s \Phi_\nu = \tfrac 12 \Phi_\nu + h_W^\nu \circ \Phi.
$ 
Solving this ODE on the interval $[s',s]$, with arbitrary $s' \in [s_0,s)$, we obtain that 
\begin{align*}
\Phi_\nu(s') = \Phi_\nu(s) e^{-\frac{s-s'}{2}} - \int_{s'}^s e^{-\frac{s''-s'}{2}} h_W^\nu\circ \Phi(s'') ds'' \,. 
\end{align*}
Using that by \eqref{e:h_estimates} we have $\abs{h_W (\cdot,s)} \leq M^{\frac 12}  e^{-\frac s2}$, and appealing to the assumption $\abs{\Phi_\nu(s)} \le M \eps^{\frac 12}$, we obtain that 
\begin{align*}
\abs{\Phi_\nu(s')} 
\leq \abs{\Phi_\nu(s)} e^{-\frac{s-s'}{2}} +M^{\frac 12}  \int_{s'}^s e^{-\frac{s''-s'}{2}} e^{-\frac{s''}{2}} ds'' 
\leq M \eps^{\frac 12} e^{-\frac{s-s'}{2}} + M^{\frac 12} e^{-\frac{s'}{2}} (1- e^{-(s-s')}) 
\leq M \eps^{\frac 12}
\,. 
\end{align*}
where in the last inequality we have used that $s' \geq s_0 \geq -\log \eps$, so that $e^{-\frac{s'}{2}} \leq \eps^{\frac 12} e^{-\frac{(s'-s_0)}{2}}$. This proves the second claim in \eqref{eq:escape_from_SF}.

In order to prove the first claim in \eqref{eq:escape_from_SF}, we again recall \eqref{flows} and \eqref{eq:transport_velocities}, which gives 
$
\p_s \Phi_1 = \tfrac 32 \Phi_1 + \beta_\tau W\circ \Phi +G_W\circ \Phi \,.
$
In view of the bound established for $\check \Phi$ and of the information we have from Lemma~\ref{lem:escape}, we already know that $\abs{y_0}\geq \LLL$ implies that $\abs{\Phi_1(s')}\geq \frac{\LLL}{2} e^{(s'-s_0)/5}$ for all $s' \in [s_0,s]$, so that $\Phi_1(s')$ is much larger than $1$. Thus, from  \eqref{eq:beta:tau} and the first bound in \eqref{eq:W_decay}, we have 
\begin{align*}
\beta_\tau \sabs{ W\circ \Phi(s')} \leq (1+2M \eps) (1+\eps^{\frac{1}{20}})  \left(1 + \sabs{\Phi_1(s')}^2 + (M\eps^{\frac 12})^6\right)^{\frac 16} \leq 2 \sabs{\Phi_1(s')}^{\frac 13} \,.
\end{align*}
Similarly, the first estimate in Lemma \ref{lemma_g}, in which we use an extra factor of $M$ to absorb the implicit constant in the $\les$ symbol, and the previously established bound \eqref{trajectory1} imply that
\begin{align*}
\sabs{G_W\circ \Phi(s')} \leq M^2 e^{-\frac{s'}{2}} + M^{\frac 32} e^{-s'} \sabs{\Phi_1(s')} + M^2 \eps^{\frac 56} \leq M^{2} e^{-s'} \sabs{\Phi_1(s')} \leq 2 M^{2} \eps^{\frac 13}  \sabs{\Phi_1(s')}^{\frac 13}\,.
\end{align*}
Combining the above two estimates with the ODE satisfied by $\Phi_1$ we derive that
$$
\tfrac{1}{2} \tfrac{d}{ds} \abs{\Phi_1(s')}^2 \ge \tfrac{3}{2}  \abs{\Phi_1(s')}^2 - 3 \abs{\Phi_1(s')}^{ \frac 43}  \,.
$$
By explicitly integrating the above ODE, and using our earlier observation that $\abs{(y_1)_0} \geq \frac 12 \eps^{-\frac{1}{10}}$ for all $s' \in [s_0,s]$, we derive that
$$
\abs{\Phi_1(s')} \geq \left(\abs{(y_0)_1}^{\frac 23}  - 2 \right)^{\frac 32} e^{\frac{3(s'-s_0)}{2}} \geq \tfrac 34  \abs{(y_0)_1} e^{\frac{3(s'-s_0)}{2}}
$$
which completes the proof. 
\end{proof}

\subsubsection{Lower bounds for $\pz$ and $\pu$}
We now establish important lower-bounds for $\pz^{y_0}(s)$ or $\pa^{y_0}(s)=\pu^{y_0}(s)$.  
\begin{lemma}\label{lem:phiZ} 
Let $\Phi(s)$ denote either $\pz^{y_0}(s)$ or  $\ \pu^{y_0}(s)$.  If
\begin{align} 
\kappa_0\geq \frac{3}{1-\max(\beta_1,\beta_2)}\,, \label{k0-lb}
\end{align} 
then for any $y_0 \in \XXX_0$ defined in \eqref{eq:support-init}, there exists an $s_*\geq -\log \eps$ such that
\begin{equation}\label{phi-lowerbound}
\abs{\Phi_1(s)}\geq \min\left(\abs{e^{\frac s2}-e^{\frac {s_*}2}}, e^{\frac s2}\right)\,.
\end{equation}
In particular, we have the following inequalities:
\begin{equation}\label{phi-lowerbound_conseq}
\int_{-\log \eps }^{\infty} e^{\sigma_1 s'}(1+\abs{\Phi_1(s')})^{-\sigma_2}\,ds'\leq C\,,
\end{equation}
for $0\leq \sigma_1 \le \sfrac12$ and $2\sigma_1 < \sigma_2$, where the constant $C$ depends only on the choice of $\sigma_1$ and $\sigma_2$.
\end{lemma}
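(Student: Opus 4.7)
The plan is to rescale the first component of the trajectory so that the $\tfrac{3}{2}\Phi_1$ drift becomes a simpler $\Psi$ term in an explicitly solvable linear ODE, and then to read off the claimed lower bound. Writing $\Phi(s)$ for either $\pz^{y_0}(s)$ or $\pu^{y_0}(s)$ and letting $(\beta_*, G_*)=(\beta_2,G_Z)$ or $(\beta_1,G_U)$ accordingly, the relations \eqref{flows}, \eqref{eq:transport_velocities} and \eqref{eq:gZ}--\eqref{eq:gA} give
\begin{align*}
\partial_s \Phi_1 \;=\; \tfrac{3}{2}\Phi_1 + \bigl(\beta_*\beta_\tau \Jcal W + G_*\bigr)\!\circ\!\Phi(s)\,.
\end{align*}
Setting $\Psi(s):=e^{-s/2}\Phi_1(s)$ converts this to the scalar ODE
\begin{align*}
\partial_s \Psi = \Psi + F(s)\,,\qquad F(s):=e^{-s/2}\bigl(\beta_*\beta_\tau\Jcal W+G_*\bigr)\!\circ\!\Phi(s)\,.
\end{align*}

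Next I would analyze $F$. Expanding $G_Z$ and $G_U$ via \eqref{eq:gZ}--\eqref{eq:gA} yields
\begin{align*}
F(s)=\beta_*\beta_\tau \Jcal\kappa(s)+\mathcal{E}(s)\,,
\end{align*}
where $\mathcal{E}$ collects the contributions involving $e^{-s/2}W\circ\Phi$, $\dot f\circ\Phi$, $Z\circ\Phi$, and $V\cdot n\circ\Phi$. Using the modulation bounds \eqref{mod-boot}--\eqref{eq:dot:Q}, the support estimate \eqref{eq:support} together with \eqref{e:space_time_conv} (so that $|W\circ\Phi|\les \eta^{1/6}\les \eps^{1/6}e^{s/2}$, hence $e^{-s/2}|W\circ\Phi|\les\eps^{1/6}$), the pointwise bounds \eqref{eq:W_decay}, \eqref{eq:Z_bootstrap} on $W$ and $Z$, and the formula \eqref{def_V} for $V$, one verifies that $|\mathcal{E}(s)|\les \eps^{1/6}$ uniformly in $s\geq-\log\eps$, with implicit constants depending on $\alpha,\kappa_0,M$. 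Combined with $\beta_\tau\Jcal=1+O(M^2\eps)$ and $\kappa\geq\kappa_0/2$, the hypothesis \eqref{k0-lb} then gives $F(s)\geq \tfrac{3}{2}\beta_*/(1-\max(\beta_1,\beta_2))-O(\eps^{1/6})$, which is strictly bigger than $4/3$ once $\eps$ is small.

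With $F$ bounded below in this way, I solve the $\Psi$-equation via the integrating factor $e^{-s}$: setting $s_0:=-\log\eps$,
\begin{align*}
\Psi(s)=e^{s-s_0}\Psi(s_0)+\int_{s_0}^s e^{s-s'}F(s')\,ds'\,.
\end{align*}
Define $s_*$ to be the unique zero of $\Psi$ in $[s_0,\infty)$ if one exists, and $s_*=s_0$ otherwise. In the first case, restarting the integration at $s_*$ gives $\Psi(s)=\int_{s_*}^s e^{s-s'}F(s')\,ds'$, and combining the lower bound on $F$ with the factorization $e^{s-s_*}-1=(e^{(s-s_*)/2}-1)(e^{(s-s_*)/2}+1)$ yields
$|\Phi_1(s)|=e^{s/2}|\Psi(s)|\geq \min\!\bigl(\abs{e^{s/2}-e^{s_*/2}},\,e^{s/2}\bigr)$, verified separately for $s\geq s_*$ and for $s\in[s_0,s_*)$ by a sign-tracking argument. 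In the second case, monotonicity of $\Psi$ (driven by the positive forcing $F$) together with the same elementary estimate yields the identical conclusion with $s_*=s_0$. Finally, \eqref{phi-lowerbound_conseq} follows from \eqref{phi-lowerbound} by the change of variables $u=e^{s'/2}$, giving
\begin{align*}
\int_{-\log\eps}^{\infty} e^{\sigma_1 s'}(1+\abs{\Phi_1(s')})^{-\sigma_2}ds' \;\les\; \int_{\eps^{-1/2}}^{\infty}\frac{u^{2\sigma_1-1}}{(1+|u-e^{s_*/2}|)^{\sigma_2}}\,du\,,
\end{align*}
which converges at infinity thanks to $2\sigma_1<\sigma_2$ and is locally integrable near $u=e^{s_*/2}$ thanks to $\sigma_1\leq 1/2$.

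The principal obstacle is to make the lower bound on $F$ sharp enough in the constant to recover precisely the coefficient $1$ multiplying $\min(\cdot)$ in \eqref{phi-lowerbound}; this is exactly where the quantitative hypothesis \eqref{k0-lb} on $\kappa_0$ is used, and where all the bootstrap assumptions of Section~\ref{sec:bootstrap} must simultaneously cooperate to dominate $\mathcal{E}$ uniformly in $s$. The remaining ODE manipulations and the change-of-variables estimate are entirely elementary.
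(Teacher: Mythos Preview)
Your overall strategy---rescaling $\Psi=e^{-s/2}\Phi_1$, reducing to $\partial_s\Psi=\Psi+F$, extracting a definite-sign lower bound on $|F|$ of size $>4/3$, and then doing the change of variables $u=e^{s'/2}$ for \eqref{phi-lowerbound_conseq}---is exactly the right one, and coincides with the argument of \cite[Lemma~8.3]{BuShVi2019b} to which the paper defers. But your computation of the leading order of $F$ is wrong, and as written the proof does not close.

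The error is in the sentence ``$\mathcal E$ collects the contributions involving \ldots $V\cdot n\circ\Phi$'' followed by ``$|\mathcal E(s)|\les\eps^{1/6}$.'' The $V\cdot\Ncal$ term is \emph{not} small: from \eqref{def_V} it contains $-R_{ji}\dot\xi_j$, and $\dot\xi$ is of size $\kappa_0$, not $\eps^{\text{anything}}$ (cf.~\eqref{eq:dot:xi:def:1} or the bootstrap $|\dot\xi|\le M^{1/4}$ in \eqref{eq:acceleration:bound}, where $M$ is $\eps$-independent). In fact one computes $2\beta_1 V\cdot\Ncal\approx -\kappa$, so that $e^{-s/2}G_*\approx -(1-\beta_*)\kappa_0$; this is the content of the first line of \eqref{e:G_ZA_estimates}. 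Hence the true leading order is
\[
F(s)=-(1-\beta_*)\kappa_0+O(\eps^{1/6}),
\]
which is \emph{negative}. With your claimed leading order $\beta_*\kappa$, the inequality ``$>4/3$'' fails anyway for $\beta_*=\beta_2$ once $\alpha\ge 9/17$ (and for $\beta_*=\beta_1$ once $\alpha\ge 9/8$), so the hypothesis \eqref{k0-lb} would not be the right one.

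The fix is immediate: since $1-\beta_*\ge 1-\max(\beta_1,\beta_2)$, hypothesis \eqref{k0-lb} gives $(1-\beta_*)\kappa_0\ge 3$, hence $F\le -3+O(\eps^{1/6})$ and in particular $F$ has a definite sign with $|F|>4/3$. From there your uniqueness-of-zero argument (at any zero $\partial_s\Psi=F<0$) and the Duhamel estimate go through verbatim. So: replace your second paragraph by the observation that the dominant part of $F$ is $e^{-s/2}G_*\approx -(1-\beta_*)\kappa_0$ coming from the Galilean shift $-R^T\dot\xi$ inside $V$, quote \eqref{e:G_ZA_estimates} for the error, and proceed.
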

This is a slight generalization of Lemma 8.3 in \cite{BuShVi2019b}, where we now allow the value $\sigma_1= {\sfrac{1}{2}} $.   The only addition
to the proof requires an estimate for the integral $ \mathcal{I} $ in the proof of Lemma 8.3 in \cite{BuShVi2019b}.   In particular, 
 for $\sigma_1= {\sfrac{1}{2}} $, we see that
\begin{align*}
\mathcal I & =2 \int_{\eps^{-\frac 12}}^{\infty}\left(1+\sabs{r-e^{\frac{s_*}{2}}}\right)^{-\sigma_2}\,dr \les 1\,.
\end{align*} 
The implicit constant only depends on $\sigma_1$ and $\sigma_2$.

\subsubsection{The time integral of $\abs{\p_1 W }$ along $\pz^{y_0}$}
An immediate consequence of \eqref{phi-lowerbound_conseq} is the following
\begin{corollary}\label{cor:p1Wz} 
For all $s \ge -\log \eps$,
\begin{align} 
\sup_{y_0 \in \XXX_0} \int_{ -\log\eps}^s \abs{\p_1 W } \circ \pz^{y_0}(s') ds' \les 1   \label{eq:p1W:PhiZ} \,.
\end{align} 
\end{corollary}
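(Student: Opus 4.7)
The plan is to combine the pointwise bootstrap bound on $\p_1 W$ from \eqref{eq:W_decay} with the integrability estimate along the $\pz$-trajectory furnished by Lemma~\ref{lem:phiZ}. Since Corollary~\ref{cor:p1Wz} is meant to package already-established material, no new idea is required; the task reduces to checking that the decay exponent of $\p_1 W$ in the $y_1$ variable matches the hypotheses of \eqref{phi-lowerbound_conseq}.

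First, I would extract from the second case of \eqref{eq:W_decay} (namely $\gamma_1=1$, $\abs{\check\gamma}=0$) the pointwise inequality
\begin{align*}
\abs{\p_1 W(y,s)} \le \tilde \eta^{-\frac 13}(y/2)\, \mathbf{1}_{\abs{y}\le \LLL} + 2\,\eta^{-\frac 13}(y)\, \mathbf{1}_{\abs{y}\ge \LLL}\,.
\end{align*}
Recalling that $\eta(y) = 1 + y_1^2 + \abs{\check y}^6 \ge 1 + y_1^2$ and $\tilde \eta(y) \ge \eta(y)$, in both regimes one obtains the simpler bound
\begin{align*}
\abs{\p_1 W(y,s)} \lesssim \bigl(1 + y_1^2\bigr)^{-1/3} \lesssim \bigl(1 + \abs{y_1}\bigr)^{-2/3},
\end{align*}
with an implicit constant independent of $\eps$, $M$, $s$, and $y$.

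Second, evaluating this estimate along the trajectory $\pz^{y_0}$ and integrating in $s'$ yields
\begin{align*}
\int_{-\log\eps}^{s} \abs{\p_1 W}\circ \pz^{y_0}(s')\, ds' \lesssim \int_{-\log\eps}^{s} \bigl(1 + \abs{(\pz^{y_0})_1(s')}\bigr)^{-2/3}\, ds'\,.
\end{align*}
I would then invoke \eqref{phi-lowerbound_conseq} of Lemma~\ref{lem:phiZ} with $\sigma_1 = 0$ and $\sigma_2 = 2/3$; the hypotheses $0 \le \sigma_1 \le \sfrac12$ and $2\sigma_1 < \sigma_2$ are satisfied, and the resulting bound is uniform in $y_0 \in \XXX_0$. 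This is precisely \eqref{eq:p1W:PhiZ}.

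There is essentially no obstacle here: the only subtlety is to verify that the $\eta^{-1/3}$ decay in the $\p_1 W$ bootstrap gives enough integrability in $y_1$, i.e.\ the exponent $2/3$ exceeds $2\sigma_1 = 0$ so that Lemma~\ref{lem:phiZ} applies. One might also sanity-check that we do not need to invoke the pointwise lower bound \eqref{phi-lowerbound} directly, since its consequence \eqref{phi-lowerbound_conseq} is already in the precise form needed.
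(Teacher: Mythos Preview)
Your proposal is correct and is exactly the argument the paper uses: bound $\abs{\p_1 W}$ by $\eta^{-1/3}\les (1+\abs{y_1})^{-2/3}$ via the second line of \eqref{eq:W_decay}, then apply \eqref{phi-lowerbound_conseq} with $\sigma_1=0$ and $\sigma_2=2/3$. The paper's one-line proof records only these two inputs; you have simply written out the intermediate simplification of the $\tilde\eta$ and $\eta$ weights.
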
 
\begin{proof}[Proof of Corollary \ref{cor:p1Wz}]
The bound \eqref{eq:p1W:PhiZ} follows using the second estimate in \eqref{eq:W_decay} together with \eqref{phi-lowerbound_conseq} with
$\sigma_1=0$ and $\sigma_2= {\frac{2}{3}} $.
\end{proof}

\subsection{The Lagrangian flow $\varphi(x,t)$}
With respect to the independent variables $(x,t)$, the transport velocity for $\mathring u$ in \eqref{momentum-sheep-alt} is given by
 \begin{align}
\Vcal = (\Vcal_1,\Vcal_2,\Vcal_3) = 2\beta_1 \left(-\tfrac{\dot f }{2 \beta_1 } + \Jcal  v \cdot \Ncal + \Jcal \mru \cdot \Ncal ,  v_2 + \mru_2, v_3 + \mru_3\right) \,.
\label{eq:fucked:up:v}
\end{align}
 We let $\varphi(x,t)$ denote the flow of $\Vcal$ so that
\begin{subequations} 
\label{phi-flow}
\begin{align} 
\p_t \varphi(x,t) & = \Vcal(\varphi(x,t),t)\,, \ \ \  t> -\eps\,,  \\
 \varphi(x,-\eps)&=x\,, 
\end{align} 
\end{subequations} 
and we denote by $\varphi_{x_0}(t)$ the trajectory emanating from $x_0$.

\subsubsection{Asymptotic non-positivity for $\p_1W$}
\begin{lemma}
\label{lem:non:positivity}
For all $y\in \RR^3$ and $s\geq -\log \eps$, we have 
\begin{align}
\max\left\{ \p_1W(y,s),0\right\} \leq 4 e^{-\frac{s}{15}}  \,.
\label{eq:Celtics:suck}
\end{align}
\end{lemma}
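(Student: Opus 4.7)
The plan is to propagate the bound along Lagrangian trajectories of the $W$-equation. Fix $(y,s)$ with $s \ge -\log\eps$, and let $\pw^{y_0}$ denote the $\mathcal{V}_W$-trajectory with $\pw^{y_0}(s) = y$ and $y_0 := \pw^{y_0}(-\log\eps) \in \XXX_0$. Setting $u(s') := \p_1 W(\pw^{y_0}(s'), s')$ and $F(s') := F_W^{(e_1)}(\pw^{y_0}(s'), s')$, the $\gamma = e_1$ instance of \eqref{euler_for_Linfinity:a} reads $u' = - u - \beta_\tau \Jcal u^2 + F$. Because $\Jcal \ge 1$ (cf.\ \eqref{Jp1W}) and $\beta_\tau > 0$ (cf.\ \eqref{eq:beta:tau}), the quadratic term only reinforces damping whenever $u > 0$; hence $u_+$ satisfies $u_+' \le -u_+ + \abs{F}$, and Duhamel yields
\begin{align*}
u_+(s) \le e^{-(s-s_0)} u_+(s_0) + \int_{s_0}^s e^{-(s-s')} \abs{F(s')}\, ds', \qquad s_0 := -\log\eps.
\end{align*}

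The first input is the initial-time bound $u_+(s_0) \le 2 \eps^{1/15} = 2 e^{-s_0/15}$ for every $y_0 \in \XXX_0$. For $\abs{y_0} \le \LLL$ this uses the splitting $W = \bar W + \tilde W$, the sign condition $\p_1 \bar W \le 0$ (immediate from \eqref{eq:barW:def} since $W_{\rm 1d}'$ is strictly negative), and \eqref{eq:tilde:W:p1=1}, giving $u_+(s_0) \le \eps^{1/11}$. For $\abs{y_0} \ge \LLL$ it follows from \eqref{eq:rio:de:caca:2} together with the identity $\LLL^{-2/3} = (\eps^{-1/10})^{-2/3} = \eps^{1/15}$.

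The second input is the uniform forcing bound $\abs{F(s')} \le C e^{-s'/2}$. Applying \eqref{eq:F:W:def} with $\abs{\gamma}=1$ (for which both indicator sums vanish) yields $F_W^{(e_1)} = \p_1 F_W - \p_1 G_W \cdot \p_1 W - \p_1 h_W^\mu \cdot \p_\mu W$. Since $\Jcal$, $\dot f$, $\Ncal$, and $\Tcal^\nu$ depend only on $\check x$, applying $\p_1$ to \eqref{eq:gW} and \eqref{eq:hW} produces terms involving only $\p_1 Z$, $\p_1 V$, and $\p_1 W$. From \eqref{def_V} one has $\p_1 V_i = \dot Q_{i1} e^{-3s/2}$, which combined with \eqref{eq:dot:Q}, \eqref{eq:Z_bootstrap}, and \eqref{eq:W_decay} gives $\abs{\p_1 G_W \cdot \p_1 W} + \abs{\p_1 h_W^\mu \cdot \p_\mu W} \les e^{-s}$. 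The contribution of $\p_1 F_W$ is estimated by differentiating \eqref{eq:FW:def} term-by-term and invoking Lemma~\ref{lem:US_est}, the bootstrap bounds \eqref{eq:Z_bootstrap}--\eqref{eq:S_bootstrap}, and Lemma~\ref{eq:higher:order:ests}; the slowest-decaying contribution, of order $e^{-s/2}$, arises after $\p_1$-differentiating $\beta_4 \beta_\tau S^2 \Jcal e^s \p_1 K$, whose $e^s$ prefactor combines with $\p_{11} K$ or $\p_1 S \cdot \p_1 K$.

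Inserting these two inputs into the Duhamel estimate gives, for $s \ge s_0$,
\begin{align*}
u_+(s) \le 2 e^{-s_0/15} e^{-(s-s_0)} + C \int_{s_0}^s e^{-(s-s')} e^{-s'/2} \, ds' \le 2 e^{-s/15} + 2 C e^{-s/2} \le 4 e^{-s/15},
\end{align*}
where the first inequality uses $e^{-s_0/15} e^{-(s-s_0)} \le e^{-s/15}$ for $s \ge s_0$, and the last uses $C e^{-13s/30} \le 1$ for $\eps$ sufficiently small. As $(y,s)$ was arbitrary, this closes the proof; intervals on which $u$ changes sign are handled by restarting the Duhamel comparison at each zero of $u$, without affecting the estimate. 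The main obstacle is the pointwise bookkeeping for $\p_1 F_W$: several terms in $F_W$ contain a factor $V_\mu$ that grows linearly in $y$, multiplied by the small curvature factor $\Ncal_{i,\mu}$, so one must exploit the explicit structure of \eqref{def_V} together with the smallness of $\dot Q$, $\dot\xi$, and $\phi$ from \eqref{mod-boot} in order to see that all such terms decay at least as $e^{-s/2}$ on the support $\XXX(s)$.
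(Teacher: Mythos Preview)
Your proof is correct and takes a genuinely different route from the paper. The paper splits into three spatial regions: for $|y|\leq\LLL$ it shows $\p_1W<0$ outright via \eqref{eq:bootstrap:Wtilde1} and $\p_1\bar W<0$; for $|y|\geq e^{s/10}$ it uses the decay $|\p_1W|\leq 2\eta^{-1/3}$ from \eqref{eq:W_decay}; and in the intermediate shell it traces the $\mathcal{V}_W$-trajectory back only to the sphere $|y_0|=\LLL$ (invoking Lemma~\ref{lem:escape} to guarantee arrival there at some $s_0>-\log\eps$), then controls the Duhamel integral via the spatially weighted forcing bound \eqref{eq:forcing_W} together with the forward-referenced estimate $\int_{s_0}^s\eta^{-1/3}\circ\pw^{y_0}\,ds'\leq\eps^{1/16}$ of \eqref{eq:need:a:label}. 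Your argument is more direct: trace every trajectory all the way to initial time, drop the quadratic term by its sign, and use a uniform-in-$y$ temporal bound on the forcing. The key point that makes this work is that $|\p_1 G_W|\les e^{-s}$, sharper than the $\gamma=(1,0,0)$ case recorded in \eqref{e:G_W_estimates} (which is only tuned to the worst $|\gamma|=2$ cases grouped with it); combined with $|\p_1 F_W|\les e^{-s}$ from \eqref{eq:F_WZ_deriv_est} this actually gives $|F_W^{(e_1)}|\les e^{-s}$ uniformly. Your approach thus avoids the region splitting, the escape lemma, and the forward reference. Two minor corrections that do not affect validity: your identification of the slowest term in $\p_1F_W$ is off, since the entropy piece $\p_1(S^2e^s\p_1K)$ is $\OO(e^{-s})$ by \eqref{eq:S_bootstrap}, so your stated rate $e^{-s/2}$ is merely conservative; and $\p_1h_W^\mu$ also contains a $\p_1A_\gamma$ contribution you omitted, though it obeys the same $e^{-3s/2}$ decay as $\p_1Z$ via \eqref{eq:A_bootstrap} and is harmless.
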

\begin{proof}[Proof of Lemma~\ref{lem:non:positivity}]
We start with the region $\abs{y}\leq \LLL = \eps^{-\frac{1}{10}}$. Here, due to the bootstrap \eqref{eq:bootstrap:Wtilde1} for $\p_1 \tilde W$ and the fact that $\p_1 \bar W  \leq - \tilde \eta^{-\frac 13}$ (see (2.48) in \cite{BuShVi2019b}), we deduce that 
\begin{align} 
\p_1 W(y,s) = \p_1 \bar W(y) + \p_1 \tilde W(y,s) \leq -  \tilde \eta^{-\frac 13}(y) + \eps^{\frac{1}{12}} \eta^{-\frac 13}(y) < 0  \qquad \text{for}\qquad \abs{y} \le \eps^{-\frac{1}{10}} \,,
\label{perezoso-gordo}
\end{align} 
upon taking $\eps$ sufficiently small, and using that  $\tilde \eta(y) \leq 2 \eta(y)$. Thus, for $\abs{y}\leq \LLL$ the bound \eqref{eq:Celtics:suck} holds. 

Next, let us consider the region $\abs{y} \geq e^{\frac{s}{10}}$. Here we have that $\eta(y) \geq \frac 12 e^{\frac{s}{5}}$. Combining this bound with the second line of \eqref{eq:W_decay}, we arrive at
\[
\abs{\p_1 W(y,s)} \leq 2 \eta^{-\frac 13}(y) \leq 4 e^{-\frac{s}{15}} \,.
\]
Thus, \eqref{eq:Celtics:suck} also holds in the region $\abs{y} \geq e^{\frac{s}{10}}$.

It remains to consider the region $\LLL < \abs{y} < e^{\frac{s}{10}}$. Notice that by the definition of $\LLL=e^{\frac{-\log \eps}{10}}$, in this case we have that $s>-\log \eps$. For such a fixed $(y,s)$
we trace the particle trajectory of  the flow $\mathcal{V}_W$ backwards in time, and write $\pw^{y_0}(s) = y$, where the initial datum $\pw^{y_0}(s_0) = y_0$ is  given  by the property that $\abs{y_0} = \LLL$ if $s_0>-\log \eps$, and $\abs{y_0} > \LLL$ if $s_0 = -\log \eps$. We claim that the second option is not possible, so that we must have $s_0>-\log \eps$ and $\abs{y_0} = \LLL$. To see this, we appeal to Lemma~\ref{lem:escape}, which is applicable since $\abs{y_0} \geq \LLL \geq \ell$, and which gives the bound $\abs{\pw^{y_0}(s)}\geq \abs{y_0} e^{\frac{s-s_0}{5}}$. Thus, in the case that $s_0 = -\log \eps$ and $\abs{y_0} > \LLL$, this bound implies
\[
e^{\frac{s}{10}} > \abs{y} = \abs{\pw^{y_0}(s)} \geq \abs{y_0} e^{\frac{s-s_0}{5}} > \LLL e^{\frac{s+\log \eps}{5}} = \eps^{-\frac{1}{10}} e^{\frac{s+\log \eps}{5}} = e^{\frac{s}{10}}e^{\frac{s+\log \eps}{10}} > e^{\frac{s}{10}}
\]
since $s> -\log \eps$. This yields the desired contradiction, guaranteeing that $\abs{y_0} = \LLL$ and $s_0>-\log \eps$. At this stage we appeal to the evolution of $\p_1 W$ given in \eqref{euler_for_Linfinity:a} with $\gamma = (1,0,0)$,
and deduce that $e^{\frac{s}{2}} \p_1 W$ satisfies the equation
\[
\p_s  (e^{\frac{s}{2}} \p_1 W)  + \left(   \tfrac{1}{2} + \beta_\tau   \Jcal \p_1 W \right) (e^{\frac{s}{2}} \p_1 W)   +  \left( \mathcal{V}_W \cdot \nabla\right) (e^{\frac{s}{2}}  \p_1 W)  = e^{\frac{s}{2}}  F^{(1,0,0)}_W
\,.
\]
Composing with $\pw^{y_0}$ and appealing to Gr\"onwall's inequality on the interval $[s_0,s]$,  we obtain that
\begin{align}
e^{\frac{s}{2}} \p_1 W(y,s) 
&= e^{\frac{s_0}{2}} \p_1 W(y_0,s_0) \exp\left(- \int_{s_0}^s \tfrac{1}{2} + \beta_\tau (\Jcal \p_1 W)\circ\pw^{y_0}(s') ds' \right)\notag\\
&\quad + \int_{s_0}^s e^{\frac{s'}{2}} F^{(1,0,0)}_W\circ\pw^{y_0}(s')  \exp\left(- \int_{s'}^s \tfrac{1}{2} + \beta_\tau (\Jcal \p_1 W)\circ\pw^{y_0}(s'') ds'' \right) ds' \,.
\label{eq:Celtics:suck:a:lot}
\end{align}
We now use the information that $\abs{y_0} = \LLL$, and thus, as established earlier, $\p_1 W(y_0,s_0) < 0$. Hence, the first term on the right side of \eqref{eq:Celtics:suck:a:lot} is strictly negative (as the exponential is positive), so that it does not contribute to the positive part of $\p_1 W$. We deduce, by also appealing to the $F^{(1,0,0)}_W$ estimate in \eqref{eq:forcing_W} and the $\p_1 W$ bootstrap in \eqref{eq:W_decay}, that 
\begin{align*}
e^{\frac{s}{2}} \max\left\{\p_1 W(y,s) ,0\right\}
&\leq \int_{s_0}^s e^{\frac{s'}{2}} \abs{F^{(1,0,0)}_W\circ\pw^{y_0}(s')}  \exp\left(- \int_{s'}^s \tfrac{1}{2} + \beta_\tau (\Jcal \p_1 W)\circ\pw^{y_0}(s'') ds'' \right) ds' 
\notag\\
&\les M \int_{s_0}^s   \eta^{-\frac 13}\circ\pw^{y_0}(s')   \exp\left(4  \int_{s'}^s  \eta^{-\frac 13}\circ\pw^{y_0}(s'')  ds'' \right) ds' \,.
\end{align*}
The proof is completed by appealing to the  bound established in \eqref{eq:need:a:label}, namely $ \int_{s_0}^s \eta^{-\frac 13} \circ\pw^{y_0}(s') ds' \leq \eps^{\frac{1}{16}}$, which holds for $\abs{y_0} \geq \LLL$, and which implies 
$
e^{\frac{s}{2}} \max\left\{\p_1 W(y,s) ,0\right\} \les M \eps^{\frac{1}{16}} \exp(4 \eps^{\frac{1}{16}}) \leq 1.
$
\end{proof}

From Lemma~\ref{lem:non:positivity},  we immediately deduce the following
\begin{corollary}
\label{cor:out:of:labels}
For any $t\in[-\eps,T_*)$ we have 
\begin{align}
\int_{-\eps}^t \max\{\p_{x_1}  \mru \cdot \Ncal,0\} dt' \leq \eps^{\frac{1}{16}}
\end{align}
uniformly pointwise in space.
\end{corollary}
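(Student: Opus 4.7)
First, I would convert the physical--variable quantity $\p_{x_1}(\mru\cdot\Ncal)$ into self-similar variables. Since $\Ncal = \Ncal(\check x,t)$ does not depend on $x_1$, the identity \eqref{eq:UdotN:Sigma} and the chain rule $\p_{x_1}=e^{3s/2}\p_1$ (from \eqref{eq:y:s:def}) give
\begin{align*}
\p_{x_1}(\mru\cdot\Ncal)(x,t) \;=\; e^{3s/2}\,\p_1(U\cdot\Ncal)(y,s) \;=\; \tfrac12\bigl(e^{s}\,\p_1 W(y,s)\;+\;e^{3s/2}\,\p_1 Z(y,s)\bigr),
\end{align*}
where $y=y(x,t)$ is the self-similar image of $x$. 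Using $\max\{a+b,0\}\leq \max\{a,0\}+|b|$ it then suffices to estimate the $W$-contribution and the $Z$-contribution separately, uniformly in $x$.

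For the $W$-contribution, I would change variables from $t'$ to $s'$ via $dt'=\beta_\tau e^{-s'}ds'$ (which follows from $s=-\log(\tau(t)-t)$ and $1-\dot\tau = \beta_\tau^{-1}$). Since $\beta_\tau\leq 2$ by \eqref{eq:beta:tau}, the factor $e^{s'}$ cancels and one is left with
\begin{align*}
\int_{-\eps}^{t}\tfrac12\max\{e^{s'}\p_1 W(y(x,t'),s'),0\}\,dt' \;\leq\; \int_{-\log\eps}^{\infty}\beta_\tau\,\max\{\p_1 W,0\}\,ds'.
\end{align*}
At this point I invoke Lemma~\ref{lem:non:positivity}, which furnishes the pointwise bound $\max\{\p_1 W,0\}\leq 4e^{-s'/15}$ uniformly in $y$, yielding a contribution of size $\OO(\eps^{1/15})$.

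For the $Z$-contribution, the bootstrap \eqref{eq:Z_bootstrap} with $\gamma=(1,0,0)$ gives $|\p_1 Z|\leq M^{1/2}e^{-3s'/2}$, so $|e^{3s'/2}\p_1 Z|\leq M^{1/2}$. Integrating directly over $t'\in[-\eps,t]$ and using $t-(-\eps)\leq T_*+\eps\leq 2\eps$ via \eqref{T*-bound} produces a contribution of size $\OO(M^{1/2}\eps)$. Summing the two contributions,
\begin{align*}
\int_{-\eps}^{t}\max\{\p_{x_1}\mru\cdot\Ncal,0\}\,dt' \;\leq\; 60\,\eps^{1/15}+M^{1/2}\,\eps \;\leq\; \eps^{1/16}
\end{align*}
for $\eps$ sufficiently small (depending on $M$), which is the desired bound.

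There is no genuine obstacle here: the entire argument is a change-of-variables from Lagrangian Eulerian time to self-similar time, followed by applying Lemma~\ref{lem:non:positivity} and the $Z$-bootstrap. The only small care required is checking that the transformation $\p_{x_1}=e^{3s/2}\p_1$ is legitimate (because $\Ncal$ depends only on $\check x$, not on $x_1$), and that the $Z$-term -- which does not benefit from decay after multiplication by $e^{3s/2}$ -- is controlled entirely by the shortness $\OO(\eps)$ of the physical time interval.
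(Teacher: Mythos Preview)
Your proposal is correct and follows essentially the same approach as the paper's proof: both derive the identity $\p_{x_1}(\mru\cdot\Ncal)=\tfrac12 e^{s}\p_1 W+\tfrac12 e^{3s/2}\p_1 Z$, split via sub-additivity of the positive part, apply Lemma~\ref{lem:non:positivity} to the $W$-term after the change of variables $dt'=\beta_\tau e^{-s'}ds'$, and control the $Z$-term by the bootstrap \eqref{eq:Z_bootstrap} together with the $\OO(\eps)$ length of the physical time interval, arriving at the identical final bound $60\,\eps^{1/15}+M^{1/2}\eps\leq\eps^{1/16}$.
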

\begin{proof}[Proof of Corollary~\ref{cor:out:of:labels}]
Recall that cf.~\eqref{tildeu-dot-N} and \eqref{w_ansatz}--\eqref{z_ansatz} that 
\[
\p_{x_1} \mru \cdot \Ncal = \tfrac 12 ( \p_{x_1} w + \p_{x_1} z ) = \tfrac 12 e^s \p_1 W + \tfrac 12 e^{\frac{3s}{2}} \p_1 Z \,.
\]
From \eqref{eq:Z_bootstrap} we know that $e^{\frac{3s}{2}} \abs{\p_1 Z } \leq M^{\frac 12}$, and since the function $\max\{ \cdot,0\}$ is convex and in fact sub-additive, we deduce from Lemma~\ref{lem:non:positivity} that
\[
\max\{\p_{x_1}  \mru \cdot \Ncal,0\} \leq \tfrac 12 e^s \max\{\p_1 W,0\}  + \tfrac12  e^{\frac{3s}{2}} \max\{\p_1 Z,0\}  \leq 2 e^{\frac{14 s}{15}} + \tfrac{1}{2} M^{\frac 12}\,.
\]
Writing $dt' = \beta_\tau e^{-s'} ds'$, the desired bound follows from
\[
\int_{-\log \eps}^\infty \left(2 e^{\frac{14}{15}s'} + \tfrac{1}{2} M^{\frac 12}\right)\beta_\tau e^{-s'} ds' 
\leq 60 \eps^{\frac{1}{15}} + M^{\frac 12} \eps
\] 
concluding the proof.
\end{proof}

\subsubsection{The time integral of $\abs{\p_1 W }$ along $\pu^{y_0}$}
We next establish the following:
\begin{lemma}\label{lem:p1Wu} 
For all $s \ge -\log \eps$,
\begin{align} 
\sup_{y_0 \in \XXX_0} \int_{ -\log\eps}^s \abs{\p_1 W } \circ \pu^{y_0}(s') ds' \les \eps^ {\frac{1}{18}}    \label{eq:p1W:PhiU} \,.
\end{align} 
\end{lemma}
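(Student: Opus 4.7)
The plan is to combine the uniform pointwise bound $\abs{\p_1 W(y,s)}\les (1+\abs{y_1})^{-\sfrac 23}$, which is a direct consequence of the bootstrap assumption \eqref{eq:W_decay}, with the trajectory lower-bound furnished by Lemma~\ref{lem:phiZ}. The smallness factor $\eps^{\sfrac{1}{18}}$ will emerge purely from the fact that the integration begins at the initial self-similar time $s_0 = -\log \eps$, so that any weight $e^{-\sigma_1 s'}$ with $\sigma_1 > 0$ is uniformly bounded by $\eps^{\sigma_1}$ on $[-\log\eps,\infty)$.

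First I would verify the pointwise bound. On the region $\abs{y}\le \LLL$, the second line of \eqref{eq:W_decay} gives $\abs{\p_1 W}\le \tilde\eta^{-\sfrac 13}(y/2)$, while on the region $\abs{y}\ge \LLL$ it gives $\abs{\p_1 W}\le 2\eta^{-\sfrac 13}(y)$. Since both $\tilde\eta(y/2)$ and $\eta(y)$ dominate $1+y_1^2$, in either case
\begin{align*}
\abs{\p_1 W(y,s)} \les (1+\abs{y_1})^{-\sfrac 23}
\end{align*}
holds uniformly in $(y,s)$.

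Next, I would apply Lemma~\ref{lem:phiZ} to the trajectory $\Phi = \pu^{y_0}$ with the choice $\sigma_1 = \sfrac{1}{18}$ and $\sigma_2 = \sfrac 23$, noting that $0 \le \sigma_1 \le \sfrac 12$ and $2\sigma_1 = \sfrac 19 < \sfrac 23 = \sigma_2$, so that the hypotheses of that lemma are satisfied. This yields
\begin{align*}
\int_{-\log\eps}^\infty e^{s'/18}\bigl(1+\abs{\Phi_1(s')}\bigr)^{-\sfrac 23}\,ds' \les 1
\end{align*}
uniformly in $y_0 \in \XXX_0$.

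Combining the two ingredients produces
\begin{align*}
\int_{-\log\eps}^s \abs{\p_1 W}\circ \pu^{y_0}(s')\,ds'
&\les \int_{-\log\eps}^s \bigl(1+\abs{\Phi_1(s')}\bigr)^{-\sfrac 23}\,ds' \\
&\le \Bigl(\sup_{s'\ge -\log\eps} e^{-s'/18}\Bigr)\int_{-\log\eps}^\infty e^{s'/18}\bigl(1+\abs{\Phi_1(s')}\bigr)^{-\sfrac 23}\,ds'
\les \eps^{\sfrac{1}{18}},
\end{align*}
since $\sup_{s'\ge -\log\eps} e^{-s'/18} = \eps^{\sfrac{1}{18}}$, which is exactly \eqref{eq:p1W:PhiU}. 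No serious obstacle is anticipated: the only subtlety, compared to the analogous bound for $\pz^{y_0}$ in Corollary~\ref{cor:p1Wz}, is the choice of a strictly positive $\sigma_1$ in Lemma~\ref{lem:phiZ}, which is what extracts a positive power of $\eps$ rather than a mere $O(1)$ bound.
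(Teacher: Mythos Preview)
Your proof is correct and considerably more direct than the paper's. You use only the bootstrap decay of $\p_1W$ and Lemma~\ref{lem:phiZ} with a positive exponent $\sigma_1=\tfrac{1}{18}$, extracting $\eps^{1/18}$ from the trivial bound $e^{-s'/18}\le \eps^{1/18}$ on $[-\log\eps,\infty)$. This is exactly the step that separates Lemma~\ref{lem:p1Wu} from Corollary~\ref{cor:p1Wz}, and your observation that $2\sigma_1=\tfrac19<\tfrac23=\sigma_2$ keeps Lemma~\ref{lem:phiZ} available is the key point.

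The paper takes a completely different, physical-space route: it passes to the Lagrangian flow $\varphi$ of $\Vcal$ in the $(x,t)$ variables, observes that conservation of mass gives $\det\nabla_x\varphi=\mathring\rho_0/\mathring\rho$, and uses the closeness of $\mathring\rho$ to a constant (Proposition~\ref{prop:sound}) to bound $\int \div_{\tilde x}\mathring u\circ\varphi\,dt'$. Since $\div_{\tilde x}\mathring u$ equals $\Jcal\,\p_{x_1}(\mathring u\cdot\Ncal)$ up to tangential derivatives whose time-integral is $\OO(\eps)$, this controls the \emph{signed} integral of $\p_{x_1}(\mathring u\cdot\Ncal)\circ\varphi$; the asymptotic non-positivity of $\p_1W$ (Corollary~\ref{cor:out:of:labels}) then upgrades this to an absolute-value bound, and changing back to self-similar time yields \eqref{eq:p1W:PhiU}. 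The advantage of the paper's detour is that it produces the intermediate estimates \eqref{light-buzz}--\eqref{buzzed-good-bro2} on $\det\nabla_x\varphi$ and on $\int\abs{\p_{x_1}\mathring u\cdot\Ncal\circ\varphi}\,dt'$, which are reused in the vorticity analysis of Sections~\ref{sec:vorticity:sound}--\ref{sec:creation}. Your argument, by contrast, gives the lemma itself with almost no work.
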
 
\begin{proof}[Proof of Lemma \ref{lem:p1Wu}] 
From the definition of the transport velocity $\Vcal$ in  \eqref{eq:fucked:up:v}, observe that 
\begin{align} 
\operatorname{div}_{\!  x}\Vcal =   \operatorname{div}_{\! \tilde x}\mathring u = 2\beta_1(\p_{x_1} \mru \cdot \nn \Jcal + \p_{x_\nu} \mru_\nu)
\label{eq:remarkable:argument}
\end{align} 
where we have used the fact that 
$$\operatorname{div}_{\!  x}v = \p_{x_j} v_j =  \p_{x_1} \Jcal \Ncal \cdot v + \p_{x_\mu} v_\mu = \operatorname{div}_{\!  \tilde x}v    $$
and that from \eqref{eq:v:x:t:def},
$\operatorname{div}_{\! \tilde x}v= \dot Q_{ii} =0$, and that 
$\operatorname{div}_{\!   x} (-\dot f, 0  , 0)=0$.  Hence, the conservation of mass equation
 \eqref{density-sheep} can be written as 
\begin{align} 
 \partial_t \mathring\rho +  \Vcal \cdot \nabla_{\! x}\mathring\rho
 + \mathring \rho \operatorname{div}_{\! x}\Vcal =0  \,,
\label{density-good1}
\end{align} 
and composing \eqref{density-good1} with the flow $\varphi$ given by \eqref{phi-flow}, we see that
\begin{align} 
 \partial_t (\mathring\rho \circ \varphi)
 = (\mathring \rho \circ \varphi)  (\operatorname{div}_{\! x}\Vcal) \circ \varphi  \,.
\label{density-good2}
\end{align} 
Since 
\begin{align} 
 \partial_t (\det \nabla_{\! x} \varphi)
 = \det \nabla_{\! x} \varphi (\operatorname{div}_{\! x}\Vcal) \circ \varphi  \,,
\label{Jac-good}
\end{align} 
and $\det \nabla_{\! x} \varphi (x, -\eps) =1$, 
it follows that $$\mathring\rho \circ \varphi = (\det \nabla_{\! x} \varphi) ^{-1} \mathring\rho_0\,.$$

Note that using \eqref{sigma1}, \eqref{Sigma-bound} and \eqref{eq:S_bootstrap:*} yields
\begin{align}
\abs{\rho-(\tfrac {\alpha\kappa_0} 2)^{\frac 1 \alpha}}&= \abs{( \alpha e^{-{\frac{\scal}{2}} } \sigma)^ {\frac{1}{\alpha }}  -(\tfrac {\alpha\kappa_0} 2)^{\frac 1 \alpha}} \notag\\
&\les  \abs{( \alpha e^{-{\frac{\scal}{2}} } \sigma)^ {\frac{1}{\alpha }} -( \tfrac{\alpha e^{-{\frac{\scal}{2}} } \kappa_0}{2})^ {\frac{1}{\alpha }}  }+ \abs{( \tfrac{\alpha e^{-{\frac{\scal}{2}} } \kappa_0}{2})^ {\frac{1}{\alpha }} -( \tfrac{\alpha \kappa_0}{2})^ {\frac{1}{\alpha }}  } \notag\\
&\les \eps^{\frac18} ( \tfrac{\alpha \kappa_0}{2})^ {\frac{1}{\alpha }-1}  
\les \eps^{\frac19} \,.\label{density-final0}
\end{align}
Therefore, by  \eqref{density-final0} and  \eqref{density-final0}, we have that
\begin{align}
\abs{\det ( \nabla_{\!x} \varphi(x, t) )-1}&\leq \abs{\tfrac{\mathring \rho_0}{\mathring \rho}-1}\notag\\
&\leq \abs{\tfrac{\mathring \rho_0}{\mathring \rho}-\tfrac{(\tfrac {\alpha\kappa_0} 2)^{\frac 1 \alpha}}{\mathring \rho}}+\abs{\tfrac{(\tfrac {\alpha\kappa_0} 2)^{\frac 1 \alpha}}{\mathring \rho}- 1}\les \eps^{\frac19}  \,.\label{light-buzz}
\end{align}

From \eqref{density-good2} and \eqref{Jac-good}, we have that
\begin{align*} 
\frac{d}{dt}\det \nabla_{\!x} \varphi =  \det \nabla_{\!x} \varphi  (\div_{x} \Vcal ) \circ \varphi =  \det \nabla_{\!x} \varphi  (\div_{\tilde x} \mathring u) \circ \varphi
\end{align*} 
leads to
\begin{align} 
 \det \nabla_{\!x} \varphi(x, t) =\exp \int_{-\eps}^t (\operatorname{div}_{\tilde x}  \mathring u \circ \varphi)(x, t') dt' \,. \label{nice-buzz}
\end{align} 
Hence,
\begin{align}
 - \eps^ {\frac{1}{9}}   \les \int_{-\eps}^t (\operatorname{div}_{\tilde x}  \mathring u \circ \varphi)(x, t') dt'  \les  \eps^ {\frac{1}{9}}    \ \ \text{ for all } \ x \in \mathbb{R}^3   \,.  \label{light-buzz2}
\end{align} 
From \eqref{a_ansatz},  \eqref{U-trammy}, \eqref{eq:A_bootstrap}, and \eqref{eq:US_est}
\begin{align} 
\snorm{  \p_{ x_\nu} \mathring u_\nu (\cdot ,t) }_{ L^ \infty } \les 1 \,. \label{feeling-good}
\end{align} 
It follows from \eqref{eq:speed:bound} and  \eqref{feeling-good} that
\begin{align} 
\int_{-\eps}^{t} \snorm{  \p_{ x_\nu} \mathring u_\nu (\cdot ,t) }_{ L^ \infty } dt' \les {\tau(t) + \eps} \les {M\eps^2 + \eps} \le \eps^ {\frac{1}{2}}  \,.   \label{even-better-mofo}
\end{align} 
Thus, with  \eqref{e:bounds_on_garbage}, \eqref{eq:remarkable:argument}, \eqref{light-buzz},  and \eqref{even-better-mofo}, we have that
\begin{align}
\abs{ \int_{-\eps}^t   \p_{ x_1} \mathring u \cdot  \Ncal \circ \varphi  dt'}\leq \abs{ \int_{-\eps}^t\left(\tfrac{1}{\Jcal}\p_{x_\nu} \mru_\nu\right)\circ  \varphi  dt'}  +\tfrac{1}{2\beta_1} \abs{\int \left( \operatorname{div}_{\! \tilde x}\mathring u\right)\circ  \varphi  dt'} \les \eps^{\frac 19}   \,. \label{buzzed-good-bro}
\end{align}
By Corollary~\ref{cor:out:of:labels}, the integral of the positive part of $\p_{ x_1} \mathring u \cdot  \Ncal $ is small. Therefore, the above estimate gives a bound on the negative part of $\p_{ x_1} \mathring u \cdot  \Ncal $ as well. In summary, by \eqref{buzzed-good-bro}  and Corollary~\ref{cor:out:of:labels}, we then have that
\begin{align} 
 \int_{-\eps}^t  \abs{ \p_{ x_1} \mathring u \cdot  \Ncal \circ \varphi } dt' \le \eps^ {\frac{1}{18}}   \,. \label{buzzed-good-bro2}
\end{align} 
Then, from  \eqref{tildeu-dot-N} and the bootstrap assumptions \eqref{eq:speed:bound} and  \eqref{eq:Z_bootstrap}, we see that
$ \int_{-\eps}^t  \abs{ \p_{ x_1} w \circ \varphi } dt' \le  \eps^ {\frac{1}{19}} $, and in particular, for any $x_0 \in \XX_0$, we have that
\begin{align} 
\sup_{x_0 \in \XX_0} \int_{-\eps}^t  \abs{ \p_{ x_1} w \circ \varphi_{x_0} } dt' \le  \eps^ {\frac{1}{19}}   \,. \label{buzzed-good-bro3}
\end{align} 
Since the flow $\Phi(y,s)$ is related to the flow $\varphi(x,t)$ via
$$
\Phi_1 (y,s) =  e^ {\frac{3}{2}s}  \varphi_1(x,t) \,, \ \ \Phi_\nu (y,s) = e^ {\frac{s}{2}} \varphi_\nu( x, t) \,,
$$
and since $\p_{x_1} w = e^s \p_1W$, using \eqref{s(t)}, the estimate \eqref{eq:p1W:PhiU} follows.
\end{proof} 
 
 \subsubsection{The Lagrangian flow $X(\tilde x, t)$}
 We next introduce the Lagrangian flow $X$ associated to the transport velocity in \eqref{tvorticity}, namely $2\beta_1(\tilde v + \tilde u)$, as the solution to 
\begin{subequations} 
\label{X-flow}
\begin{align} 
 \p_tX(\tilde x, t) & = 2\beta_1(\tilde v +\tilde u)(X(\tilde x, t),t)\,, \qquad  t \in [-\eps, T_*] \,,  \\
 X(\tilde x, -\eps) &= \tilde x \,.
\end{align} 
\end{subequations} 
Note that the flow $X(\tilde x, t)$ is related to the flow $\varphi(x,t)$ given in \eqref{phi-flow}, via the transformation
\begin{align} 
\varphi_1(x,t) = X_1(\tilde x, t) - f(\check X (\check {\tilde x}, t),t) \,, \ \ \varphi_\nu(x,t) = X_\nu(\tilde x,t) \,,  \label{X-to-phi}
\end{align} 
and that $X(\tilde x, t)$ is related to the flow $\Phi(y,s):=\pu(y,s)$  by
\begin{align} 
\Phi_1 (y,s) =  e^ {\frac{3}{2}s} (X_1(\tilde x, t) - f(\check X (\check {\tilde x}, t),t)) \,, \ \ \Phi_\nu (y,s) = e^ {\frac{s}{2}} X_\nu(\tilde x, t)  \,. \label{good-vlad-260}
\end{align}

In this subsection we obtain three results, which play an important role in the proof of vorticity creation: the first is an estimate on $\abs{\nabla_{\! \tilde x} X(\cdot,t) - \Id}$, cf.~\eqref{deformation-bounds}; the second is a precise bound on the label $\tilde x_0$ such that $X(\tilde x_0,t) \to 0$ as $t\to T_*$ (recall that $0$ is the location at which the first singularity occurs), cf.~Lemma~\ref{lem:initial:particle:location}; the third result is a precise lower bound on $-\int_{-\eps}^{T_*} \p_{\tilde x_1} \tilde w \circ X$, cf.~Lemma~\ref{lem:dont:read:this}. 

First, we estimate the deformation rate of the flow $X$ on the time interval $[-\eps,T_*]$. The evolution of $\nabla_{\! \tilde x} X$ is given by
\begin{align} 
\frac{d}{dt} \p_{\tilde x _j} X_i  & = 2 \beta_1 \left(\p_{\tilde x _k} (\tilde v_i+ \tilde u_i) \circ X\right) \p_{\tilde x_j}X_k \label{eq:DX0}  \,.
\end{align} 
We note that using the bounds \eqref{eq:V:bnd}, the argument given in \eqref{light-buzz}--\eqref{buzzed-good-bro}, together with the identical argument given in Section 13 of \cite{BuShVi2019b}, we may show that there exists a universal constant  $C \geq 1 $ (in particular, $\eps$-independent) such that
\begin{align} 
 \tfrac{1}{C}  \le \abs{ \nabla_{\!\tilde x} X}  \le C  \,. \label{muy-culo-gordo}
\end{align} 
The bound \eqref{muy-culo-gordo} can however be made sharper, and we show (cf.~\eqref{deformation-bounds} below) that $\abs{\nabla_{\! \tilde x} X - \Id} \leq  \eps^{\frac{1}{20}}$ uniformly on $[-\eps, T_*)$. In order to prove this, we appeal to \eqref{eq:DX0}, from which we subtract $\Id_{ij}$ and then we contract with $\partial_{ \tilde x_j} X_i - \Id_{ij}$, to obtain that 
\begin{align} 
\tfrac{d}{2dt} \abs{\nabla_{\!\tilde x}X-\Id}^2 =
(\partial_{ \tilde x_j} X_i - \Id_{ij}) {\mathcal S}_{ik} (\partial_{ \tilde x_j} X_k - \Id_{kj}) + {\mathcal S}_{ij} (\partial_{ \tilde x_j} X_i - \Id_{ij})
\label{eq:matrix:ODE}
\end{align}  
we have introduced the notation
\begin{align*}
 {\mathcal S}_{ik} = 2 \beta_1 \left(\p_{\tilde x _k} (\tilde v_i+ \tilde u_i) \circ X\right)
\end{align*}
and for a matrix $A_{ij}$ we denote the Euclidean norm as $\abs{A}^2 = A_{ij} A_{ij}$.  Because of \eqref{usigma-sheep}, which implies that for a vector field $b$ we have $b\cdot \nabla_{\!\tilde x} \tilde u_j = \check b \cdot \check \nabla_{\! x} \mru_j + \Jcal b\cdot \Ncal \p_{x_1} \mru_j$, using the relation \eqref{X-to-phi} between the $\tilde x$ and $x$ Lagrangian trajectories $X$ and respectively $\varphi$, and appealing to \eqref{tildeu-dot-T}-\eqref{tildeu-dot-N}, we note that the following identities hold
\begin{subequations}
\label{eq:tilde:geometric:deformation}
\begin{align}
\p_{\Ncal} \tilde u \cdot \Ncal \circ X &= 
\Jcal \p_{x_1} \mru \cdot \Ncal \circ \varphi - \tfrac 12 \Ncal_\gamma \p_{x_\gamma} (w+z) \circ \varphi + \Ncal_\gamma \mru\cdot \Ncal_{,\gamma} \circ \varphi
\\
\p_{\Ncal} \tilde u \cdot \Tcal^\nu \circ X &= 
\Jcal \p_{x_1} a_\nu \circ \varphi -  \Ncal_\gamma \p_{x_\gamma} a_\nu \circ \varphi + \Ncal_\gamma \mru\cdot \Tcal^\nu_{,\gamma} \circ \varphi
\\
\p_{\Tcal^\mu} \tilde u \cdot \Ncal \circ X &= 
\tfrac 12 \Tcal^\mu_\gamma \p_{x_\gamma} (w+z)  \circ \varphi - \Tcal^\mu_\gamma \mru\cdot \Ncal_{,\gamma} \circ \varphi
\\
\p_{\Tcal^\mu} \tilde u \cdot \Tcal^\nu \circ X &= 
\Tcal^\mu_\gamma \p_{x_\gamma} a_\nu  \circ \varphi - \Tcal^\mu_\gamma \mru\cdot \Tcal^\nu_{,\gamma} \circ \varphi
\end{align}
\end{subequations}
The first term on the right side of the first line of the above list has the worst estimate when time integrated, cf.~\eqref{buzzed-good-bro2}. Indeed for all the other terms in the above list, by appealing to the bootstrap assumptions \eqref{eq:support}--\eqref{eq:A_bootstrap} and the estimate \eqref{f-bounds}, we may deduce that their time integrals are $\OO(\eps)$. Combining these estimates we deduce that 
\begin{align}
\int_{-\eps}^{T_*} \abs{(\nabla_{\!\tilde x} \tilde u) \circ X} dt' \les \eps^{\frac{1}{18}} 
\label{eq:nabla:tilde:u}
\end{align}
Similarly, using the relations~\eqref{eq:v:x:t:def}, \eqref{v_ansatz}, \eqref{X-to-phi}, and the estimate \eqref{eq:V:bnd} we obtain that the time integral of $\abs{(\nabla_{\!\tilde x} \tilde v) \circ X}$ is $\OO(\eps)$. Summarizing, we have that the matrix appearing on the right side of \eqref{eq:matrix:ODE} satisfies
\begin{align}
\label{eq:matrix:ODE:stretch}
\int_{-\eps}^{T_*} \abs{\mathcal S} dt' \les \eps^{\frac{1}{18}} \,.
\end{align}
Using that  $\nabla_{\!\tilde x} X|_{t=-\eps}  = \Id$, from \eqref{eq:matrix:ODE}, \eqref{eq:matrix:ODE:stretch} and ODE type bounds we deduce that
\begin{align}
\label{deformation-bounds}
\sup_{t\in[-\eps,T_*]} \abs{ \nabla_{\!\tilde x} X(t) - \Id} \les e^{\int_{-\eps}^{t} \abs{\mathcal S} dt'} - 1  \les \eps^{\frac{1}{18}} e^{\eps^{\frac{1}{18}}} \leq \eps^{\frac{1}{20}}.
\end{align}
The above bound is merely a quantitative version of \eqref{muy-culo-gordo}; it will be used in the proof of Theorem~\ref{thm:vorticity:creation}.

\section{$L^ \infty $ bounds for specific vorticity}
\label{sec:vorticity:sound}
We now establish bounds to solutions $ \mrg $ of the specific vorticity equation \eqref{sv-ss}

From \eqref{svorticity_sheep} and \eqref{11bottlesdownto7}, we deduce that the normal and tangential components of the vorticity  satisfy 
\begin{subequations}
\label{sv-ss}
\begin{align}
\p_t (\mrg \cdot \Tcal^2)  +    \Vcal \cdot \nabla_{\! x} (\mrg \cdot \Tcal^2)  &= 
\FFF_{21} (\mrg \cdot \Ncal) + \FFF_{2\mu}(\mrg \cdot \Tcal^\mu) + \mathcal{G} _2 \\
\p_t (\mrg \cdot \Tcal^3)   +    \Vcal \cdot \nabla_{\! x} (\mrg \cdot \Tcal^3)  &=
\FFF_{31} (\mrg \cdot \Ncal) + \FFF_{3\mu}(\mrg \cdot \Tcal^\mu) + \mathcal{G} _3
\end{align}
\end{subequations}
where the transport velocity $\Vcal$ is defined by \eqref{eq:fucked:up:v}, and 
\begin{subequations}
\label{ssvort-force}
\begin{align}
\FFF_{21} &=  \Ncal \cdot \p_t \Tcal^2      + 2\beta_1 \dot{Q}_{ij} \Tcal^2_i \Ncal_j     + \Vcal_\nu ( \Ncal\cdot \Tcal^2_{,\nu})
  + 2\beta_1  \Ncal_{\nu} \p_{x_\nu} a_2
- 2\beta_1  \Ncal_\nu \mru \cdot  \Tcal^2_{,\nu}   
\\
\FFF_{22} &= 2\beta_1  \Tcal^2_{\nu} \p_{x_\nu} a_2   - 2\beta_1  \Tcal^2_\nu \mru \cdot  \Tcal^2_{,\nu} 
\\
\FFF_{23} &=   \Tcal^3 \cdot \p_t \Tcal^2   + 2\beta_1 \dot{Q}_{ij} \Tcal^2_i \Tcal_j^3    
 \Vcal_\nu (  \Tcal^3\cdot \Tcal^2_{,\nu})
  + 2\beta_1\Tcal^3_{\nu} \p_{x_\nu} a_2
  - 2\beta_1  \Tcal^3_\nu \mru \cdot  \Tcal^2_{,\nu} 
\\
\FFF_{31} &=  \Ncal \cdot \p_t \Tcal^3       + 2\beta_1 \dot{Q}_{ij} \Tcal^3_i \Ncal_j  + \Vcal_\nu ( \Ncal\cdot \Tcal^3_{,\nu}) 
  + 2\beta_1 \Ncal_{\nu} \p_{x_\nu} a_3
   - 2\beta_1  \Ncal_\nu \mru \cdot  \Tcal^3_{,\nu}  
   \\
\FFF_{32} &=    \Tcal^2 \cdot \p_t \Tcal^3   + 2\beta_1 \dot{Q}_{ij} \Tcal^3_i \Tcal_j^2   
 + \Vcal_\nu ( \Tcal^2\cdot \Tcal^3_{,\nu})
  + 2\beta_1 \Tcal^2_{\nu} \p_{x_\nu} a_3
   - 2\beta_1  \Tcal^2_\nu \mru \cdot  \Tcal^3_{,\nu}  
\\
\FFF_{33} &=  2\beta_1  \Tcal^3_{\nu} \p_{x_\nu} a_3 - 2\beta_1 \Tcal^3_\nu \mru \cdot  \Tcal^3_{,\nu}  \,,
 \end{align} 
\end{subequations}
and
\begin{subequations}
\label{ssvort-Gforce}
\begin{align}
\mathcal{G} _2 & = \tfrac{\alpha }{\gamma}\tfrac{ \mathring \sigma}{ \mathring \rho} 
(\p_{\Tcal^3} \mathring \sigma \p_{\Ncal} \mathring \scal - \p_{\Ncal} \mathring \sigma \p_{\Tcal^3} \mathring \scal)
+  \tfrac{\alpha }{\gamma}\tfrac{ \mathring \sigma}{ \mathring \rho} 
 \Tcal^2_1 f,_\nu ( \nabla_{\! x} \mathring \sigma\! \times\!  \nabla_{\!  x}\mathring \scal )_\nu \\
\mathcal{G} _3 & =  \tfrac{\alpha }{\gamma}\tfrac{ \mathring \sigma}{ \mathring \rho} 
(\p_{\Ncal} \mathring \sigma \p_{\Tcal^2} \mathring \scal - \p_{\Tcal^2} \mathring \sigma \p_{\Ncal} \mathring \scal)
+  \tfrac{\alpha }{\gamma}\tfrac{ \mathring \sigma}{ \mathring \rho} 
\Tcal^3_1 f,_\nu ( \nabla_{\! \tilde x} \mathring \sigma\! \times\!  \nabla_{\!  \tilde x}\mathring \scal )_\nu \,,
\end{align} 
\end{subequations}
and from \eqref{tangent},  $\Tcal^\mu_1 = {\frac{f,_\mu}{\Jcal}} $.
 
\begin{proposition}[Bounds on specific vorticity]\label{prop:vorticity}
For $-\eps \le t < \tau(T_*)$,
\begin{align} \label{svort-bound}
\snorm{\mathring \zeta \circ \varphi (\cdot ,t) -\mathring \zeta(\cdot ,-\eps) }_{L^\infty}  \le \eps^ {\frac{1}{20}} 
\, .
\end{align} 
\end{proposition}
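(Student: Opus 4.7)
The plan is to project the vorticity equation \eqref{svorticity_sheep} onto the moving orthonormal frame $(\Ncal,\Tcal^2,\Tcal^3)$, compose with the Lagrangian flow $\varphi$ of \eqref{phi-flow}, and read off from \eqref{sv-ss} and the analogous $\Ncal$-equation a closed ODE system
\[
\tfrac{d}{dt}\bigl(\mrg\circ\varphi\bigr) = \mathbb F\,(\mrg\circ\varphi) + \mathbb G,
\]
where $\mathbb F$ comprises the $\FFF_{ij}$ coefficients of \eqref{ssvort-force} (together with the vortex-stretching and frame-rotation entries acting on $\mrg\cdot\Ncal$), and $\mathbb G$ contains the baroclinic pieces $\mathcal G_\mu$ of \eqref{ssvort-Gforce} together with $\Ncal\cdot(\nabla_{\!\tilde x}\mrs\times\nabla_{\!\tilde x}\mathring\scal)=\p_{\Tcal^2}\mrs\,\p_{\Tcal^3}\mathring\scal-\p_{\Tcal^3}\mrs\,\p_{\Tcal^2}\mathring\scal$, which involves only \emph{tangential} derivatives. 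Since the initial datum \eqref{eq:svort:IC:SS} is $\OO(1)$ in $L^\infty$, a Gr\"onwall argument reduces \eqref{svort-bound} to proving that $\int_{-\eps}^t(\abs{\mathbb F}+\abs{\mathbb G})\circ\varphi(\cdot,t')\,dt'\les\eps^{\frac{1}{20}}$ pointwise in $x$.

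The entries of $\mathbb F$ extracted from \eqref{sv-ss} are all uniformly $\OO(M)$ on $[-\eps,T_*)$: only tangential derivatives of $a$, geometric quantities such as $\mru\cdot\Tcal^\mu_{,\nu}$, and the modulation pieces $\dot Q$, $\p_t\Tcal^\mu$ enter, and these are controlled by Lemma~\ref{lem:US_est}, the bootstrap \eqref{eq:A_bootstrap}, the modulation bounds \eqref{mod-boot}, and the rescaling $\p_{x_\mu}=e^{s/2}\p_\mu$; their $t'$-integrals are $\OO(M\eps)$. The only genuinely singular entry of $\mathbb F$ is the vortex-stretching coefficient $2\beta_1\Jcal(\p_{x_1}\mru\cdot\Ncal)$ multiplying $\mrg\cdot\Ncal$ in the $\Ncal$-component equation. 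Writing $\p_{x_1}\mru\cdot\Ncal=\tfrac12 e^s\p_1 W+\tfrac12 e^{3s/2}\p_1 Z$, changing variables by $dt'=\beta_\tau e^{-s'}ds'$, and applying Lemma~\ref{lem:p1Wu} together with \eqref{eq:Z_bootstrap} yields
\[
\int_{-\eps}^t\abs{\p_{x_1}\mru\cdot\Ncal}\circ\varphi\,dt'\les\int_{-\log\eps}^{s(t)}\abs{\p_1 W}\circ\pu\,ds'+\eps\les\eps^{\frac{1}{18}}.
\]

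The main obstacle is $\mathbb G$, where the factor $\p_\Ncal\mrs$ diverges along trajectories passing through the shock. The decisive structural observation is that by \eqref{11bottlesdownto7} the singular derivative $\p_\Ncal\mrs$ only ever pairs with tangential derivatives of the entropy, $\p_{\Tcal^\nu}\mathring\scal$. Since $K$ solves the pure transport equation \eqref{Snew-ss}, the bootstrap \eqref{eq:S_bootstrap} yields the \emph{uniform} bounds $\abs{\p_{\Tcal^\nu}\mathring\scal}\approx e^{s/2}\abs{\p_\nu K}\les\eps^{\frac18}$ and $\abs{\p_\Ncal\mathring\scal}\approx e^{3s/2}\abs{\p_1 K}\les\eps^{\frac14}$. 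Combining this with Proposition~\ref{prop:sound} (which gives $\mrs/\mathring\rho\les 1$), the bound $\abs{\p_{\Tcal^\nu}\mrs}\les 1$ from Lemma~\ref{lem:US_est}, the singular bound $\abs{\p_\Ncal\mrs}\les M^{\frac13}e^{\frac{s}{2}}\eta^{-\frac13}$ also from Lemma~\ref{lem:US_est}, and the smallness $\abs{\Tcal^\mu_1 f,_\mu}\les M^2\eps^{\frac12}$ inherited from \eqref{mod-boot}, every term in $\mathbb G$ either admits a uniform bounded-times-bounded $\OO(\eps^{\frac18})$ estimate, or satisfies the schematic bound
\[
\int_{-\eps}^t\Bigl|\tfrac{\mrs}{\mathring\rho}(\p_\Ncal\mrs)(\p_{\Tcal^\nu}\mathring\scal)\Bigr|\circ\varphi\,dt'\les\eps^{\frac18}\int_{-\log\eps}^{s(t)}e^{-\frac{s'}{2}}\eta^{-\frac13}\circ\pu\,ds'\les\eps^{\frac18+\frac12}.
\]
Summing all contributions we obtain $\int(\abs{\mathbb F}+\abs{\mathbb G})\circ\varphi\,dt'\les\eps^{\frac{1}{18}}$, which upon Gr\"onwall proves \eqref{svort-bound}. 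The hard part is precisely this structural decoupling: without the antisymmetry of the baroclinic cross product and the $\eps^{\frac18}$ smallness of $\p K$ (a consequence of the entropy being merely advected rather than wave-compressed), the divergent factor $\p_\Ncal\mrs\sim(T_*-t)^{-1}$ identified in the introduction would force $\int\abs{\mathbb G}\,dt'$ to blow up.
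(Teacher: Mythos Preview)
Your strategy is sound and closes the bootstrap, but there is a computational slip, and the paper takes a structurally different shortcut worth knowing.

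The slip is in the bound $\abs{\p_\Ncal\mrs}\les M^{\frac13}e^{\frac s2}\eta^{-\frac 13}$. Since $\p_\Ncal\mrs=\Jcal\,\p_{x_1}\mrs+\Ncal_\mu\p_{x_\mu}\mrs$ and $\p_{x_1}\mrs=e^{\frac{3s}{2}}\p_1\sound$, Lemma~\ref{lem:US_est} in fact yields $\abs{\p_\Ncal\mrs}\les M^{\frac13}e^{s}\eta^{-\frac 13}$, a full factor of $e^{\frac s2}$ larger. With the correct power the integrand loses its exponential decay and you obtain only $\int_{-\log\eps}^{s}\eta^{-\frac13}\circ\pu\,ds'\les 1$ via \eqref{phi-lowerbound_conseq} with $\sigma_1=0$, $\sigma_2=\tfrac23$, so the baroclinic contribution is $\les\eps^{\frac18}$ rather than $\eps^{\frac18+\frac12}$; this is still comfortably below $\eps^{\frac{1}{20}}$, so the argument survives.

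The structural difference: you run Gr\"onwall on the full $3\times3$ system and tame the singular diagonal entry $2\beta_1\Jcal\,\p_{x_1}\mru\cdot\Ncal$ in the $\Ncal$-row via Lemma~\ref{lem:p1Wu}. The paper instead bypasses the $\Ncal$-ODE entirely by invoking the \emph{algebraic} identity \eqref{boom2}, which expresses $\mathring\rho\,\mrg\cdot\Ncal=\operatorname{curl}_{\tilde x}\mru\cdot\Ncal$ purely through tangential $\p_{x_\nu}$-derivatives of $a_\mu$ and geometric terms; this gives $\abs{\mrg\cdot\Ncal}\les\eps^{\frac13}$ pointwise without any dynamics, and Gr\"onwall is then applied only to $(\mrg\cdot\Tcal^2,\mrg\cdot\Tcal^3)$. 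For the singular baroclinic piece the paper also extracts the sharper bound $\int_{-\eps}^t\abs{\p_\Ncal\mrs}\circ\varphi\,dt'\les\eps^{\frac{1}{18}}$ by rewriting $\p_\Ncal\mrs=\Jcal\,\p_{x_1}\mru\cdot\Ncal-\Jcal\,\p_{x_1}z+\text{bounded}$ (cf.~\eqref{snorky2}) and reusing \eqref{buzzed-good-bro2}, rather than the cruder pointwise $\eta^{-\frac13}$ route. Your approach is a legitimate alternative; the paper's trades one Gr\"onwall component for a geometric curl identity.
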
 
\begin{proof}[Proof of Proposition \ref{prop:vorticity}]
By the transformations \eqref{tildeu-dot-T}, \eqref{a_ansatz}, and \eqref{U-trammy} together with the
bootstrap bounds   \eqref{eq:A_bootstrap}, \eqref{eq:US_est}, Lemma \ref{lem:BBS}, we have that
\begin{align} 
\norm{\mru}_{L^ \infty } \les M^ {\frac{1}{4}} \,, \ \   \norm{\p_{x_\nu}(\mru \cdot \Ncal)}_{L^ \infty } \les 1 \,, \ \ 
\norm{\p_{x_\nu} a}_{L^ \infty }  \le  M \eps^ {\frac{1}{2}}\, , \ \ \norm{\Vcal}_{L^\infty} \les M^{\frac 14}\,.\label{boomical}
\end{align} 
Hence, these bounds, together with \eqref{eq:dot:Q} and Lemma \ref{lem:BBS} yields the following bounds on the forcing functions defined in \eqref{ssvort-force} 
\begin{align} 
 \snorm{ \FFF _{ij}}_{L^\infty} \les 1  \ \ \text{ for } \ i,j \in \{1,2,3\} \,. \label{force-bound}
\end{align} 
where we have used powers of $\eps$ to absorb powers of $M$.

Now, from the definitions \eqref{usigma-sheep},
\eqref{sv-sheep},  we have that
\begin{align} 
\mathring \rho(x,t) \mathring \sigma( x, t))^ {\sfrac{1}{\alpha }}  \mathring \zeta(x, t) = \tilde \rho( \tilde x, t) \tilde \zeta( \tilde x, t) = \tilde \omega ( \tilde x, t) 
= \operatorname{curl} _{ \tilde x}  \tilde u ( \tilde x, t)  = \operatorname{curl} _{ \tilde x} \mathring u (x, t) \,,
\notag
\end{align} 
and
\begin{align} 
\operatorname{curl}_{\tilde x} \mru \cdot \Ncal & = \Tcal^2_j  \p_{\tilde x_j} \mru     \cdot \Tcal^3 -    \Tcal^3_j \p_{\tilde x_j} \mru  \cdot \Tcal ^2
\notag \\
&   = \Tcal^2_\nu  \p_{ x_\nu} \mru  \cdot \Tcal^3 -    \Tcal^3_\nu \p_{ x_\nu} \mru \cdot \Tcal ^2 
\notag  \\
&   = \Tcal^2_\nu  \p_{ x_\nu} a_3 - \Tcal^2_\nu  \mru      \cdot  \Tcal^3_{,\nu}
-  \Tcal^3_\nu  \p_{ x_\nu} a_2 + \Tcal^3_\nu  \mru     \cdot  \Tcal^2_{,\nu} \,. \label{boom3}
\end{align} 
from which it follows that
\begin{align} 
 \mathring \zeta \cdot \Ncal = 
  \frac{ \Tcal^2_\nu  \p_{ x_\nu} a_3 - \Tcal^2_\nu  \mru     \cdot  \Tcal^3_{,\nu}
-  \Tcal^3_\nu  \p_{ x_\nu} a_2 + \Tcal^3_\nu  \mru     \cdot  \Tcal^2_{,\nu}}{ \mathring \rho}
   \,. \label{boom2}
\end{align} 

It follows from \eqref{gerd}, Lemma \ref{lem:BBS}, \eqref{density-final0}, \eqref{boomical}, and \eqref{boom2}, we have that
\begin{align} 
\sabs{ \mathring \zeta \cdot \Ncal} \les M^{\frac14}\eps +M\eps^ {\frac{1}{2}} \les \eps^{\frac13} \,,  \label{boom4}
\end{align} 
assuming $\eps$ is taken sufficiently small.

We define
$$
\overline {\mathcal{F}} _{ij} = \mathcal{F} _{ij} \circ \varphi_{x_0}\,, \ \  \overline {\mathcal{G}}_\mu = \mathcal{G}_\mu \circ \varphi_{x_0}\,, \ \ 
\mathcal{Q} _1 = (  \mrg \cdot \Ncal) \circ  \varphi_{x_0}\,, \ \ 
\mathcal{Q} _2 = (  \mrg \cdot \Tcal^2) \circ  \varphi_{x_0}\,, \ \ 
\mathcal{Q} _3 = (  \mrg \cdot \Tcal^3) \circ   \varphi_{x_0}\,,
$$
Then,  \eqref{sv-ss} is written as the following system of ODEs:
 \begin{align*} 
  \p_t \mathcal{Q} _2   =   \overline {\mathcal{F}} _{2 j} \mathcal{Q} _j  + \overline{ \mathcal{G} }_2 \,, \ \
   \p_t \mathcal{Q} _3     = \overline {\mathcal{F}} _{3 j} \mathcal{Q} _j + \overline{ \mathcal{G} }_3 \,.
 \end{align*} 
Hence,
\begin{align} 
\tfrac{1}{2} \tfrac{d}{dt} \left(  \mathcal{Q}_2^2 +  \mathcal{Q}_3^2 \right) 
& =     \overline {\mathcal{F}} _{\nu \mu} \mathcal{Q}_\nu  \mathcal{Q}_\mu +  \overline {\mathcal{F}} _{\mu 1}  \mathcal{Q}_\mu \mathcal{Q}_1
+ \mathcal{Q}_\mu \overline{ \mathcal{G} }_\mu \,.
 \label{D-bound1}
\end{align} 
Now, we set $ \mathcal{Y} = ( \mathcal{Q} _2^2+ \mathcal{Q} _3^2)^ {\frac{1}{2}} $.  
Using \eqref{force-bound} and \eqref{boom4}, we see from \eqref{D-bound1} that
\begin{align*} 
  \tfrac{d}{dt} \mathcal{Y} 
& \les    \mathcal{Y}  +  \eps^{\frac{1}{3}} 
+   \abs{\overline{ \mathcal{G} }_2} + \abs{\overline{ \mathcal{G} }_3}   \,,
\end{align*} 
and hence by Gronwall's inequality, 
\begin{align} 
\abs{\mathcal{Y}(t)-\mathcal{Y}(-\eps)}& \les (e^ {\int_{-\eps}^tC\,dt'} -1)   \mathcal{Y} (- \eps) + e^ {\int_{-\eps}^tC\,dt'}\int_{-\eps}^t  (\eps^{\frac{1}{3}} 
+   \abs{\overline{ \mathcal{G} }_2} + \abs{\overline{ \mathcal{G} }_3}  ) dr \notag\\
& \les \eps   \mathcal{Y} (- \eps) + \int_{-\eps}^t (\eps^{\frac{1}{3}} 
+   \abs{\overline{ \mathcal{G} }_2} + \abs{\overline{ \mathcal{G} }_3}  ) dr\,.
 \label{D-bound2}
\end{align} 
where we used the bound $t-\eps\leq \tau(T_*) \leq 2\eps$ from \eqref{eq:speed:bound}.

We now prove that $\int_{-\eps}^t \overline{ \mathcal{G} }_\mu(r) dr$ is bounded for all $t \ge -\eps$ such that  $t <  \tau(t) $. First note that by \eqref{s_ansatz} and \eqref{eq:S_bootstrap}, we see that
\begin{align} 
\snorm{  \nabla_{\! x} \mathring \scal( \cdot , t)}_{ L^ \infty } \les \eps^ {\frac{1}{8}}  \,,
  \label{even-better-mofo-2}
\end{align} 
so it remains for us to bound $ \exp \int_{-\eps}^{t} \abs{  \p_{\Tcal^\mu} \mathring \sigma \circ \varphi } dt'$ and 
$ \exp \int_{-\eps}^{t} \abs{  \p_{\Ncal} \mathring \sigma \circ \varphi} dt'$.   
Using the identities
\begin{align*}
(\Ncal \cdot \nabla_{\!\tilde x})\mathring \sigma  = \p_{x_1} \mathring\sigma \Jcal + \Ncal_\mu\p_{x_\mu} \mathring \sigma \text{ and }
(\Tcal^\nu \cdot \nabla_{\!\tilde x})\mathring \sigma    =  \Tcal^\nu_\mu\p_{x_\mu} \mathring \sigma \,,
\end{align*} 
and \eqref{tildeu-dot-T}, we see that
\begin{align*} 
 \p_{\Ncal} \mathring \sigma &= \p_{x_1} \mathring u \cdot \Ncal  \Jcal  - \p_{x_1} z  \Jcal
 + \Ncal_\mu\p_{x_\mu}( \mathring u \cdot \Ncal) - \Ncal_\mu\p_{x_\mu} z  \,,\\
  \p_{\Tcal^\nu} \mathring \sigma &=  \Tcal^\nu_\mu\p_{x_\mu} (\mathring u \cdot \Ncal) - \Tcal^\nu_\mu\p_{x_\mu}z \,.
\end{align*} 
From \eqref{z_ansatz}, \eqref{U-trammy}, \eqref{eq:Z_bootstrap}, and \eqref{eq:US_est}, we find that
\begin{align} 
\snorm{  \p_{\Tcal^\nu} \mathring \sigma }_{ L^ \infty }  \les 1 \,, \label{snorky1}
\end{align} 
and additionally with \eqref{buzzed-good-bro2}, we see that
\begin{align} 
 \int_{-\eps}^t \abs{  \p_{\Ncal} \mathring \sigma \circ \varphi } dt' \les  \eps^ {\frac{1}{18}}  \,. \label{snorky2}
\end{align} 
The estimates \eqref{even-better-mofo-2}, \eqref{snorky1}, and \eqref{snorky2}  together with \eqref{Sigma-bound} and \eqref{density-final0}
show that
\begin{align} 
\int_{-\eps}^t \abs{\overline{ \mathcal{G} }_\mu(s)} ds\les  \eps^ {\frac{1}{18}}  \,.
\label{snorky3}
\end{align} 

From \eqref{D-bound2} and \eqref{snorky3}, we have that
\begin{align*}
\abs{\mathcal{Q}_2(t)-\mathcal{Q}_2(-\eps)} +  \abs{\mathcal{Q}_3(t)-\mathcal{Q}_3(-\eps)}  \les\eps ( \abs{\mathcal{Q}_2(-\eps)} +  \abs{\mathcal{Q}_3(-\eps)})+\eps^ {\frac{1}{18}} 
\end{align*}
uniformly for all labels $x_0$. Since $\Ncal, \Tcal^2,\Tcal^3$ form an orthonormal basis, the above estimate and \eqref{boom4}, 
implies that \eqref{svort-bound} holds. 
\end{proof}

\section{Vorticity creation}
\label{sec:creation}

We analyze vorticity creation (see~Theorem~\ref{thm:vorticity:creation}) through the evolution of the specific vorticity vector $\tilde \zeta$ in $\tilde x$ variables, given in equation \eqref{tvorticity}. 
For this purpose, we recall that the Lagrangian flow $X$ associated to the transport velocity in \eqref{tvorticity}, was defined in \eqref{X-flow} above. Before turning to Theorem~\ref{thm:vorticity:creation}, we establish two preliminary results associated to the flow $X$, which play an important role in the proof of vorticity creation: the first  is a precise bound on the label $\tilde x_0$ with the property that $X(\tilde x_0,t) \to 0$ as $t\to T_*$, cf.~Lemma~\ref{lem:initial:particle:location}; the second is a precise lower bound on the amplification factor 
$-\int_{-\eps}^{T_*} \p_{\tilde x_1} \tilde w (X(\tilde x_0, t),t) dt$, cf.~Lemma~\ref{lem:dont:read:this}. 

\subsection{The blowup trajectory and a bound on the amplification factor}
We obtain an estimate for the position of the particle $\tilde x_0$, which is carried by the flow $X(\cdot,t)$ to the blowup location $\tilde x=0$ as $t\to T_*$. 
\begin{lemma}[Initial location of particle trajectory leading to blowup]
\label{lem:initial:particle:location}
With the flow $X$ defined by \eqref{X-flow}, let  $X_{\tilde x_0}(t)$ denote the trajectory  which 
emanates from the point $\tilde x_0$. If $\lim_{t\to T_*}  X_{\tilde x_0}(t) =0$, then 
\begin{align}
 \abs{ (\tilde x_0)_1 - \beta_3 \kappa_0 \eps} \leq 5 \eps^{\frac 76}  \,, \qquad \sabs{\check {\tilde x}_0} \le 5 \eps^{\frac 76}  \,.   \label{miller-highlife}
\end{align} 
\end{lemma}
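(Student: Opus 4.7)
The plan is to integrate the flow equation \eqref{X-flow} backward from $T_*$ to $-\eps$. Since $X_{\tilde x_0}(T_*) = 0$ by assumption, the fundamental theorem of calculus gives
\[
\tilde x_0 = -\int_{-\eps}^{T_*} 2\beta_1(\tilde v + \tilde u)(X_{\tilde x_0}(t), t)\, dt,
\]
and since the time interval has length $T_* + \eps = \eps(1 + \OO(\eps))$ by \eqref{T*-bound}, with integrand of size $\OO(\kappa_0)$ (cf.~\eqref{mod-boot} and the bounds on $\tilde u$, $\tilde v$), the trajectory remains in a ball of radius $\OO(\kappa_0 \eps)$ about the origin.

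For the normal direction, I would decompose $\tilde u_1 = (\tilde u \cdot \Ncal)\Ncal_1 + a_\mu \Tcal^\mu_1$ and use \eqref{eq:UdotN:Sigma} together with the support bound \eqref{eq:support} and the bootstraps \eqref{eq:W_decay}, \eqref{eq:Z_bootstrap} to conclude $\tilde u \cdot \Ncal = \kappa/2 + \OO(\eps^{1/6})$ (the key point being that in the support, $|e^{-s/2}W| \lesssim \eps^{1/6}$). Since $\Ncal_1 = 1/\Jcal = 1 + \OO(\eps^4)$ and $|\Tcal^\mu_1| \lesssim \eps^2$ along the trajectory, $2\beta_1 \tilde u_1 = \beta_1 \kappa + \OO(\eps^{1/6})$. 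For $2\beta_1 \tilde v_1$, the term $|2\beta_1 \dot Q_{1j} X_j| \lesssim M^2 \eps^{3/2}$ is negligible via \eqref{eq:dot:Q}, while the key calculation is $2\beta_1 R_{j1}\dot\xi_j = \kappa + \OO(M\eps)$: this uses \eqref{eq:dot:kappa:1} together with the bootstrap $|\dot\kappa| \leq e^{-3s/10}$ and the bound $|F_W^0| \lesssim \eps^{3/4}$ (obtained by inspecting \eqref{eq:FW:def} at $y=0$ using the bootstraps on $A$, $K$, $\sound$), which via \eqref{eq:GW:0:a} and $|Z^0| \leq M\eps$ yields the desired estimate. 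Combining, $2\beta_1(\tilde u_1 + \tilde v_1) = -\beta_3 \kappa + \OO(\eps^{1/6})$; together with $|\kappa - \kappa_0| \lesssim \eps^{13/10}$ and $T_* = \OO(\eps^2)$, integration produces
\[
(\tilde x_0)_1 = \beta_3 \kappa_0 \eps + \OO(\eps^{7/6}),
\]
where taking $\eps$ sufficiently small absorbs all $M$-dependent prefactors into the bound $5\eps^{7/6}$.

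For the tangential components, the analogous decomposition gives $2\beta_1 \tilde u_\mu = 2\beta_1 (\tilde u \cdot \Ncal)\Ncal_\mu + 2\beta_1 a_\nu \Tcal^\nu_\mu$; using $|\Ncal_\mu| \lesssim |f_{,\mu}| \lesssim M^2 \eps^2$ along the trajectory and $|a_\nu| \leq M\eps$, this is bounded by $M\eps$, while $|\dot Q_{\mu j} X_j| \lesssim M^2 \eps^{3/2}$. The crucial step is to establish the refined bound $|\dot\xi_\mu| \lesssim M^{9/4}\eps$ (much better than the bootstrap $M^{1/4}$ from \eqref{eq:acceleration:bound}). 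I would obtain this by inspecting the system \eqref{eq:hj:0:a}--\eqref{eq:hj:def:1}: an explicit computation of $\p_{1\mu} G_W^0$ from \eqref{eq:gW} using $\Ncal^0 = e_1$, $\Jcal^0 = 1$, and $\check \nabla \Ncal|_0 = -e^{-s/2}\phi_{\cdot\cdot}$, together with the bootstrap $|\p_1 Z^0| \lesssim M e^{-3s/2}$ and the estimate $|\dot Q| \lesssim M^2 \eps^{1/2}$, gives $|\p_{1\mu} G_W^0| \lesssim M e^{-s} + M^4 \eps^{3/2} e^{-s/2}$, and a similar estimate holds for $\p_{1\mu} F_W^0$; since $\mathcal H^0$ is close to $\mathrm{diag}(6,2,2)$ and hence invertible, this yields $|h_W^{\mu,0}| \lesssim M e^{-s/2} \eps^{1/2}$, and \eqref{eq:hj:0:a} then gives $R_{j\mu}\dot\xi_j = A_\mu^0 + \OO(M^4 \eps^{3/2})$. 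Since $R$ is close to identity, $|\dot\xi_\mu| \lesssim M^{9/4}\eps$; integrating, $|\xi_{*,\mu}| \lesssim M^{9/4}\eps^2$, and combining all terms produces $|(\tilde x_0)_\mu| \lesssim M\eps^2 + M^{9/4}\eps^{5/2} \leq 5\eps^{7/6}$ for $\eps$ small.

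The main technical obstacle is the refined tangential modulation bound $|\dot\xi_\mu| \lesssim M^{9/4}\eps$: without it, the a priori bootstrap would only give $|(\tilde x_0)_\mu| \lesssim M^{1/4}\eps$, which fails the required $\eps^{7/6}$ threshold. The refinement relies on the precise algebraic structure of the modulation constraints in Section~\ref{sec:constraints:initial} combined with the smallness of $F_W^0$ and its derivatives, and requires careful bookkeeping across several bootstrap estimates — in particular, distinguishing the role of the $e_1$ direction (where $\dot\xi_1$ is $\OO(\kappa_0)$) from the tangential directions (where the modulation motion is constrained to be $\OO(M\eps)$ by the evolution of the $\check \nabla W$ and $\p_1 \check \nabla W$ vanishing conditions at $y=0$).
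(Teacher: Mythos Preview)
Your approach is essentially the paper's own: integrate the flow identity $\tilde x_0=-\int_{-\eps}^{T_*}\p_tX\,dt$, decompose the velocity, and for the tangential components invoke the refined bound on $h_W^{\mu,0}$ to control $(R^T\dot\xi)_\mu$ via \eqref{eq:hj:0:a}, rather than the crude bootstrap $|\dot\xi|\le M^{1/4}$. One caveat: your claimed rate $|h_W^{\mu,0}|\lesssim Me^{-s/2}\eps^{1/2}$ is too optimistic, since the third-order contributions $\kappa_0\,\p_{1\nu\mu}A_\mu^0$ and $\kappa_0^2\,e^s\p_{11\nu}K^0$ to $\p_{1\nu}F_W^0$ (cf.~\eqref{eq:FW:0:e}, \eqref{eq:A:higher:order}, \eqref{eq:K:higher:order}) only yield $|h_W^{\mu,0}|\lesssim Me^{-(1-\frac{5}{2m-7})s}$---precisely the bound \eqref{eq:GW:hW:0} that the paper cites directly---but this still gives $e^{s/2}|h_W^{\mu,0}|\ll 1$ and the argument goes through.
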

\begin{proof}[Proof of Lemma~\ref{lem:initial:particle:location}]
We consider the trajectory $X_{\tilde x_0}(t)$ for which $X_{\tilde x_0}(T_*)=0$ and for notational simplicity, we drop the subscript $\tilde x_0$ and use
$X(t)$ to denote this trajectory. The main idea is that the initial position of the particle $X(t)$, i.e. $\tilde x_0$, may be computed by passing $t\to T_*$ in the identity $X(t) - \tilde x_0 = \int_{-  \eps}^t \partial_t X(t') dt'$, leading to 
\begin{align}
\tilde x_0 = - \int_{-\eps}^{T_*} \p_t X(t') dt' \,. 
\label{eq:annoying:0}
\end{align}
By revisiting the right side of \eqref{X-flow}, we obtain a sharp estimate for the right side of the above identity. 

For convenience, in analogy to \eqref{tildeu-dot-T} we define
\begin{align}  
\tilde w =  \tilde u \cdot \Ncal + \tilde \sigma  \,, \qquad \tilde z =  \tilde u \cdot \Ncal - \tilde \sigma  \,, \qquad  \tilde a_\nu = \tilde u \cdot \Tcal^\nu \,. \label{tildewz}
\end{align} 
We note that $\p_{\tilde x_1} \tilde w (\tilde x, t) = \p_{x_1} w(x,t)$.
Furthermore, using \eqref{tildev} 
we have that
\begin{align} 
\p_t X & = 2\beta_1( \tilde v + \tilde u \cdot \Ncal \, \Ncal +  \tilde u \cdot \Tcal^\nu \Tcal ^\nu) \circ X \notag \\
&= 2\beta_1 \dot Q X - 2\beta_1 R^T \dot \xi + \beta_1(  \tilde w \Ncal  +   \tilde z \Ncal + 2 \tilde a_\nu \Tcal ^\nu) \circ X\,.
\label{eq:annoying:1}
\end{align} 
First we note that using that $\dot Q$ is skew symmetric, that $X(T_*) = 0$,  appealing to the bounds \eqref{eq:acceleration:bound}, \eqref{eq:A_bootstrap},  \eqref{eq:US_est},  together with \eqref{T*-bound}, from the Gr\"onwall inequality on $[t,T_*]$ we obtain that
\begin{align} 
\abs{X(t)} \les  M^\frac 14  \eps \,.  \label{coors-light-sucks}
\end{align} 
This estimate his however not sharp enough; to do better, we need to carefully bound the term $2\beta_1 R^T \dot \xi$ on the right side of \eqref{eq:annoying:1}. Note cf.~\eqref{def_V} we have that $(R^T \dot \xi)_i = - V_i(0,s)$. Then, evaluating \eqref{eq:gW} and \eqref{eq:hW} at $y=0$, using definition of the function $f$ and our constraints \eqref{eq:constraints}, we deduce 
\begin{align*}
2\beta_1  (R^T \dot \xi)_1  = \kappa + \beta_2 Z^0 - \tfrac{1}{\beta_\tau} e^{-\frac s2} G_W^0 
\qquad \mbox{and} \qquad 
2\beta_1  (R^T \dot \xi)_\mu = 2\beta_1 A_\mu^0 - \tfrac{1}{\beta_\tau} e^{\frac s2} h_W^{\mu,0}
\end{align*}
in analogy to \eqref{eq:hj:0:a} and \eqref{eq:GW:0:a}. Using the $\dot{\kappa}$ estimate in \eqref{eq:acceleration:bound}, the $Z$ and $A$ estimates in \eqref{eq:Z_bootstrap} and \eqref{eq:A_bootstrap}, and the bound \eqref{eq:GW:hW:0} for $G_W^0$ and $h_W^{\mu,0}$, which is a consequence of the bootstrap assumptions, we deduce that 
\begin{align}
\abs{ 2\beta_1  (R^T \dot \xi)_1 - \kappa_0} \les  M \eps
\qquad \mbox{and} \qquad 
\abs{ 2\beta_1  (R^T \dot \xi)_\mu} \les M \eps^{\frac 45}
\label{eq:annoying:2}
\end{align}
since $1 - \frac{5}{2m-7} > \frac 45$ for $m\geq 18$. Returning to \eqref{eq:annoying:1}, from \eqref{eq:dot:Q}, \eqref{eq:Z_bootstrap}, \eqref{eq:A_bootstrap} and \eqref{coors-light-sucks}, we have that
\begin{align}
\abs{2\beta_1 \dot Q X +  \beta_1( \tilde z \Ncal + 2 \tilde a_\nu \Tcal ^\nu) \circ X} \les M^{\frac 94} \eps^{\frac 32} + M \eps \les M \eps \,.
\label{eq:annoying:3}
\end{align}
Lastly, by \eqref{X-to-phi} we have $\tilde w \circ X = w\circ \varphi$, and by \eqref{w_ansatz} we have $w = \kappa + e^{-\frac s2} W$. Thus, by also appealing to \eqref{eq:acceleration:bound}, \eqref{e:space_time_conv}, \eqref{eq:W_decay}, \eqref{e:bounds_on_garbage}, and the fact that by $\abs{\phi_{\nu\mu}(-\eps)} \leq \eps$ we have that $\abs{\Ncal(-\eps) - e_1}\les \eps$,  we obtain
\begin{align} 
\abs{\tilde w \Ncal \circ X - \kappa_0 e_1} \leq \abs{\kappa \Ncal -\kappa_0 e_1}  + e^{-\frac s2} \norm{W}_{L^\infty(\XXX(s))} \leq 3 \eps^{\frac 16} \,.
\label{eq:annoying:4}
\end{align}
By inserting the estimates \eqref{eq:annoying:2}--\eqref{eq:annoying:4} into the right side of \eqref{eq:annoying:1} we obtain that 
\begin{align}
\abs{\p_t X_1 + \beta_3 \kappa_0} \leq  4 \eps^{\frac 16}
\qquad \mbox{and} \qquad
\abs{\p_t X_\nu} \leq  4 \eps^{\frac 16}
\label{eq:annoying:5}
\end{align}
upon taking $\eps$ to be sufficiently small in terms of $M$, and recalling that $\beta_1 - 1 = - \beta_3$.
To conclude the proof of the lemma we simply combine \eqref{eq:annoying:0} with \eqref{eq:annoying:5} and the estimate $\abs{T_*} \leq  \eps^{\frac 32}$, as given by \eqref{T*-bound}.
\end{proof}

\begin{remark}
For the particle trajectory from Lemma~\ref{lem:initial:particle:location}, integrating \eqref{eq:annoying:5} from on $[t,T_*]$, as opposed to $[-\eps,T_*]$ as was done in \eqref{eq:annoying:0}, we obtain that 
\begin{align}
\abs{X_1(t)  - \beta_3 \kappa_0 e^{-s}} \leq 5 \eps^{\frac 16} e^{-s} 
\qquad \mbox{and} \qquad 
\abs{X_\nu(t)} \leq 5 \eps^{\frac 16} e^{-s}  \,.
\label{corona-premier-is-piss}
\end{align}
Here we have again used that using \eqref{eq:acceleration:bound}, \eqref{eq:beta:tau}, and  \eqref{good-vlad} we have that  $\abs{e^s(T_*-t) - 1} \leq 2 M \eps$.
\end{remark}

The second preliminary estimate in this subsection is a lower bound on $-\int_{-\eps}^{T_*} \p_{\tilde x_1} \tilde w \circ X$, as this quantity plays a key role in our proof of vorticity creation (cf.~the estimate for the term $I_1$ in Theorem~\ref{thm:vorticity:creation}).
\begin{lemma} 
\label{lem:dont:read:this}
With the flow $X$ defined by \eqref{X-flow}, let  $X_{\tilde x_0}(t)$ denote the trajectory which 
emanates from the point $\tilde x_0$.
If  $X_{\tilde x_0}(T_*) =0$ and the initial condition $W(y, - \log \eps)$ satisfies \eqref{sunny-days},  then 
\begin{align}
-\int_{-\eps}^{T_*} \p_{\tilde x_1} \tilde w(X_{\tilde x_0}(t),t)dt  \ge  \tfrac{1}{9}  \kappa_0^{-\frac 23} \eps^{\frac 13}  \,.   \label{el-burro-es-feliz}
\end{align} 
\end{lemma}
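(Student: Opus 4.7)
The strategy is to convert the integral to self-similar $(y,s)$ coordinates, precisely locate the label $y_0$ corresponding to $\tilde x_0$, sharply track the Lagrangian trajectory $\Phi = \Phi_U$ passing through $y_0$ at $s=-\log\eps$, and then integrate a pointwise lower bound on $-\p_1 W$ along that trajectory. Since $\p_{\tilde x_1}\tilde w = \p_{x_1} w = e^s \p_1 W$, $dt = \beta_\tau e^{-s}\,ds$, and \eqref{good-vlad-260} gives $\Phi_1 = e^{3s/2}(X_1 - f(\check X,t))$, $\Phi_\nu = e^{s/2}X_\nu$, one rewrites
\[
-\int_{-\eps}^{T_*}\p_{\tilde x_1}\tilde w(X_{\tilde x_0}(t),t)\,dt
 = -\int_{-\log\eps}^{\infty}\beta_\tau(s)\,\p_1 W(\Phi(y_0,s),s)\,ds .
\]
By Lemma~\ref{lem:initial:particle:location}, $\tilde x_0 = \beta_3\kappa_0\eps\,e_1 + O(\eps^{7/6})$; combined with $\abs{\phi_0}\le\eps$ from \eqref{ic-kappa0-phi0}, so that $f_0(\check{\tilde x}_0)=O(\eps^{10/3})$ is negligible, this gives $(y_0)_1 = \beta_3\kappa_0\eps^{-1/2}(1+O(\eps^{1/6}))$ and $\sabs{\check y_0}\les\eps^{2/3}$, placing $y_0$ in the regime where \eqref{sunny-days} applies. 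Then \eqref{corona-premier-is-piss} translates into the sharp trajectory bounds
\[
\Phi_1(y_0,s) = (\beta_3\kappa_0+O(\eps^{1/6}))e^{s/2},\qquad \sabs{\check\Phi(y_0,s)}\les\eps^{1/6}e^{-s/2}
\]
uniformly for $s\ge-\log\eps$, so $\Phi_1$ is large while $\abs{\check\Phi}$ is tiny, precisely the regime where the asymptotic $\p_1\bar W(y)=-\tfrac{1}{3}\abs{y_1}^{-2/3}(1+O(\abs{y_1}^{-2/3})+O(\abs{\check y}^2))$, derived as in \eqref{eq:underrated:computation}, is valid.

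Writing $\p_1 W = \p_1\bar W + \p_1\tilde W$, the hypothesis \eqref{sunny-days} at $s=-\log\eps$ yields $-\p_1 W(y_0,-\log\eps)\ge\tfrac{1}{4}(\beta_3\kappa_0)^{-2/3}\eps^{1/3}$. The plan is to show that this lower bound persists along $\Phi(y_0,\cdot)$ in the sharper asymptotic form
\[
-\p_1 W(\Phi(y_0,s),s) \ge \tfrac{1}{3}(\beta_3\kappa_0)^{-2/3}e^{-s/3}\bigl(1 - o_\eps(1)\bigr) \qquad \mbox{for all } s\ge-\log\eps .
\]
Granting this, and using $\beta_\tau = 1+O(\eps)$ from \eqref{eq:beta:tau} together with $\int_{-\log\eps}^{\infty}e^{-s/3}\,ds = 3\eps^{1/3}$, one obtains
\[
-\int_{-\log\eps}^{\infty}\beta_\tau\p_1 W(\Phi(y_0,s),s)\,ds \ge (\beta_3\kappa_0)^{-2/3}\eps^{1/3}(1-o_\eps(1)) \ge \tfrac{1}{9}\kappa_0^{-2/3}\eps^{1/3} ,
\]
where the final inequality exploits $\beta_3 = \alpha/(1+\alpha)\in(0,1)$, so that $\beta_3^{-2/3}>1$, leaving ample slack for both the factor $\tfrac{1}{9}$ and the $o_\eps(1)$ error terms.

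The main obstacle is the propagation step asserted above. Since $(y_0)_1 \sim \eps^{-1/2} \gg \eps^{-1/10} = \LLL$, the trajectory $\Phi(y_0,\cdot)$ never enters the ball $\abs{y}\le\LLL$ where the perturbative $\tilde W$ estimate \eqref{eq:bootstrap:Wtilde1} lives; the global bootstrap \eqref{eq:W_decay} provides only the magnitude bound $\sabs{\p_1 W}\les\eta^{-1/3}$, which is not a signed lower bound and therefore cannot close the argument on its own. To circumvent this I would exploit the specific cutoff structure \eqref{eq:IC:CHOICE}, which ensures $W(y,-\log\eps) = \bar W(y)$ exactly on the initial slice containing $y_0$ and a macroscopic neighborhood of the relevant $\Phi_W$--trajectory, and then propagate using a Lagrangian Riccati analysis of \eqref{euler_for_Linfinity:a} with $\gamma = e_1$ along $\Phi_W$: because $\abs{\p_1 W}$ stays $O(e^{-s/3})$ small, the Riccati nonlinearity $\beta_\tau\Jcal(\p_1 W)^2$ is subdominant to the linear damping term, so the sign and leading $-\tfrac{1}{3}\Phi_1^{-2/3}$ profile are preserved under transport, subject to controlling the forcing $F_W^{(1,0,0)}$ in this far-field region via the earlier bootstraps. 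A final reduction from the $\Phi_W$--based lower bound to the $\Phi_U$--based integrand uses that $\mathcal{V}_U - \mathcal{V}_W = \OO(e^{-s/2})$ times bounded quantities, so that $\Phi_U(y_0,s)$ and $\Phi_W(y_0,s)$ differ by a factor that stays within the valid asymptotic regime.
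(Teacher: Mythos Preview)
Your setup through the self-similar rewriting, the localization of $y_0$ via Lemma~\ref{lem:initial:particle:location}, and the trajectory bounds \eqref{corona-premier-is-piss}/\eqref{good-steve} are all correct and match the paper. The target pointwise lower bound $-\p_1 W(\Phi_U(y_0,s),s)\gtrsim (\beta_3\kappa_0)^{-2/3}e^{-s/3}$ is also the right heuristic, consistent with $\p_1 W\approx-\tfrac13\abs{y_1}^{-2/3}$ at $y_1\sim\beta_3\kappa_0 e^{s/2}$.

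The gap is in your reduction from $\Phi_W$ to $\Phi_U$. The claim $\mathcal V_U-\mathcal V_W=\OO(e^{-s/2})$ is false: from \eqref{eq:gW}--\eqref{eq:gA} one computes
\[
g_W-g_U=\beta_3\beta_\tau\Jcal\bigl(W+e^{s/2}(\kappa-Z)\bigr)=2\beta_3\beta_\tau\Jcal e^{s/2}\sound\approx\beta_3\kappa_0 e^{s/2},
\]
which is exactly the sound-speed offset and is \emph{large}. Consequently the two flows diverge: by Lemma~\ref{lem:escape2}, $(\Phi_W(y_0,\cdot))_1$ grows like $e^{3s/2}$, whereas $(\Phi_U(y_0,\cdot))_1\sim e^{s/2}$ by \eqref{good-steve}. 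A Riccati bound propagated along $\Phi_W$ from $y_0$ lands at the wrong spatial location and decays like $e^{-(s-s_0)}$, not $e^{-s/3}$; it cannot be transferred to $\Phi_U(y_0,s)$ as you suggest.

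The paper handles this by a different mechanism that you are missing: for each fixed $s$ it takes the point $\Phi_U(y_0,s)$ and traces the $\Phi_W$ flow \emph{backward} from there to a label $y_0'=y_0'(s)$, stopping either when $\abs{y_0'}=\LLL$ (so \eqref{perezoso-gordo} applies) or at $s_0=-\log\eps$ (where Lemma~\ref{lem:escape2} forces $\abs{(y_0')_1}\le 2\kappa_0\eps^{-1/2}$, placing $y_0'$ in the \eqref{sunny-days} window). In either case $\p_1W(y_0',s_0)\le-\tfrac14\abs{(y_0')_1}^{-2/3}$, and the forward Gr\"onwall/Riccati argument along $\Phi_W$ on $[s_0,s]$ now terminates exactly at $\Phi_U(y_0,s)$. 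This $s$-dependent backward shooting is the key device; without it the two transport velocities cannot be reconciled.
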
 
\begin{proof}[Proof of Lemma~\ref{lem:dont:read:this}]
The proof of the lemma is based on two ideas: first, the time integral in \eqref{el-burro-es-feliz} is dominated by values of $t$ which are very close to $-\eps$, where we can relate $\p_{\tilde x_1} \tilde w$ to its initial datum; second, the flow $X(t)$ is related to the self-similar flow $\Phi_U$ via the relation \eqref{good-vlad-260}, which allows us to appeal to sharp bounds for $\p_1 W$ in estimating the contribution to \eqref{el-burro-es-feliz} for $t \gg - \eps$. We implement these ideas as follows.

We consider the trajectory $X_{\tilde x_0}(t)$ for which $X_{\tilde x_0}(T_*)=0$ and for notational simplicity, we drop the subscript $\tilde x_0$ and use
$X(t)$ to denote this trajectory. The associated self-similar initial datum variable $y_0$ is given via \eqref{x-sheep} and \eqref{eq:y:s:def} as
\begin{align}
y_0 = (\eps^{-\frac 32} ((\tilde x_0)_1 - f (\check{\tilde{x}}_0)), \eps^{-\frac 12} \check{\tilde{x}}_0)
\,.
\end{align}
Due to Lemma~\ref{lem:initial:particle:location} we know that $\tilde x_0$ satisfies \eqref{miller-highlife}, and since $\abs{\phi_{\mu\nu}(-\eps)} \leq \eps$, we deduce that
\begin{align}
\abs{(y_0)_1 - \beta_3 \kappa_0 \eps^{-\frac 12}}\leq 6 \eps^{-\frac 13} 
\qquad \mbox{and} \qquad 
\abs{(y_0)_\nu}\leq 5 \eps^{ \frac 23}
\,.
\label{eq:f-ing:annoying:0} 
\end{align}
Note that these bounds are set up precisely to account for the region specified in \eqref{sunny-days}.  In view of the precise estimates on the trajectory $X_{\tilde x_0}(t)$, we directly obtain sharp bounds on the self-similar Lagrangian flow $\pu^{y_0}(s)$ emanating from $y_0$. Indeed,  by the $\phi$ bound in \eqref{eq:speed:bound}, the relation between $\pu$ and $X$ in \eqref{good-vlad-260}, and the bounds \eqref{corona-premier-is-piss}, we have that 
\begin{align} 
(\beta_3 \kappa_0 - \eps^{\frac 17}) e^{\frac s2} \leq (\pu^{y_0})_1(s)  \le  (\beta_3 \kappa_0 + \eps^{\frac 17}) e^ {\frac{s}{2}} \,,
\qquad \mbox{and} \qquad
\abs{(\pu^{y_0})_\nu(s)} \le \eps^{\frac 17}  e^ {-\frac{s}{2}} \,.     \label{good-steve}
\end{align} 

Next, due to  \eqref{good-vlad-260} and \eqref{tildewz} we have that
\begin{align}
 \p_{\tilde x_1} \tilde w \circ X_{\tilde x_0}(t) = e^s \p_1 W \circ \pu^{y_0}(s)
 \label{eq:f-ing:annoying:0.5} 
\end{align}
with the usual relation between $t$ and $s$ from \eqref{eq:y:s:def}. Since $dt = \beta_\tau e^{-s} ds$, we thus have that the integral we need to estimate in \eqref{el-burro-es-feliz} may be rewritten as
\begin{align}
-\int_{-\eps}^{T_*} \p_{\tilde x_1} \tilde w(X_{\tilde x_0}(t),t)dt  
= - \int_{-\log \eps}^\infty \beta_\tau  \p_1 W \circ \pu^{y_0}(s) ds \,.  
\label{eq:f-ing:annoying:1} 
\end{align}
Recall cf.~\eqref{eq:beta:tau} that $1 - 2Me^{-s} \leq \beta_\tau \leq 1+2 M e^{-s}$, so that we just need to bound from below the integral of $- \p_1 W \circ \pu^{y_0}$.
The remainder of the argument mimics the proof of Lemma~\ref{lem:non:positivity}.

Fix  $y_0$ as in \eqref{eq:f-ing:annoying:0}, $s\in [-\log \eps, \infty)$, and thus fix a value of $\pu^{y_0}(s)$.
We trace the particle trajectory of  the flow $\mathcal{V}_W$ (not $\mathcal{V}_U$!) backwards in time, and write $\pw^{y_0'}(s) = \pu^{y_0}(s)$, where the initial datum $\pw^{y_0'}(s_0) = y_0'$ is  given  by the property that $\abs{y_0'} = \LLL$ if $s_0\ge -\log \eps$, and $\abs{y_0'} > \LLL$ if $s_0 = -\log \eps$. 
We then appeal to Lemma~\ref{lem:escape2} with $y_0'$ replacing $y_0$. The lemma is applicable on the interval $[s_0,s]$ since $\abs{y_0'} \geq \LLL$ and by \eqref{good-steve} we have  $\sabs{\check\Phi_W^{y_0'}(s)} = \sabs{\check \Phi_U^{y_0}(s)} \leq \eps^{\frac 17} e^{-\frac s2} \leq \eps^{\frac 12}$. By \eqref{eq:escape_from_SF}, we thus obtain that for any $s' \in [-\log \eps,s]$ we have the estimates 
\begin{align}
\abs{(\pw^{y_0'})_1 (s')}\geq \tfrac 34 \abs{(y_0)_1}e^{\frac{3(s'-s_0)}{2}}
\qquad \mbox{and} \qquad
\abs{\check\Phi_W^{y_0'}(s')}\leq M \eps^{\frac 12}
\,.
\label{eq:f-ing:annoying:2} 
\end{align}

Let us first consider the case that $\abs{y_0'} > \LLL$ and $s_0 = -\log \eps$. 
Based on \eqref{eq:f-ing:annoying:2} we now claim that $\abs{(y_0')_1} \leq 2 \kappa_0 \eps^{-\frac 12}$. If not, then by appealing to the first estimate in \eqref{good-steve}, we thus deduce that 
\begin{align*}
\tfrac 32 \beta_3 \kappa_0 e^{\frac s2} 
\geq  \abs{(\pu^{y_0})_1(s)} 
= \sabs{(\pw^{y_0'})_1 (s)}
\geq \tfrac 34 \abs{(y_0)_1} e^{\frac{3(s-s_0)}{2}} 
> \tfrac 32 \kappa_0 \eps^{-\frac 12} e^{s-s_0}  e^{\frac s2} \eps^{\frac 12} \geq  \tfrac 32 \kappa_0  e^{\frac s2}
\,,
\end{align*}
which is a contradiction, since $\beta_3 = \frac{\alpha}{1+\alpha} < 1$. Therefore, from the above argument  and the second bound in 
\eqref{good-steve}  evaluated at $s' = s_0$, we have that $\LLL = \eps^{-\frac{1}{10}} < \abs{(y_0')_1} \leq 2 \kappa_0 \eps^{-\frac 12}$, and $\abs{(y_0')_\nu} \leq M \eps^{\frac 12} \leq \eps^{\frac 13}$. Therefore, the point $y_0'$ exactly lies in the region stipulated in \eqref{sunny-days}, and so by Lemma~\ref{lem:IC:for:creation} in this case we have that 
\begin{align}
\p_1 W\left(\pw^{y_0'}(s_0),s_0\right) = \p_1 W(y_0',-\log \eps) \in \left[ -\tfrac 12 \abs{(y_0')_1}^{-\frac 23},  -\tfrac 14 \abs{(y_0')_1}^{-\frac 23} \right] 
 \label{eq:f-ing:annoying:3} 
\end{align}

Next, let us first consider the case that $\abs{y_0'} = \LLL$ and $s_0 > -\log \eps$.
In this case, instead of appealing to \eqref{sunny-days} we use the bootstrap \eqref{eq:bootstrap:Wtilde1} and as shown earlier in \eqref{perezoso-gordo} we deduce 
\begin{align} 
\p_1 W\left(\pw^{y_0'}(s_0),s_0\right) = \p_1 W(y_0',s_0)  \leq -\tfrac 12  \tilde \eta^{-\frac 13}(y) \leq -\tfrac 14 \abs{(y_0')_1}^{-\frac 23}  \,,
 \label{eq:f-ing:annoying:4} 
\end{align} 
where we used \eqref{eq:f-ing:annoying:2} with $s'=s_0$ in the last inequality. 

Having established \eqref{eq:f-ing:annoying:3} and \eqref{eq:f-ing:annoying:4}, we use the $\p_1 W$ evolution given in \eqref{euler_for_Linfinity:a} with $\gamma = (1,0,0)$,
and deduce that  
\[
\p_s  ( \p_1 W \circ \pw^{y_0'})  + \left( 1 + \beta_\tau   \Jcal \p_1 W \circ \pw^{y_0'} \right) ( \p_1 W \circ \pw^{y_0'})    =  F^{(1,0,0)}_W \circ \pw^{y_0'}
\,.
\]
Integrating this expression on $[s_0,s]$,   recalling that by definition we have $\pw^{y_0'}(s) = \pu^{y_0}(s)$, using that by \eqref{eq:f-ing:annoying:3} and \eqref{eq:f-ing:annoying:4} we have that $-\p_1W(y_0',s_0)>0$, by appealing to the $F^{(1,0,0)}_W$ estimate in \eqref{eq:forcing_W} and to the $\p_1 W$ bootstrap in \eqref{eq:W_decay}, we deduce   
\begin{align}
-\p_1 W(\pu^{y_0}(s),s) 
&= - \p_1 W(y_0',s_0) \exp\left(- \int_{s_0}^s 1 + \beta_\tau (\Jcal \p_1 W)\circ\pw^{y_0'}(s') ds' \right)\notag\\
&\quad - \int_{s_0}^s  F^{(1,0,0)}_W\circ\pw^{y_0'}(s')  \exp\left(- \int_{s'}^s 1 + \beta_\tau (\Jcal \p_1 W)\circ\pw^{y_0'}(s'') ds'' \right) ds' 
\notag\\
&\geq \tfrac 14 \abs{(y_0')_1}^{-\frac 23} e^{-(s-s_0)} \exp\left(- 3 \int_{s_0}^s \eta^{-\frac 13} \circ\pw^{y_0'}(s') ds' \right)\notag\\
&\quad - \int_{s_0}^s e^{-\frac{s'}{5}} \eta^{-\frac 13} \circ \pw^{y_0'}(s') e^{-(s-s')} \exp\left(3 \int_{s'}^s  \eta^{-\frac 13} \circ\pw^{y_0}(s'') ds'' \right) ds' 
 \label{eq:f-ing:annoying:5} 
\end{align}
Since $\abs{y_0'} \geq \LLL$ , by \eqref{eq:f-ing:annoying:2} we have
$$
3 \int_{s_0}^s \eta^{-\frac 13} \circ\pw^{y_0'}(s') ds'  
\leq 4 \abs{(y_0')_1}^{-\frac 23} \int_{s_0}^s e^{-(s'-s_0)} ds' \leq  \eps^{\frac{1}{16}} \,,
$$
and 
$$
\int_{s_0}^se^{-\frac{s'}{5}} \eta^{-\frac 13} \circ\pw^{y_0'}(s') ds'  e^{-(s-s')}
\leq 2 e^{-s}  \abs{(y_0')_1}^{-\frac 23} \int_{s_0}^s e^{\frac{4s'}{5}} e^{-(s'-s_0)} ds' \leq  10 \eps^{\frac 15} e^{-(s-s_0)}  \abs{(y_0')_1}^{-\frac 23} \,.
$$
Inserting these estimates into \eqref{eq:f-ing:annoying:5}, we deduce
\begin{align}
-\p_1 W(\pu^{y_0}(s),s) 
&\geq \tfrac 15 \abs{(y_0')_1}^{-\frac 23} e^{-(s-s_0)}
\,.
 \label{eq:f-ing:annoying:6} 
\end{align}
The bound \eqref{eq:f-ing:annoying:6} holds both in the case that $s_0>-\log \eps$ and $\abs{y_0'} = \LLL$, and also in the case that $s_0 = -\log \eps$ and $\abs{y_0'} > \LLL$ and $\abs{(y_0')_1} \leq 2 \kappa_0 \eps^{-\frac 12}$. The last observation is that in either case, the bound \eqref{eq:f-ing:annoying:6} implies
\begin{align}
-\p_1 W(\pu^{y_0}(s),s) 
\geq \tfrac 15 (2 \kappa_0 \eps^{-\frac 12})^{-\frac 23} e^{-(s-s_0)}
\geq \tfrac 18  \kappa_0^{-\frac 23} \eps^{\frac 13}  e^{-(s+\log \eps)}
\,.
 \label{eq:f-ing:annoying:7} 
\end{align}

Lastly, using \eqref{eq:f-ing:annoying:7} we bound from below the right side of \eqref{eq:f-ing:annoying:1}  and obtain
\begin{align*}
- \int_{-\log \eps}^\infty \beta_\tau  \p_1 W \circ \pu^{y_0}(s) ds
\geq  
\tfrac{1-2 M \eps}{8}  \kappa_0^{-\frac 23} \eps^{\frac 13} \int_{-\log \eps}^{\infty} e^{-(s+\log \eps)} ds 
\geq \tfrac{1}{9}  \kappa_0^{-\frac 23} \eps^{\frac 13} 
\end{align*}
which completes the proof. 
\end{proof}

\subsection{Vorticity creation from irrotational data}

We now return to the specific vorticity equation \eqref{tvorticity} which we shall now write as
\begin{align}
  \label{specific-vorticity2}
 \p_t \tilde \zeta - 2\beta_1\dot Q \tilde \zeta + 2 \beta_1(\tilde v + \tilde u) \cdot \nabla_{\! \tilde x}\tilde \zeta 
 =
 2\beta_1 \operatorname{Def} _{\! \tilde x} \tilde u \cdot \tilde\zeta + \tilde b  
 \qquad \text{ for } \quad t \in [-\eps, T_*)  \, 
\end{align}
where we use $\tilde b$ to denote the baroclinic term in $(\tilde x, t)$ variables:
\begin{align}
\tilde b=2\beta_1\tfrac{\alpha }{\gamma} \tfrac{\tilde \sigma}{\tilde \rho} \nabla_{\!\tilde x} \tilde\sigma \times  \nabla_{\!\tilde x} \tilde \scal  \,,
\label{eq:baroclinic:clinic}
\end{align}
and the (rate of) deformation tensor is defined by
$$
 \operatorname{Def} _{\! \tilde x} \tilde u  = \tfrac{1}{2} (\nabla_{\! \tilde x} \tilde u +\nabla_{\! \tilde x} \tilde u^T)
$$
which is  the symmetric part of the velocity gradient.  
In components, $( \operatorname{Def} _{\! \tilde x} \tilde u  \cdot  \tilde \zeta)_i =
\tfrac{1}{2}( \p_{\tilde x _j} \tilde u_i + \p_{\tilde x _i} \tilde u_j) \tilde \zeta_j$.

By definition of the $X_{\tilde x_0}(t)$ flow in \eqref{X-flow}, so that $X_{\tilde x_0}(-\eps) = \tilde x_0$ upon composing \eqref{specific-vorticity2} with $X_{\tilde x_0}(t)$ and denoting
\begin{align} 
\upzeta(\tilde x_0,t) = \tilde \zeta \circ X_{\tilde x_0}(t) \,, \qquad \mathsf{D}(\tilde x_0,t)  = 2\beta_1 \operatorname{Def}_{\tilde x} \tilde u \circ X_{\tilde x_0}(t) \,, \qquad \bcal(\tilde x_0,t) = \tilde b \circ X_{\tilde x_0}(t) \,, 
\label{lagrangian-variables}
\end{align} 
we have
\begin{align}
\tfrac{d}{dt} \upzeta =(2\beta_1 \dot Q + \mathsf{D}) \cdot \upzeta + \bcal \,.
\label{eq:BlackJack:0}
\end{align}
At this stage two observations are in order. First, due to \eqref{X-to-phi} we have that $\upzeta = \tilde \zeta \circ X = \mathring{\zeta} \circ \varphi$, so that the bound \eqref{svort-bound} translates into
\begin{align}
\abs{\upzeta(\tilde x_0,t) - \tilde \zeta (\tilde x_0,-\eps) } \leq \eps^{\frac{1}{21}} 
\label{eq:svort-bound}\,.
\end{align}
Second, we note that by \eqref{eq:tilde:geometric:deformation}, \eqref{eq:nabla:tilde:u}, \eqref{f-bounds}, and \eqref{ic-kappa0-phi0}, for any $(i,j) \neq (1,1)$ we have
\begin{align}
\int_{-\eps}^{T_*} \abs{\mathsf{D}_{ij}(t')} dt' \les M \eps \,,
\label{eq:BlackJack:1}
\end{align}
while for $(i,j) = (1,1)$ we have 
\begin{align}
\int_{-\eps}^{T_*} \abs{\mathsf{D}_{11}(t')} dt' \les \eps^{\frac{1}{18}} \,.
\label{eq:BlackJack:2}
\end{align}
We omit the detailed proofs of \eqref{eq:BlackJack:1} and \eqref{eq:BlackJack:2} but note that as already discussed in the paragraph below \eqref{eq:tilde:geometric:deformation}, only the time integral of $\abs{\p_{\Ncal} \tilde u \cdot \Ncal \circ X}$ is not $\OO(\eps)$; and since $\abs{\Ncal - e_1} \les \eps$, this corresponds to only the $(1,1)$ component of the $\mathsf{D}$ matrix as having a time integral which may be larger than $\OO(\eps)$. Taking into account also the $\dot{Q}$ estimate in \eqref{eq:dot:Q} we rewrite 
\begin{align}
2\beta_1 \dot Q + \mathsf{D} =: {\rm diag}(\mathsf{D}_{11},0,0) +  \mathsf{D}_{\rm small} =:  \mathsf{D}_{\rm main} +  \mathsf{D}_{\rm small}
\label{eq:BlackJack:3}
\end{align}
with 
\begin{align}
\int_{-\eps}^{T_*} \abs{\mathsf{D}_{\rm small}(t')} dt' \les M \eps \,.
\label{eq:BlackJack:4}
\end{align}

With this information, since $\mathsf{D}_{\rm main}$ is a diagonal matrix, we may write the solution of ODE \eqref{eq:BlackJack:0}  pointwise in $\tilde x_0$ as
\begin{align} 
\upzeta(\cdot, t) =e^{\int_{-\eps}^t \! \mathsf{D}_{\rm main}(\cdot,t')dt' } \tilde \zeta (\cdot,-\eps)  
+ \int_{-\eps}^t e^{\int_{t'}^t \! \mathsf{D}_{\rm main}(\cdot,t'')dt'' }\left( \bcal(\cdot, t')
+ \mathsf{D}_{\rm small}(\cdot,t')\cdot \upzeta(\cdot,t') \right) dt' 
\,,
\label{vort-solution}
\end{align} 
where in view of \eqref{eq:BlackJack:2}
\begin{align}
\abs{e^{\int_{t'}^t \! \mathsf{D}_{\rm main}(\cdot,t'')dt'' } - \Id} = \abs{ {\rm diag}\left( e^{\int_{t'}^t \! \mathsf{D}_{11}(\cdot,t'')dt'' }, 1, 1\right)-\Id}  
\les  \eps^{\frac{1}{18} }
\,.
\label{eq:BlackJack:5}
\end{align}
The solution formula \eqref{vort-solution}, along with the bounds \eqref{eq:svort-bound}, \eqref{eq:BlackJack:4}, and \eqref{eq:BlackJack:5} show that vorticity creation is essentially implied by (lower) bounds on $\int_{-\eps}^t \bcal(\cdot,t') dt'$. This is indeed the main idea in the proof of vorticity creation, which we establish next.

In the following theorem, we show that when the initial vorticity is zero, the Euler dynamics instantaneously creates vorticity, and that for appropriately chosen initial data, the vorticity remains non-trivial at the formation of the shock.

\begin{theorem}[Vorticity creation]
\label{thm:vorticity:creation}
Consider  $\tilde x_0$ such that the flow $X_{\tilde x_0}(t)$ converges to the blowup point $0$ as $t\to T_*$. More generally, consider any $\tilde x_0$ satisfying \eqref{miller-highlife}.
Suppose that the initial datum verifies \eqref{sunny-days}, and that the initial baroclinic torque at this point, $\tilde b(\tilde x_0,-\eps)$, is {\em non-trivial}. For example, this may be ensured by choosing 
\begin{align}
\p_{\tilde x_1}\tilde \scal(\tilde x_0,-\eps)= 0\, \quad \p_{\tilde x_3}\tilde \scal(\tilde x_0,-\eps)= 0 \,, \quad 
\p_{\tilde x_2} \tilde \scal(\tilde x_0,-\eps) < 0 \,.
\label{eq:initial:torque}
\end{align}
If the initial datum is irrotational, i.e. 
$\tilde \zeta (\tilde x, -\eps) = 0$ for all $\tilde x \in \mathbb{R}^3$, then vorticity is instantaneously
created, and remains non-vanishing in the neighborhood of the shock location $(\tilde x,t) = (0,T_*)$.
Quantitatively, with the choice \eqref{eq:initial:torque} we have that
\begin{align}
\sabs{\tilde \zeta(\tilde x,t)} \geq c_\alpha \kappa_0^{\frac 13 - \frac{1}{\alpha}} \eps^{\frac 13}  \sabs{\p_{\tilde x_2} \tilde \scal_0(\tilde x_0)} 
\label{eq:shock:vorticity}
\end{align}
for all $(\tilde x,t)$ in a small neighborhood of the shock location $(0,T_*)$, where $c_\alpha>0$ is a constant that only depends on $\alpha$.
\end{theorem}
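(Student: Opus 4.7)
Since $\tilde\zeta(\cdot,-\eps)\equiv 0$ by hypothesis, the solution formula \eqref{vort-solution} reduces to
\[
\upzeta(\tilde x_0,t) = \int_{-\eps}^{t} e^{\int_{t'}^{t}\mathsf{D}_{\rm main}(\tilde x_0,t'')\,dt''}\bigl(\bcal(\tilde x_0,t') + \mathsf{D}_{\rm small}(\tilde x_0,t')\cdot \upzeta(\tilde x_0,t')\bigr)\,dt'\,.
\]
Using \eqref{eq:BlackJack:4}--\eqref{eq:BlackJack:5}, which give that the matrix exponential is $\Id+\OO(\eps^{\sfrac 1{18}})$ and that $\mathsf{D}_{\rm small}$ has $L^1_t$-norm of size $\OO(M\eps)$, a Gr\"onwall argument reduces the proof of \eqref{eq:shock:vorticity} to a matching lower bound on one fixed component of $\int_{-\eps}^{T_*}\bcal(\tilde x_0,t')\,dt'$.

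The natural choice is the $e_3$ component: by \eqref{11bottlesdownto7} and \eqref{eq:baroclinic:clinic},
\[
\bcal_3 \approx 2\beta_1\tfrac{\alpha}{\gamma}\bigl(\tfrac{\tilde\sigma}{\tilde\rho}\circ X\bigr)\bigl(\p_{\tilde x_1}\tilde\sigma\,\p_{\tilde x_2}\tilde\scal - \p_{\tilde x_2}\tilde\sigma\,\p_{\tilde x_1}\tilde\scal\bigr)\!\circ X\,,
\]
up to relative errors of size $\OO(\eps)$ arising from $\Ncal-e_1$ and $\Tcal^\nu-e_\nu$. The key structural observation is that the entropy is transported by $X$: differentiating $\tilde\scal\circ X = \tilde\scal_0(\tilde x_0)$ in $\tilde x_0$ yields $\nabla_{\tilde x}\tilde\scal\circ X = (\nabla_{\tilde x_0}X)^{-T}\nabla\tilde\scal_0(\tilde x_0)$, and the sharp deformation bound \eqref{deformation-bounds} combined with the alignment hypothesis \eqref{eq:initial:torque} then gives $(\p_{\tilde x_2}\tilde\scal)\circ X = \p_{\tilde x_2}\tilde\scal_0(\tilde x_0)(1+\OO(\eps^{\sfrac 1{20}}))$ and $(\p_{\tilde x_1}\tilde\scal)\circ X = \OO(\eps^{\sfrac 1{20}}\sabs{\p_{\tilde x_2}\tilde\scal_0})$. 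Meanwhile, by \eqref{Sigma-bound} and \eqref{density-final0} the prefactor $\tfrac{\tilde\sigma}{\tilde\rho}$ is essentially the constant $\tfrac{\kappa_0/2}{(\alpha\kappa_0/2)^{1/\alpha}}+\OO(\eps^{\sfrac 19})$. All these nearly-constant factors can be pulled out of the time integral with only an $\eps^{\sfrac 1{20}}$ relative error, leaving only the integral of $\p_{\tilde x_1}\tilde\sigma\circ X$ to be estimated.

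For this last quantity I would write $\p_{\tilde x_1}\tilde\sigma = \tfrac 12(\p_{\tilde x_1}\tilde w - \p_{\tilde x_1}\tilde z)$ via \eqref{tildewz}; the $\tilde z$ piece has $L^1_t$-norm of size $\OO(\eps)$ by \eqref{eq:Z_bootstrap}, while the $\tilde w$ piece is precisely the object controlled from below in Lemma~\ref{lem:dont:read:this}, giving
\[
-\int_{-\eps}^{T_*}\p_{\tilde x_1}\tilde w(X_{\tilde x_0}(t'),t')\,dt' \ge \tfrac 19\kappa_0^{-\sfrac 23}\eps^{\sfrac 13}\,.
\]
The subdominant term $\p_{\tilde x_2}\tilde\sigma\cdot\p_{\tilde x_1}\tilde\scal$ contributes only $\OO(\eps^{1+\sfrac 1{20}}\sabs{\p_{\tilde x_2}\tilde\scal_0})$ using bootstrap estimates on $\check\nabla a_\mu$ together with the above bound on $(\p_{\tilde x_1}\tilde\scal)\circ X$, and is therefore negligible. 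Collecting the factors yields $\sabs{\upzeta(\tilde x_0,T_*)}\ge c_\alpha \kappa_0^{\sfrac 13-\sfrac 1\alpha}\eps^{\sfrac 13}\sabs{\p_{\tilde x_2}\tilde\scal_0(\tilde x_0)}$; since $X(\cdot,T_*)$ is a near-identity diffeomorphism by \eqref{deformation-bounds}, and since the initial labels $\tilde x_0$ satisfying \eqref{miller-highlife} form an open set, this lower bound transfers to $\tilde\zeta$ on a neighborhood of the blowup location $(0,T_*)$.

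The principal technical obstacle is that the baroclinic torque diverges pointwise like $(T_*-t)^{-1}$ near the shock, so no pointwise estimate on $\bcal$ can possibly yield the sharp $\eps^{\sfrac 13}$ lower bound; the whole argument relies on the \emph{time-integrated} cancellation captured by Lemma~\ref{lem:dont:read:this}, whose proof was carefully engineered precisely to deliver the $\eps^{\sfrac 13}$ exponent rather than a mere logarithm. A secondary difficulty is ensuring that the subdominant term $(\p_{\tilde x_2}\tilde\sigma)(\p_{\tilde x_1}\tilde\scal)\circ X$ does not produce cancellations with the main term; this is where the quantitative sharpness of \eqref{deformation-bounds} is indispensable, as it prevents the entropy gradient from rotating appreciably toward $e_1$ over the interval $[-\eps,T_*]$ and hence keeps the alignment \eqref{eq:initial:torque} essentially frozen along trajectories.
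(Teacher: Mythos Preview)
Your proposal is correct and follows essentially the same route as the paper: reduce via \eqref{vort-solution} to a lower bound on $\int_{-\eps}^{T_*}\bcal_3\,dt'$, freeze $\nabla_{\tilde x}\tilde\scal$ along $X$ via the transport identity and \eqref{deformation-bounds}, split off the $\tilde z$ contribution to $\p_{\tilde x_1}\tilde\sigma$, and invoke Lemma~\ref{lem:dont:read:this} for the main $\eps^{1/3}$ lower bound. The only cosmetic differences are that the paper carries out the entropy-gradient step with the explicit cofactor formula \eqref{nariz-gorda}--\eqref{fat-nose} rather than your $(\nabla_{\tilde x_0}X)^{-T}$ language, and in the reduction step it uses the a~priori bound \eqref{eq:svort-bound} directly (giving \eqref{eq:BlackJack:6}) in place of your Gr\"onwall argument; note also that $\tilde b_3$ in Cartesian coordinates is \emph{exactly} $2\beta_1\tfrac{\alpha}{\gamma}\tfrac{\tilde\sigma}{\tilde\rho}(\p_{\tilde x_1}\tilde\sigma\,\p_{\tilde x_2}\tilde\scal-\p_{\tilde x_2}\tilde\sigma\,\p_{\tilde x_1}\tilde\scal)$, so no $\Ncal-e_1$ errors enter at that stage.
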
  
\begin{proof}[Proof of Theorem~\ref{thm:vorticity:creation}] 
As alluded to in the discussion preceding the Theorem, the proof is based on following the Lagrangian flow $X_{\tilde x_0}(t)$ which arrives at the shock location as $t\to T_*$, and study the vorticity production caused by the baroclinic torque term $\bcal$. We note that \eqref{eq:shock:vorticity} is proven by establishing this bound at $\tilde x = X_{\tilde x_0}(t)$ with $t\to T_*$, for one component of the vorticity vector, and arguing by continuity, the fact that the vorticity remains continuous all the way up to the blowup time ensures that the lower bound holds for $(\tilde x,t)$ in a neighborhood of $(0,T_*)$.

For simplicity of the presentation we provide a lower bound on the third component of the vorticity; this is why in assumption \eqref{eq:initial:torque}  we have chosen very specific gradient components for $\tilde \scal$ and $\tilde \sigma$. 
Recall the notation \eqref{lagrangian-variables}. Using that the initial datum is irrotational, from the solution formula \eqref{vort-solution}, the bounds \eqref{eq:svort-bound}, \eqref{eq:BlackJack:4},  \eqref{eq:BlackJack:5} and the fact that the matrix $\mathsf{D}_{\rm main}$ only has a nontrivial $(1,1)$ entry, we obtain that
\begin{align} 
\abs{\upzeta_3(\tilde x_0, t) - \int_{-\eps}^t \bcal_3(\tilde x_0, t') dt'} \les (1+ \eps^{\frac{1}{18}}) \eps^{\frac{1}{21}} M \eps \les \eps 
\,.
\label{eq:BlackJack:6}
\end{align} 
The remainder of the proof consists of analyzing the time integral of $\bcal_3 (\tilde x_0,t) = \tilde b(X_{\tilde x_0}(t),t)$.

Let us denote the cofactor matrix associated to $\nabla_{\!\tilde x}  X$ and its Jacobian determinant, respectively, by
$$
\mathcal{B} (\tilde x, t) = \operatorname{Cof} (\nabla_{\!\tilde x} X)\,, \qquad J(\tilde x,t) = \det ( \nabla_{\!\tilde x} X) \,,
$$
so that 
$$
( \nabla_{\!\tilde x} X)^{-1} = J^{-1} \mathcal{B} \,.
$$
Two components of the cofactor matrix that we shall make use of  are given by
\begin{align*} 
\mathcal{B} ^2_2 & = \p_{\tilde x_2} X_2( \p_{\tilde x_1} X_1  \p_{\tilde x_3} X_3 - \p_{\tilde x_1} X_3 \p_{\tilde x_3} X_1   )  \,, \\
\mathcal{B} ^2_1 & = \p_{\tilde x_1} X_2( \p_{\tilde x_3} X_1  \p_{\tilde x_2} X_3 - \p_{\tilde x_2} X_1 \p_{\tilde x_3} X_3   ) \,.
\end{align*} 
From \eqref{deformation-bounds}, we see that
\begin{align} 
\abs{J - 1} \les \eps^{\frac{1}{20}} \,, 
\qquad
\abs{\mathcal{B} ^2_2 - 1} \les \eps^{\frac{1}{20}} \,, 
\qquad \mbox{and} \qquad 
\abs{\mathcal{B} ^2_1} \les  \eps^{\frac{1}{10}}  \,. \label{cowbell}
\end{align} 
Then,  transport equation
 \eqref{eq:entropy3} shows that
\begin{align} 
\tilde\scal \circ X_{\tilde x}(t) = \tilde\scal (\tilde x,-\eps) =: \tilde\scal_0 (\tilde x)
\label{culo-gordo-tonto}
\end{align} 
so that
\begin{align} 
\p_{\tilde x_j} \tilde \scal \circ X_{\tilde x}(t) = J^{-1}(\tilde x,t)  \p_{\tilde x_\ell} \tilde\scal_0(\tilde x) \, \mathcal{B}^\ell_j (\tilde x,t) \,.\label{nariz-gorda}
\end{align}
The point of the first two assumptions in \eqref{eq:initial:torque} is to single out one of the three elements in the sum over $\ell$ in \eqref{nariz-gorda}, which now reduces to 
\begin{align} 
\p_{\tilde x_j}\tilde \scal \circ X_{\tilde x}(t)  =J^{-1}(\tilde x,t)  \p_{\tilde x_2}\tilde \scal_0(\tilde x) \mathcal{B}^2_j  (\tilde x,t) \,.\label{fat-nose}
\end{align}

For the remainder of the proof, we fix $X$ to denote the trajectory which collides with the blowup at time $t=T_*$ so that $X(T_*)=0$. Using \eqref{fat-nose} and recalling \eqref{tildewz} we return to \eqref{eq:baroclinic:clinic} and obtain that 
\begin{align} 
 \bcal_3 = \tilde b \circ X 
 & = 
 2\beta_1\tfrac{\alpha }{\gamma} \tfrac{\tilde \sigma}{\tilde \rho}\circ X (\p_{\tilde x_1}  \tilde\sigma  \circ X  \p_{\tilde x_2} \tilde \scal \circ X   -\p_{\tilde x_2}  \tilde\sigma   \circ X \p_{\tilde x_1} \tilde \scal \circ X ) \notag \\
  & = 
 2\beta_1\tfrac{\alpha }{\gamma} \tfrac{\tilde \sigma}{\tilde \rho}\circ X J ^{-1} \p_{\tilde x_2} \tilde \scal_0 (\mathcal{B} ^2_2 \p_{\tilde x_1}  \tilde\sigma   \circ X
 -   \mathcal{B} ^2_1  \p_{\tilde x_2}  \tilde\sigma \circ X ) \notag \\
  & = 
\beta_1\tfrac{\alpha }{\gamma} \tfrac{\tilde \sigma}{\tilde \rho}\circ X J ^{-1} \p_{\tilde x_2} \tilde \scal_0 
 (\mathcal{B} ^2_2   \p_{\tilde x_1}\tilde w \circ X
 -  \mathcal{B} ^2_2  \p_{\tilde x_1}\tilde z \circ X
 - 2 \mathcal{B} ^2_1 \p_{\tilde x_2}  \tilde\sigma  \circ X ) \notag\\
 &=: \bcal_{3}^{(1)} - \bcal_{3}^{(2)} - \bcal_{3}^{(3)} \,.  \label{eq:baroclinic:clinic:1}
\end{align} 
We first note that by the relation of $\sigma$ and $\rho$, in view of \eqref{eq:entropy3} we have
\begin{align}
\tfrac{\beta_1 \alpha }{\gamma} \tfrac{\tilde \sigma}{\tilde \rho}\circ X
= \tfrac{ \beta_1}{\gamma} e^{\frac{\tilde \scal_0}{2}} (\tilde \rho \circ X)^{\alpha -1}
\end{align}
so that by \eqref{density-final0} and the initial $L^\infty$ assumption on $\scal(\cdot,-\eps)$ we have
\begin{align}
\abs{ \tfrac{\beta_1 \alpha }{\gamma} \tfrac{\tilde \sigma}{\tilde \rho}\circ X 
-  \tfrac{ \beta_1}{\gamma} (\tfrac {\alpha\kappa_0} 2)^{\frac{\alpha-1}{\alpha}}} \les \eps^{\frac{1}{10}} \,.
\label{eq:useless:beyond:belief}
\end{align}
Combined with \eqref{cowbell}, our bootstrap assumptions derivatives of  $Z$ in \eqref{eq:Z_bootstrap} and on $\uu \cdot \Ncal$ and $\sound$ in \eqref{eq:US_est}, similarly to \eqref{eq:BlackJack:4} we obtain that the last two terms in \eqref{eq:baroclinic:clinic:1} have time integrals bounded as 
\begin{align}
\int_{-\eps}^{T_*} \abs{\bcal_{3}^{(2)}(\tilde x_0, t)} + \abs{\bcal_{3}^{(2)}(\tilde x_0,t)} dt \les M \eps
\, .
\label{eq:baroclinic:clinic:2} 
\end{align}

In order to conclude the proof, we need to estimate the time integral of the first term in \eqref{eq:baroclinic:clinic:1}, namely $\bcal_{3}^{(1)}$. This is precisely the reason that Lemma~\ref{lem:dont:read:this} was created. First, we note that by \eqref{eq:f-ing:annoying:0.5} and \eqref{eq:f-ing:annoying:6} we have that $\p_{\tilde x_1} \tilde w \circ X(t) < 0$ for all $t \in [-\eps,T_*)$, that is, this term is signed. Taking into account \eqref{cowbell}, \eqref{eq:useless:beyond:belief}, and the third assumption in \eqref{eq:initial:torque} we obtain the pointwise in time bound
\begin{align}
\bcal_{3}^{(1)}(\tilde x_0,t) \geq  \tfrac{ \beta_1}{2 \gamma} (\tfrac {\alpha\kappa_0} 2)^{\frac{\alpha-1}{\alpha}} \p_{\tilde x_2} \tilde \scal_0(\tilde x_0) \, \p_{\tilde x_1} \tilde w \circ X_{\tilde x_0}(t) \,.
\label{eq:baroclinic:clinic:3} 
\end{align}
To conclude the proof we combine \eqref{eq:baroclinic:clinic:3} with \eqref{el-burro-es-feliz} and the assumption $\p_{\tilde x_2} \tilde \scal_0(\tilde x_0) < 0$ to deduce
\begin{align}
\int_{-\eps}^{T_*} \bcal_{3}^{(1)}(\tilde x_0,t) \geq  \tfrac{ \beta_1}{2 \gamma} (\tfrac {\alpha\kappa_0} 2)^{\frac{\alpha-1}{\alpha}} \sabs{\p_{\tilde x_2} \tilde \scal_0(\tilde x_0)}\tfrac{1}{9}  \kappa_0^{-\frac 23} \eps^{\frac 13} = 2 c_\alpha \kappa_0^{\frac 13 - \frac{1}{\alpha}} \eps^{\frac 13}  \sabs{\p_{\tilde x_2} \tilde \scal_0(\tilde x_0)} \,,
\label{eq:baroclinic:clinic:4} 
\end{align}
where $c_\alpha>0$ is a constant that depends only on $\alpha$. The point here is that the lower bound is $\OO(\eps^{\frac 13})$, while the error terms in both \eqref{eq:BlackJack:6} and \eqref{eq:baroclinic:clinic:2} are $\OO(\eps)$. Combining these estimates we deduce that  
\begin{align}
\upzeta_3(\tilde x_0, t) \geq  \int_{-\eps}^t \bcal_{3}^{(1)} (\tilde x_0, t') dt'  - M^2 \eps 
\geq \tfrac 32 c_\alpha \kappa_0^{\frac 13 - \frac{1}{\alpha}} \eps^{\frac 13}  \sabs{\p_{\tilde x_2} \tilde \scal_0(\tilde x_0)}
\end{align}
upon taking $\eps$ to be sufficiently small.
\end{proof}

\section{$\dot{H}^m$ bounds}\label{sec:energy}

\begin{definition}[Modified $\dot H^m$-norm]
For $m \geq 18$ we introduce the semi-norm
\begin{align}
E_m^2(s) = E_m^2[\uu,\pp,\hh](s) :=
\sum_{|\gamma|=m} \lambda^\modckg  \left( \norm{ \p^\gamma \uu(\cdot,s)}_{L^2}^2 +\norm{ \hh  \p^\gamma \pp(\cdot,s)}_{L^2}^2 
+ \kappa_0^2 \norm{\p^\gamma \hh(\cdot,s)}_{L^2}^2 \right)
\label{eq:Ek:def}
\end{align}
where $ \lambda = \lambda(m) \in (0,1)$ is to be made precise below (cf.~Lemma~\ref{lem:forcing:1}). 
\end{definition} 
Clearly, $E_m^2$ is equivalent to the homogenous Sobolev norm $\dot{H}^m$ for $\uu$, $\pp$, and $\hh$, and since $\kappa_0\geq 2$, we have the quantitative inequalities
\begin{align}
\tfrac{\lambda^m}{2}  \left( \snorm{\uu}_{\dot H^m}^2 + \snorm{ \pp}_{\dot H^m}^2 + \snorm{ \hh}_{\dot H^m}^2\right) \leq E_m^2 \leq \kappa_0^2 \left( \snorm{\uu}_{\dot H^m}^2 
 + \snorm{ \pp}_{\dot H^m}^2+ \snorm{ \hh}_{\dot H^m}^2 \right) \,.
 \label{norm_compare}
 \end{align} 
The bound \eqref{norm_compare} follows from 
\begin{align}
\abs{\hh(y,s) -  1} \leq \tfrac{2\eps}{\upgamma} 
\label{eq:useful:crap}
\end{align}
and the triangle inequality, upon taking $\eps$ sufficiently small.  In turn, \eqref{eq:useful:crap} is a consequence of the definition \eqref{Snew}, and of the bootstrap \eqref{eq:S_bootstrap}. 

Additionally, in order to apply the interpolation inequalities from Appendix~\ref{sec:interpolation}, we need to establish a quantitative equivalence between the $E_m$ semi-norm defined in \eqref{eq:Ek:def} and the classical homogenous $\dot{H}^m$ norm of the quantities $U$, $\sound$, and $K$ (recall that these are related to $\uu$, $\pp$, and $\hh$ via the nonlinear transformation given in \eqref{eq:thunder:variables}). In this direction we have

\begin{lemma}[Asymptotic equivalence of norms]
\label{lem:norm:equivalence}
For $\kappa_0 \geq 1$ sufficiently large in terms of $\upgamma$, and for $\eps$ sufficiently small in terms of $\kappa_0$, $M$, and $m$, we have the estimate
\begin{align}
 \lambda^m \left( \norm{U}_{\dot {H}^m}^2 + \norm{\sound}_{\dot{H}^m}^2 +  \norm{K}_{\dot {H}^m}^2 - e^{-2s}\right) 
 \leq E_m^2 
 \leq  \kappa_0^2 \left( \norm{U}_{\dot {H}^m}^2 + \norm{\sound}_{\dot{H}^m}^2  +  \norm{K}_{\dot {H}^m}^2 + e^{-2s} \right)
 \label{eq:ghastly}
\end{align}
for all $s\geq -\log \eps$. 
As a consequence, we also have the estimate
\begin{align}
\kappa_0^{-2} E_m^2 - e^{-2s} 
\leq  e^{-s} \norm{W}_{\dot{H}^m}^2 +  \norm{Z}_{\dot {H}^m}^2 + \norm{A}_{\dot{H}^m}^2 +  \norm{K}_{\dot {H}^m}^2  
 \leq 4 \lambda^{-m}  E_m^2 + 4 e^{-2s}  \,.
 \label{eq:ghastly:0}
\end{align}
\end{lemma}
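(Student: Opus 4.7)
The key observation is that $\hh = e^{K/(2\upgamma)}$ and $\pp = \sound/\hh$ are $L^\infty$-close to $1$ and $\sound$ respectively, with $|\hh - 1|\le 2\eps/\upgamma$ by \eqref{eq:useful:crap}, and that by the bootstrap \eqref{eq:S_bootstrap} every derivative of $K$ of order $\ge 1$ carries an $s$-decay factor in $L^\infty$. Therefore the top-order pieces of $E_m^2$ match the top-order pieces of $\|U\|_{\dot H^m}^2 + \|\sound\|_{\dot H^m}^2 + \|K\|_{\dot H^m}^2$, and every lower-order commutator in the Leibniz/Faà di Bruno expansions is absorbed into the error term $\OO(e^{-2s})$.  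I will first establish \eqref{eq:ghastly}, then derive \eqref{eq:ghastly:0} algebraically.

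For \eqref{eq:ghastly}, the $\uu=U$ contribution is immediate. For the $\pp$-contribution, the identity $\hh\pp = \sound$ and Leibniz give
\begin{align*}
\hh\,\p^\gamma\pp = \p^\gamma\sound - \sum_{0\le\beta<\gamma}\binom{\gamma}{\beta}\p^{\gamma-\beta}\hh\cdot\p^\beta\pp\,.
\end{align*}
For the $\hh$-contribution, Faà di Bruno applied to $\hh = e^{K/(2\upgamma)}$ yields $\p^\gamma\hh = \tfrac{1}{2\upgamma}\hh\,\p^\gamma K + \hh\,\mathcal{R}_\gamma(K)$, where $\mathcal{R}_\gamma(K)$ is a polynomial in derivatives of $K$ of order at most $m-1$ containing at least two derivative factors; the inverse relation $K = 2\upgamma\log\hh$ yields the analogous expansion for $\p^\gamma K$ in terms of $\p^\gamma\hh$.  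The hypothesis $\kappa_0\ge 2\upgamma$ then ensures that the weight $\kappa_0^2\|\p^\gamma\hh\|_{L^2}^2$ in $E_m^2$ dominates $\|\p^\gamma K\|_{L^2}^2$ up to the remainder.  Each remainder term is a product of at least two factors $\p^\alpha K$ or $\p^\beta\sound$ with intermediate orders $1\le|\alpha|,|\beta|\le m-1$, and by Gagliardo--Nirenberg interpolation (Lemma~\ref{lem:GN}) such products are bounded in $L^2$ by a combination of $L^\infty$ and $\dot H^m$ norms.  Inserting the bootstrap bounds $\|K\|_{L^\infty}\le\eps$ (from \eqref{eq:S_bootstrap:*}) and $\|\nabla K\|_{L^\infty}\les\eps^{1/8}e^{-s/2}$ (from \eqref{eq:S_bootstrap}), Proposition~\ref{prop:sound} (giving $\|\sound\|_{L^\infty}\les\kappa_0$), and the uniform $\dot H^m$ estimates $\|K\|_{\dot H^m},\|\sound\|_{\dot H^m}\les M^{2m}$ coming from Proposition~\ref{cor:L2}, every remainder carries at least one factor of $e^{-s/2}$ and is bounded by $M^{4m}\eps^{1/8}e^{-s}\le \tfrac14\kappa_0^{-2}e^{-2s}$ after taking $\eps$ small in terms of $\kappa_0,M,m$.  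Summing over $|\gamma|=m$ and using $\lambda^m\le\lambda^{|\check\gamma|}\le 1$ produces \eqref{eq:ghastly}.

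For \eqref{eq:ghastly:0}, I will use the decomposition $U = (U\!\cdot\!\Ncal)\Ncal + A_\nu\Tcal^\nu$ together with the Riemann-type identities $U\!\cdot\!\Ncal = \tfrac12(\kappa + e^{-s/2}W + Z)$ and $\sound = \tfrac12(\kappa + e^{-s/2}W - Z)$ from \eqref{eq:UdotN:Sigma}.  Since $\kappa$ is spatially constant, the parallelogram identity $(a+b)^2+(a-b)^2 = 2(a^2+b^2)$ gives
\begin{align*}
\|\p^\gamma(U\!\cdot\!\Ncal)\|_{L^2}^2 + \|\p^\gamma\sound\|_{L^2}^2 = \tfrac12 e^{-s}\|\p^\gamma W\|_{L^2}^2 + \tfrac12\|\p^\gamma Z\|_{L^2}^2\,,
\end{align*}
and $\|\p^\gamma U\|_{L^2}^2 = \|\p^\gamma(U\!\cdot\!\Ncal)\|_{L^2}^2 + \|\p^\gamma A\|_{L^2}^2$ modulo commutators coming from distributing $\p^\gamma$ across the geometric factors $\Ncal$ and $\Tcal^\nu$.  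Because $\Ncal,\Tcal^\nu$ depend on $\check x = e^{-s/2}\check y$ with $|\phi|\le M^2\eps$ by \eqref{eq:speed:bound}, every $y$-derivative of these fields is $\les M^2\eps e^{-s/2}$ in $L^\infty$, so all such commutators contribute only an $\OO(e^{-2s})$ correction in $L^2$ via the same interpolation argument.  Combining this identity with \eqref{eq:ghastly} and using the slack factor $4$ on the right of \eqref{eq:ghastly:0} to absorb all $\OO(e^{-2s})$ errors yields the claim.  The main obstacle is not the algebraic top-order identification but the meticulous bookkeeping required to verify that every Faà di Bruno remainder and every geometric commutator carries an $e^{-s/2}$ decay in $L^\infty$; the only structural hypothesis needed beyond those already used for Proposition~\ref{cor:L2} is $\kappa_0\ge 2\upgamma$, which makes the $\kappa_0^2$ weight on $\|\p^\gamma\hh\|_{L^2}^2$ large enough to absorb the $(2\upgamma)^2$ prefactor appearing in the expansion of $\p^\gamma K$.
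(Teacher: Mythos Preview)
Your plan has two related flaws that prevent it from closing. First, you invoke Proposition~\ref{cor:L2} to obtain a priori bounds $\|K\|_{\dot H^m},\|\sound\|_{\dot H^m}\les M^{2m}$, but the proof of Proposition~\ref{cor:L2} (given at the end of Section~\ref{sec:energy}) combines Proposition~\ref{prop:L2} with the present Lemma~\ref{lem:norm:equivalence}, so this step is circular. Second, and independently, your claim ``$M^{4m}\eps^{1/8}e^{-s}\le\tfrac14\kappa_0^{-2}e^{-2s}$ after taking $\eps$ small'' is false: no choice of $\eps$ makes $e^{-s}\le C e^{-2s}$ hold uniformly for $s\ge-\log\eps$. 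The Leibniz and Fa\`a di Bruno remainders unavoidably carry one factor of a top-order norm (an $L^2$ norm of a product whose total derivative order is $m$ cannot be placed entirely in $L^\infty$), so after interpolation the best you obtain is something proportional to $\eps\|K\|_{\dot H^m}$ or $\delta\|\pp\|_{\dot H^m}$, never a pure $e^{-2s}$.

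The paper's proof sidesteps both issues by \emph{absorption} rather than a priori bounds. For the $\hh$--$K$ comparison it shows $\|\hh^{-1}\p^\gamma\hh-\tfrac{1}{2\upgamma}\p^\gamma K\|_{L^2}\le C_m\eps\|K\|_{\dot H^m}$ and absorbs this back into the principal $\|K\|_{\dot H^m}^2$ term. For the $\pp$--$\sound$ comparison it writes $\p^\gamma\sound=\hh\,\p^\gamma\pp+\pp\,\p^\gamma\hh+R$, interpolates $\|R\|_{L^2}$ via \eqref{eq:special3}, and applies $\delta$-Young to split it into $\delta(\|\pp\|_{\dot H^m}+\|\hh\|_{\dot H^m})$ (absorbed via \eqref{norm_compare}) plus a pure $L^\infty$ product $\sim e^{-(2m-3)s/4}$, which \emph{is} $\le e^{-2s}$ since $m\ge 18$. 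The geometric commutators in the proof of \eqref{eq:ghastly:0} are likewise bounded by $\eps\|U\|_{\dot H^m}$ and absorbed, not by $e^{-2s}$. Your outline becomes correct once every instance of ``remainder $\le e^{-2s}$'' is replaced by ``remainder $\le$ (small coefficient)$\times$(principal $\dot H^m$ term) $+$ (pure decay $\le e^{-2s}$)''.
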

\begin{proof}[Proof of Lemma~\ref{lem:norm:equivalence}]
We directly have
\begin{align}
 \lambda^m \norm{U}_{\dot{H}^m}^2 \leq \sum_{|\gamma|=m} \lambda^\modckg   \norm{ \p^\gamma \uu }_{L^2}^2
\leq \norm{U}_{\dot{H}^m}^2  
\label{eq:ghastly:1}
\end{align}
which gives a direct comparison between the $\dot{H}^m$ norm of $U$ and the $\uu$-part of $E_m$. 

Next, we turn to the $\hh$-part of $E_m$. The chain rule yields $\hh^{-1} \nabla \hh = \frac{1}{2 \upgamma} \nabla K$. Applying $m-1$ more derivatives, by the Fa\`a di Bruno formula, we have that there exists a constant $C_m$ which only depends on $m$, such that pointwise we have the bound
\begin{align}
\abs{\hh^{-1} \p^\gamma \hh - \tfrac{1}{2 \upgamma} \p^\gamma K } \leq C_m \sum_{(i_1,\ldots,i_{m-1})\in I_m} \prod_{j=1}^{m-1} \abs{D^j K}^{i_j}
\label{eq:shalala:1}
\end{align}
where the index set $I_m$ is given by $I_m = \{ (i_1,\ldots,i_{m-1}) \colon i_j\geq 0, \sum_{j=1}^{m-1} j i_j  = m \}$. In particular, note that whenever $(i_1,\ldots,i_{m-1}) \in I_m$, we must have $\sum_{j=1}^{m-1} i_j \geq 2$. This fact is crucial for the argument below, and has to do with the fact that we have already accounted on the left side for the term with the highest order of derivatives. In \eqref{eq:shalala:1} as usual we have written $D^j K$ to denote $D^{\beta} K$ for some multi-index $\beta$ with $\abs{\beta} = j$. Using the interpolation inequality \eqref{eq:special1}, for all $1\leq j \leq m-1$ we next estimate 
\begin{align}
\norm{|D^j K|^{i_j}}_{L^{\frac{2m}{j i_j}}}= \norm{D^j K}_{L^{\frac{2m}{j}}}^{i_j}  \les \norm{K}_{L^\infty}^{i_j(1- \frac{j}{m})}\norm{K}_{\dot{H}^m}^{\frac{j i_j}{m}}.
\label{eq:shalala:2}
\end{align}
Moreover, note that for $(i_1,\ldots,i_{m-1}) \in I_m$ we have that $\sum_{j=1}^{m-1} \frac{j i_j}{2m} = \frac{1}{2}$, so that these are H\"older conjugate exponents corresponding to an $L^2$ norm. Thus, applying the $L^2$ norm to \eqref{eq:shalala:1}, using the H\"older inequality, and the interpolation bound \eqref{eq:shalala:2}, we obtain
\begin{align}
 \norm{\hh^{-1} \p^\gamma \hh - \tfrac{1}{2\upgamma} \p^\gamma K }_{L^2}
 &\leq C_m \sum_{(i_1,\ldots,i_{m-1})\in I_m} \prod_{j=1}^{m-1} \norm{K}_{L^\infty}^{i_j(1- \frac{j}{m})}\norm{K}_{\dot{H}^m}^{\frac{j i_j}{m}}\notag\\
  &\leq C_m \sum_{(i_1,\ldots,i_{m-1})\in I_m} \norm{K}_{L^\infty}^{-1 + \sum_{j=1}^{m-1}i_j}\norm{K}_{\dot{H}^m}
  \,,
\end{align}
for some $m$-dependent constant $C_m$ (which may increase from line to line), whenever $\abs{\gamma} = m$.
At this point we use that $(i_1,\ldots,i_{m-1}) \in I_m$, we must have $\sum_{j=1}^{m-1} i_j \geq 2$, which is combined with the bootstrap \eqref{eq:S_bootstrap} to conclude 
\begin{align}
 \norm{\hh^{-1} \p^\gamma \hh - \tfrac{1}{2\upgamma} \p^\gamma K }_{L^2} 
  &\leq C_m \eps \norm{K}_{\dot{H}^m}\,.
\label{eq:shalala:3}
\end{align}
We next appeal to the pointwise estimate on $\hh$ in \eqref{eq:useful:crap}, and since $\kappa_0 \geq 1$, we deduce that 
\begin{align}
\tfrac{\kappa_0^2}{2\upgamma^2} \lambda^m \norm{K}_{\dot{H}^m}^2 \leq \sum_{|\gamma|=m} \lambda^\modckg   \kappa_0^2\norm{ \p^\gamma \hh}_{L^2}^2
\leq  \tfrac{\kappa_0^2}{\upgamma^2} \norm{K}_{\dot{H}^m}^2  
\label{eq:ghastly:2}
\end{align}
where we have used that $\lambda \in (0,1)$, and that $\eps$ is sufficiently small to absorb the $C_m$ constant in \eqref{eq:shalala:3}.

Lastly, we turn to the $\pp$-part of $E_m$. From \eqref{Pnew} and \eqref{Snew} we obtain $ \sound  = \pp \hh$, and thus, by the binomial formula and the Moser estimate \eqref{eq:Moser:inequality}, we have  
\begin{align*}
\norm{ \p^\gamma \sound  - \hh \p^\gamma \pp - \pp \p^\gamma \hh}_{L^2} \leq C_m \left( \norm{\nabla \hh}_{L^\infty} \norm{\pp}_{\dot H^{m-1}} + \norm{\nabla \pp}_{L^\infty} \norm{\hh}_{\dot{H}^{m-1}} \right) \,.
\end{align*}
Furthermore, using the interpolation bound \eqref{eq:special3} applied to $\nabla \pp$ and $\nabla \hh$, and the $\eps$-Young inequality, we obtain that for any $\delta \in (0,1)$ we have
\begin{align}
&\norm{  \p^\gamma \sound - \hh \p^\gamma \pp - \pp \p^\gamma \hh}_{L^2}  \notag\\
&\qquad \leq C_m \left( \norm{\nabla \hh}_{L^\infty} \norm{\nabla \pp}_{L^\infty}^{\frac{2}{2m-5}} \norm{\pp}_{\dot H^{m}}^{1-\frac{2}{2m-5}} + \norm{\nabla \pp}_{L^\infty}  \norm{\nabla \hh}_{L^\infty}^{\frac{2}{2m-5}} \norm{\hh}_{\dot H^{m}}^{1-\frac{2}{2m-5}} \right) \notag\\
&\qquad \leq \delta \norm{\pp}_{\dot H^{m}} + \delta \norm{\hh}_{\dot H^{m}} + C_m \delta^{-\frac{2m-7}{2}} \left( \norm{\nabla \hh}_{L^\infty}^{\frac{2m-5}{2}} \norm{\nabla \pp}_{L^\infty} + \norm{\nabla \pp}_{L^\infty}^{\frac{2m-5}{2}} \norm{\nabla \hh}_{L^\infty}  \right)
\label{eq:shalala:4}
\end{align}
where the $m$-dependent constant $C_m$ may change from line to line. From the definitions \eqref{Pnew}--\eqref{Snew}, the $K$ estimates in \eqref{eq:S_bootstrap}, the $W$ and $Z$ bounds in \eqref{eq:W_decay} and \eqref{eq:Z_bootstrap},  the relations $\hh \nabla \pp = \nabla \sound - \sound \hh^{-1} \nabla \hh$, and $2 \nabla \sound = e^{-\frac s2} \nabla W - \nabla Z$, we deduce 
\begin{align}
\norm{\nabla \hh}_{L^\infty} \leq \eps^{\frac 13} e^{-\frac s2} \qquad \mbox{and} \qquad 
\norm{\nabla \pp}_{L^\infty} \leq \left( \tfrac 12 + \eps^{\frac 14}\right) e^{-\frac s2}  \,.
\label{eq:shalala:5}
\end{align}
Taking $\eps$ to be sufficiently small to absorb the $m$ and $M$ dependent constants, we obtain from \eqref{eq:shalala:4} and \eqref{eq:shalala:5} that 
\begin{align}
\norm{ \p^\gamma \sound  - \hh \p^\gamma \pp - \pp \p^\gamma \hh}_{L^2}   
\leq \delta \norm{\pp}_{\dot H^{m}} + \delta \norm{\hh}_{\dot H^{m}}  +  \delta^{-\frac{2m-7}{2}} e^{-\frac{2m-3}{4} s}
\label{eq:shalala:6}
\end{align}
for any constant $\delta \in (0,1)$.  Using that $\abs{ \sound -  \kappa_0/2} \leq 5 \eps^{\frac 16}$ (which follows from the bootstrap assumptions on $\dot \kappa$, $W$, and $Z$), and appealing to \eqref{eq:useful:crap}, we obtain
\begin{align}
\abs{\pp(y,s)  - \tfrac{\kappa_0}{2} }  \leq  6 \eps^{\frac 16}  
\label{eq:shalala:7}
\end{align}
upon taking $\eps$ to be sufficiently small in terms of $M$ and $\kappa_0$. 
 At last, we combine  \eqref{eq:shalala:6}--\eqref{eq:shalala:7}, use the $\pp$ and $\hh$ part of the comparison \eqref{norm_compare},   choose $\delta$ sufficiently small depending on $\kappa_0$ and $\lambda$, and then $\eps$ sufficiently small in terms of $\kappa_0,  \lambda, \delta$ and $m$, to deduce that 
\begin{align}
 \lambda^{m} \norm{\sound}_{\dot{H}^m}^2 
\leq   \sum_{\abs{\gamma}=m} \lambda^{\abs{\check \gamma}}  \left( \norm{\hh \p^\gamma \pp}_{L^2}^2 + \kappa_0^2 \norm{\p^\gamma \hh}_{L^2}^2 \right) + e^{-2s}
\label{eq:ghastly:3}\,,
\end{align}
and taking $\kappa_0 \geq 2$, we also have
\begin{align}
 \norm{\sound}_{\dot{H}^m}^2  \geq  \tfrac 18  \sum_{\abs{\gamma}=m} \lambda^{\abs{\check \gamma}}  \left( \norm{\hh\p^\gamma \pp}_{L^2}^2 + \kappa_0^2 \norm{\p^\gamma \hh}_{L^2}^2 \right) -  e^{-2s} \,.
\label{eq:ghastly:4}
\end{align}
Combining \eqref{eq:ghastly:1}, \eqref{eq:ghastly:2}, \eqref{eq:ghastly:3}, and \eqref{eq:ghastly:4}, we arrive at the proof of \eqref{eq:ghastly}.

The proof of \eqref{eq:ghastly:0} follows once we recall the identities $ W = e^{\frac{s}{2}}  ( U \cdot \Ncal +\sound - \kappa)$,   $ Z =   U \cdot \Ncal -\sound  $, which follow from \eqref{eq:UdotN:Sigma}, and the definition
$ A_\nu  =   U \cdot \Tcal^\nu $. 
Therefore, by  \eqref{e:bounds_on_garbage}, \eqref{eq:special1}, using the  Poincar\'e inequality in the $\check y$ direction, and the fact that the diameter of $\XXX(s)$ in the $\check e$ directions is  $4 \eps^{\frac 16} e^{\frac{s}{2}}$, for any $\gamma$ with $\abs{\gamma} = m$, we obtain 
\begin{align*}
&\norm{e^{-\frac s2} \p^\gamma W -  \Ncal \cdot \p^\gamma U - \p^\gamma S}_{L^2}
+ \norm{\p^\gamma Z -  \Ncal \cdot \p^\gamma U + \p^\gamma S}_{L^2}
+\norm{\p^\gamma A_\nu -  \Tcal^\nu \cdot \p^\gamma U }_{L^2}
\notag\\
&\leq 2 \norm{\comm{\p^\gamma}{\Ncal}\cdot \uu}_{L^2} + \norm{\comm{\p^\gamma}{\Tcal^\nu}\cdot \uu}_{L^2} 
\notag\\
&\les  \sum_{j=1}^{m} \left(\norm{D^j \Ncal}_{L^\infty} + \norm{D^j \Tcal^\nu}_{L^\infty}\right) \norm{D^{m-j}\uu}_{L^2(\XXX(s))}
\notag\\
&\les \eps \sum_{j=1}^{m} e^{-\frac{j s}{2}} (4\eps^{\frac 16} e^{\frac s2})^{j} \norm{\uu}_{\dot{H}^m}
\notag\\
&\les \eps \norm{\uu}_{\dot{H}^m} \,.
\end{align*}
Summing over all $\gamma$ with $\abs{\gamma} =m$, and appealing to \eqref{eq:ghastly}, the estimate \eqref{eq:ghastly:0} follows.
\end{proof}

 \subsection{Higher-order derivatives for the $(\uu,\pp,\hh)$-system}
In order to estimate $E_m(s)$ we need the differentiated form of the $(\uu,\pp,\hh)$-system \eqref{UPS-new}. For this purpose, fix $\gamma \in {\mathbb N}_0^3$ with $\abs{\gamma} = m$, and apply $\p^\gamma$ to \eqref{UPS-new}, to obtain
\begin{subequations} 
\label{UPS-L2}
\begin{align}
&\p_s (\p^\gamma \uu_i)
+ ( \mathcal{V} _U \cdot \nabla ) (\p^\gamma \uu_i) 
+ \mathcal{D}_\gamma (\p^\gamma \uu_i)
- 2 \beta_1 \beta_\tau e^{-s} \dot Q_{ij} (\p^\gamma \uu_j)
+2 \beta_\tau \beta_3 \hh^2 (\p^\gamma \pp) \Jcal \Ncal_i e^{\frac s2} \p_1 \pp
\notag \\
& \quad
+ 2 \gamma_1 \beta_\tau \beta_3 \hh^2 e^{\frac s2} \p_1 \pp \Jcal \Ncal_i (\p^\gamma \pp)
+ 2 \beta_\tau \beta_3 \hh^2 \pp \left( \Jcal \Ncal_i e^{\frac{s}{2}}   \p_1 (\p^\gamma \pp) + e^{-\frac{s}{2}} \delta^{i\nu}  \p_\nu ( \p^\gamma \pp) \right)
= \mathcal{F} _{U_i}^{(\gamma)} ,
 \label{UPS-L2-U}
 \\
& \p_s (\p^\gamma \pp)  
+ \left(\mathcal{V} _U \cdot \nabla \right) (\p^\gamma \pp)
+ \mathcal{D}_\gamma (\p^\gamma \pp )   
+2 \beta_\tau \beta_3 e^{\frac s2} \Jcal \p_1(\uu \cdot \Ncal) (\p^\gamma \pp) 
\notag \\
& \quad
+2 \gamma_1  \beta_\tau \beta_3 e^{\frac s2} \p_1 \pp  \Jcal \Ncal_j (\p^\gamma \uu_j)
+2 \beta_\tau\beta_3 \pp  \left( e^{\frac{s}{2}} \Jcal \Ncal_j \p_1(\p^\gamma \uu_j) + e^{-\frac{s}{2}} \p_\nu( \p^\gamma \uu_\nu)\right) 
=  \mathcal{F} _{\pp}^{(\gamma)} , 
\label{UPS-L2-P} 
\\
&\p_s (\p^\gamma \hh)  
+ ( \mathcal{V} _U \cdot \nabla ) (\p^\gamma \hh)
+ \mathcal{D}_\gamma (\p^\gamma \hh)
 = \mathcal{F} _{\hh}^{(\gamma)} 
, \label{UPS-L2-K} 
\end{align} 
\end{subequations} 
where the damping function $ \mathcal{D} _\gamma$ is defined as 
\begin{align}
\label{Dgamma}
\mathcal{D} _\gamma =  \gamma_1 ( 1 + \p_1 g_U) 
 + \tfrac{1}{2} \abs{ \gamma}   \,,
\end{align} 
the transport velocity $ \mathcal{V} _U$ is given in \eqref{V_U}, and since $\abs{\gamma} \ge 3$ the forcing functions in \eqref{UPS-L2} are given by 
\begin{subequations} 
\label{UPS-forcing-abstract}
\begin{align} 
\mathcal{F} _{U_i}^{(\gamma)}
&=  D_\gamma (\p^\gamma \uu_i)  - \comm{\p^\gamma}{\mathcal{V}_U \cdot \nabla}\uu_i  
- 2 \beta_\tau \beta_3 e^{-\frac s2} \delta^{i\nu} \comm{\p^\gamma}{\hh^2\pp} \p_\nu \pp
\notag\\
&\qquad 
+ 2 \beta_\tau \beta_3 e^{\frac s2} \left((\p^\gamma \pp) \hh^2 \Jcal \Ncal_i \p_1 \pp 
+ \gamma_1 \hh^2 \p_1 \pp \Jcal \Ncal_i (\p^\gamma \pp) - \comm{\p^\gamma}{\hh^2\pp \Jcal \Ncal_i} \p_1 \pp \right)
\,, \label{FU-abstract}\\
\mathcal{F} _{\pp}^{(\gamma)}
&=  D_\gamma (\p^\gamma \pp)  - \comm{\p^\gamma}{\mathcal{V}_U \cdot \nabla}\pp  
- 2 \beta_\tau \beta_3 e^{-\frac s2}  \comm{\p^\gamma}{\pp} \p_\nu \uu_\nu 
\notag\\
&\qquad 
+ 2 \beta_\tau \beta_3 e^{\frac s2} \left( (\p^\gamma \pp) \Jcal \Ncal_j \p_1 \uu_j + \gamma_1 \p_1 \pp \Jcal \Ncal_i (\p^\gamma \uu_i ) - \comm{\p^\gamma}{\pp \Jcal \Ncal_i} \p_1 \uu_i \right)
\,, \label{FP-abstract}\\
\mathcal{F} _{\hh}^{(\gamma)}
&=  D_\gamma (\p^\gamma \hh) - \comm{\p^\gamma}{\mathcal{V}_U \cdot \nabla}\hh  
\,. \label{FS-abstract}
\end{align} 
\end{subequations} 
In \eqref{UPS-forcing-abstract} we have used the notation $\comm{a}{b}$ to denote the commutator $a b- b a$. 
Note that two additional forcing terms are singled out on the left side of \eqref{UPS-L2-P}; this is because these terms will turn out to contribute the main contribution that has to be absorbed in the damping term $D_\gamma$. 

The $E_m$ energy estimate is obtained by testing \eqref{UPS-L2-U} with $ \p^\gamma \uu_i$, \eqref{UPS-L2-P} with $\hh^2 \p^\gamma \pp$, and \eqref{UPS-L2-K} with $\kappa_0^2 \p^\gamma \hh$. Adding the resulting differential equations produces the cancelation of all terms involving $m+1$ derivatives, which upon integrating by parts allows us to close the energy estimate. This computation is detailed in Subsection~\ref{sec:Ek:energy:estimate} below. Prior to this, in the next subsection we give estimates for the forcing terms defined in \eqref{UPS-forcing-abstract}. 

\subsection{Forcing estimates}
In order to analyze \eqref{UPS-L2} we first estimate the forcing terms defined in \eqref{UPS-forcing-abstract}.
This is achieved next:

\begin{lemma}\label{lem:forcing:1} 
Consider the forcing functions  defined in
\eqref{UPS-forcing-abstract}.
Let $m \ge 18$, fix $0 < \delta\le \tfrac{1}{32}$, and define the parameter $\lambda = \lambda(\delta,m)$ from \eqref{eq:Ek:def} to equal $\lambda= \tfrac{\delta ^2}{16 m^2} $. Then, we have that
\begin{subequations} 
\begin{align} 
2 \sum_{\abs{\gamma}=m}
 \lambda^\modckg \int_{ \mathbb{R}^3  } \abs{ \mathcal{F} _{ U^i}^{(\gamma)} \,  \p^\gamma \uu_i } & 
\le   (5 + 9\delta )  E_m^2  + e^{-s} M^{4m-1}\,,   \label{eq:Hk:est:FU}\\
2 \sum_{\abs{\gamma}=m}
 \lambda^\modckg \int_{ \mathbb{R}^3  }\abs{ \mathcal{F} _{\pp}^{(\gamma)}\, \hh^2 \p^\gamma \pp} &
 \le  (2+ 8\delta )  E_m^2  +   e^{-s} M^{4m-1} \,,  \label{eq:Hk:est:FP}\\
 2 \sum_{\abs{\gamma}=m}
 \lambda^\modckg \kappa_0^2 \int_{ \mathbb{R}^3  }\abs{ \mathcal{F} _{\hh}^{(\gamma)}\, \p^\gamma \hh} &
 \le  (2+ 4\delta )  E_m^2  +   e^{-s} M^{4m-1} \,,   \label{eq:Hk:est:FE} 
\end{align} 
\end{subequations} 
for $ \eps $ taken sufficiently small in terms of $m$, $\delta$, $\lambda$, $M$, and $\kappa_0$.
\end{lemma}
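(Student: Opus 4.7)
\textbf{Proof proposal for Lemma~\ref{lem:forcing:1}.} The plan is to estimate each of the three forcing functions in \eqref{UPS-forcing-abstract} by splitting them into three natural groups and then pairing each group with the corresponding test function ($\p^\gamma \uu_i$, $\hh^2 \p^\gamma \pp$, or $\kappa_0^2 \p^\gamma \hh$) via Cauchy--Schwarz, weighted by $\lambda^\modckg$. The three groups are: (i) the corrective damping terms $D_\gamma(\p^\gamma \uu_i)$ etc., which encode the discrepancy between the true damping on the LHS of \eqref{UPS-L2} and the explicit $\mathcal{D}_\gamma$ of \eqref{Dgamma}; (ii) transport commutators $[\p^\gamma,\mathcal{V}_U\cdot\nabla]$; and (iii) pressure/coupling commutators such as $[\p^\gamma, \hh^2\pp]\p_\nu\pp$ and $[\p^\gamma,\hh^2 \pp \Jcal \Ncal_i]\p_1\pp$, together with the explicitly displayed top-order ``borderline'' terms.

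First I would handle the borderline terms, since they drive the main contribution to the constants $5+9\delta$, $2+8\delta$, $2+4\delta$. Using Proposition~\ref{prop:sound}, \eqref{eq:useful:crap}, \eqref{eq:shalala:5}, and \eqref{eq:shalala:7}, we have pointwise $|\hh|\leq 1+\tfrac{2\eps}{\upgamma}$, $|\pp - \tfrac{\kappa_0}{2}|\leq 6\eps^{1/6}$, $|\Jcal|\leq 1+\OO(\eps)$, $|\Ncal|=1$, and most importantly $e^{s/2}|\p_1\pp|\leq \tfrac12+\eps^{1/4}$ as well as $2\beta_\tau\beta_3\leq 1+\OO(\eps)$. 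Substituting these bounds and applying Cauchy--Schwarz in the $\uu$-$\pp$ coupling (grouping $(\p^\gamma\pp)\hh^2\Jcal\Ncal_i\p_1\pp \cdot \p^\gamma \uu_i$ against the analogous term $\p_1\pp\Jcal\Ncal_j(\p^\gamma\uu_j)\cdot\hh^2\p^\gamma\pp$ appearing in $\mathcal F_\pp^{(\gamma)}$) produces the dominant $E_m^2$ contributions with explicit coefficients, and the $\gamma_1$ prefactors are absorbed via $\gamma_1 \leq m$ together with the choice $\lambda = \delta^2/(16 m^2)$, which buys an extra factor $\lambda^{1/2}=\delta/(4m)$ whenever one derivative must be transferred from a $\p_1$ direction to a mixed commutator (see the proof of Lemma 9.1 in \cite{BuShVi2019b}).

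Next I would dispatch the commutator terms using the Moser/product estimates collected in the Appendix, in the form $\norm{[\p^\gamma,f]g}_{L^2}\lesssim \norm{\nabla f}_{L^\infty}\norm{g}_{\dot H^{m-1}}+\norm{f}_{\dot H^m}\norm{g}_{L^\infty}$. For $[\p^\gamma,\mathcal V_U\cdot \nabla]\uu$, note that $\nabla\mathcal V_U = \nabla(g_U,h_U^\nu)+\tfrac12\mathrm{diag}(3,1,1)$, where the constant-matrix part is absorbed by $\mathcal D_\gamma$ on the LHS, and the remainder $\nabla(g_U,h_U^\nu)$ is pointwise small by Lemma~\ref{lemma_g} and the analogous $h$ bounds in \eqref{e:h_estimates}; the high-Sobolev terms are controlled by Proposition~\ref{cor:L2} applied to $W,Z,A,K$ together with \eqref{eq:ghastly:0}. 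For the pressure commutators, the factors $\hh^2\pp$ and $\hh^2\pp\Jcal\Ncal_i$ are smooth functions of $(W,Z,K,f)$ with $L^\infty$ derivatives controlled by the bootstraps \eqref{eq:W_decay}, \eqref{eq:S_bootstrap}, \eqref{eq:US_est} and the geometry bounds (Lemma~\ref{lem:BBS}); derivatives of order $\geq m$ are controlled by $E_m$ via \eqref{eq:ghastly}. The $e^{-s/2}$ prefactor on the $\delta^{i\nu}[\p^\gamma,\hh^2\pp]\p_\nu\pp$ term converts this into an $e^{-s}E_m^2$-type contribution, absorbed into the $e^{-s}M^{4m-1}$ residual after interpolation using \eqref{eq:special1} and \eqref{eq:special3}.

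The main obstacle is bookkeeping: ensuring that the precise constants $5+9\delta$, $2+8\delta$, $2+4\delta$ are achieved, rather than some larger numerical constants. This requires tracking that $\kappa_0$-gains from the $\hh^2$ and $\kappa_0^2$ test weights precisely offset the $\kappa_0$-losses from the cross terms (e.g.\ $\hh^2\p^\gamma\pp$ tested against $[\p^\gamma,\hh^2\pp\Jcal\Ncal_i]\p_1\uu_i$ produces a factor of $\pp\sim \kappa_0/2$ that must match the weights in $E_m$), and that the $\gamma_1$-dependent borderline terms combine to give a total contribution of $1+\OO(\eps^{1/8}+\delta)$ rather than $2+\OO(\cdots)$, by the $(\uu,\pp)$-pairing cancellation already used in \cite{BuShVi2019b}. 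All remaining error terms are $\OO(\eps^{1/9}E_m^2)$ or lower-order residuals that, after application of \eqref{eq:ghastly:0} and Proposition~\ref{cor:L2}, contribute at most $e^{-s}M^{4m-1}$, completing the proof upon choosing $\eps$ small in terms of $m$, $\delta$, $\lambda$, $M$, $\kappa_0$.
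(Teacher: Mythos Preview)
Your outline has two genuine gaps.

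First, the explicitly displayed top-order terms $(\p^\gamma\pp)\hh^2\Jcal\Ncal_i\p_1\pp$ and $\gamma_1\hh^2\p_1\pp\Jcal\Ncal_i(\p^\gamma\pp)$ in $\mathcal{F}_{U_i}^{(\gamma)}$ are not meant to be estimated on their own or paired across the $\uu$--$\pp$ system: they are written out precisely so as to \emph{cancel} against the $\beta=0$ and the $|\beta|=m-1$, $\beta_1=\gamma_1-1$ pieces of $-\comm{\p^\gamma}{\hh^2\pp\Jcal\Ncal_i}\p_1\pp$. Without this internal cancellation the $\gamma_1$-weighted term survives; since $\p^\gamma\pp$ and $\p^\gamma\uu_i$ carry the \emph{same} $|\check\gamma|$, no $\lambda^{1/2}$ gain is available, and $\gamma_1\leq m$ produces an $O(m)E_m^2$ contribution that cannot be absorbed by your choice of $\lambda$. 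After the cancellation, the surviving top-order pieces are (a) $\gamma_\mu\p_\mu(\hh^2\pp\Jcal\Ncal)\p_1\p^{\gamma-e_\mu}\pp$, which \emph{does} carry the $\lambda^{1/2}$ gain; (b) $2\gamma_1\pp\Jcal\Ncal_i\hh\,\p_1\hh\,\p^\gamma\pp$, harmless since $|\p_1\hh|\les\eps^{1/3}e^{-s/2}$; and (c) the genuinely non-isentropic term $e^{s/2}\pp\,\p_1\pp\,\p^\gamma(\hh^2\Jcal\Ncal_i)$. Term (c) is the source of the ``$3$'' in $5=2+3$ for \eqref{eq:Hk:est:FU}: since $\p^\gamma(\hh^2\Jcal\Ncal)\approx 2\hh\Jcal\Ncal\,\p^\gamma\hh$ to leading order, it is controlled by the $\kappa_0\|\p^\gamma\hh\|_{L^2}$ part of $E_m$, and the $\kappa_0$-loss from the prefactor $4e^{s/2}|\pp\,\p_1\pp|\leq\kappa_0+o(1)$ is exactly absorbed. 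Your sketch identifies neither this cancellation nor the role of term (c), which is the main new feature relative to \cite{BuShVi2019b}.

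Second, for the intermediate commutator range $1\leq|\beta|\leq m-2$ in $\comm{\p^\gamma}{g_U}\p_1(U\cdot\Ncal\,\Ncal_i)$, the standard Moser/Gagliardo--Nirenberg bound you invoke yields only $\|\cdot\|_{L^2}\les E_m$, because $\|Dg_U\|_{L^\infty}\sim 1$ and $\|\p_1(U\cdot\Ncal)\|_{L^\infty}\sim e^{-s/2}$ exactly offset $\|g_U\|_{\dot H^m}\sim e^{s/2}E_m$; this gives an $O(1)\,E_m^2$ term with no smallness in $\eps$ or $\delta$. The paper instead applies the tailored inequality of Lemma~\ref{lem:tailored:interpolation} with $q=\tfrac{6(2m-3)}{2m-1}$, which returns an $E_m^{\,2-\frac{1}{2m-4}}$ factor multiplied by $M^{\frac{1}{2m-4}}e^{-\frac{s}{2(2m-4)}}$; Young's inequality then splits this as $\delta E_m^2+e^{-s}M^{4m-3}$.
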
 
\begin{proof}[Proof of Lemma~\ref{lem:forcing:1}]
Throughout this proof, when there is no need to keep track of the binomial coefficients from the product rule we  denote a partial derivative $\p^\gamma$ with $\abs{\gamma}=m$ simply as $D^m$.

Upon expanding the commutator terms in \eqref{UPS-forcing-abstract}, the forcing functions defined here may be written as
\begin{subequations}
\begin{align}
\mathcal{F} _{ U_i}^{(\gamma)}  &= \mathcal{F} _{ U_i}^{(m)} + \mathcal{F} _{ U_i}^{(< m)}
\label{eq:F:U:sum}
\\
\mathcal{F} _{\pp}^{(\gamma)}  &= \mathcal{F} _{\pp}^{(m)} + \mathcal{F} _{\pp}^{(< m)}
\label{eq:F:P:sum}
\\
\mathcal{F} _{\hh}^{(\gamma)}  &= \mathcal{F} _{\hh}^{(m)} + \mathcal{F} _{\hh}^{(< m)}
\label{eq:F:E:sum}
\end{align}
\end{subequations}
where the upper index $(m)$ indicates that terms with exactly $m$ derivatives are present, while the upper index $(<m)$ indicates that all terms have at most $m-1$ derivatives on them. These terms are defined by
\begin{subequations}
\begin{align}
 \mathcal{F} _{ U_i}^{(m)}
 &=
- \left(  \gamma_\mu \p_\mu g_U \p_1 \p^{\gamma- e_\mu} \uu_i + \gamma_j \p_j h_U^\nu \p_\nu \p^{\gamma - e_j} \uu_i + \p^\gamma g_U \p_1 \uu_i + \p^\gamma h_U^\nu \p_\nu \uu_i \right)
\notag\\
&\ -2 \beta_\tau \beta_3\left(\gamma_\mu  e^{\frac s2}  \p_\mu (\hh^2 \pp \Jcal \Ncal_i) \p^{\gamma - e_\mu}\p_1 \pp  +2 \gamma_1 \pp  \Jcal \Ncal_i \hh  e^{\frac s2}  \p_1\hh \p^\gamma \pp + e^{-\frac s2} \delta^{i\nu} \gamma_j \p_j (\hh^2 \pp) \p^{\gamma - e_j} \p_\nu \pp \right)
\notag\\
&\ -2 \beta_\tau \beta_3 \left( e^{-\frac s2} \delta^{i\nu} 
\p_\nu \pp \p^\gamma (\hh^2 \pp) +  e^{\frac s2} \pp  \p_1 \pp  \p^{\gamma} (\hh^2 \Jcal \Ncal_i)   \right)
\notag\\
&=: \mathcal{F} _{ U_i,(1)}^{(m)} + \mathcal{F} _{ U_i,(2)}^{(m)} + \mathcal{F} _{ U_i,(3)}^{(m)}
\label{eq:F:U:k}
\\
\mathcal{F} _{ U_i}^{(<m)}
&= - \sum_{j=1}^{m-2} \sum_{\abs{\beta}=j, \beta \leq \gamma} {\gamma \choose \beta}
\left( \p^{\gamma -\beta} g_U \p^\beta \p_1 \uu_i + \p^{\gamma - \beta } h_U^\nu \p^\beta \p_\nu \uu_i \right)
\notag\\
&-2 \beta_\tau \beta_3  \sum_{j=1}^{m-2} \sum_{\abs{\beta}=j, \beta \leq \gamma} {\gamma \choose \beta}
\left( e^{\frac s2} \p^{\gamma-\beta} (\hh^2 \pp \Jcal \Ncal_i) \p^\beta\p_1 \pp + e^{-\frac s2} \delta^{i\nu}\p^{\gamma-\beta} (\hh^2 \pp) \p^\beta\p_\nu \pp\right)
\notag\\
&-2 \beta_\tau \beta_3 e^{\frac s2} \p_1 \pp \sum_{j=1}^{m-1} \sum_{\abs{\beta}=j,\beta\leq \gamma} {\gamma \choose \beta} \p^{\gamma-\beta } (\hh^2 \Jcal \Ncal_i)  \p^\beta \pp 
\notag\\
&=: \mathcal{F} _{ U_i,(1)}^{(<m)} + \mathcal{F} _{ U_i,(2)}^{(<m)} + \mathcal{F} _{ U_i,(3)}^{(<m)}
\label{eq:F:U:less:k}
\end{align}
\end{subequations}
for the $\p^\gamma \uu$ evolution, by
\begin{align}
\mathcal{F} _{\pp}^{(m)} 
&=
- \left(  \gamma_\mu \p_\mu g_U \p_1 \p^{\gamma- e_\mu} \pp + \gamma_j \p_j h_U^\nu \p_\nu \p^{\gamma - e_j} \pp + \p^\gamma g_U  \p_1 \pp +  \p^\gamma h_U^\nu \p_\nu \pp   \right)
\notag\\
&\ - 2 \beta_\tau \beta_3 \left(\gamma_\mu e^{\frac s2}   \p_\mu (\pp \Jcal \Ncal_i) \p^{\gamma - e_\mu}\p_1 \uu_i +  e^{-\frac s2} \p_\nu \uu_\nu \p^{\gamma} \pp    +  e^{-\frac s2} \gamma_j \p_j \pp \p^{\gamma - e_j} \p_\nu \uu_\nu  \right)\notag\\
&=: \mathcal{F} _{\pp,(1)}^{(m)} + \mathcal{F} _{\pp,(2)}^{(m)}
\label{eq:F:P:k}
\\
\mathcal{F} _{\pp}^{(<m)} 
&=- \sum_{j=1}^{m-2} \sum_{\abs{\beta}=j, \beta \leq \gamma} {\gamma \choose \beta}
\left( \p^{\gamma -\beta} g_U \p^\beta \p_1 \pp + \p^{\gamma - \beta } h_U^\nu \p^\beta \p_\nu \pp \right)
\notag\\
&\ -2 \beta_\tau \beta_3  \sum_{j=1}^{m-2} \sum_{\abs{\beta}=j, \beta \leq \gamma} {\gamma \choose \beta}
\left( e^{\frac s2} \p^{\gamma-\beta} (\pp \Jcal \Ncal_i) \p^\beta\p_1 \uu_i + e^{-\frac s2} \p^{\gamma-\beta} \pp \p^\beta\p_\nu \uu_\nu \right)
\notag\\
&\ -2 \beta_\tau \beta_3  \p_1 \uu_i \gamma_\mu (\Jcal \Ncal_i)_{,\mu} \p^{\gamma - e_\mu} \pp
\notag\\
&=: \mathcal{F} _{\pp,(1)}^{(<m)} + \mathcal{F} _{\pp,(2)}^{(<m)} + \mathcal{F} _{\pp,(3)}^{(<m)}
\label{eq:F:P:less:k}
\end{align}
for the $\p^\gamma \pp$ equation,   
and by
\begin{subequations}
\begin{align}
\mathcal{F} _{\hh}^{(m)} 
&=
- \left(  \gamma_\mu \p_\mu g_U \p_1 \p^{\gamma- e_\mu} \hh + \gamma_j \p_j h_U^\nu \p_\nu \p^{\gamma - e_j} \hh + \p^\gamma g_U \p_1 \hh + \p^\gamma h_U^\nu \p_\nu \hh \right)
\label{eq:F:E:k}
\\
\mathcal{F} _{\hh}^{(<m)} 
&=- \sum_{j=1}^{m-2} \sum_{\abs{\beta}=j, \beta \leq \gamma} {\gamma \choose \beta}
\left( \p^{\gamma -\beta} g_U \p^\beta \p_1 \hh + \p^{\gamma - \beta } h_U^\nu \p^\beta \p_\nu \hh \right)
\label{eq:F:E:less:k}
\end{align}
\end{subequations} 
for the $\p^\gamma \hh$ equation.

{\bf Proof of \eqref{eq:Hk:est:FU}.} 
We shall first prove \eqref{eq:Hk:est:FU}, and to do so, we estimate separately the terms  in the  sum \eqref{eq:F:U:sum}.  Let us treat the term which contains the highest-order derivatives, namely $\mathcal{F}_{\uu_i}^{(m)}$. This term is decomposed in three pieces cf.~\eqref{eq:F:U:k}, and we estimate each piece separately. 

Recall that $g_U$ and $h_U^\nu$ are defined in \eqref{eq:gA} and \eqref{eq:hA} and that 
\begin{align} 
U_i = U\cdot \Ncal \Ncal_i +  A_\nu \Tcal^\nu_i = \tfrac{1}{2} ( e^{-\frac{s}{2}} W + \kappa+ Z )\Ncal_i + A_\nu \Tcal^\nu_i \,.  \label{Udef00}
\end{align}
Also, note that $f$ and $V$ are quadratic functions of $\check y$, whereas $\Jcal \Ncal$ is an affine function of $\check y$; therefore $\p^\gamma$ annihilates these terms and we have\footnote{Note that \eqref{eq:Scottie} holds whenever $\abs{\gamma}\geq 4$. This is because $g_U = 2\beta_1\beta_\tau e^{\frac s2} ( \uu\cdot\Ncal \Jcal + V \cdot \Ncal \Jcal - \dot f)$, with $V \cdot \Ncal \Jcal$ being a cubic polynomial in $y$, and $\dot f$ a quadratic polynomial in $\check y$.}
\begin{align}
\tfrac{1}{\beta_1 \beta_\tau}\p^\gamma g_U &= 2 e^{\frac s2} \p^\gamma  (\Jcal \Ncal \cdot \uu) 
=  2 e^{\frac s2} \Jcal \Ncal \cdot \p^\gamma \uu + 2 \gamma_\mu  (\Jcal \Ncal_\rho)_{,\iota} \p^{\gamma - e_\iota} \uu_\rho
\label{eq:Scottie}\\
\tfrac{1}{\beta_1 \beta_\tau}\p^\gamma h_U^\mu &= 2 e^{-\frac s2} \p^\gamma \uu_\mu
\label{eq:Dennis} 
\end{align}
In view of these definitions,   using that $\lambda \leq 1$, that $\beta_\tau \beta_1 \leq 1$, and that $\p_1 \p^{\gamma- e_\mu} \uu_i$ produces a favorable imbalance of $\lambda^{\frac 12}$, for the first term in \eqref{eq:F:U:k}  we have that 
\begin{align}   
&2 \sum_{\abs{\gamma}=m}
 \lambda^\modckg \int_{ \mathbb{R}^3  }\abs{ F _{U_i,(1)}^{(m)}  \,  \p^\gamma \uu_i }
 \notag\\
&\qquad
\leq
2 E_m^2 \left( m \lambda^{\frac 12} \norm{\check\nabla  g_U}_{L^\infty} + m \norm{\nabla h_U}_{L^\infty}   + 2 e^{\frac s2} \abs{\Jcal} \norm{\p_1 \uu}_{L^\infty}  + 2 e^{-\frac s2} \norm{\check \nabla U}_{L^\infty} \right)
 \notag\\
&\qquad \qquad+ 4 m E_m \norm{(\Jcal \Ncal)_{,\iota}}_{L^\infty} \norm{\p_1 U}_{L^\infty} \norm{\uu}_{\dot H^{m-1}} \,.
\label{FU0.5}
\end{align}
Estimate \eqref{FU0.5} is the perfect example of the usage of the parameter $\lambda$ appearing in the definition of the energy $E_m$: it yields a factor of $\lambda^{\frac 12}$ next to the term $m \norm{\check\nabla  g_U}_{L^\infty} \approx m$ in the first term of \eqref{FU0.5}. Without this factor, the resulting coefficient of $E_m^2$ appearing on the right side of \eqref{eq:Hk:est:FU} would be larger than $2m$, which would not allow us to close the energy estimate. But by choosing $\lambda = \frac{\delta^2}{12m^2}$, we have that $2m \lambda^{\frac 12} < \delta$. Using the definitions of $g_U$, $h_U^\nu$, and $U$, the bounds  \eqref{eq:beta:tau}, \eqref{e:space_time_conv}, \eqref{eq:W_decay},  \eqref{Jp1W}, \eqref{eq:Z_bootstrap},  \eqref{eq:A_bootstrap},  \eqref{e:bounds_on_garbage}, \eqref{eq:Celtics:suck}, the norm equivalence \eqref{norm_compare}, and the interpolation inequality \eqref{eq:special3} applied to $\nabla U$, we estimate 
\begin{align*}
\norm{\check \nabla g_U}_{L^\infty} &\leq \norm{\check \nabla (\Jcal W)}_{L^\infty} + \norm{\check \nabla G_U}_{L^\infty} \leq 1 + \eps^{\frac 14} \\
\norm{\nabla h_U}_{L^\infty} &\leq \eps^{\frac 14} \\
\norm{\Jcal \p_1 U}_{L^\infty} &\leq \tfrac 12 e^{-\frac s2} \norm{\Jcal \p_1 W}_{L^\infty} + \norm{\p_1 Z}_{L^\infty} + 2 \norm{\p_1 A}_{L^\infty}  \leq \tfrac{1}{2} (1+ \eps^{\frac 14}) e^{-\frac s2}\\
 \norm{\check \nabla U}_{L^\infty} &\leq  \eps^{\frac 14}\\
 \norm{(\Jcal \Ncal)_{,\iota}}_{L^\infty} &\leq \eps^{\frac 14}\\
 \norm{\p_1 U}_{L^\infty} \norm{U}_{\dot{H}^{m-1}} &\leq C_m \norm{\p_1 U}_{L^\infty}^{\frac{2m-3}{2m-5}} \norm{U}_{\dot{H}^m}^{\frac{2m-7}{2m-5}} \leq \norm{U}_{\dot{H}^m} + C_m' \norm{\p_1 U}_{L^\infty}^{\frac{2m-3}{2}} \leq 2 \lambda^{-\frac m2} E_m + e^{-s}
\end{align*}
for an arbitrary $\delta \in (0,1)$, 
upon choosing $\eps$ to be sufficiently small to absorb the stray powers of $M$ and all implicit, $\delta$-dependent and $m$-dependent constants. Combining the above estimates with \eqref{FU0.5}, we obtain
\begin{align}   
&2 \sum_{\abs{\gamma}=m}
 \lambda^\modckg \int_{ \mathbb{R}^3  }\abs{ F _{U_i,(1)}^{(m)}  \,  \p^\gamma \uu_i }
 \notag\\
&\qquad
\leq
2 E_m^2 \left( \tfrac{\delta}{4} (1 + \eps^{\frac 14}) + m \eps^{\frac 14}  + 1 + \eps^{\frac 14} +2  \eps^{\frac 34}  \right) + 4 m E_m \eps^{\frac 14} \left( 2 \lambda^{-\frac m2} E_m + e^{-s} \right)
\notag\\
&\qquad 
\leq (2 + \delta ) E_m^2 + e^{-2s} \,.
\label{FU1}
\end{align}
 Quite similarly, using that $\lambda \leq 1$, that $\beta_\tau \beta_3 \leq 1$, and that $\p_1 \p^{\gamma- e_\mu} \pp$ produces a favorable imbalance of $\lambda^{\frac 12}$, for the second term in \eqref{eq:F:U:k}, we have
\begin{align}   
&2 \sum_{\abs{\gamma}=m}
 \lambda^\modckg \int_{ \mathbb{R}^3  }\abs{ F _{U_i,(2)}^{(m)}  \p^\gamma \uu_i  }
 \notag\\
&\  \leq 4 E_m^2  \left( m\lambda^{\frac 12} e^{\frac s2} \norm{\hh^{-1} \check \nabla(\hh^2 \pp \Jcal \Ncal)}_{L^\infty} + 2m e^{\frac s2} \norm{\pp \Jcal \Ncal \p_1 \hh}_{L^\infty} + m e^{-\frac s2} \norm{\hh^{-1} \nabla(\hh^2 \pp)}_{L^\infty} \right)
\, .
\label{FU1.5}
\end{align} 
Using the estimates \eqref{e:bounds_on_garbage}, \eqref{eq:useful:crap}, \eqref{eq:shalala:5}, and \eqref{eq:shalala:7}
we obtain that
\begin{align*}
\norm{\hh^{-1} \check \nabla(\hh^2 \pp \Jcal \Ncal)}_{L^\infty}
&\leq \left(\tfrac 12 + \eps^{\frac 16} \right) e^{-\frac s2}  \\
\norm{\pp\Jcal \Ncal \p_1 \hh}_{L^\infty} 
&\leq \eps^{\frac 14} e^{-\frac s2} \\
\norm{\hh^{-1} \nabla(\hh^2 \pp)}_{L^\infty}
&\leq \left(\tfrac 12 + \eps^{\frac 16} \right) e^{-\frac s2} \,. 
\end{align*}
Using the above estimates, and recalling our choice of $\lambda = \frac{\delta^2}{16 m^2}$, the bound \eqref{FU1.5} becomes
\begin{align}   
2 \sum_{\abs{\gamma}=m}
 \lambda^\modckg \int_{ \mathbb{R}^3  }\abs{ F _{U_i,(2)}^{(m)}  \p^\gamma \uu_i  }
 \leq 4 E_m^2 \left( \tfrac{\delta}{4} (\tfrac 12 + \eps^{\frac 16}) + 2m \eps^{\frac 14} + m \eps (\tfrac 12 + \eps^{\frac 16})  \right)
 \leq \delta E_m^2
\, ,
\label{FU2}
\end{align} 
upon taking $\eps$ to be sufficiently small. Lastly, for the third   term in \eqref{eq:F:U:k}, we similarly have
\begin{align}   
&2 \sum_{\abs{\gamma}=m}
 \lambda^\modckg \int_{ \mathbb{R}^3  }\abs{ F _{U_i,(3)}^{(m)}  \p^\gamma \uu_i  }
 \notag\\
&\  \leq 4 E_m  e^{-\frac s2} \norm{\check \nabla \pp}_{L^\infty} \norm{\hh^2 \pp}_{\dot{H}^m} + 4 e^{\frac s2}  \norm{\pp}_{L^\infty} \norm{\p_1 \pp}_{L^\infty} \sum_{\abs{\gamma}=m}
 \lambda^\modckg \int_{ \mathbb{R}^3  } \abs{ \p^\gamma (\hh^2 \Jcal \Ncal) \cdot \p^\gamma \uu}  
\, .
\label{FU2.5}
\end{align} 
For the second term in \eqref{FU2.5} we recall that $\Jcal \Ncal$ is an affine function, and thus $D^2 (\Jcal \Ncal) = 0$. From the Leibniz rule,  the Moser inequality \eqref{eq:Moser:inequality},   the estimates \eqref{eq:speed:bound},  \eqref{e:bounds_on_garbage}, \eqref{eq:useful:crap}, \eqref{eq:shalala:5},   the interpolation bound \eqref{eq:special3}, and the norm comparison~\eqref{norm_compare}, we moreover have that
\begin{align}
\norm{\p^\gamma (\hh^2 \Jcal \Ncal) - 2 \hh \Jcal \Ncal \p^\gamma \hh}_{L^2} 
&\leq e^{-\frac s2} \norm{(\Jcal \Ncal)_{,\mu}}_{L^\infty} \gamma_\mu \norm{\p^{\gamma - e_\mu}(\hh^2)}_{L^2} + C_m \norm{\Jcal \Ncal}_{L^\infty} \sum_{j=1}^{m-1} \norm{D^{j} \hh \, D^{m-j} \hh}_{L^2}
\notag\\
&\leq C_m \eps^{\frac 12} e^{-\frac s2} \norm{\hh}_{L^\infty} \norm{\hh}_{\dot{H}^{m-1}} + C_m  \norm{\nabla \hh}_{L^\infty} \norm{\hh}_{\dot{H}^{m-1}}
\notag\\
&\leq C_m \eps^{\frac 12} e^{-\frac s2} \norm{\hh}_{L^\infty} \norm{\nabla \hh}_{L^\infty}^{\frac{2}{2m-5}} \norm{\hh}_{\dot{H}^{m}}^{\frac{2m-7}{2m-5}} + C_m  \norm{\nabla \hh}_{L^\infty}^{\frac{2m-3}{2m-5}} \norm{\hh}_{\dot{H}^{m}}^{\frac{2m-7}{2m-5}}
\notag\\
&\leq  C_m  (\eps^{\frac 13} e^{-\frac s2})^{\frac{2m-3}{2m-5}} (\lambda^{-\frac m2} E_m)^{\frac{2m-7}{2m-5}}
\notag\\
&\leq \eps^{\frac 13} e^{-\frac s2} E_m + \eps^{\frac 12} e^{-s}
\label{eq:quantitative:diarrhea:1} 
\end{align}
by taking $\eps$ to be sufficiently small in terms of $m$ and $\lambda$. From \eqref{e:bounds_on_garbage},  \eqref{eq:useful:crap}, \eqref{eq:quantitative:diarrhea:1}, the definition of the $E_m$ norm in \eqref{eq:Ek:def}, and the Cauchy–Bunyakovsky inequality we deduce that 
\begin{align}
\sum_{\abs{\gamma}=m}
 \lambda^\modckg \int_{ \mathbb{R}^3  } \abs{ \p^\gamma (\hh^2 \Jcal \Ncal) \cdot \p^\gamma \uu}  
 &\leq 2 \eps^{\frac 13} e^{-\frac s2} E_m^2 + \eps e^{-s} + 3 \kappa_0^{-1} E_m^2\, .
 \label{eq:quantitative:diarrhea:2} 
\end{align}
The above estimate is combined with the bound 
\begin{align*}
\norm{\pp}_{L^\infty} \norm{\p_1 \pp}_{L^\infty}
\leq \left( \tfrac{\kappa_0}{4} + \eps^{\frac 18}\right) e^{-\frac s2}
\,,
\end{align*}
which follows from \eqref{eq:shalala:5} and \eqref{eq:shalala:7}, and with the estimate
\begin{align*}
\norm{\check \nabla \pp}_{L^\infty} \norm{\hh^2 \pp}_{\dot{H}^m}
\leq C_m e^{-\frac s2} \left( \norm{\pp}_{\dot{H}^m} + \kappa_0 \norm{\hh}_{\dot{H}^m}\right)
\leq C_m \kappa_0 e^{-\frac s2} \lambda^{-\frac m2} E_m
\,,
\end{align*}
which follows from the fact that $\kappa_0\geq 1$, the Moser inequality, \eqref{norm_compare}, \eqref{eq:useful:crap}, \eqref{eq:shalala:5} and \eqref{eq:shalala:7}, to imply that the right side of \eqref{FU2.5} is further estimated as
\begin{align}   
&2 \sum_{\abs{\gamma}=m}
 \lambda^\modckg \int_{ \mathbb{R}^3  }\abs{ F _{U_i,(3)}^{(m)}  \p^\gamma \uu_i  }
 \notag\\
&\leq C_m \kappa_0 \lambda^{-\frac m2} E_m^2  e^{-s}  + 4  \left( \tfrac{\kappa_0}{4} + \eps^{\frac 18}\right)  \left( 2 \eps^{\frac 13} e^{-\frac s2} E_m^2 + \eps  e^{-s} + 3 \kappa_0^{-1} E_m^2\right)
\notag\\
&\leq (3 + \delta) E_m^2 + \eps^{\frac 12} e^{-s}
\, ,
\label{FU3}
\end{align} 
after taking $\eps$ to be sufficiently small, in terms of $\delta, \kappa_0$, and $m$.

The bounds \eqref{FU1}, \eqref{FU2}, and \eqref{FU3} provide the needed estimate for the contribution of the $\mathcal{F}_{\uu_i}^{(m)}$ term in \eqref{eq:F:U:sum} to \eqref{eq:Hk:est:FU}. It remains to bound the contribution from the lower order term $\mathcal{F}_{\uu_i}^{(<m)}$, which we recall is decomposed in three pieces, according to \eqref{eq:F:U:less:k}. Next, we estimate these three contributions.

The difficulty in addressing the $\mathcal{F}_{\uu_i,(1)}^{(<m)}$ term defined in \eqref{eq:F:U:less:k} arises due to the fact that  the bootstrap assumption  for $A$ in \eqref{eq:A_bootstrap} does not include bounds on the full Hessian $\nabla^2 A$. Therefore, we need to split off the $A_\nu$ (i.e. $\uu \cdot \Tcal^\nu$) contributions from the $W$ and $Z$ contributions (i.e. $\uu \cdot \Ncal$) to this term.  Using \eqref{Udef00}   we write the first term in \eqref{eq:F:U:less:k} as 
\begin{align} 
\mathcal{F}_{U_i,(1)}^{(<m)}
&=  \mathcal{I}_1 + \mathcal{I}_2+ \mathcal{I} _3\,, 
\label{eq:basic:shitstorm}
\end{align}
where 
\begin{align*}
\mathcal{I} _1 
&=
 -   \sum_{j=1}^{m-2} \sum_{\abs{\beta}=j, \beta \leq \gamma} {\gamma \choose \beta} 
\p^{\gamma -\beta} g_U \p^\beta \p_1(U \cdot \Ncal \, \Ncal_i )
 \,, \ \ \ \\
 \mathcal{I} _2 
&= - \sum_{j=1}^{m-2} \sum_{\abs{\beta}=j, \beta \leq \gamma} {\gamma \choose \beta} 
\p^{\gamma -\beta} g_U \p^\beta (\p_1 A_\nu \Tcal^\nu_i )
\,, \\
 \mathcal{I} _3
 &=
 -  \sum_{j=1}^{m-2} \sum_{\abs{\beta}=j, \beta \leq \gamma} {\gamma \choose \beta} 
 \p^{ \alpha - \beta } h_U^\nu \p^\beta \p_\nu U_i
 \,.
\end{align*} 
We estimate the contributions of the three terms in \eqref{eq:basic:shitstorm} individually. 
 
First, for the ${\mathcal I}_1$ term in \eqref{eq:basic:shitstorm}, by  Lemma \ref{lem:tailored:interpolation}  with $q=\frac{6(2m-3)}{2m-1}$, we have that
\begin{align} 
2 \sum_{\abs{\gamma}=m}
 \lambda^\modckg \int_{ \mathbb{R}^3  }\abs{ \mathcal{I} _1 \,  \gui  } 
 &
  \les \snorm{D^m g_U }_{L^2}^\acal        \snorm{D^m U}_{L^2}^\bcal 
   \snorm{ D^2 g_U}_{L^q}^{1- \acal } \snorm{ D^2 ( U\cdot \Ncal \Ncal)}_{L^q}^{1- \bcal }  \snorm{D^m U}_{L^2} \,,
   \label{decomp1}
 \end{align} 
where $ \acal $ and $ \bcal $  obey $\acal + \bcal = 1 - \frac{1}{2m-4}$. Note by \eqref{eq:gA} that $g_U$ does not include any $A$ term. Thus, using the bootstrap bounds \eqref{mod-boot}--\eqref{eq:Z_bootstrap}, or alternatively by appealing directly to \eqref{eq:W_decay}, \eqref{e:bounds_on_garbage} and the last bound in \eqref{e:G_ZA_estimates}, and the definition of $\XXX(s)$ in \eqref{eq:support} we 
deduce that 
\begin{align}
  \snorm{ D^2 g_U}_{L^q(\XXX(s))} \les M \snorm{\eta^{-\frac 16}}_{L^q(\XXX(s))}  + M^2 e^{-\frac s2} \abs{\XXX(s)}^{\frac 1q} \les M 
  \label{eq:D2:gU}
\end{align}
since $q \in [\frac{11}{2},6)$ for $m\geq 18$.  Similarly, from the first four bounds in \eqref{eq:US_est} (bounds which do not rely on any $A$ estimates) and from \eqref{e:bounds_on_garbage} (which only uses \eqref{eq:speed:bound} and  \eqref{e:space_time_conv}), we deduce that 
\begin{align}
  \snorm{ D^2((U \cdot \Ncal) \Ncal)}_{L^q(\XXX(s))} \les M e^{-\frac s2} \snorm{\eta^{-\frac 16}}_{L^q(\XXX(s))}  + M e^{-s} \abs{\XXX(s)}^{\frac 1q} \les M  e^{-\frac s2}  \, .
  \label{eq:D2:UdotN}
\end{align}
Moreover, from \eqref{eq:Scottie}, the bounds listed above \eqref{FU1}, the Poincar\'e inequality in the $\check y$ direction, and the fact that the diameter of $\XXX(s)$ in the $e_\mu$ directions is  $4 \eps^{\frac 16} e^{\frac{s}{2}}$ we have that 
\begin{align}
  \snorm{ D^m g_U}_{L^2} \les e^{\frac s2} \norm{U}_{\dot{H}^m} + \eps^{\frac 14} \norm{U}_{\dot{H}^{m-1}} \les e^{\frac s2} \norm{U}_{\dot{H}^m}\, .
  \label{eq:Dk:gU}
\end{align}
By combining \eqref{eq:D2:gU}--\eqref{eq:Dk:gU} we  obtain that the right side of \eqref{decomp1} is bounded from above as
\begin{align*} 
 & \snorm{D^m g_U }_{L^2}^\acal  \snorm{D^m U}_{L^2}^\bcal
   \snorm{ D^2 g_U}_{L^q}^{1- \acal } \snorm{ D^2 (U \cdot \Ncal \Ncal)}_{L^q}^{1- \bcal }  \snorm{D^m U}_{L^2}\\
&\qquad
  \les (e^{\frac{  s}{2}} \snorm{U}_{\dot{H}^m})^\acal        \snorm{U}_{\dot{H}^m}^{ \bcal }
   M^{1- \acal } (M e^{-\frac s2})^{1- \bcal }    \snorm{U}_{\dot{H}^m} \\
 &\qquad
  \les M^{2-\acal - \bcal} e^{\frac{(\acal+\bcal-1) s}{2}} \snorm{U}_{\dot{H}^m}^{1+\acal +\bcal} \,.
\end{align*} 
Recalling from Lemma \ref{lem:tailored:interpolation}  that $1- \acal - \bcal =  \frac{1}{2m-4} \in (0,1)$, the and using the norm equivalence  \eqref{norm_compare}, by 
Young's inequality with a small parameter $ \delta >0$, we have that the left side of \eqref{decomp1} is bounded as
\begin{align} 
2 \sum_{\abs{\gamma}=m}
 \lambda^\modckg \int_{ \mathbb{R}^3  }\abs{ \mathcal{I} _1 \,  \gui  } 
&\le C_m M^{2-\acal - \bcal}
 e^{\frac{(\acal+\bcal-1) s}{2}} \lambda ^\frac{-m(1+\acal +\bcal )}{2}    E_m^{1+\acal +\bcal }     \notag\\
&\le  \delta  E_m^2  + e^{-s} M^{4m-3} \,.
 \label{decomp2}
\end{align} 
In the last inequality we have used that by definition $\lambda = \lambda(m,\delta)$, $\delta \in (0,\frac{1}{32}]$ is a fixed universal constant, and $C_m$ is a constant that only depends on $m$; thus, we may use a power of $M$ (which is taken to be sufficiently large) to absorb all the $m$ and $\delta$ dependent constants.

Next, we estimate the   $\mathcal{I} _2$ term in \eqref{eq:basic:shitstorm}.  First, we note that by \eqref{eq:special1} we have
\begin{align*} 
\norm{ \mathcal{I} _2}_{L^2} 
&\les \sum_{j=1}^{m-2} \norm{D^{m-1-j} Dg_U}_{L^ {\frac{2(m-1)}{m-1-j}} }  \norm{ D^j (\p_1 A_\nu \Tcal^\nu )}_{L^ {\frac{2(m-1)}{j}} } \\
& \les \sum_{j=1}^{m-2} \norm{g_U}_{\dot{H}^{m} }^{\frac{m-1-j}{m-1}}  \norm{D g_U}_{ L^ \infty }^{\frac{j}{m-1}}
\norm{ \p_1 A_\nu \Tcal^\nu }_{ \dot{H}^{m-1} }^{\frac{j}{m-1}} \norm{ \p_1 A_\nu \Tcal^\nu  }_{L^\infty}^{ \frac{m-1-j}{m-1}}  \, .
\end{align*} 
Then, by appealing to \eqref{eq:gA}, \eqref{eq:W_decay}, \eqref{eq:A_bootstrap}, \eqref{e:bounds_on_garbage}, \eqref{e:G_ZA_estimates}, \eqref{norm_compare}, \eqref{eq:Dk:gU}, and \eqref{eq:Moser:inequality}, we deduce  
\begin{align*} 
\norm{ \mathcal{I} _2}_{L^2} 
& \les \sum_{j=1}^{m-2} \left(e^{\frac s2} \norm{U}_{\dot{H}^{m} }\right)^{\frac{m-1-j}{m-1}} \left(\norm{A}_{\dot{H}^m} + M \eps e^{-\frac{m+2}{2}s} \right)^{\frac{j}{m-1}} \left(M e^{-\frac{3s}{2}}\right)^{ \frac{m-1-j}{m-1}} \notag\\
& \les \sum_{j=1}^{m-2} \left( \lambda^{-\frac k2} E_m \right)^{\frac{m-1-j}{m-1}} \left( \lambda^{-\frac k2} E_m + M \eps e^{-\frac{m+2}{2}s} \right)^{\frac{j}{m-1}} \left(M e^{-s}\right)^{ \frac{m-1-j}{m-1}} \notag\\
&\les (M \eps)^{\frac{1}{m-1}} \lambda^{-\frac m2} E_m + M e^{-s}
\end{align*} 
since $\norm{D g_U}_{L^\infty} \les 1$. By taking $\eps$ sufficiently small, in terms of $M$, $\lambda = \lambda(m,\delta)$, $\delta$, and $m$, we obtain from the above estimate that 
\begin{align} 
2 \sum_{\abs{\gamma}=m}
 \lambda^\modckg \int_{ \mathbb{R}^3  }\abs{ \mathcal{I} _2 \,  \gui  } 
 &
  \le \delta E_m^2 +  e^{-s} 
   \label{decomp2.5}
 \end{align} 
 for all $s\geq -\log \eps$.
 
At last, we estimate the ${\mathcal I}_3$ term in \eqref{eq:basic:shitstorm}, which is estimated similarly to the ${\mathcal I}_2$ term as
\begin{align*}
\norm{{\mathcal I}_3}_{L^2}
&\les \sum_{j=1}^{m-2} \norm{h_U}_{\dot{H}^{m} }^{\frac{m-1-j}{m-1}}  \norm{D h_U}_{ L^ \infty }^{\frac{j}{m-1}}
\norm{ \p_\nu U_i}_{ \dot{H}^{m-1} }^{\frac{j}{m-1}} \norm{ \p_\nu U_i}_{L^\infty}^{ \frac{m-1-j}{m-1}}  \, .
\end{align*}
From \eqref{eq:Dennis}, the bounds \eqref{eq:W_decay},  \eqref{eq:Z_bootstrap}, \eqref{eq:A_bootstrap}, \eqref{e:bounds_on_garbage}, and the Moser inequality \eqref{eq:Moser:inequality}, we have  
\begin{align*}
\norm{ h_U}_{\dot H^m} 
\les e^{-\frac{s}{2}} \norm{\Ncal U\cdot \Ncal}_{\dot H^m} + \kappa e^{-\frac s2} \norm{A_\gamma \Tcal^\gamma}_{\dot{H}^m}  
\les M e^{-\frac{s}{2}} \norm{ U}_{\dot H^m} + M \eps e^{-\frac{m+1}{2} s} \,.
\end{align*}
On the other hand, by \eqref{e:h_estimates} we have $\norm{D h_U}_{L^\infty} \les e^{-s}$, while from  \eqref{eq:W_decay}, \eqref{eq:Z_bootstrap}, \eqref{eq:A_bootstrap},  and \eqref{Udef00} we obtain $\norm{\check\nabla  U}_{L^\infty} \les e^{-\frac s2}$. Combining the above three estimates, we deduce that 
\begin{align*}
\norm{{\mathcal I}_3}_{L^2}
&\les \sum_{j=1}^{m-2} \left( M e^{-\frac s2} \norm{U}_{\dot H^m} + e^{-2s} \right)^{\frac{m-1-j}{m-1}}  e^{-\frac{j}{m-1}s}
\norm{ U}_{ \dot{H}^{m} }^{\frac{j}{m-1}} e^{- \frac{m-1-j}{2(m-1)}s} 
\les Me^{-s} \norm{ U}_{ \dot{H}^{m} } + e^{-s}
\end{align*}
from which we deduce
\begin{align} 
2 \sum_{\abs{\gamma}=m}
 \lambda^\modckg \int_{ \mathbb{R}^3  }\abs{ \mathcal{I} _3 \,  \gui  } 
 &
  \le \eps^{\frac 12} E_m^2 +  e^{-s} 
   \label{decomp3}
 \end{align} 
upon taking $M$ to be sufficiently large in terms of $m$, and $\eps$ sufficiently large in terms of $M$.
Combining \eqref{decomp2}, \eqref{decomp2.5}, and \eqref{decomp3}, we have thus shown that
\begin{align} 
2 \sum_{\abs{\gamma}=m}
 \lambda^\modckg \int_{ \mathbb{R}^3  }\abs{ F_{U_i,(1)}^{(<m)}  \,  \gui  } 
 \le (2 \delta  + \eps^ {\frac{1}{2}} ) E_m^2 + M^{4m-2} e^{-s}  \,.
\label{FU5}
\end{align} 

We next turn to the second term in \eqref{eq:F:U:less:k}, which involves only derivatives of $\pp$, $\hh$, and $\Jcal \Ncal$. For the first term (the one with an $e^{\frac s2}$ prefactor) we apply the same bound as in \eqref{decomp1}, while for the second term we use \eqref{eq:special1}, to obtain
\begin{align}
&2 \sum_{\abs{\gamma} = m} \lambda^{\abs{\check \gamma}} \int_{\RR^3} \abs{\mathcal{F} _{ U_i,(2)}^{(<m)} \p^\gamma \uu_i} 
\notag\\
&\qquad \les (e^{\frac s2}  \snorm{D^m (\hh^2 \pp \Jcal \Ncal)}_{L^2})^\acal        \snorm{D^m \pp}_{L^2}^\bcal 
   (e^{\frac s2} \snorm{ D^2 (\hh^2 \pp \Jcal \Ncal)}_{L^q})^{1- \acal } \snorm{ D^2 \pp}_{L^q}^{1- \bcal }  \snorm{D^m U}_{L^2} 
   \notag\\
 &\qquad \qquad + e^{-\frac s2} \sum_{j=1}^{m-2} \norm{\hh^2 \pp}_{\dot{H}^{m} }^{\frac{m-1-j}{m-1}}  \norm{D (\hh^2 \pp)}_{ L^ \infty }^{\frac{j}{m-1}}
\norm{\pp }_{ \dot{H}^{m} }^{\frac{j}{m-1}} \norm{ \check \nabla \pp }_{L^\infty}^{ \frac{m-1-j}{m-1}}    \snorm{D^m U}_{L^2} 
\notag\\
&=: \mathcal{T}_1 + \mathcal{T}_2 \,,
\label{FU6}
\end{align}
with $q = \frac{6(2m-3)}{2m-1}$, and $\acal + \bcal = 1 - \frac{1}{2m-4}$. 
Recalling that $\pp = \sound \hh^{-1} = (\uu \cdot \Ncal - Z) \hh^{-1}$, the definition of $\hh$, our bootstrap assumptions on $Z$ and $K$, exactly as in \eqref{eq:D2:UdotN} we have the estimate 
\begin{align*}
\norm{D^2 \pp}_{L^q(\XXX(s))} 
&\les \norm{D^2(\uu \cdot \Ncal)}_{L^q(\XXX(s))} + \left( \norm{D^2 Z}_{L^\infty} + \norm{D\hh}_{L^\infty} \norm{D(\uu\cdot \Ncal - Z)}_{L^\infty}   \right) \abs{\XXX(s)}^{\frac 1q} 
\notag\\
&\qquad + \norm{\uu\cdot \Ncal - Z}_{L^\infty} \left(\norm{D^2 \hh}_{L^\infty} + \norm{D\hh}_{L^\infty}^2  \right) \abs{\XXX(s)}^{\frac 1q}
\notag\\
&\les M e^{-\frac s2} \,.
\end{align*}
Thus, the Hessian of $\pp$ obeys the same estimate as the Hesssian of $(\uu \cdot \Ncal) \Ncal$ in \eqref{eq:D2:UdotN}. Similarly, by using \eqref{e:bounds_on_garbage}, \eqref{eq:useful:crap}, \eqref{eq:shalala:5}, and \eqref{eq:shalala:7}, as in \eqref{eq:D2:gU} and \eqref{eq:D2:UdotN} we have
\begin{align*}
e^{\frac s2} \norm{D^2 (\hh^2 \pp \Jcal \Ncal)}_{L^q(\XXX(s))} 
 &\les e^{\frac s2} \norm{D^2 (\hh^2 \pp)}_{L^q(\XXX(s))} +  \norm{D (\hh^2 \pp)}_{L^\infty}  \abs{\XXX(s)}^{\frac 1q} \les M \,.
\end{align*}
The above estimate is exactly the same as the Hessian of $g_U$ bound in \eqref{eq:D2:gU}. Clearly we have that $\norm{\pp}_{\dot{H}^m} \les \lambda^{-\frac m2} E_m$, and additionally, from the Moser inequality \eqref{e:bounds_on_garbage}, \eqref{eq:useful:crap}, \eqref{eq:shalala:5}, and \eqref{eq:shalala:7}
we have that 
\begin{align*}
 e^{\frac s2} \norm{ \hh^2 \pp \Jcal \Ncal }_{\dot{H}^m} 
 &\les e^{\frac s2} \left( \kappa_0 \norm{\hh}_{\dot{H}^m} + \norm{\pp}_{\dot{H}^m} \right) \les e^{\frac s2} \lambda^{-\frac m2} E_m
\end{align*}
which is the same as the bound on on the $\dot{H}^m$ norm of $g_U$ obtained in \eqref{eq:Dk:gU}.
In view of these analogies, proceeding in exactly the same way as in \eqref{decomp2}, we obtain that the first term in \eqref{FU6} is estimated as
\begin{align}
\mathcal{T}_1 \leq \delta E_m^2 + e^{-s} M^{4m-3} \,.
\label{eq:quantitative:farts:1} 
\end{align}
For the second term in \eqref{decomp2} we recall that by the Moser inequality, \eqref{eq:useful:crap}, and \eqref{eq:shalala:7} we have $\norm{\hh^2 \pp}_{\dot{H}^m} \les \norm{\pp}_{\dot{H}^m} + \kappa_0 \norm{\hh}_{\dot{H}^m} \les \lambda^{-\frac m2} E_m$, and by also appealing to \eqref{eq:shalala:5} we obtain
\begin{align}
\mathcal{T}_2 
&\les \lambda^{-m} e^{-\frac s2} E_m^2 \sum_{j=1}^{m-2}  \norm{D (\hh^2 \pp)}_{ L^ \infty }^{\frac{j}{m-1}}
  \norm{ \check \nabla \pp }_{L^\infty}^{ \frac{m-1-j}{m-1}}  
\les \lambda^{-m} e^{-s} E_m^2 
\leq \delta E_m^2
\label{eq:quantitative:farts:2} 
\end{align}
after taking $\eps$ to be sufficiently small to absorb the $m$, $\lambda$, and $\delta$-dependent constants. 
By combining \eqref{FU6}, \eqref{eq:quantitative:farts:1}, and \eqref{eq:quantitative:farts:2}, we obtain that
\begin{align}
2 \sum_{\abs{\gamma} = m} \lambda^{\abs{\check \gamma}} \int_{\RR^3} \abs{\mathcal{F} _{ U_i,(2)}^{(<m)} \p^\gamma \uu_i} 
\leq 2 \delta E_m^2 + e^{-s} M^{4m-3}\,.
\label{FU7}
\end{align} 

At last, we consider the third term in \eqref{eq:F:U:less:k}. 
Recall that cf.~\eqref{eq:shalala:5} that $e^{\frac s2} \norm{\p_1 \pp}_{L^\infty} \leq 1$, and that since $\Jcal \Ncal$ is linear in $\check y$, by Poincar\'e inequality in the $\check y$ direction, and the fact that the diameter of $\XXX(s)$ in the $\check e$ directions is  $4 \eps^{\frac 16} e^{\frac{s}{2}}$, we obtain that $\norm{\hh^2 \Jcal\Ncal}_{\dot{H}^m} \les \norm{\hh}_{\dot{H}^m}$. Thus, by appealing to \eqref{e:bounds_on_garbage}, \eqref{norm_compare}, \eqref{eq:shalala:5},  \eqref{eq:special1} and the Poincar\'e inequality in the $\check y$ direction we arrive at 
\begin{align}
&2 \sum_{\abs{\gamma} = m} \lambda^{\abs{\check \gamma}} \int_{\RR^3} \abs{\mathcal{F} _{ U_i,(3)}^{(<m)} \p^\gamma \uu_i} 
\notag\\
&\les E_m \sum_{j=1}^{m-1} \norm{D(\hh^2 \Jcal\Ncal)}_{L^\infty}^{ \frac{j}{m-1}} \norm{\hh^2 \Jcal \Ncal}_{\dot{H}^m}^{\frac{m-j-1}{m-1}} \norm{\pp}_{L^\infty}^{1 - \frac{j}{m-1}} \norm{\pp}_{\dot{H}^{m-1}}^{\frac{j}{m-1}} \notag\\
&\les E_m \sum_{j=1}^{m-1} \left(\eps e^{-\frac s2}\right)^{ \frac{j}{m-1}} \norm{\hh}_{\dot{H}^m}^{\frac{m-j-1}{m-1}} \kappa_0^{1 - \frac{j}{m-1}} \left(\eps^{\frac 16} e^{\frac{s}{2}} \norm{\pp}_{\dot{H}^{m}}\right)^{\frac{j}{m-1}} \notag\\
&\leq \delta E_m^2
\label{FU8}
\end{align} 
upon taking $\eps$ to be sufficiently small, in terms of $\lambda$, $m$, and $\kappa_0$. 

The bounds \eqref{FU5}, \eqref{FU7}, and \eqref{FU8} provide the needed estimate for the contribution of the $\mathcal{F}_{\uu_i}^{(<m)}$ term in \eqref{eq:F:U:sum} to \eqref{eq:Hk:est:FU}, thereby completing the proof of \eqref{eq:Hk:est:FU}.

{\bf Proof of \eqref{eq:Hk:est:FP}.} The proof is extremely similar to that of \eqref{eq:Hk:est:FU}. Comparing the forcing terms in \eqref{eq:F:P:k} with those in \eqref{eq:F:U:k}, and those in \eqref{eq:F:P:less:k}, with those in \eqref{eq:F:U:less:k}, we see that they only differ by exchanging $\uu$ with $\pp$ in several places; in fact, here we have fewer terms to bound. The contribution from $\mathcal{F}_{\pp,(1)}^{(m)}$ is estimated in precisely the same way as the one from $\mathcal{F}_{\uu_i,(1)}^{(m)}$ in \eqref{FU1}. Similarly, the contribution from $\mathcal{F}_{\pp,(2)}^{(m)}$ is estimated in precisely the same way as the one from $\mathcal{F}_{\uu_i,(2)}^{(m)}$ in \eqref{FU2}. Note that there is no third term in the definition of $\mathcal{F}_{\pp}^{(m)}$, and thus we do not need to add a $(3+\delta)$ to our error estimate, as we had to do for the $\uu$ forcing in view of \eqref{FU3}. Next, $\mathcal{F}_{\pp,(1)}^{(<m)}$, $\mathcal{F}_{\pp,(2)}^{(<m)}$, and $\mathcal{F}_{\pp,(3)}^{(<m)}$ are bounded in precisely the same way as $\mathcal{F}_{\uu_i,(1)}^{(<m)}$, $\mathcal{F}_{\uu_i,(2)}^{(<m)}$, and $\mathcal{F}_{\uu_i,(3)}^{(<m)}$ in \eqref{FU5}, \eqref{FU7} and respectively \eqref{FU8}. To avoid redundancy, we omit these details.

{\bf Proof of \eqref{eq:Hk:est:FE}.} Again, the proof is similar to that of \eqref{eq:Hk:est:FU}, except that in \eqref{eq:F:E:k} and \eqref{eq:F:E:less:k} we have much fewer terms. We need to be slightly careful here, as the $\p^\gamma \hh$ evolution is tested with $\kappa_0^2 \p^\gamma \hh$, rather than just $\p^\gamma \hh$, and we need to ensure that our damping bounds are independent of $\kappa_0$! The reason this is achieved is as follows. For the terms which contain a $D^m \hh$, such as the first two terms in \eqref{eq:F:E:k}, there is no issue as each of the two powers of $\kappa_0$ are paired with an $\norm{\hh}_{\dot{H}^m}$. An issue may arise in terms which contain $D^m \uu$, such as the last two terms in \eqref{eq:F:E:k}. The important thing to notice here is that each such term is paired with $\norm{\nabla \hh}_{L^\infty}$. As opposed to $\nabla \pp$, which satisfies $\norm{\nabla \pp}_{L^\infty} \approx \frac 12 e^{-\frac s2}$, by  \eqref{eq:shalala:5} we have that $\norm{\nabla \hh}_{L^\infty} \leq \eps^{\frac 13} e^{-\frac s2}$. This additional factor of $\eps^{\frac 13}$ is able to absorb all the stray powers of $\kappa_0$. A similar argument applies to the terms in \eqref{eq:F:E:less:k}, showing that the resulting bounds are independent of $\kappa_0$.  The contribution from $\mathcal{F}_{\hh}^{(m)}$ is estimated in precisely the same way as the one from $\mathcal{F}_{\uu_i,(1)}^{(m)}$ in \eqref{FU1}, while the contribution of $\mathcal{F}_{\hh}^{(<m)}$, is  bounded in precisely the same way as $\mathcal{F}_{\uu_i,(1)}^{(<m)}$ in \eqref{FU5}. To avoid redundancy, we omit further details.
\end{proof} 

\subsection{The $E_m$ energy estimate}
\label{sec:Ek:energy:estimate}
We now turn to the main energy estimate for the differentiated system \eqref{UPS-L2}. 
\begin{proposition}[$\dot H^m$  estimate for $\uu$, $\pp$, and $\hh$]\label{prop:L2}
For any integer $m$ satisfying
\begin{align}
 m \geq 18 \,,
 \label{eq:k-cond}
\end{align}
with $\delta$ and $\lambda= \lambda(m,\delta)$ as specified in Lemma~\ref{lem:forcing:1}, we have the estimate
\begin{align}
&E_m^2(s)
  \leq  e^{-2(s-s_0)} E_m^2(s_0) + 3 e^{-s} M^{4m-1} \left(1 - e^{-(s-s_0)}\right) 
 \label{eq:L2}
\end{align}
for all $s\geq  s_0 \geq -\log \eps$.
\end{proposition}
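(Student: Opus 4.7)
The plan is to perform an $L^2$ energy estimate on the differentiated system \eqref{UPS-L2} by testing \eqref{UPS-L2-U} with $\p^\gamma \uu_i$, \eqref{UPS-L2-P} with $\hh^2 \p^\gamma \pp$, and \eqref{UPS-L2-K} with $\kappa_0^2 \p^\gamma \hh$, then summing over all multi-indices $\gamma$ with $\abs{\gamma} = m$ against the anisotropic weight $\lambda^{\abs{\check\gamma}}$ built into $E_m^2$. The $\hh^2$-weighting of the $\p^\gamma \pp$ equation is the essential symmetrization that makes the top-order $\uu$-$\pp$ coupling cancel, while the $\kappa_0^2$-weighting on the $\p^\gamma \hh$ equation provides the balance used in Lemma~\ref{lem:norm:equivalence}.

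Four distinct contributions then have to be accounted for. First, the damping $\mathcal{D}_\gamma = \gamma_1(1 + \p_1 g_U) + \tfrac{|\gamma|}{2}$ from \eqref{Dgamma} satisfies $\mathcal{D}_\gamma \geq \tfrac{m}{2}$: indeed, $\p_1 g_U = \beta_1 \beta_\tau \Jcal \p_1 W + \p_1 G_U$, and combining $\beta_\tau \Jcal \p_1 W \geq -1$ from Lemma~\ref{lem:Jp1W}, the smallness of $\p_1 G_U$, and the bound $\beta_1 < 1$, yields $1 + \p_1 g_U \geq 0$. Second, the transport terms produce, after integration by parts (and, for the $\pp$ equation, after combining the $\p_s \hh^2$ contribution obtained from \eqref{UPS-L2-K} with $(\mathcal{V}_U \cdot \nabla) \p^\gamma \pp$ into the clean expression $\int \mathcal{V}_U \cdot \nabla(\hh^2 \abs{\p^\gamma \pp}^2)$), a harmless factor $+\tfrac 12 \int (\div \mathcal{V}_U)(\,\cdot\,)^2$ with $\div \mathcal{V}_U = \tfrac{5}{2} + \p_1 g_U + \p_\mu h_U^\mu$ uniformly bounded. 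Third, the top-order $\uu$-$\pp$ couplings in \eqref{UPS-L2-U} and \eqref{UPS-L2-P}, namely $2 \beta_\tau \beta_3 \hh^2 \pp (\Jcal \Ncal_i e^{s/2} \p_1 \p^\gamma \pp + e^{-s/2} \delta^{i\nu} \p_\nu \p^\gamma \pp)$ and $2 \beta_\tau \beta_3 \pp (\Jcal \Ncal_j e^{s/2} \p_1 \p^\gamma \uu_j + e^{-s/2} \p_\nu \p^\gamma \uu_\nu)$, cancel upon addition via integration by parts, using the crucial identity $\p_1(\Jcal \Ncal_i) = 0$; the residual error is of the form $-\int 2\beta_\tau\beta_3 e^{s/2} \p_1(\hh^2 \pp) \Jcal \Ncal_i \p^\gamma \pp \, \p^\gamma \uu_i$ plus its tangential analogue, which is $\OO(E_m^2)$ since $\abs{e^{s/2} \p_1(\hh^2 \pp)} \les 1$. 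Fourth, the skew-symmetric term $\dot Q_{ij} \p^\gamma \uu_j \, \p^\gamma \uu_i$ vanishes by antisymmetry, and every other coupling is absorbed into $\mathcal{F}^{(\gamma)}_{U_i}$, $\mathcal{F}^{(\gamma)}_{\pp}$, and $\mathcal{F}^{(\gamma)}_{\hh}$, whose contributions are then estimated by Lemma~\ref{lem:forcing:1}.

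Collecting these bounds yields a differential inequality of the form
\begin{align*}
\frac{d}{ds} E_m^2 + (2m - 5 - C_{\mathrm{coup}}) E_m^2 \leq (9 + 21 \delta) E_m^2 + 3 M^{4m-1} e^{-s},
\end{align*}
for some absolute constant $C_{\mathrm{coup}}$. The threshold $m \geq 18$ combined with $\delta \leq \tfrac{1}{32}$ from Lemma~\ref{lem:forcing:1} guarantees an effective damping of at least $2$, so that $\tfrac{d}{ds} E_m^2 + 2 E_m^2 \leq 3 M^{4m-1} e^{-s}$. Grönwall's inequality on $[s_0,s]$ then gives
\begin{align*}
E_m^2(s) \leq e^{-2(s-s_0)} E_m^2(s_0) + 3 M^{4m-1} e^{-2s} \int_{s_0}^s e^{s'} \, ds' = e^{-2(s-s_0)} E_m^2(s_0) + 3 M^{4m-1} e^{-s}\bigl(1 - e^{-(s-s_0)}\bigr),
\end{align*}
which is the claimed bound.

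The main obstacle lies in the precise accounting of the pre-factors. A naive energy estimate produces forcing losses proportional to $m$, because top-order derivatives may fall on any of the $m$ slots of a transport coefficient such as $g_U$ or $h_U^\mu$. The parameter $\lambda = \delta^2/(16 m^2)$ in the definition of $E_m^2$ is precisely calibrated so that each dangerous term of the form $\gamma_\mu \p_\mu g_U \p_1 \p^{\gamma - e_\mu}(\,\cdot\,)$ acquires a factor $m \lambda^{1/2} \leq \delta$, keeping the total forcing loss bounded independently of $m$; this is exactly what Lemma~\ref{lem:forcing:1} exploits. The threshold $m \geq 18$ is then set so that the guaranteed $m/2$ damping comfortably exceeds the transport contribution $\tfrac 52$, the Lemma~\ref{lem:forcing:1} losses $9 + 21\delta$, and the coupling residual $C_{\mathrm{coup}}$, with at least one unit of damping to spare.
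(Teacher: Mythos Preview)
Your overall architecture is right and matches the paper's: test the three equations against $\p^\gamma\uu$, $\hh^2\p^\gamma\pp$, $\kappa_0^2\p^\gamma\hh$, cancel the $(m{+}1)$-order cross terms by integration by parts, invoke Lemma~\ref{lem:forcing:1}, and Gr\"onwall. But there is a genuine gap in the accounting of the damping.

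You only track the highest-order coupling terms $2\beta_\tau\beta_3\hh^2\pp(\Jcal\Ncal_i e^{s/2}\p_1\p^\gamma\pp+\ldots)$ and their counterparts in \eqref{UPS-L2-P}, and you treat the integration-by-parts residual as an $\OO(E_m^2)$ error with absolute constant $C_{\rm coup}$. However, the left sides of \eqref{UPS-L2-U}--\eqref{UPS-L2-P} also carry the terms
\[
2\gamma_1\beta_\tau\beta_3\,\hh^2 e^{\frac s2}\p_1\pp\,\Jcal\Ncal_i(\p^\gamma\pp)
\qquad\mbox{and}\qquad
2\gamma_1\beta_\tau\beta_3\,e^{\frac s2}\p_1\pp\,\Jcal\Ncal_j(\p^\gamma\uu_j),
\]
which have been \emph{deliberately extracted} from the commutators and placed on the left; they are not part of the forcings $\mathcal F^{(\gamma)}$ to which Lemma~\ref{lem:forcing:1} applies. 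After testing, these yield the ``third integral'' of \eqref{energy00}, whose integrand is bounded (cf.~\eqref{pendejo2}) by $(2\gamma_1\beta_\tau\beta_3+\eps^{\frac12})(\abs{\p^\gamma\uu}^2+\hh^2\abs{\p^\gamma\pp}^2)$. Since $e^{\frac s2}\abs{\p_1\pp}\sim \tfrac12\abs{\p_1 W}$ is \emph{order one} near the origin, this anti-damping is genuinely of size $2\gamma_1\beta_3$, hence $\OO(m)$ when $\gamma_1=m$; it is \emph{not} an absolute constant. With your crude bound $\mathcal D_\gamma\geq \tfrac{m}{2}$ (equivalently $2\mathcal D_\gamma-\div\mathcal V_U\gtrsim m-\tfrac52$), the net damping becomes $m-\tfrac52-2\gamma_1\beta_3-\OO(1)$, which is negative for $\gamma_1=m$ whenever $\beta_3>\tfrac12$ (i.e.\ $\alpha>1$). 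The inequality simply does not close.

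The fix, which the paper implements in \eqref{pendejo1}--\eqref{eq:total:dumping}, is to retain the full damping $2\mathcal D_\gamma-\div\mathcal V_U = m-\tfrac52+2\gamma_1+(2\gamma_1-1)\p_1 g_U-\p_\nu h_U^\nu$ and observe that $\p_1 g_U\geq -\beta_\tau\beta_1-\OO(\eps)$. Then the identity $\beta_1+\beta_3=1$ makes the $\gamma_1$-dependent pieces cancel:
\[
2\gamma_1-\beta_\tau\beta_1(2\gamma_1-1)_+-2\gamma_1\beta_\tau\beta_3
= 2\gamma_1(1-\beta_\tau)+\beta_\tau\beta_1 \ = \ \OO(1),
\]
yielding $\mathcal D_{\rm total}\geq m-\tfrac92$ independently of $\gamma_1$. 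This is the one structural point that cannot be shortcut. (Separately, your displayed ``$2m-5$'' is a slip: the honest damping is $\approx m$, not $\approx 2m$.)
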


\begin{proof}[Proof of Proposition~\ref{prop:L2}]
We fix a   multi-index $\gamma \in {\mathbb N}_0^3$ with $\abs{\gamma} = m$,  and consider the sum of the $L^2$ inner-product of \eqref{UPS-L2-U} with 
$2 \lambda^\modckg    \p^\gamma U^i$ and the $L^2$ inner-product of  \eqref{UPS-L2-P} with $2 \lambda^\modckg  \hh^2 \p^\gamma P$
 and the $L^2$ inner-product of  \eqref{UPS-L2-K} with $2 \kappa_0^2 \lambda^\modckg  \p^\gamma \hh$.   
With the damping function $ \mathcal{D} _\gamma$ defined in \eqref{Dgamma} and the transport velocity
$ \mathcal{V} _U$ defined in \eqref{V_U},   using the fact that $\dot Q$ is skew-symmetric and that $(\p_s + \mathcal{V}_U \cdot \nabla) \hh = 0$,   we find that
\begin{align}
&\frac{d}{ds}  \lambda^\modckg \int_{ \mathbb{R}^3  } \left(   \abs{\p^\gamma \uu}^2    +\hh^2 \abs{ \p^\gamma \pp}^2 + \kappa_0^2 \abs{ \p^\gamma \hh}^2 \right) 
\notag\\
&\quad + \lambda^\modckg \int_{ \mathbb{R}^3  } \left( 2 \mathcal{D} _\gamma  - \div \mathcal{V}_U \right) \,  
\left(  \abs{\p^\gamma U }^2 +\hh^2 \abs{\p^\gamma \pp }^2 + \kappa_0^2 \abs{ \p^\gamma \hh}^2\right) 
\notag\\
&\quad + 8 \gamma_1 \beta_\tau \beta_3 \lambda^\modckg \int_{ \mathbb{R}^3  }
\hh^2 (\p^\gamma \pp)  \Jcal (\Ncal \cdot \p^\gamma \uu) e^{\frac s2} \p_1 \pp
\notag \\
&\quad + 4\beta_\tau \beta_3  \lambda^\modckg \int_{ \mathbb{R}^3  } 
(\p^\gamma \pp) \hh^2 \Jcal (\Ncal \cdot \p^\gamma \uu) e^{\frac s2} \p_1 \pp + (\p^\gamma \pp)^2 \hh^2   \Jcal e^{\frac s2} \p_1 (\uu \cdot \Ncal)  
\notag\\
 & \quad + 4 \beta_\tau \beta_3\lambda^\modckg \int_{ \mathbb{R}^3  }    
 \hh^2 \pp \left( e^{\frac s2} \Jcal (\Ncal \cdot \p^\gamma \uu) \p_1 (\p^\gamma \pp) + e^{\frac s2} \Jcal \p_1 (\Ncal \cdot \p^\gamma \uu) (\p^\gamma \pp) \right)
 \notag\\
& \quad + 4 \beta_\tau \beta_3\lambda^\modckg \int_{ \mathbb{R}^3  }    
 \hh^2 \pp \left(  e^{-\frac s2} (\p^\gamma \uu_\nu) \p_\nu (\p^\gamma \pp) + e^{-\frac s2} (\p^\gamma \pp) \p_\nu (\p^\gamma \uu_\nu) \right)
 \notag \\
& = 2 \lambda^\modckg  \int_{ \mathbb{R}^3  } \left( \mathcal{F} _{ U^i}^{(\gamma)} \,  \p^\gamma U_i  + \hh^2\mathcal{F} _{\pp}^{(\gamma)}\, \p^\gamma \pp
+\kappa_0^2 \mathcal{F} _{\hh}^{(\gamma)}\, \p^\gamma \hh \right)  \,.
\label{energy0}
\end{align}
We note that in the last two integrals on the left-hand side of the identity \eqref{energy0} we may integrate by parts:
\begin{align*} 
&4 \beta_\tau \beta_3\lambda^\modckg \int_{ \mathbb{R}^3  }    
 \hh^2 \pp \left( e^{\frac s2} \Jcal (\Ncal \cdot \p^\gamma \uu) \p_1 (\p^\gamma \pp) + e^{\frac s2} \Jcal \p_1 (\Ncal \cdot \p^\gamma \uu) (\p^\gamma \pp) \right)
 \notag\\
&\qquad + 4 \beta_\tau \beta_3\lambda^\modckg \int_{ \mathbb{R}^3  }    
 \hh^2 \pp \left(  e^{-\frac s2} (\p^\gamma \uu_\nu) \p_\nu (\p^\gamma \pp) + e^{-\frac s2} (\p^\gamma \pp) \p_\nu (\p^\gamma \uu_\nu) \right)
\\
&= - 4 \beta_\tau \beta_3\lambda^\modckg \int_{ \mathbb{R}^3  }    
e^{\frac s2} \p_1 \left(\hh^2 \pp\right)  \Jcal (\Ncal \cdot \p^\gamma \uu)  (\p^\gamma \pp)  
 \notag\\
&\qquad - 4 \beta_\tau \beta_3\lambda^\modckg \int_{ \mathbb{R}^3  }    
e^{-\frac s2} \p_\nu \left( \hh^2 \pp  \right)   (\p^\gamma \uu_\nu)  (\p^\gamma \pp) 
\end{align*} 
where we have used that $\p_1 \Jcal = 0$.
Therefore, upon rearranging,  the energy equality \eqref{energy0} becomes
\begin{align}
&\frac{d}{ds} \lambda^\modckg \int_{ \mathbb{R}^3  }  \left(   \abs{\p^\gamma \uu}^2    +\hh^2 \abs{ \p^\gamma \pp}^2 + \kappa_0^2 \abs{ \p^\gamma \hh}^2 \right) 
\notag\\
& + \lambda^\modckg \int_{ \mathbb{R}^3  } \left( 2 \mathcal{D} _\gamma  - \div \mathcal{V}_U \right) \,  
\left(  \abs{\p^\gamma U }^2 +\hh^2 \abs{\p^\gamma \pp }^2 + \kappa_0^2 \abs{ \p^\gamma \hh}^2\right) 
\notag\\
&\ + 8 \gamma_1 \beta_\tau \beta_3 \lambda^\modckg \int_{ \mathbb{R}^3  }
\hh^2 (\p^\gamma \pp)  \Jcal (\Ncal \cdot \p^\gamma \uu) e^{\frac s2} \p_1 \pp
\notag \\
&\ + 4\beta_\tau \beta_3  \lambda^\modckg \int_{ \mathbb{R}^3  } 
 (\p^\gamma \pp)^2 \hh^2   \Jcal e^{\frac s2} \p_1 (\uu \cdot \Ncal)  
 - 2 (\p^\gamma \pp)  \hh  \Jcal (\Ncal \cdot \p^\gamma \uu) e^{\frac s2} \pp   \p_1 \hh
 -  (\p^\gamma \uu_\nu)  (\p^\gamma \pp) e^{-\frac s2} \p_\nu \left( \hh^2 \pp  \right)  
\notag\\
& = 2 \lambda^\modckg  \int_{ \mathbb{R}^3  } \left( \mathcal{F} _{ U^i}^{(\gamma)} \,  \p^\gamma U_i  + \hh^2\mathcal{F} _{\pp}^{(\gamma)}\, \p^\gamma \pp
+\kappa_0^2 \mathcal{F} _{\hh}^{(\gamma)}\, \p^\gamma \hh \right)  \,.
\label{energy00}
\end{align}
We shall next obtain a lower bound for the second thru fourth integrals on the right side of \eqref{energy00}.

For the second integral, we recall \eqref{Dgamma}, use \eqref{V_U}, and the bounds \eqref{Jp1W},  \eqref{e:G_ZA_estimates}, \eqref{e:h_estimates}, \eqref{eq:Celtics:suck} to obtain the lower bound
\begin{align} 
 2 \mathcal{D} _\gamma - \operatorname{div} \mathcal{V} _U  
&=   \abs{\gamma}  - \tfrac{5}{2}+2 \gamma_1 +  (2 \gamma_1 -1)(\beta_\tau \beta_1 \Jcal \p_1 W + \p_1 G_U ) - \p_\nu h_U^\nu \notag \\
&\ge    \abs{\gamma}  - \tfrac{5}{2}+2 \gamma_1 -  \beta_\tau \beta_1 (2 \gamma_1 -1)_+ - \eps^ {\frac{1}{16}}   \,.   \label{pendejo1}
\end{align}
For the third integral, we note that by the definitions \eqref{eq:UdotN:Sigma}, \eqref{Pnew} and \eqref{Snew}
\begin{align}
2 \hh \nabla \pp  = e^{-\frac s2} \nabla W - \nabla Z - \tfrac{1}{ \upgamma} \sound \nabla K
\label{pendejo3/2}
\end{align}
and thus,  from \eqref{Jp1W}, \eqref{eq:W_decay}, \eqref{eq:Z_bootstrap},  \eqref{eq:US_est}, \eqref{eq:Celtics:suck},  the third integral on the left-hand side of  \eqref{energy00} has an integrand which is bounded as
\begin{align}
\frac{8 \gamma_1 \beta_\tau \beta_3 \hh^2  \sabs{ (\p^\gamma \pp)  \Jcal (\Ncal \cdot \p^\gamma \uu) e^{\frac s2} \p_1 \pp }}{\abs{\p^\gamma U}^2 + \hh^2 \abs{\p^\gamma \pp}^2}
&\leq
 4 \gamma_1 \beta_\tau \beta_3 \Jcal  \hh  e^{\frac s2}   \abs{  \p_1 \pp } 
 \notag\\
 &\leq 
2 \gamma_1 \beta_\tau \beta_3 \Jcal   \sabs{  \p_1 W - e^{\frac s2} \p_1 Z - \upgamma^{-1}\sound  e^{\frac s2}  \p_1 K} 
\notag\\
 &\leq 
2 \gamma_1 \beta_\tau \beta_3  \left( 1 +  2 M^{\frac 12} e^{-s} \right)
\notag\\
&\leq 2 \gamma_1 \beta_\tau \beta_3 + \eps^{\frac 12}\,.
\label{pendejo2}
\end{align}
Lastly, we compute $\p_1 (U \cdot \Ncal)$ from \eqref{eq:UdotN:Sigma}, $\p_\nu \pp$ from \eqref{pendejo3/2}, and by using \eqref{Jp1W}, \eqref{eq:W_decay}, \eqref{eq:Z_bootstrap},  \eqref{eq:S_bootstrap},   \eqref{eq:US_est},  \eqref{eq:Celtics:suck}, \eqref{eq:useful:crap}, and \eqref{eq:shalala:7}, the integrand in the fourth integral on the left-hand side of \eqref{energy00} may be estimated as
\begin{align}
&\frac{4\beta_\tau \beta_3 \sabs{
 (\p^\gamma \pp)^2 \hh^2   \Jcal e^{\frac s2} \p_1 (\uu \cdot \Ncal)  
 - 2 (\p^\gamma \pp)  \hh  \Jcal (\Ncal \cdot \p^\gamma \uu) e^{\frac s2} \pp   \p_1 \hh
 -  (\p^\gamma \uu_\nu)  (\p^\gamma \pp) e^{-\frac s2} \p_\nu \left( \hh^2 \pp  \right)}}
{\abs{\p^\gamma U}^2 + \hh^2 \abs{\p^\gamma \pp}^2}
\notag\\
&\leq 4\beta_\tau \beta_3 \left( 
  \Jcal e^{\frac s2} \sabs{\p_1 (\uu \cdot \Ncal)}
 +    \Jcal   e^{\frac s2} \sabs{\pp   \p_1 \hh}
 + \tfrac 12 \hh^{-1} e^{-\frac s2} \sabs{\p_\nu \left( \hh^2 \pp  \right)}\right)
 \notag\\
 &\leq 4\beta_\tau \beta_3 \left( \tfrac 12 + M e^{-s}\right)
 \notag\\
 &\leq 2 \beta_\tau \beta_3 + \eps^{\frac 12} \,.
 \label{pendejo3}
\end{align}
Combining the bounds \eqref{pendejo1}, \eqref{pendejo2}, and \eqref{pendejo3}, with the energy equality \eqref{energy00}, we arrive at
\begin{align}
&\frac{d}{ds} \int_{ \mathbb{R}^3  } \lambda^\modckg \left(   \abs{\p^\gamma \uu}^2    +\hh^2 \abs{ \p^\gamma \pp}^2 + \kappa_0^2 \abs{ \p^\gamma \hh}^2 \right) 
+{\mathcal D}_{\rm total}   \int_{ \mathbb{R}^3  }\lambda^\modckg 
\left(  \abs{\p^\gamma U }^2 +\hh^2 \abs{\p^\gamma \pp }^2 +\kappa_0^2 \abs{ \p^\gamma \hh}^2\right) 
\notag\\
& \leq 2 \lambda^\modckg  \int_{ \mathbb{R}^3  } \abs{ \mathcal{F} _{ U^i}^{(\gamma)} \,  \p^\gamma U_i  + \hh^2\mathcal{F} _{\pp}^{(\gamma)}\, \p^\gamma \pp
+ \kappa_0^2 \mathcal{F} _{\hh}^{(\gamma)}\, \p^\gamma \hh }  \,,
\label{energy000}
\end{align}
where we have denoted
\begin{align*}
{\mathcal D}_{\rm total} = \abs{\gamma}  - \tfrac{5}{2}+2 \gamma_1 -  \beta_\tau \beta_1 (2 \gamma_1 -1)_+ - \eps^ {\frac{1}{16}}  - 2 \gamma_1 \beta_\tau \beta_3 - 2 \beta_\tau \beta_3 - 2 \eps^{\frac 12} \,.
\end{align*}
The crucial observation here is that because $\beta_1 + \beta_3 =1$ (cf.~\eqref{eq:various:beta}), and appealing to \eqref{eq:beta:tau},  the damping term ${\mathcal D}_{\rm total}$ has the lower bound
\begin{align}
{\mathcal D}_{\rm total} 
\geq \abs{\gamma} - \tfrac 52  + 2 \gamma_1 (1- \beta_\tau) - 2\beta_\tau \beta_3 - \eps^{\frac{1}{16}} - 2 \eps^{\frac 12} 
\geq m - \tfrac 92 
\label{eq:total:dumping}
\end{align}
for $\eps$ taken sufficiently small, in terms of $\alpha$ and $m$.
Upon summing over $\abs{\gamma}=m$, the energy inequality and \eqref{energy000} and the damping lower bound \eqref{eq:total:dumping} thus yield
\begin{align} 
& \frac{d}{ds} E_m^2  + \left(m- \tfrac{9}{2}\right) E_m^2  
 \le 
\sum_{\abs{\gamma}=m}
 2\lambda^\modckg \int_{ \mathbb{R}^3  } \abs{ \mathcal{F} _{ U^i}^{(\gamma)} \,  \p^\gamma U_i  + \hh^2\mathcal{F} _{\pp}^{(\gamma)}\, \p^\gamma \pp
+\kappa_0^2 \mathcal{F} _{\hh}^{(\gamma)}\, \p^\gamma \hh }
  \,. \label{energy1}
\end{align} 
We are left with estimating the right side of \eqref{energy1}, which is the content of Lemma~\ref{lem:forcing:1} above. 
By Lemma \ref{lem:forcing:1}, for $0< \delta \le \tfrac{1}{32} $,
\begin{align*} 
& \tfrac{d}{ds} E_m^2(s) + (m-6) E_m^2(s) 
 \le 
(9+ 21 \delta)  E_m^2  + 3 e^{-s} M^{4m-1}  \,,
\end{align*} 
and hence, by since $m$ was taken sufficiently large in \eqref{eq:k-cond}, we have that  
\begin{align*}
\tfrac{d}{ds} E_m^2 + 2 E_m^2  
&\leq 3 e^{-s} M^{4m-1}  \,,
\end{align*}
and so 
we obtain that 
\begin{align*}
E_m^2(s) \leq  e^{-2(s-s_0)} E_m^2(s_0) +  3 e^{-s} M^{4m-1} \left(1 - e^{-(s-s_0)}\right) \,,
\end{align*}
for all $s\geq s_0 \geq -\log \eps$. This concludes the proof of Proposition~\ref{prop:L2}.
\end{proof}

In conclusion of this section,  we mention that Proposition~\ref{prop:L2} applied with $s_0=-\log \eps$, in conjunction with Lemma~\ref{lem:norm:equivalence}, yields the proof of Proposition \ref{cor:L2}.

\begin{proof}[Proof of Proposition \ref{cor:L2}]
The initial datum assumption \eqref{eq:data:Hk}  together with the first bound in \eqref{eq:ghastly:0} implies that  
$$
E_m^2(-\log \eps) \leq 2 \kappa_0^2 \eps\,.
$$
Thus, from \eqref{eq:L2} the second bound in \eqref{eq:ghastly:0}  we obtain
\begin{align*}
&e^{-s} \norm{W}_{\dot{H}^m}^2 + \norm{Z}_{\dot {H}^m}^2 + \norm{A}_{\dot{H}^m}^2 +  \norm{K}_{\dot {H}^m}^2
 \notag\\
&\qquad 
\leq 4 \lambda^{-m} E_m^2(s) + 4 e^{-2s} 
\notag\\
&\qquad \leq 8 \kappa_0^2 \lambda^{-m}  \eps^{-1} e^{-2s} + 12 \lambda^{-m} e^{-s} M^{4m-1} (1- \eps^{-1} e^{-s}) + 4 e^{-2s}
\notag\\
&\qquad \leq 16 \kappa_0^2 \lambda^{-m}  \eps^{-1} e^{-2s} +  e^{-s} M^{4m} (1- \eps^{-1} e^{-s})  
\end{align*}
by taking $M$ sufficiently slow. The inequalities \eqref{eq:AZ-L2}--\eqref{eq:W-L2} immediately follow.
\end{proof}

\section{Auxiliary lemmas and bounds on forcing functions} 
\label{sec:preliminary}
We begin by recording some useful bounds that will be used repetitively throughout the section.
\begin{lemma}
\label{lem:BBS}
For $y\in\XXX(s)$ and for $m\geq 0$ we have
\begin{subequations} 
\label{f-bounds}
\begin{align}
\label{e:bounds_on_garbage}
\abs{\check \nabla^m f} &+ \abs{\check \nabla^m (\Ncal- \Ncal_0 )}+\abs{\check \nabla^m (\tt^\nu -  \Tcal_0^\nu )} \notag\\
& +\abs{\check \nabla^m (\Jcal-1)} +\abs{\check \nabla^m (\Jcal^{-1}-1)}  
\les \eps M^2 e^{-\frac{m+2}{2}s } \abs{\check y}^2 \les \eps e^{-\frac{m}{2}s } \,,
\\
\abs{\check \nabla^m \dot f}&+\abs{\check \nabla^m \dot \Ncal}  \les M^2 e^{-\frac{m+2}{2}s } \abs{\check y}^2 \les \eps^{\frac 14} e^{-\frac{m}{2}s }\,.
\label{e:bounds_on_garbage_2}
\end{align}
\end{subequations} 
Moreover, we have the following estimates on $V$  
\begin{equation}\label{eq:V:bnd}
\abs{\p^\gamma V} \les \begin{cases}
 M^{\frac 14}  & \mbox{if } \abs{\gamma}=0\\
 M^2 \eps^{\frac 12}  e^{-\frac32 s} & \mbox{if }\abs{ \gamma}=1\mbox{ and } \gamma_1=1\\
 M^2 \eps^{\frac 12}  e^{-\frac s2 } & \mbox{if } \abs{ \gamma}=1\mbox{ and } \gamma_1=0\\
 M^4 \eps^{\frac 32}  e^{-s}  &  \mbox{if } \abs{ \gamma}=2\mbox{ and } \gamma_1=0\\
0&  \mbox{else}
\end{cases}
\end{equation}
for all $y \in \XXX(s)$. 
\end{lemma}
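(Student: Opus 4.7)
I plan to establish Lemma~\ref{lem:BBS} by direct algebraic computation. Every quantity appearing on the left-hand side is either a polynomial (for $f$, $\dot f$, and $V$) or a smooth algebraic function (for $\Ncal$, $\Tcal^\nu$, $\Jcal$) of the modulation parameters $\phi, \dot\phi, \dot Q, \dot\xi, R$ and of the self-similar coordinates $y$. Every bound follows by combining the modulation estimates from \eqref{eq:speed:bound}--\eqref{eq:dot:Q} with the coordinate changes $\check x = e^{-s/2}\check y$ and $x_1 = e^{-3s/2}y_1$, together with the support constraint $|\check y|\leq 2\eps^{\sfrac 16}e^{\sfrac s2}$ from \eqref{eq:support}.

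The first step is to verify the estimate on $f$ itself: rewriting \eqref{def_f} via the change of coordinates gives $f=\tfrac 12 e^{-s}\phi_{\nu\mu}(t)y_\nu y_\mu$, so that $\check\nabla^m f$ has at most two nonvanishing components and each $\check\nabla$-derivative carries an additional factor $e^{-s/2}$ from the chain rule, while the prefactor $|\phi|\leq M^2\eps$ is taken from \eqref{eq:speed:bound}. For the geometric quantities $\Jcal$, $\Jcal^{-1}$, $\Ncal$, and $\Tcal^\nu$, I will use their rational expressions in \eqref{tangent} in terms of $(f_{,2},f_{,3})$ and Taylor-expand via Fa\`a di Bruno; since $|\check\nabla f|$ is small on $\XXX(s)$ by the previous step, the relevant series converges and its terms can be directly controlled. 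The corresponding initial-time quantities $\Ncal_0,\Tcal_0^\nu,\Jcal_0$ have the same structure with $\phi_0$ (bounded by $\eps$ by \eqref{ic-kappa0-phi0}) in place of $\phi(t)$, so the differences are rewritten via $\phi(t)-\phi_0=\int_{-\eps}^t \dot\phi\, dt'$, whose magnitude is controlled by $|\dot\phi|\cdot(t+\eps)\les M^2\eps$ thanks to \eqref{eq:acceleration:bound} and \eqref{T*-bound}. Collecting factors yields \eqref{e:bounds_on_garbage}, with the second inequality an immediate consequence of the support bound on $|\check y|$.

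The estimates in \eqref{e:bounds_on_garbage_2} for $\dot f$ and $\dot\Ncal$ will be proved in an essentially identical manner, with $|\dot\phi|\leq M^2$ from \eqref{eq:acceleration:bound} replacing $|\phi|\leq M^2\eps$; this accounts precisely for the missing power of $\eps$ in the right-hand side. For $\dot\Ncal$ I will differentiate the formula for $\Ncal$ in $t$, producing terms involving $\dot\phi\,\check x$ and $\check\nabla f$ that have already been bounded. Finally, \eqref{eq:V:bnd} follows from inspection of the explicit formula \eqref{def_V}: $V$ is affine in $y_1$ and at most quadratic in $\check y$, so only multi-indices $\gamma\in\{0,e_1,e_\nu,e_\nu+e_\mu\}$ produce a nonzero $\p^\gamma V$. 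Each such derivative is a short explicit combination of $\dot Q$, $\dot\xi$, $R$, $\phi$, and a monomial in $y$, and the stated bounds are read off using $|\dot Q|\leq 2M^2\eps^{\sfrac 12}$ from \eqref{eq:dot:Q}, $|\dot\xi|\leq M^{\sfrac 14}$ and $|\phi|\leq M^2\eps$ from \eqref{eq:speed:bound}--\eqref{eq:acceleration:bound}, combined with the support constraint. There is no conceptual obstacle here; the only care required is in bookkeeping the weights $e^{\pm s/2},e^{\pm s},e^{\pm 3s/2}$ generated by the $\check x\leftrightarrow\check y$ and $x_1\leftrightarrow y_1$ rescalings and by the different number of derivatives distributed across each factor.
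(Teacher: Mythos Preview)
Your approach is correct and matches the paper's own proof, which is essentially the same one-line sketch: the estimates follow directly from the definitions of $f,\Ncal,\Tcal,\Jcal$ together with the $\phi$ and $\dot\phi$ bounds in \eqref{eq:speed:bound}--\eqref{eq:acceleration:bound} and the support inequality \eqref{e:space_time_conv}, while the $V$ estimate comes from \eqref{def_V}, \eqref{eq:dot:Q}, and the crude bound $|R-\Id|\leq 1$. One small slip in your write-up: once you express $f=\tfrac12 e^{-s}\phi_{\nu\mu}y_\nu y_\mu$ in self-similar coordinates, successive $\check\nabla_y$-derivatives do not pick up further $e^{-s/2}$ chain-rule factors (the $e^{-s}$ is already explicit); the bookkeeping is unaffected because each derivative instead removes a factor of $|\check y|$, and on $\XXX(s)$ this still yields the stated final bound $\eps e^{-ms/2}$.
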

\begin{proof}[Proof of Lemma~\ref{lem:BBS}]
The estimates \eqref{e:bounds_on_garbage} follow directly from the definitions of $f$,  $\Ncal$, $\tt$ and $\Jcal$, together with the bounds on $\phi$ given in \eqref{eq:speed:bound} and the inequality \eqref{e:space_time_conv}. Similarly, \eqref{e:bounds_on_garbage_2} follows by using the $\dot \phi$ estimate in \eqref{eq:acceleration:bound}. To obtain the bound \eqref{eq:V:bnd}, we recall that $V$ is defined in \eqref{def_V}, employ the bounds on $\dot \xi$ and $\dot Q$  given by \eqref{eq:acceleration:bound} and \eqref{eq:dot:Q}, and the fact that $\abs{R-\Id} \leq 1$ which follows from \eqref{eq:speed:bound} and the definition of $R$ in (2.2) of \cite{BuShVi2019b}.
\end{proof}

\subsection{Transport estimates}

\begin{lemma}[Estimates for $G_W$, $G_Z$, $G_{U}$, $h_W$, $h_Z$ and $h_{U}$]
\label{lemma_g} 
For $\eps>0$ sufficiently small, and $y\in \XXX(s)$, we have
\begin{align} 
&\abs{\partial^{\gamma}G_W}  \les 
\begin{cases}
M e^{-\frac s2}+ M^{\frac 12}   \abs{y_1}e^{-s}+\eps^{\frac 13} \abs{ \check y},  &\mbox{if } \abs{\gamma}=0\\
 M^2  \eps^{\frac12}, & \mbox{if } \gamma_1=0\mbox{ and } \abs{\check \gamma}=1\\
M  e^{-\frac s2}, &\mbox{if } \gamma=(1,0,0)\mbox{ or } \abs{\gamma}=2 \\
  M^ {\frac{1}{2}}   e^{-s}, &\mbox{if } \gamma=(2,0,0)
\end{cases}\label{e:G_W_estimates}\,, \\
&\abs{\partial^{\gamma}(G_Z+(1-\beta_2)e^{\frac s2}\kappa_0)}+\abs{\partial^{\gamma}(G_{U}+(1-\beta_1) e^{\frac s2}\kappa_0)} \les 
\begin{cases}
\eps^{\frac 12} e^{\frac s2},  &\mbox{if } \abs{ \gamma}=0\\
 M^2 \eps^{\frac12}, & \mbox{if } \gamma_1=0\mbox{ and } \abs{\check \gamma}=1\\
 M  e^{-\frac s2}, &\mbox{if } \gamma=(1,0,0)\mbox{ or } \abs{\gamma}=2 \\
 M^ {\frac{1}{2}}   e^{-s}, &\mbox{if } \gamma=(2,0,0)
\end{cases}\label{e:G_ZA_estimates}\,, \\
& \abs{\partial^{\gamma}h_W}+\abs{\partial^{\gamma}h_Z}+\abs{\partial^{\gamma}h_{U}} \les 
\begin{cases}
 e^{-\frac s2},  &\mbox{if } \abs{ \gamma}=0\\
e^{-s}, & \mbox{if } \gamma_1=0\mbox{ and } \abs{\check \gamma}=1\\
 e^{-s} \eta^{-\frac 16}, & \mbox{if } \gamma=(1,0,0),\mbox{ or } (\abs{\gamma}=2 \mbox{ and } \abs{\check\gamma}=1,2)\\
e^{-( 2  - \frac{3}{2m-5})s}, & \mbox{if } \gamma=(2,0,0)
\end{cases}\label{e:h_estimates}\,. 
\end{align}
 Furthermore, for $\abs{\gamma} \in \{3, 4\}$ we have the lossy global estimates  
 \begin{align}
\abs{\partial^{\gamma}G_W}&\les  e^{- (\frac12 - \frac{ \abs{\gamma}-1}{2m-7})s}\,,
\label{eq:GW:lossy}\\
\abs{\partial^{\gamma}h_W}&\les   e^{-s}\,,
\label{eq:h:lossy}
 \end{align}
for all $y \in \XXX(s)$.
 \end{lemma}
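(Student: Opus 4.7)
The plan is to work directly from the explicit definitions \eqref{eq:g:def}--\eqref{eq:h:def}, combining the Leibniz rule with the geometric estimates of Lemma~\ref{lem:BBS} and the various bootstrap bounds collected in Section~\ref{sec:bootstrap}. The first step is to identify the leading-order structure at the origin. At $y=0$ one has $V^0 = -R^T \dot\xi$ and $\Ncal^0 = e_1 + \OO(M^2\eps^{3/2})$, so the modulation identity \eqref{eq:GW:0:a} together with the $\dot \kappa$ and $\dot \xi$ bounds from \eqref{mod-boot} forces the leading-order cancellations $G_W^0 \approx 0$, $G_Z^0 \approx -(1-\beta_2) e^{s/2}\kappa_0$, and $G_U^0 \approx -(1-\beta_1) e^{s/2}\kappa_0$. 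This is the origin of the constant shifts appearing in \eqref{e:G_ZA_estimates}.

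For the $\abs{\gamma}=0$ bound on $G_W$, I would split
\[
G_W = \beta_\tau e^{\frac s2}\Jcal(\kappa + 2\beta_1 V\cdot \Ncal) + \beta_\tau e^{\frac s2}(\beta_2 Z \Jcal - \dot f)
\]
and expand $V$ via \eqref{def_V}: the $\dot Q_{i1} y_1 e^{-3s/2}$ term contributes the $M^{1/2}\abs{y_1}e^{-s}$ piece of \eqref{e:G_W_estimates} after multiplication by $\beta_\tau e^{s/2}$ and use of \eqref{eq:dot:Q}, while the $e^{-s/2}\dot Q_{i\nu}y_\nu$ piece combined with the tilt $\abs{\Ncal - e_1}\lesssim \eps$ of the normal produces the $\eps^{1/3}\abs{\check y}$ contribution; the $\dot f$ term is $\OO(M e^{-s/2})$ by \eqref{e:bounds_on_garbage_2}, and $\beta_2 Z\Jcal e^{s/2}$ is negligible. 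The shifted bounds on $G_Z$ and $G_U$ follow from the same decomposition, where the $\OO(\eps^{1/2}e^{s/2})$ size is dictated by the fact that $\abs{y_1}e^{-s}\leq 2\eps^{1/2}e^{s/2}$ on $\XXX(s)$.

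The derivative bounds for $1\leq\abs{\gamma}\leq 2$ follow from Leibniz and case-by-case invocation of \eqref{e:bounds_on_garbage}--\eqref{eq:V:bnd}, \eqref{eq:W_decay}, \eqref{eq:Z_bootstrap}, and \eqref{eq:A_bootstrap}; the $\gamma=(2,0,0)$ case is the tightest, because $\partial_1$ annihilates $f$, $\Jcal\Ncal$, and the $\dot Q_{i\nu}y_\nu$ part of $V$, forcing both derivatives to land on either $Z$, on the linear-in-$y_1$ part of $V$, or split between $\Jcal$ and $W$, each alternative supplying an $e^{-s}$ gain through the self-similar scaling. The $h$-estimates \eqref{e:h_estimates} are analogous, and the $\eta^{-1/6}$ weight in the $\gamma=(1,0,0)$ and mixed second-order bounds comes from the $\beta_\tau e^{-s}\Ncal_\mu W$ piece together with \eqref{eq:W_decay}. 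For the lossy bounds \eqref{eq:GW:lossy}--\eqref{eq:h:lossy} at $\abs{\gamma}\in\{3,4\}$, I would apply Leibniz once more and use Lemma~\ref{eq:higher:order:ests} to control the highest-order derivatives of $W$, $Z$, $A$; the exponent $(\abs{\gamma}-1)/(2m-7)$ then matches exactly the interpolation loss in those bounds. The main obstacle I anticipate is the bookkeeping in the $\gamma=(2,0,0)$ and $\abs{\gamma}=4$ cases, where one must repeatedly exploit the vanishing of $\partial_1$ on the geometric factors to ensure that the $\partial_1$'s are effectively committed to $W$, $Z$, $A$, or the polynomial part of $V$, and then absorb all residual powers of $M$ by a positive power of $\eps$.
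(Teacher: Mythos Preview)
Your approach is correct and matches the paper's: work directly from the definitions \eqref{eq:g:def}--\eqref{eq:h:def}, apply Leibniz, and invoke the bootstrap bounds and Lemma~\ref{lem:BBS}. The paper in fact defers most cases to Lemma~7.2 of \cite{BuShVi2019b} and only writes out the new $\gamma=(2,0,0)$ case, so your sketch is more explicit than the paper's own proof.

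One small correction on the $\gamma=(2,0,0)$ case: recall that by definition $g_W = \beta_\tau \Jcal W + G_W$, so $G_W$ itself contains no $W$ term, and hence there is no ``split between $\Jcal$ and $W$'' to worry about. Moreover the linear-in-$y_1$ part of $V$ is annihilated by $\partial_1^2$, so the \emph{only} surviving contribution is $\beta_2 \beta_\tau e^{s/2}\Jcal\,\partial_1^2 Z$, which immediately gives $\abs{\partial_1^2 G_W}\lesssim e^{s/2}\abs{\partial_1^2 Z}\lesssim M^{1/2}e^{-s}$ via \eqref{eq:Z_bootstrap}. The $h$ case at $\gamma=(2,0,0)$ is slightly different because $h_W^\mu$ contains $A_\gamma \Tcal^\gamma_\mu$, so $\partial_1^2 A$ appears; this is where the interpolation loss $\tfrac{3}{2m-5}$ in \eqref{e:h_estimates} originates, via the $\abs{\gamma}=2$, $\gamma_1\geq 1$ line of \eqref{eq:A:higher:order}.
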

\begin{proof}[Proof of Lemma~\ref{lemma_g}]
The bounds for the first three cases in \eqref{e:G_W_estimates} and \eqref{e:G_ZA_estimates} are the same as in  Lemma 7.2 in \cite{BuShVi2019b}.
It remains to consider the case $\gamma=(2,0,0)$. By  \eqref{eq:g:def},  we have that
\begin{align*}
\abs{\partial_1^2 G_W}+\abs{\partial_1^2 G_Z}+\abs{\partial_1^2 G_U}\les  e^{\frac s2} \abs{\partial_1^2  Z } \,,
\end{align*}
so that an application of \eqref{eq:Z_bootstrap} provides  the bounds for both \eqref{e:G_W_estimates} and \eqref{e:G_ZA_estimates}.

For the estimates \eqref{e:h_estimates},  the proof of the first three cases is given in Lemma 7.2 in \cite{BuShVi2019b}.
For the case $\gamma=(2,0,0)$, by \eqref{eq:h:def},  we have that
\begin{align*}
\abs{\partial_1^2 h_W}+\abs{\partial_1^2 h_Z}+\abs{\partial_1^2 h_U}\les  e^{-\frac s2} (\abs{\partial_1^2  Z }+\abs{\partial_1^2  A })\les M^{\frac14}e^{-2s}+e^{-( 2  - \frac{3}{2m-5})s}  \,,
\end{align*}
where we have applied  \eqref{eq:Z_bootstrap} and \eqref{eq:A:higher:order} to attain the desired estimate.
\end{proof} 

\subsection{Forcing estimates}
\begin{lemma}[Estimates on $\partial^\gamma F_W$, $\partial^\gamma F_Z$ and $\partial^\gamma F_A$] 
\label{lem:forcing}
For $y \in \XXX(s)$ we have the force bounds 
\begin{align}
\abs{\partial^\gamma F_W}+
e^{\frac s2}\abs{\partial^\gamma F_Z} &\les  
\begin{cases}
 e^{-\frac s 2 },  &\mbox{if } \abs{ \gamma}=0\\
 e^{-s} \eta^{- {\frac{1}{15}}  }
 ,&\mbox{if } \gamma_1=1\mbox{ and } \abs{\check \gamma}=0\\
e^{-{\frac{5}{8}s}   } , & \mbox{if } \gamma_1=0\mbox{ and } \abs{\check \gamma}=1\\
e^{- (1- {\frac{4}{2m-7}} )s} \eta^{- {\frac{1}{15}}  } ,&\mbox{if } \gamma_1 \ge 1  \mbox{ and } \abs{ \gamma}=2 \\
e^{-(\frac{5}{8} - \frac{9}{4(2 m-7) })s} , & \mbox{if } \gamma_1=0\mbox{ and } \abs{\check \gamma}=2
\end{cases} \,,\label{eq:F_WZ_deriv_est}
\\
\abs{\partial^\gamma F_{A \nu}} &\les 
\begin{cases}
 M^{\frac 12}  e^{-s},  &\mbox{if }  \abs{ \gamma}=0\\
  (M^{\frac 12} + M^2 \eta^{-\frac 16})   e^{-s} , & \mbox{if } \gamma_1=0\mbox{ and } \abs{\check \gamma}=1\\
 e^{-\left( 1-\frac{3}{2m-7}\right)s}  \eta^ {-\frac{1}{6}} , & \mbox{if } \gamma_1=0\mbox{ and } \abs{\check \gamma}=2
\end{cases} \,.
\label{eq:F_A_deriv_est}
\end{align}
Moreover, we have the following higher order estimate at $y=0$
\begin{align}
\abs{(\partial^\gamma \tilde F_W)^0}
\les   e^{- (\frac12 - \frac{4}{2m-7})s} \quad\mbox{for}\quad \abs{\gamma}=3 \,, \label{e:FW3}
\end{align}
and the bound on $\tilde{F}_W$ given by
\begin{equation}\label{eq:Ftilde_est}
\abs{\partial^\gamma \tilde F_W}
\les 
M \eps^{\frac 16} 
\begin{cases}
\eta^{-\frac{1}{6}},&\mbox{if}\quad \abs{\gamma}=0 \\
  \eta^ {-\frac25} ,&\mbox{if}\quad \gamma=(1,0,0)\\
 \eta^{-\frac{1}{3}} ,&\mbox{if}\quad \gamma_1=0 \mbox{ and }\abs{\check \gamma}=1 \\
 1 ,&\mbox{if}\quad \abs{\gamma}=4\quad\mbox{and}\quad \abs{y}\leq \ell
\end{cases}
\end{equation}
holds for all $ \abs{y} \leq \LLL $.
\end{lemma}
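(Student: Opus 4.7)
\textbf{Proof proposal for Lemma~\ref{lem:forcing}.} The strategy is to expand the definitions \eqref{eq:FW:def}--\eqref{eq:A:def} (and the modified forcing \eqref{eq:tilde:W:evo} for $\tilde F_W$) term by term, apply the Leibniz rule to distribute $\partial^\gamma$ across each product, and then bound the resulting pieces using the bootstrap assumptions together with the auxiliary lemmas already established. Specifically, I will use: the bootstrap bounds on $W$ (\eqref{eq:W_decay}--\eqref{eq:tildeW_decay}), on $Z$, $A$, $K$ (\eqref{eq:Z_bootstrap}--\eqref{eq:S_bootstrap}), and on the modulation parameters (\eqref{mod-boot}); the geometric bounds $\abs{\check\nabla^m f}, \abs{\check\nabla^m \Ncal}, \ldots \les \eps e^{-ms/2}$ from Lemma~\ref{lem:BBS}; the $U\cdot \Ncal$ and $\sound$ estimates from Lemma~\ref{lem:US_est}; the $G$ and $h$ transport-coefficient estimates from Lemma~\ref{lemma_g}; the higher-order bounds on $K$, $Z$, $A$, $W$ from Lemma~\ref{eq:higher:order:ests}; the $\dot Q$ bound \eqref{eq:dot:Q}; and the $\beta_\tau$ closeness to $1$ from \eqref{eq:beta:tau}.

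For each forcing term, the strategy is to identify the dominant factor. In $F_W$ (see \eqref{eq:FW:def}) the principal contributions are $\beta_\tau \sound \Tcal^\nu_\mu \p_\mu A_\nu$, the $A \dot Q$ and $A \dot\Ncal$ pieces, and the baroclinic piece $\beta_4 \beta_\tau \sound^2 (\Jcal e^s \p_1 K + \Ncal_\mu \p_\mu K)$. The first term combines the $L^\infty$ bound $\sound \les 1$ with the $\check\nabla A$ bound $\les \eps$; the rotation-related terms are small owing to \eqref{eq:dot:Q} and \eqref{e:bounds_on_garbage_2}; and the $\eta^{-1/15}$ weight appearing in the $\gamma_1=1, |\check\gamma|=0$ and $\gamma_1\ge 1, |\gamma|=2$ cases of \eqref{eq:F_WZ_deriv_est} comes directly from the $\p^\gamma K$ bootstrap \eqref{eq:S_bootstrap} multiplied by the $e^s$ prefactor. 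The claimed $F_Z$ bounds are identical up to the $e^{-s/2}$ prefactor visible in \eqref{eq:FZ:def} versus \eqref{eq:FW:def}. For $F_{A\nu}$ in \eqref{eq:A:def}, the $A\dot\Tcal^\nu$ and $A\dot Q$ pieces combine with the $e^{-s}$ factor to give $M^{1/2}e^{-s}$; the $\sound \Tcal^\nu_\mu \p_\mu \sound$ piece uses Lemma~\ref{lem:US_est}; the $\sound^2 \Tcal^\nu_\mu \p_\mu K$ baroclinic piece inherits decay from the $K$ bootstrap. Differentiating and using Lemma~\ref{eq:higher:order:ests} to absorb one extra derivative on $A$ via interpolation gives the second-order entries.

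For the $\tilde F_W$ estimates \eqref{e:FW3} and \eqref{eq:Ftilde_est}, I will use the identity $\tilde F_W = F_W - e^{-s/2}\beta_\tau\dot\kappa + \bigl((\beta_\tau\Jcal - 1)\bar W - G_W\bigr)\p_1 \bar W - h_W^\mu \p_\mu \bar W$ from \eqref{eq:tilde:W:evo}. Since $|\beta_\tau \Jcal - 1|\les \eps$ by \eqref{eq:beta:tau} and \eqref{e:bounds_on_garbage}, and since $\bar W$, $\p_\mu \bar W$ satisfy the pointwise bounds $|\bar W|\les \eta^{1/6}$, $|\p_1\bar W|\les \eta^{-1/3}$, $|\check\nabla\bar W|\les \eta^{-1/6}$ on $|y|\le \LLL$ (cf.~\cite[Eq.~(2.48)]{BuShVi2019b}), the extra terms contribute precisely the $M\eps^{1/6}$-weighted decay profiles claimed in \eqref{eq:Ftilde_est}. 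The $\dot\kappa$ term is absorbed using $|\dot\kappa|\le e^{-3s/10}$ from \eqref{eq:acceleration:bound}. For the bound \eqref{e:FW3} at $y=0$ with $|\gamma|=3$, I will exploit the constraints \eqref{eq:constraints}, which annihilate many of the terms generated by Leibniz, and rely on the $\p^\gamma \tilde W(0,s)$ bound \eqref{eq:bootstrap:Wtilde3:at:0} and the bootstraps on $\p^\gamma Z$, $\p^\gamma A$, $\p^\gamma K$ at third order given in Lemma~\ref{eq:higher:order:ests}, which are exactly what produces the $e^{-(\frac12-\frac{4}{2m-7})s}$ decay rate. For the $|\gamma|=4$ case of \eqref{eq:Ftilde_est} on $|y|\le \ell$, the relevant inputs are the near-origin bootstrap \eqref{eq:bootstrap:Wtilde4} and the small-ball $L^\infty$ bounds obtained by Taylor expansion from the values and constraints at $y=0$.

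The main obstacle is purely bookkeeping: tracking which of the dozen-or-so terms in each of \eqref{eq:FW:def}--\eqref{eq:A:def} saturates each of the weights $\eta^{-1/15}$, $\eta^{-1/6}$, $\eta^{-1/3}$, $\eta^{-2/5}$ at the various derivative orders, and confirming that no interpolation loss (of size $e^{\frac{C}{2m-7}s}$) exceeds what is claimed. Once one writes out the Leibniz expansions, the verification reduces to a finite, case-by-case tabulation; the interpolation losses at $|\gamma|=2,3$ are controlled by the margin that our choice $m\ge 18$ provides, and the $\eta^{-1/15}$ factor never combines multiplicatively with another weight because the $K$-forcing appears linearly in each piece of $F_W$, $F_Z$, $F_{A\nu}$.
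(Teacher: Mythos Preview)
Your proposal is correct and follows essentially the same approach as the paper: expand the forcing definitions \eqref{eq:FW:def}--\eqref{eq:A:def} term by term, apply the Leibniz rule, and bound each piece using the bootstrap assumptions, Lemma~\ref{lem:BBS}, Lemma~\ref{lem:US_est}, and Lemma~\ref{eq:higher:order:ests}. The paper organizes the computation slightly differently by explicitly isolating the baroclinic contributions $\mathcal{I}_{W,\gamma}=\abs{e^s \p^\gamma(\Jcal\sound^2\p_1 K)}+\abs{\p^\gamma(\Ncal_\mu\sound^2\p_\mu K)}$ and $\mathcal{I}_{A,\gamma}=\abs{e^{-s/2}\p^\gamma(\sound^2\Tcal^\nu_\mu\p_\mu K)}$ and deferring all remaining (isentropic) terms to Lemma~7.3 of \cite{BuShVi2019b}; this makes transparent that the $\eta^{-1/15}$ weight and the $e^{-5s/8}$-type rates in \eqref{eq:F_WZ_deriv_est} arise solely from the new $K$-terms, but the underlying argument is identical to what you outline.
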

\begin{proof}[Proof of Lemma~\ref{lem:forcing}]
By the definition \eqref{eq:FW:def} we have
\begin{align*}
\abs{\partial^\gamma F_W}&\les {\abs{\partial^{\gamma}(\sound \Tcal^\nu_\mu \partial_{\mu} A_\nu)}}+e^{-\frac s2} \abs{\partial^{\gamma}(A_\nu \Tcal^\nu_i \dot{\Ncal}_i)}+ e^{-\frac s2} \abs{\partial^{\gamma}(A_\nu \Tcal^\nu_j \Ncal_i)}
\notag \\
&+ e^{-\frac s2} \abs{\partial^{\gamma}\left(\left(V_\mu +\Ncal_\mu  U \cdot \nn +  A_\nu \Tcal^\nu_\mu \right)A_\gamma \Tcal^\gamma_i \Ncal_{i,\mu}\right)}
+ e^{-\frac s2} \abs{\partial^{\gamma}\left(\sound \left( A_\nu \Tcal^\nu_{\mu,\mu} + U\cdot \Ncal \Ncal_{\mu,\mu} \right)\right)}
\notag \\
& +\underbrace{\abs{ e^s \p^\gamma (\Jcal\sound^2   \p_1 K)} + \abs{ \p^\gamma(\Ncal_\mu \sound^2 \p_\mu K)}}_{\mathcal I_{W,\gamma}}
\notag 
\end{align*}
The bounds for the first five terms on the right side follow as in the proof of Lemma 7.3 in \cite{BuShVi2019b},  and we have that
\begin{align*}
\abs{\partial^\gamma F_W}&\les\abs{\mathcal I_{W,\gamma}}+
\begin{cases}
e^{-\frac s 2 },  &\mbox{if } \abs{ \gamma}=0\\
e^{-s} \eta^{- \frac16 + \frac{2 \abs{\gamma} +1}{3(2m-5)}}(y) ,&\mbox{if } \gamma_1 \geq 1 \mbox{ and } \abs{ \gamma}=1,2 \\
 M^2 e^{-s}, & \mbox{if } \gamma_1=0\mbox{ and } \abs{\check \gamma}=1\\
 e^{- (1 -  \frac{3}{2m-7} )s}, & \mbox{if } \gamma_1=0\mbox{ and } \abs{\check \gamma}=2
\end{cases} 
\end{align*}
Invoking  \eqref{gerd},  \eqref{eq:S_bootstrap}, \eqref{eq:K:higher:order}, \eqref{e:bounds_on_garbage} and Lemma \ref{lem:US_est},  we obtain that
\begin{align}
\abs{\mathcal I_{W,\gamma}}& \les\sum_{\beta\leq \gamma,~\beta_1=0}e^{-\frac{\abs{\beta}}{2}s}
\left(e^{s}  \abs{\partial^{\gamma-\beta}\left( \sound ^2 \p_1 K \right)}
+\eps \abs{\partial^{\gamma-\beta}\left( \sound ^2 \check\nabla   K \right)} \right)\\&
 \les\begin{cases}
e^{-\frac s2},  &\mbox{if } \abs{ \gamma}=0\\
e^{-s}(\eta^{-{\frac{1}{15}} }+e^{-{\frac{5}{8}} s}) ,& \mbox{if } \gamma= (1,0,0) \\
e^{- {\frac{5}{8}} s}, & \mbox{if } \gamma_1=0\mbox{ and } \abs{\check \gamma}=1\\
e^{-(1-\frac{4}{2m-7})s}\eta^{-{\frac{1}{15}} }+e^{-({\frac{13}{8}} -\frac{3}{2m-7})s} ,  
    & \mbox{if } \gamma_1=1\mbox{ and } \abs{\check \gamma}=1\\
e^{-(1-\frac{4}{2m-7})s}\eta^{-{\frac{1}{15}} } ,& \mbox{if } \gamma= (2,0,0) \\
e^{-( {\frac{5}{8}} + \frac{1 - 2 \beta}{2 m-7 })s} , & \mbox{if } \gamma_1=0\mbox{ and } \abs{\check \gamma}=2
\end{cases}\label{e:IWest}
\end{align}
Using the same set of estimates we also obtain the lossy bound
\begin{equation}
\abs{\mathcal I_{W,\gamma}}\les e^{ -\frac{s}{2}    } \label{e:FW3A}
\end{equation}
for $\abs{\gamma}=3$,
which we shall need later in order to prove \eqref{e:FW3},
and 
\begin{equation}
 \abs{\mathcal I_{W,\gamma}}\les \eps^{\frac 16  }      \label{e:FW4A}
\end{equation}
for $\abs{\gamma}=4$ and $\abs{y}\leq \ell$,
which we shall need later in order to prove the last case of \eqref{eq:Ftilde_est}. 

Then,  additionally  using \eqref{e:space_time_conv},  we obtain the stated bounds claimed in  \eqref{eq:F_WZ_deriv_est} for $\p^\gamma F_W$.  
Comparing \eqref{eq:FZ:def} and \eqref{eq:FW:def}, we note that the estimates on $\p^\gamma F_Z$ claimed in \eqref{eq:F_WZ_deriv_est} are 
completely analogous to the estimates ones $\p^\gamma F_W$ up to a factor of $e^{-\frac s2}$.

Now we consider the estimates on $F_A$. By definition \eqref{eq:A:def}, we have that
\begin{align*}
\abs{\partial^{\gamma} {F}_{A_\nu}}&\les e^{-\frac s2}  \abs{\partial^{\gamma}( \sound T^\nu_\mu \p_{\mu} \sound)} 
 +  e^{-s}  \abs{ \partial^{\gamma}\left(\left( U\cdot\Ncal \Ncal_i + A_\gamma \Tcal^\gamma_i\right) \dot{\Tcal}^\nu_i \right)} 
 +  e^{-s}  \abs{\partial^{\gamma }\left(\left(  U\cdot \Ncal \Ncal_j + A_\gamma \Tcal^\gamma_j\right)\Tcal^\nu_i \right)}\\
 &\qquad
 + e^{-s} \abs{\partial^{\gamma}\left( \left(V_\mu + U\cdot \Ncal \Ncal_\mu + A_\gamma \Tcal^\gamma_\mu\right) \left(U\cdot \Ncal \Ncal_i 
 +    A_\gamma \Tcal^\gamma_i \right) \Tcal^\nu_{i,\mu} \right)}  
 +\underbrace{\abs{ e^ {-\frac{s}{2}} \p^\gamma (\sound^2 \Tcal^\nu_\mu \p_\mu K)}}_{\mathcal I_{A,\gamma}}
 \,.
  \end{align*}
 Applying the bounds for the first four terms on the right side from Lemma 7.3 of \cite{BuShVi2019b}, we see that
 \begin{align}
 \abs{\partial^\gamma F_{A \nu}} &\les \abs{\mathcal I_{A,\gamma}}+
\begin{cases}
 M^{\frac 12}  e^{-s},  &\mbox{if }  \abs{ \gamma}=0\\
  (M^{\frac 12} + M^2 \eta^{-\frac 16})   e^{-s} , & \mbox{if } \gamma_1=0\mbox{ and } \abs{\check \gamma}=1\\
 e^{-\left( 1-\frac{3}{2m-7}\right)s}  \eta^ {-\frac{1}{6}}, & \mbox{if } \gamma_1=0\mbox{ and } \abs{\check \gamma}=2
\end{cases} \,.
\end{align} 
Applying \eqref{gerd},  \eqref{eq:S_bootstrap}, \eqref{eq:K:higher:order}, and Lemma \ref{lem:US_est},  we find that
\begin{align*}\abs{\mathcal I_{A,\gamma} }\les
\sum_{\beta\leq \gamma,~\beta_1=0}e^{-\frac{\abs{\beta}+1}{2}s}\abs{\partial^{\gamma-\beta}( \sound^2 \check\nabla K)}
\les  
 \begin{cases}
e^{-s},  &\mbox{if } \abs{ \gamma}=0\\
M^2e^{-\frac{3}{2}s}, & \mbox{if } \gamma_1=0\mbox{ and } \abs{\check \gamma}=1\\  
e^{- (\frac32 - \frac{\abs{\gamma}-1}{2m-7})s} , & \mbox{if } \gamma_1=0\mbox{ and } \abs{\check \gamma}=2
\end{cases} \,.
\end{align*}
Thus, combining the above estimates,  we obtain \eqref{eq:F_A_deriv_est}.

Again, using the same argument as in Lemma 7.3 in \cite{BuShVi2019b} for $\abs{\gamma}=3$, and using  \eqref{e:FW3A} yields
\begin{equation}
\sabs{(\partial^\gamma \tilde F_W)^0}
\les  \abs{(\mathcal I_{W,\gamma})^0}+e^{- (\frac12 - \frac{4}{2m-7})s}\les e^{- (\frac12 - \frac{4}{2m-7})s} \,,  \label{chicken-chicken1}
\end{equation}
and also for all $ \abs{y} \leq \LLL $, 
\begin{equation}
\sabs{\partial^\gamma \tilde F_W}
\les   \abs{\mathcal I_{W,\gamma}}+M \eps^{\frac 16} 
\begin{cases}
\eta^{-\frac{1}{6}}(y),&\mbox{if}\quad \abs{\gamma}=0 \\
\eta^{-   \frac 12 + \frac{ 3 }{2m-5}},&\mbox{if}\quad \gamma_1=1 \mbox{ and }\abs{\check \gamma}=0 \\
\eta^{-\frac{1}{3}} ,&\mbox{if}\quad \gamma_1=0 \mbox{ and }\abs{\check \gamma}=1 \\
1 ,&\mbox{if}\quad \abs{\gamma}=4\quad\mbox{and}\quad \abs{y}\leq \ell
\end{cases} \,. \label{chicken-chicken2}
\end{equation}
The estimate \eqref{chicken-chicken1} verifies \eqref{e:FW3}, while 
combining \eqref{chicken-chicken2} with \eqref{e:space_time_conv}, \eqref{e:IWest} and \eqref{e:FW4A} verifies \eqref{eq:Ftilde_est}.
\end{proof}

\begin{corollary}[Estimates on the forcing terms] 
\label{cor:forcing}
Assume that $m\geq 18$. Then, we have
\begin{align}
\abs{F^{(\gamma)}_W} &\les 
\begin{cases}
e^{-\frac s 2 },  &\mbox{if } \abs{ \gamma}=0\\
e^{- {\frac{s}{15}}   } \eta^{- \frac{1}{3} },&\mbox{if } \gamma_1=1 \mbox{ and } \abs{\check \gamma}=0\\
\eps^{\frac {5}{24}} \eta^{-\frac {5}{24}} , & \mbox{if } \gamma_1=0\mbox{ and } \abs{\check \gamma}=1\\
\eta^{-(\frac{29}{60}-\frac{8}{3(2m-7)}})\ppp^{\frac{1}{4}}   ,&\mbox{if }  \gamma_1=2 \mbox{ and } \abs{\check \gamma}=0\\
 M^{\frac13} \eta^{-\frac 13}  ,&\mbox{if } \gamma_1=1 \mbox{ and } \abs{\check \gamma}=1 \\
M^{\frac23} \eta^{-( {\frac{5}{24}} -\frac1{2m-7}) }, & \mbox{if } \gamma_1=0\mbox{ and } \abs{\check \gamma}=2
\end{cases}\label{eq:forcing_W}
\\
\abs{F^{(\gamma)}_Z} &\les \begin{cases}
e^{-s  },  &\mbox{if } \abs{ \gamma}=0\\
M^2e^{-\frac 32s} \eta^{- {\frac{1}{15}} },& \mbox{if } \gamma_1=1 \mbox{ and } \abs{\check \gamma}=0 \\
  e^{- s}, & \mbox{if } \gamma_1=0\mbox{ and } \abs{\check \gamma}=1\\
e^ {-\frac{3s}{2}}(  M^ {\frac{1}{2}}  + e^ { {\frac{4s}{2m-7}}} \eta^ {-{\frac{1}{15}} } )  ,& \mbox{if } \mbox{if } \gamma_1 \ge 1  \mbox{ and } \abs{ \gamma}=2  \\
 e^{-({\frac{9}{8}} - \frac{9}{4(2 m-7) })s}  , & \mbox{if } \gamma_1=0\mbox{ and } \abs{\check \gamma}=2
\end{cases} \label{eq:forcing_Z}
\\
\abs{F^{(\gamma)}_{A\nu}} &\les
\begin{cases}
 M^{\frac 12}  e^{-s},  &\mbox{if }  \abs{ \gamma}=0\\
  (M^{\frac 12} + M^2 \eta^{-\frac 16})   e^{-s} , & \mbox{if } \gamma_1=0\mbox{ and } \abs{\check \gamma}=1\\
 e^{-\left( 1-\frac{3}{2m-7}\right)s}  \eta^ {-\frac{1}{6}} , & \mbox{if } \gamma_1=0\mbox{ and } \abs{\check \gamma}=2
\end{cases}\label{eq:forcing_U}
\\
\abs{F^{(\gamma)}_K} &\les \begin{cases}
M^2e^{-\frac32 s}, & \mbox{if }\gamma_1=0\mbox{ and } \abs{\check \gamma}=1,2\\
  \eps^{\frac18}e^{-\frac32 s} \eta^{-\frac 16}, & \mbox{if } \gamma_1=1 \mbox{ and } \abs{\check \gamma}=0,1\\
\eps^{\frac18}e^{-\frac32 s} \eta^{-\frac 13} \ppp^ {\frac{1}{4}} , & \mbox{if } \gamma_1=2  \mbox{ and }\abs{\gamma}=0
\end{cases}
 \label{eq:forcing_S}
\,.
\end{align}
Moreover, we have the following higher order estimate
\begin{align}
\abs{ \tilde F_W^{(\gamma),0}}&\les  e^{- (\frac12 - \frac{4}{2m-7})s}\quad\mbox{for}\quad \abs{\gamma}=3\label{e:forcing:W3}
\end{align} 
and the following estimates on $\tilde F^{(\gamma)}_W$
\begin{alignat}{2}
\abs{\tilde F^{(\gamma)}_W} &\les  \eps^{\frac 1{11}}
\eta^{- \frac 25}
\quad &&\mbox{for }\gamma=(1,0,0) \mbox{ and }  \abs{y} \leq \LLL \label{eq:Ftilde_d1_est}\\
\abs{\tilde F^{(\gamma)}_W} &\les  \eps^{\frac 1{12}} \eta^{-\frac 13}
\quad &&\mbox{for }\gamma_1=0,  \abs{\check \gamma}=1 \mbox{ and } \abs{y} \leq \LLL\label{eq:Ftilde_dcheck_est}\\
\abs{\tilde F^{(\gamma)}_W} &\les \eps^{\frac18}
+ \eps^{\frac{1}{10}} (\log M)^{\abs{\check \gamma}-1}
\quad &&\mbox{for }\abs{\gamma}=4 \mbox{ and }\abs{y}\leq \ell\label{eq:Ftilde_4th_est}\,.
 \end{alignat}
\end{corollary}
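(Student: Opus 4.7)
The proof is by direct case-by-case verification, combining Lemmas~\ref{lemma_g} and~\ref{lem:forcing} with the bootstrap assumptions on $W$, $Z$, $A$, $K$ in \eqref{eq:W_decay}--\eqref{eq:S_bootstrap}, the higher-order Gagliardo-Nirenberg bounds of Lemma~\ref{eq:higher:order:ests}, and the $L^\infty$ estimates of Lemma~\ref{lem:US_est}. The plan is to apply the Leibniz rule to the defining formulas \eqref{eq:F:W:def}--\eqref{eq:F:ZA:def} and \eqref{eq:p:gamma:tilde:F}, estimate each summand, and identify the worst contribution. Schematically, $F_W^{(\gamma)}$ decomposes as $\partial^\gamma F_W$ plus three families of commutator-type terms: (i) $\partial^{\gamma-\beta} G_W\cdot\partial_1\partial^\beta W$, (ii) $\partial^{\gamma-\beta} h_W^\mu\cdot\partial_\mu\partial^\beta W$, and (iii) $\partial^{\gamma-\beta}(\Jcal W)\cdot\partial_1\partial^\beta W$ with restrictions on $\abs{\beta}$. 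The base term $\partial^\gamma F_W$ is handled by \eqref{eq:F_WZ_deriv_est}, the factors of $G_W,h_W$ by Lemma~\ref{lemma_g}, and the $W$-factors by the bootstrap \eqref{eq:W_decay} (when $\abs{\beta}\le 2$) or by Lemma~\ref{eq:higher:order:ests} (when $\abs{\beta}\ge 2$). The same strategy is used for $F_Z^{(\gamma)}$, $F_{A\nu}^{(\gamma)}$ and $F_K^{(\gamma)}$, with the modification that for $F_K^{(\gamma)}$ there is no analogue of $\partial^\gamma F_W$ (the $K$ equation is homogeneous), which accounts for the improved decay rate $e^{-\frac{3s}{2}}$ in \eqref{eq:forcing_S}. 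The Jacobian factor $\Jcal$ and the normal/tangent vectors contribute only through \eqref{e:bounds_on_garbage}, and can be absorbed thanks to \eqref{e:space_time_conv}.

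For each multi-index $\gamma$ listed in \eqref{eq:forcing_W}--\eqref{eq:forcing_S}, the aim is to show that the sum of contributions from (i)--(iii) does not beat the bound stated for $\partial^\gamma F_W$ (resp.~$\partial^\gamma F_Z$, $\partial^\gamma F_{A\nu}$) in Lemma~\ref{lem:forcing}. For most indices this is a straightforward pairing (e.g.\ for $\gamma = (1,0,0)$ in \eqref{eq:forcing_W}, the dominant contribution is the type-(iii) term $\Jcal\partial_1 W\cdot \partial_1 W$, which already yields $\eta^{-1/3}$ via \eqref{eq:W_decay}, dominating the $\partial^\gamma F_W$ contribution). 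For the second-order indices the issue is more delicate: the type-(iii) terms with $\abs{\beta}=\abs{\gamma}-1$ and $\beta_1 = \gamma_1$ are not present in the sum, which is exactly what makes the analysis close; the remaining type-(iii) terms are controlled using the higher-order GN bound \eqref{eq:W:GN} for $\partial^\gamma W$ with $\gamma_1\ge 2$, which supplies the $\eta^{-1/3}\ppp^{1/4}$ weight required for $\gamma=(2,0,0)$. The $\ppp^{1/4}$ factor propagates through the estimates and into the final bound.

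For the higher-order value \eqref{e:forcing:W3} of $\tilde F_W^{(\gamma),0}$ at $y=0$, the constraints \eqref{eq:constraints} together with $\partial^\beta \tilde W(0,s)=0$ for $\abs{\beta}\le 2$ kill most Leibniz summands in \eqref{eq:p:gamma:tilde:F}. The surviving terms involve $\partial^\gamma \tilde F_W(0,s)$ (controlled by \eqref{e:FW3}), $\partial^\gamma \bar W(0)$ (from the explicit profile), and $\partial^\gamma\tilde W(0,s)$ for $\abs{\gamma}=3$ (bounded via \eqref{eq:bootstrap:Wtilde3:at:0}). Adding these gives the stated $e^{-(1/2 - 4/(2m-7))s}$ rate. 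The tilde bounds \eqref{eq:Ftilde_d1_est}--\eqref{eq:Ftilde_4th_est} for $\abs{y}\le\LLL$ (or $\abs{y}\le\ell$) are obtained by the identical Leibniz procedure applied to \eqref{eq:p:gamma:tilde:F}, where now the $\tilde W$-factors are controlled by \eqref{eq:tildeW_decay}--\eqref{eq:tildeW_decay2} and the base term $\partial^\gamma\tilde F_W$ by \eqref{eq:Ftilde_est}. In the fourth-order bound \eqref{eq:Ftilde_4th_est}, the logarithmic loss $(\log M)^{\abs{\check\gamma}-1}$ appears because the fourth $\check\nabla$-derivative of $\tilde W$ itself carries such a loss cf.~\eqref{eq:bootstrap:Wtilde4}, while the implicit $M$-powers from commutator terms are absorbed by the $\eps^{1/8}$ coming from \eqref{eq:Ftilde_est}.

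The main obstacle is purely organizational: each pointwise bound requires splitting the Leibniz sum, identifying which $\beta$ saturates the estimate, and checking that the $\eta$-weights and $s$-exponents agree with the claimed bounds in \eqref{eq:forcing_W}--\eqref{eq:Ftilde_4th_est}. A consistent book-keeping convention is essential, in particular to verify that factors of $M$ arising from $G_W$, $h_W$ and from the GN interpolation in Lemma~\ref{eq:higher:order:ests} are compensated by either a power of $\eps$ (via \eqref{e:space_time_conv}) or by the explicit $M$-powers that already appear in the statement of the corollary. Once the worst case is isolated in each row of the statement, summation over the Leibniz multi-indices produces only an $m$-dependent combinatorial factor, which is absorbed into the $\lesssim$.
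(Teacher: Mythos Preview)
Your plan is correct and matches the paper's approach: the paper likewise decomposes each $F^{(\gamma)}$ into $\partial^\gamma F$ plus the commutator sums, bounds the latter by citing the analogous computation in \cite{BuShVi2019b}, treats the new case $\gamma=(2,0,0)$ explicitly (where the $\psi^{1/4}$ weight enters via \eqref{eq:phi:interp}), and spells out the $F_K^{(\gamma)}$ terms in full since these are genuinely new relative to the isentropic case.

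Two small slips in your exposition, neither fatal but both worth correcting before you write the details. First, for $\gamma=(1,0,0)$ there is \emph{no} type-(iii) term at all: the indicator ${\bf 1}_{|\gamma|\ge 2}$ in the last sum of \eqref{eq:F:W:def} vanishes, so $F_W^{(1,0,0)}=\partial_1 F_W-\partial_1 G_W\,\partial_1 W-\partial_1 h_W^\mu\,\partial_\mu W$; the $\eta^{-1/3}$ comes from $\partial_1 G_W\,\partial_1 W$ (type (i)) and from converting the $e^{-s}\eta^{-1/15}$ in \eqref{eq:F_WZ_deriv_est} via \eqref{e:space_time_conv}. Second, your claim about which type-(iii) terms are absent is backwards: the terms with $|\beta|=|\gamma|-1$ and $\beta_1=\gamma_1$ are \emph{precisely the ones retained} in $F_W^{(\gamma)}$ (the last sum in \eqref{eq:F:W:def}); what is moved to the damping on the left of \eqref{euler_for_Linfinity:a} is the case $\gamma-\beta=e_1$, i.e.\ $\beta_1=\gamma_1-1$. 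The retained terms have $\gamma-\beta=e_\nu$, giving $\partial_\nu(\Jcal W)\,\partial_1\partial^\beta W$, which is harmless since $\partial_\nu(\Jcal W)$ is $\OO(1)$ by \eqref{eq:W_decay} and \eqref{e:bounds_on_garbage}.
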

\begin{proof}[Proof of Corollary~\ref{cor:forcing}]
First we establish \eqref{eq:forcing_W}. Note that in this estimate $\abs{\gamma}\leq 2$, and thus by definition \eqref{eq:F:W:def} we have
\begin{align*}
\abs{F_{W}^{(\gamma)}}&\les \abs{\p^\gamma F_W }
+ \!\!\! \underbrace{ \sum_{0\leq \beta < \gamma}  \left( \sabs{\p^{\gamma-\beta}G_W \p_1 \p^\beta W} + \sabs{\p^{\gamma-\beta}h_W^\mu \p_\mu \p^\beta W}  \right)}_{=: \mathcal I_1} + \underbrace{{\bf 1}_{|\gamma|= 2}  \!\!\! \sum_{\substack{ |\beta| = |\gamma|-1 \\ \beta\le\gamma, \beta_1 = \gamma_1}}\sabs{ \p^{\gamma-\beta} (\Jcal W)   \p_1\p^\beta W  } }_{=: \mathcal I_2} \,.
\end{align*}
We will first consider the case $\gamma\neq (2,0,0)$, since the estimates are analogous to the estimates in the previous paper. 
We have from Corollary 7.4 of \cite{BuShVi2019b}, that
\begin{align*}
\abs{\mathcal I_1}
&\les M \eta^{-\frac 13}\left(e^{-\frac s2}+  \eps^{\frac13}  ({\bf 1}_{\abs{\gamma}=2 } + {\bf 1}_{\abs{\gamma}=\abs{\check \gamma}=1})  \right) \quad\mbox{and}\quad
\abs{\mathcal I_2}\les  {\bf 1}_{|\gamma|= 2}M^{\frac{\abs{\check \gamma}}{3}}  \eta^{-\frac 13}\,. 
\end{align*}
Thus combining these estimates with \eqref{eq:F_WZ_deriv_est}, we obtain that 
\begin{align}
\abs{F_{W}^{(\gamma)}}\les M \eta^{-\frac 13}e^{-\frac s2}+ \begin{cases}
 e^{-\frac s 2 },  &\mbox{if } \abs{ \gamma}=0\\
e^{-s} \eta^{- \frac{1}{15} }
 ,&\mbox{if } \gamma =(1,0,0) \\
  e^{-\frac{5}{8} s  } +M \eps^{\frac13}\eta^{-\frac 13}, & \mbox{if } \gamma_1=0\mbox{ and } \abs{\check \gamma}=1\\
 e^{- (1- {\frac{4}{2m-7}} )s} \eta^{- \frac{1}{15} } +(M \eps^{\frac13} +M^{\frac13}) \eta^{-\frac 13},&\mbox{if } \gamma_1 = 1  \mbox{ and } \abs{ \check \gamma}=1 \\
  e^{- (\frac{5}{8}   -  \frac{3}{2m-7} )s}  +(M \eps^{\frac13} +M^{\frac23})\eta^{-\frac 13}, & \mbox{if } \gamma_1=0\mbox{ and } \abs{\check \gamma}=2
\end{cases} \,.
\end{align}
Then applying \eqref{e:space_time_conv} we obtain  \eqref{eq:forcing_W} for all cases except $\gamma=(2,0,0)$.

For the special case $\gamma=(2,0,0)$,  we have from  \eqref{e:space_time_conv},  \eqref{eq:phi:interp} (with $q=2$), \eqref{eq:W_decay}, \eqref{e:G_W_estimates} and \eqref{e:h_estimates} 
\begin{align*}
\abs{\mathcal I_1}&\les M^{\frac12}e^{-s}\eta^{-\frac13}+M^{\frac43}e^{-\frac s2}\eta^{-\frac13}\ppp^{{\frac{1}{4}} } +e^{-( 2  - \frac{3}{2m-5})s}+M^{\frac23 }e^{- s} \eta^{-\frac 12}(y) 
\les M^{\frac43 }e^{-\frac s2}\eta^{-\frac13}\ppp^{{\frac{1}{4}} }\,.
\end{align*}
From \eqref{eq:F_WZ_deriv_est} and \eqref{eq:phi:interp} (with $q={\tfrac{3}{4}}  \frac{7-2m}{11-2m}$), we have that
\[\abs{\p^\gamma F_W }\les e^{- (1- {\frac{4}{2m-7}} )s} \eta^{-\frac{1}{15}  }\les \ppp^{\frac{1}{4}}  \eta^{-(\frac{29}{60}-\frac{8}{3(2m-7)} )}\,.\]
Thus since $\mathcal I_2=0$ for  $\gamma=(2,0,0)$, we obtain \eqref{eq:F_WZ_deriv_est} for this case.

Similarly, for $\abs{\gamma}\leq 2$, from \eqref{eq:F:ZA:def} we have that
\begin{align}
\abs{F_{Z}^{(\gamma)}}&\les \abs{\p^\gamma F_Z }
+ \sum_{0\leq \beta < \gamma}  \left(\sabs{\p^{\gamma-\beta}G_Z \p_1 \p^\beta Z} + \sabs{\p^{\gamma-\beta}h_Z^\mu \p_\mu\p^\beta Z}\right)
 \notag\\
&\qquad 
+  {\bf 1}_{\abs{\gamma}=2} \sabs{\p_1 Z \p^\gamma(\Jcal W)}   +
\sum_{\substack{ |\beta| = |\gamma|-1 \\ \beta\le\gamma, \beta_1 = \gamma_1}}\sabs{ \p^{\gamma-\beta} (\Jcal W)   \p_1\p^\beta Z  } \notag \\
&= : \abs{\partial^{\gamma}F_{Z}} +\mathcal I_1 +  {\bf 1}_{\abs{\gamma}=2  }\sabs{\p_1 Z \p^\gamma(\Jcal W)} +\mathcal I_2 \, .
\label{insanity-arrives}
\end{align}
Utilizing the  bounds obtained in Corollary 7.4 of \cite{BuShVi2019b}, we have that
\begin{alignat*}{2}
 \sabs{\p_1 Z \p^\gamma(\Jcal W)} &\les M^{\frac 12} e^{-\frac 32 s} \left( M \eta^{-\frac 16} {\bf 1}_{\gamma_1=0} + M^{\frac 23} \eta^{-\frac 13} {\bf 1}_{\gamma_1 \geq 1}  + \eps e^{-\frac s2} \right) \quad&&\mbox{for } \abs{\gamma}=2 \,, \\
 \mathcal I_1
&\les
e^{-\frac32 s}\left(M^2  e^{-\frac s2} + M^3 \eps^{\frac 12} {\bf 1}_{ \abs{\check\gamma}\geq 1}  + M \eps^{\frac12} \eta^{-\frac 16} \right) \quad&&\mbox{for } \abs{\gamma}\leq 2 \,,\\
\mathcal I_2&\les \left({\bf 1}_{\abs{\check\gamma}=1} M^{\frac 12}+{\bf 1}_{\abs{\check\gamma}=2 }M\right) e^{-\frac32 s} \quad&&\mbox{for } \abs{\gamma}=2,~\abs{\check\gamma}\geq 1
\,.
\end{alignat*}
Using \eqref{e:space_time_conv}, we have
\begin{alignat*}{2}
 \sabs{\p_1 Z \p^\gamma(\Jcal W)} &\les M^2e^{-\frac32s}\eta^{-\frac16} \quad&&\mbox{for } \abs{\gamma}=2\,,\\
 \mathcal I_1
&\les Me^{-\frac32s}\eta^{-\frac16}+
 \eps^{\frac 12} {\bf 1}_{ \abs{\check\gamma}\geq 1} e^{-\frac32s}
 \quad&&\mbox{for } \abs{\gamma}\leq 2 \,.
 \end{alignat*}
Thus combining the above estimates with \eqref{e:space_time_conv} and \eqref{eq:F_WZ_deriv_est}, we obtain that
\begin{align*}
\abs{F_{Z}^{(\gamma)}}\les M^2e^{-\frac32s}\eta^{-\frac16}+ e^{-\frac s2}\begin{cases}
 e^{-\frac s 2 },  &\mbox{if } \abs{ \gamma}=0\\
e^{-s} \eta^{-\frac{1}{15}  }
 ,&\mbox{if } \gamma =(1,0,0) \\
  e^{-\frac{5}{8} s  } +M^{\frac12}e^{-s} & \mbox{if } \gamma_1=0\mbox{ and } \abs{\check \gamma}=1\\
 e^{- (1- {\frac{4}{2m-7}} )s} \eta^{-\frac{1}{15} }+M^{\frac12}e^{-s} ,&\mbox{if } \gamma_1 \ge 1  \mbox{ and } \abs{ \gamma}=2 \\
  e^{-( \frac{5}{8} + \frac{1 - 2 \beta}{2 m-7 })s} +M\eps^{\frac12}e^{-s}, & \mbox{if } \gamma_1=0\mbox{ and } \abs{\check \gamma}=2
\end{cases} \,.
\end{align*}

The bounds for $\sabs{F_{A}^{(\gamma)}}$ are obtained in the identical fashion as the bounds for (7.20) in \cite{BuShVi2019b}.

To prove the $\sabs{F_{K}^{(\gamma)}}$ estimate for $\abs{\gamma}\leq 2$, from \eqref{eq:F:ZA:def}, we have that
\begin{align}
\abs{F_{K}^{(\gamma)}}&\les \!\!
 \sum_{0\leq \beta < \gamma} \!\! \left(\sabs{\p^{\gamma-\beta}G_U \p_1 \p^\beta K} + \sabs{\p^{\gamma-\beta}h_U^\mu \p_\mu\p^\beta K}\right)
+  {\bf 1}_{\abs{\gamma}=2} \sabs{\p_1 K \p^\gamma(\Jcal W)}   +\!\!\!\!
\sum_{\substack{ |\beta| = |\gamma|-1 \\ \beta\le\gamma, \beta_1 = \gamma_1}} \!\! \!\!  \!\! \sabs{ \p^{\gamma-\beta} (\Jcal W)   \p_1\p^\beta K  } \notag \\
&= : \mathcal I_1 +  {\bf 1}_{\abs{\gamma}=2}  \sabs{\p_1 K \p^\gamma(\Jcal W)} +\mathcal I_2 \, .
\label{insanity-arrives-again}
\end{align}
Let us further split $\mathcal I_1$ as
$$
\mathcal I_1= \underbrace{\sum_{0\leq \beta < \gamma} \!\!\sabs{\p^{\gamma-\beta}G_U \p_1 \p^\beta K}}_{\mathcal I_{1,1}}
+ \underbrace{\sum_{0\leq \beta < \gamma} \!\! \sabs{\p^{\gamma-\beta}h_U^\mu \p_\mu\p^\beta K}}_{\mathcal I_{1,2}} \,.
$$
Estimating $\mathcal I_{1,1}$, using \eqref{eq:S_bootstrap} and \eqref{e:G_ZA_estimates}, we have that
\begin{align*}
\abs{\mathcal I_{1,1}}&\les
M^2
\begin{cases}
  \eps^{\frac12}\abs{\partial_1 K}, & \mbox{if } \gamma_1=0\mbox{ and } \abs{\check \gamma}=1\\
  e^{-\frac s2}\abs{\partial_1 K}+   \eps^{\frac12}\abs{\check\nabla\partial_1 K}, & \mbox{if } \gamma_1=0\mbox{ and } \abs{\check \gamma}=2\\
  e^{-\frac s2}\abs{\partial_1 K},  & \mbox{if } \gamma=(1,0,0) \\
 e^{-\frac s2}(\abs{\partial_1 K}+\abs{\check\nabla\partial_1 K})+   \eps^{\frac12}\abs{\partial_1^2 K}, & \mbox{if } \gamma_1=1  \mbox{ and }\abs{\gamma}=1\\
  e^{-s}\abs{\partial_1 K}+ e^{-\frac s2}\abs{\partial_1^2 K}, & \mbox{if } \gamma_1=2  \mbox{ and }\abs{\gamma}=0
\end{cases}\\
&\les 
\begin{cases}
 e^{-\frac32 s}, & \mbox{if } \gamma_1=0\mbox{ and } \abs{\check \gamma}=1,2\\
  e^{-2s } ,  & \mbox{if }  \gamma_1=1 \mbox{ and } \abs{\check \gamma}=0,1\\
e^{- \frac52 s}, & \mbox{if } \gamma_1=2  \mbox{ and }\abs{\gamma}=0
\end{cases} \,.
\end{align*}
Similarly, estimating  $\mathcal I_{1,2}$, using \eqref{eq:S_bootstrap} and \eqref{e:h_estimates}, we have that
\begin{align*}
\abs{\mathcal I_{1,2}}&\les
\begin{cases}
  e^{-s}\abs{\check \nabla K }, & \mbox{if } \gamma_1=0\mbox{ and } \abs{\check \gamma}=1\\
 e^{-s}(\eta^{-\frac 16}\abs{\check \nabla K}+\abs{\check \nabla^2 K}), & \mbox{if } \gamma_1=0\mbox{ and } \abs{\check \gamma}=2\\
e^{-s} \eta^{-\frac 16} \abs{\check \nabla K},  & \mbox{if } \gamma=(1,0,0) \\
e^{-s}\left(\eta^{-\frac 16}(\abs{\check\nabla K}+\abs{\check \nabla^2  K })+\abs{\check\nabla\partial_1 K}\right)), & \mbox{if } \gamma_1=1  \mbox{ and }\abs{\gamma}=1\\
e^{-s}(e^{-( 1  - \frac{3}{2m-5})s}\abs{\check\nabla K}+\eta^{-\frac 16}\abs{\check \nabla\partial_1 K }), & \mbox{if } \gamma_1=2  \mbox{ and }\abs{\gamma}=0
\end{cases}\\
&\les \eps^{\frac18}
\begin{cases}
 e^{-\frac32 s}, & \mbox{if }\gamma_1=0\mbox{ and } \abs{\check \gamma}=1,2\\
  e^{-\frac32 s} \eta^{-\frac 16}, & \mbox{if } \gamma_1=1 \mbox{ and } \abs{\check \gamma}=0,1\\
e^{-( \frac52  - \frac{3}{2m-5})s}, & \mbox{if } \gamma_1=2  \mbox{ and }\abs{\gamma}=0
\end{cases} \,.
\end{align*}

For $ {\bf 1}_{\abs{\gamma}=2}  \sabs{\p_1 K \p^\gamma(\Jcal W)} $,  using \eqref{eq:W_decay} and \eqref{eq:S_bootstrap} yields
\begin{align*}
 \sabs{\p_1 K \p^\gamma(\Jcal W)} &\les
\begin{cases}
e^{-\frac32 s}, & \mbox{if }\gamma_1=0\mbox{ and } \abs{\check \gamma}=2\\
M^{\frac23}\eps^{\frac14}e^{-\frac32 s}\eta^{-\frac13} , & \mbox{if } \gamma_1=1 \mbox{ and } \abs{\check \gamma}=1\\
M^{\frac{1}{3}} \eps^{\frac14}e^{-\frac32 s} \eta^{-\frac 13} \ppp^ {\frac{1}{4}} , &  \mbox{if }\gamma_1=2\mbox{ and } \abs{\check \gamma}=0
\end{cases} \,.
\end{align*}

Next, for  $\mathcal I_2$,  we have that
\begin{align*}
\abs{\mathcal I_{2}}&\les
\begin{cases}
e^{-\frac32 s}, & \mbox{if }\gamma_1=0\mbox{ and } \abs{\check \gamma}=1\\
e^{-2s} , & \mbox{if } \gamma_1=1 \mbox{ and } \abs{\check \gamma}=1\\
0, & \mbox{otherwise}
\end{cases}
\end{align*}

Thus combining the above estimates,  we attain
\begin{align*}
\sabs{F_{K}^{(\gamma)}}&\les
\begin{cases}
 M^2e^{-\frac32 s}, & \mbox{if }\gamma_1=0\mbox{ and } \abs{\check \gamma}=1,2\\
  \eps^{\frac18}e^{-\frac32 s} \eta^{-\frac 16}, & \mbox{if } \gamma_1=1 \mbox{ and } \abs{\check \gamma}=0,1\\
 \eps^{\frac18}e^{-\frac32 s} \eta^{-\frac 13} \ppp^{\frac{1}{4}} , & \mbox{if } \gamma_1=2  \mbox{ and }\abs{\gamma}=0
\end{cases}\,,
\end{align*}
where we used \eqref{eq:phi:interp}  (with $q=\tfrac{3}{4}  \frac{5-2m}{8-2m}$).

The proof of  the bounds \eqref{e:forcing:W3}--\eqref{eq:Ftilde_4th_est} is exactly the same as the proof of (7.21)--(7.24) in \cite{BuShVi2019b},
 with the caveat that we have changed the exponent of $\eta$ in \eqref{eq:Ftilde_d1_est} which reflects the change in exponent of $\eta$ in the estimate \eqref{eq:Ftilde_est} for $\gamma=(1,0,0)$ relative to the corresponding estimate in our previous paper.
\end{proof}

\section{Closure of $L^\infty$ based bootstrap for $Z$, $A$, and $K$}
\label{sec:Z:A}
Having established bounds on trajectories as well as on the vorticity, we now improve the bootstrap assumptions for 
$\partial^\gamma Z $ and $\partial^\gamma A$ stated in \eqref{eq:Z_bootstrap} and \eqref{eq:A_bootstrap}.  We shall obtain estimates for $\partial^\gamma Z \circ \pzy$ and 
$\partial^\gamma A \circ \pay$ which are weighted by an appropriate exponential factor $e^{\mu s}$.

From \eqref{euler_for_Linfinity:b} we obtain that $e^{\mu s}\p^\gamma Z$ is a solution of
\begin{align*} 
 \p_s (e^{\mu s}\p^\gamma Z) + D_Z^{(\gamma,\mu)} ( e^{\mu s}\p^\gamma Z)
  +  \left( \mathcal{V}_Z \cdot \nabla\right) ( e^{\mu s}\p^\gamma Z) &=e^{\mu s} F^{(\gamma)}_Z  \,,
\end{align*} 
where the damping function is given by
$$
D_Z^{(\gamma,\mu)}:=-\mu + \tfrac{3\gamma_1 + \gamma_2 + \gamma_3}{2} + \beta_2 \beta_\tau  \gamma_1  \Jcal \p_1 W \,.
$$
Upon composing with the flow of ${\mathcal V}_Z$, from Gr\"onwall's inequality  it follows that
\begin{align}
e^{\mu s}\abs{\partial^\gamma Z\circ \pz^{y_0}(s)}
&\leq   \eps^{-\mu} \abs{\partial^\gamma Z(y_0,-\log\eps)} \exp\left(- \int_{-\log\eps}^s  D_{Z}^{(\gamma,\mu)}\circ\pz^{y_0}(s')  \,ds'\right) \notag\\
&\qquad +\int_{-\log\eps}^s e^{\mu s'}\abs{F_Z^{(\gamma)}\circ \pz^{y_0}(s')}\exp\left(- \int_{s'}^s D_{Z}^{(\gamma,\mu)}\circ\pz^{y_0}(s'') \,ds''\right) \,ds' \,.\label{eq:weighted:Z:bnd}
\end{align}
Similarly, from \eqref{euler_for_Linfinity:c} we have that  $e^{\mu s}\p^\gamma A$ and $e^{\mu s}\p^\gamma K$ are solutions of
\begin{align*} 
  \p_s (e^{\mu s}\p^\gamma K) + D_K^{(\gamma,\mu)} ( e^{\mu s}\p^\gamma K)
  +  \left( \mathcal{V}_U \cdot \nabla\right) ( e^{\mu s}\p^\gamma K) &=e^{\mu s} F^{(\gamma)}_K  \,,
\end{align*} 
where 
$$
D_K^{(\gamma,\mu)}:=-\mu + \tfrac{3\gamma_1 + \gamma_2 + \gamma_3}{2} + \beta_1 \beta_\tau  \gamma_1 \Jcal \p_1 W   \,,
$$
and hence, again by Gronwall's inequality, we have that
\begin{align}
e^{\mu s}\abs{\partial^\gamma K\circ \pa^{y_0}(s)}
&\leq   \eps^{-\mu} \abs{\partial^\gamma K(y_0,-\log\eps)} \exp\left(- \int_{-\log\eps}^s  D_K^{(\gamma,\mu)}\circ\pa^{y_0}(s')  \,ds'\right) \notag\\
&\qquad +\int_{-\log\eps}^s e^{\mu s'}\abs{F_K^{(\gamma)}\circ \pa^{y_0}(s')}\exp\left(- \int_{s'}^s D_K^{(\gamma,\mu)}\circ\pa^{y_0}(s'') \,ds''\right) \,ds' \,.\label{eq:weighted:K:bnd}
\end{align}
For each choice of $\gamma \in {\mathbb N}_0^3$ present in \eqref{eq:Z_bootstrap} and \eqref{eq:A_bootstrap}, we shall require that the exponential factor $\mu$ satisfies
\begin{equation}\label{eq:mu_cond}
 \mu  \leq  \tfrac{3\gamma_1 + \gamma_2 + \gamma_3}{2} \,,
\end{equation}
which, in turn, shows that
\begin{equation}\label{eq:DZ_lower_bnd}
D_Z^{(\gamma,\mu)}\leq 2 \beta_2 \gamma_1 \abs{\partial_1 W}\,.
\end{equation}
For the last inequality, we have used the bound $\abs{\beta_\tau \Jcal}\leq 2$, which follows from \eqref{eq:beta:tau} and \eqref{e:bounds_on_garbage}. Combining \eqref{eq:mu_cond}, \eqref{eq:DZ_lower_bnd}, and \eqref{eq:p1W:PhiZ},  for $s\geq s' \ge -\log \eps$,  we find that
\begin{equation}\label{eq:damping_Z}
 \exp\left(- \int_{s'}^s  D_{Z}^{(\gamma,\mu)}\circ\pz^{y_0}(s')  \,ds'\right)\les  \exp\left(  \left( \mu -  \tfrac{3\gamma_1 + \gamma_2 + \gamma_3}{2}\right) (s-s') \right)\les 1\,.
 \end{equation}
Replacing $\beta_2$  with $\beta_1$ in \eqref{eq:DZ_lower_bnd}, we  similarly obtain that  for $s\geq s'\geq -\log\eps$,
\begin{equation}\label{eq:damping_U}
  \exp\left(- \int_{s'}^s  D_{K}^{(\gamma,\mu)}\circ\pa^{y_0}(s') \,ds'\right)
 \les 1\,.
 \end{equation}
Then as a consequence of \eqref{eq:weighted:Z:bnd}, \eqref{eq:mu_cond}, \eqref{eq:damping_Z} and \eqref{eq:damping_U}, 
\begin{align}
e^{\mu s}\abs{\partial^\gamma Z\circ \pz^{y_0}(s)}&\les    \eps^{-\mu} \abs{\partial^\gamma Z(y_0,-\log\eps)}\notag\\
&\qquad+\int_{-\log\eps}^s e^{\mu s'}\abs{F_Z^{(\gamma)}\circ \pz^{y_0}(s')} \exp\left(\left( \mu -  \tfrac{3\gamma_1 + \gamma_2 + \gamma_3}{2}\right) (s-s')\right) \,ds' \label{eq:weighted:Z:bnd2:alt}
\\
e^{\mu s}\abs{\partial^\gamma K\circ \pz^{y_0}(s)}&\les    \eps^{-\mu} \abs{\partial^\gamma K(y_0,-\log\eps)}\notag\\
&\qquad+\int_{-\log\eps}^s e^{\mu s'}\abs{F_S^{(\gamma)}\circ \pz^{y_0}(s')} \exp\left(\left( \mu -  \tfrac{3\gamma_1 + \gamma_2 + \gamma_3}{2}\right) (s-s')\right) \,ds' \label{eq:weighted:K:bnd2:alt}
\end{align}
and
\begin{align}
e^{\mu s}\abs{\partial^\gamma Z\circ \pz^{y_0}(s)}
&\les   \eps^{-\mu} \abs{\partial^\gamma Z(y_0,-\log\eps)}
+\int_{-\log\eps}^s e^{\mu s'}\abs{F_Z^{(\gamma)}\circ \pz^{y_0}(s')} \,ds' \,, \label{eq:weighted:Z:bnd2}
\\
e^{\mu s}\abs{\partial^\gamma K\circ \pu^{y_0}(s)}
&\les   \eps^{-\mu} \abs{\partial^\gamma K(y_0,-\log\eps)}
+\int_{-\log\eps}^s e^{\mu s'}\abs{F_S^{(\gamma)}\circ \pu^{y_0}(s')} \,ds' \,. \label{eq:weighted:K:bnd2} 
\end{align}

\subsection{Estimates on $Z$}

For convenience of notation, in this section we  set $\Phi=\pz^{y_0}$.
We start with the case $\gamma=0$, for which we set $\mu=0$. Then, the first line of \eqref{eq:forcing_Z} combined with \eqref{eq:weighted:Z:bnd2} and our initial datum assumption \eqref{eq:Z_bootstrap:IC} show that
\begin{align*}
\abs{ Z\circ \Phi(s)}
&\les   \abs{ Z(y_0,-\log\eps)}
+\int_{-\log\eps}^s e^{-s'} \,ds' \les \eps \,.
\end{align*}
This improves the bootstrap assumption \eqref{eq:Z_bootstrap} for $\gamma=0$, upon taking $M$ to be sufficiently large to absorb the implicit universal constant in the above inequality.

For the case $\gamma=(1,0,0)$, we set $\mu=\frac 32$ so that \eqref{eq:mu_cond} is verified, and hence from \eqref{eq:Z_bootstrap:IC}, the second case in \eqref{eq:forcing_Z},  and \eqref{eq:weighted:Z:bnd2}, we find that
\begin{align*}
e^{\frac32 s}\abs{\partial_1 Z\circ \Phi(s)}
&\les   \eps^{-\frac32} \abs{\partial_1 Z(y_0,-\log\eps)}
 +\int_{-\log\eps}^s e^{\frac{3}{2}  s'}\abs{F_Z^{(\gamma)}\circ \pz^{y_0}(s')}  \,ds' \\
& \les 1 + M^2\int_{-\log\eps}^s \left(1+\abs{\Phi_1(s')}^{2}\right)^ {-\frac{1}{15} }   \,ds'  \\
& \les 1 + \eps^\frac{1}{30} M^2\int_{-\log\eps}^s  e^{\frac{s}{30}} \left(1+\abs{\Phi_1(s')}^{2}\right)^ {-\frac{1}{15} }   \,ds' 
\, .
\end{align*}
Now, applying 
\eqref{phi-lowerbound_conseq} with $\sigma_1=\frac{1}{30}$ and $\sigma_2 = \frac{2}{15} $, we deduce that by taking $ \eps$ sufficiently small, 
\begin{align} 
Me^{\frac32 s}\abs{\partial_1 Z\circ \Phi(s)} &\les 1\,, \label{zztop1}
\end{align} 
which improves the bootstrap assumption \eqref{eq:Z_bootstrap} for  $M$ taken sufficiently large.

For the case that $\gamma_1 = 1$ and $\abs{  \check \gamma}=1$, we set $\mu= \tfrac{3}{2} $, so that  
$$
\mu - \tfrac{3\gamma_1 + \gamma_2 + \gamma_3}{2} = \tfrac 12 - \gamma_1 \leq - \tfrac 12\,.
$$
We deduce from \eqref{eq:weighted:Z:bnd2:alt}, the fourth case in \eqref{eq:forcing_Z}, the initial datum assumption \eqref{eq:Z_bootstrap:IC}, and Lemma~\ref{lem:phiZ} with $\sigma_1 = { {\frac{5s}{2m-7}}} $, $m \ge 18$,  and $\sigma_2 = \frac{2}{15} $,   that 
\begin{align}
e^{\frac32 s}\abs{\p^\gamma  Z\circ \Phi(s)}
&\les   \eps^{-\frac32} \abs{\p^\gamma  Z(y_0,-\log\eps)}
+\! \int_{-\log\eps}^s \!\!\left( \!\!M^{\frac{1}{2}} \! +\! M\eps^ { {\frac{1}{2m-7}}} e^{ {\frac{5s}{2m-7}}}\!\left(1+\abs{\Phi_1(s')}^{2}\right)^{\!- \frac{1}{15} } \right) e^{-\frac{s-s'}{2}}   \,ds' \notag\\
& \les 1+ M^{\frac{1}{2}} +M\eps^ { {\frac{1}{2m-7}}}   \les  M^ {\frac{1}{2}}  \,.  \label{zztop2}
\end{align}
This improves the bootstrap stated in \eqref{eq:Z_bootstrap} by using the factor $M^{\frac 12}$ to absorb the implicit constant in the above inequality.

We are left to consider $\gamma$ for which $\gamma_1=0$ and $1\leq \abs{\check \gamma} \leq 2$.
For  $\abs{\gamma}=\abs{\check \gamma}=1$, setting $\mu=\frac 12$ (which satisfies
\eqref{eq:mu_cond}) we obtain from 
\eqref{eq:weighted:Z:bnd2}, the forcing bound  \eqref{eq:forcing_Z}, and the initial datum assumption \eqref{eq:Z_bootstrap:IC} that 
\begin{align}
e^{\frac s2 }\abs{\check\nabla Z\circ \Phi(s)}
&\les   \eps^{-\frac12} \abs{\check\nabla Z(y_0,-\log\eps)}
+M^{\frac{1}{2}}  \int_{-\log\eps}^s e^{- s'} \,ds' \les \eps^{\frac 12}\,.  \label{zztop3}
\end{align}
Finally, for $\abs{\gamma}=\abs{\check \gamma}=2$  we set $\mu=1$. As a consequence of \eqref{eq:forcing_Z}, \eqref{eq:Z_bootstrap:IC}, and \eqref{eq:weighted:Z:bnd2}, we obtain
\begin{align}
e^{s }\abs{\check\nabla^2 Z\circ \Phi(s)}
&\les   \eps^{-1} \abs{\check\nabla^2  Z(y_0,-\log\eps)}
+\int_{-\log\eps}^s e^{- ({\frac{1}{8}}  -  \frac{3}{2m-7} )s}   \,ds' \les 1\,,  \label{zztop4}
\end{align}
 Together, the estimates \eqref{zztop1}--\eqref{zztop4} improve the bootstrap bound \eqref{eq:Z_bootstrap} by taking  $M$ sufficiently large.

\subsection{Estimates on $K$}
We shall now set $\Phi=\pu^{y_0}$.
For the case $\gamma=(1,0,0)$, we set $\mu=\frac 32$ so that \eqref{eq:mu_cond} is verified, and hence from \eqref{eq:S_bootstrap:IC}, the second case in \eqref{eq:forcing_S},  and \eqref{eq:weighted:K:bnd2}, we find that
\begin{align*}
e^{\frac32 s}\abs{\partial_1 K\circ \Phi(s)}
&\les   \eps^{-\frac32} \abs{\partial_1 K(y_0,-\log\eps)}
 +\int_{-\log\eps}^s e^{\frac{3}{2}  s'}\abs{F_K^{(\gamma)}(s')}  \,ds' \\
& \les \eps^ {\frac{1}{2}}  +  \eps^ {\frac{1}{8}}  \int_{-\log\eps}^s \left(1+\abs{\Phi_1(s')}^{2}\right)^ {-\frac{1}{6}}   \,ds' \,,
\end{align*}
so that applying
\eqref{phi-lowerbound_conseq} with $\sigma_1=0$ and $\sigma_2 = \frac{1}{3} $, and taking $\eps$ sufficiently small, we deduce that
\begin{align} 
e^{\frac32 s}\abs{\partial_1 K\circ \Phi(s)} &\le \eps^ {\frac{1}{4}}  \,, \label{sstop1}
\end{align} 
which improves the second bootstrap assumption in \eqref{eq:S_bootstrap}.

Next, we study the case that  $\gamma_1=0$ and $1\leq \abs{\check \gamma} \leq 2$.
For  $\abs{\gamma}=\abs{\check \gamma}=1$, setting $\mu=\frac 12$ (which satisfies
\eqref{eq:mu_cond}) we obtain from 
\eqref{eq:weighted:K:bnd2}, the forcing bound  \eqref{eq:forcing_S}, and the initial datum assumption \eqref{eq:S_bootstrap:IC} that 
\begin{align}
e^{\frac s2 }\abs{\check\nabla K\circ \Phi(s)}
&\les   \eps^{-\frac12} \abs{\check\nabla K(y_0,-\log\eps)}
+M^2  \int_{-\log\eps}^s e^{- s'} \,ds' \les \eps^{\frac 12}\,.  \label{sstop3}
\end{align}
For $\abs{\gamma}=\abs{\check \gamma}=2$  we set $\mu=1$. As a consequence of \eqref{eq:forcing_S}, \eqref{eq:S_bootstrap:IC}, and \eqref{eq:weighted:K:bnd2}, we obtain
\begin{align}
e^{s }\abs{\check\nabla^2 K\circ \Phi(s)}
&\les   \eps^{-1} \abs{\check\nabla^2  K(y_0,-\log\eps)}
+M^2 \int_{-\log\eps}^s  e^{-{\frac{s'}{2}} } \,ds' \les \eps^{\frac14}\,,  \label{sstop4}
\end{align}
For $\abs{\gamma_1}=\abs{\check\gamma}=1$  we set $\mu= \frac{13}{8}   $ so that \eqref{eq:mu_cond} is verified.  
From \eqref{eq:forcing_S}, \eqref{eq:S_bootstrap:IC}, and \eqref{eq:weighted:K:bnd2:alt}, we apply 
\eqref{phi-lowerbound_conseq} with $\sigma_1 = {\frac{1}{4}} $ and $\sigma_2= \frac{2}{3} $ to obtain that
\begin{align}
e^{\frac{13}{8}  s}  \abs{\p_1 \check\nabla K\circ \Phi(s)}
&\les   \eps^{- \frac{13}{8} } \abs{\p_1 \check\nabla  K(y_0,-\log\eps)}
+  \eps^ {\frac{1}{8}} \int_{-\log\eps}^s  e^{{\frac{s'}{8}}  }\left(1+\abs{\Phi_1(s')}^{2}\right)^ {-\frac{1}{3}}  \,ds' \notag \\
&\les   \eps^{- \frac{13}{8} } \abs{\p_1 \check\nabla  K(y_0,-\log\eps)}
+  \eps^ {\frac{1}{4}} \int_{-\log\eps}^s  e^{{\frac{s'}{4}}  }\left(1+\abs{\Phi_1(s')}\right)^ {-\frac{2}{3}}  \,ds' \notag \\
&\les \eps^{\frac{3}{8}}   + \eps^  {\frac{1}{4}}   \le \eps^ {\frac{1}{8}} \,.  \label{sstop5}
\end{align}

We next consider  the case that $\gamma = (2,0,0)$.  
From \eqref{euler_for_Linfinity:d}, we have that
\begin{align*} 
\p_s \p_{11}K + (3 + \beta _1 \beta _\tau \Jcal \p_1 W) \p_{11}K + ( \mathcal{V} _U \cdot \nabla )\p_{11}K = F_S^{(2,0,0)} \,,
\end{align*} 
and hence
\begin{align}
  \p_s(e^ {2s} \eta^ \frac{1}{15}  \p_{11}K)  +  D_{K}^{(2,0,0)} ( e^ {2s} \eta^\frac{1}{15}  \p_{11}K)
  + \mathcal{V}_U \cdot \nabla (e^ {2s}\eta^\frac{1}{15}  \p_{11}K) &= e^ {2s}\eta^\frac{1}{15}  F_S^{(2,0,0)}  \notag
 \end{align} 
where
\begin{align}
 D_{K}^{(2,0,0)}=\tfrac{4}{5}  + \beta _1 \beta _\tau \Jcal \p_1 W+ \tfrac{1}{15}  \eta^{-1}-  
\tfrac{2}{15} 
\eta^{-1} \left(y_1 (\beta_1\beta_\tau \Jcal W+G_U ) + 3 h_{U}^{\nu} y_\nu \abs{\check y}^4\right)\,.
 \notag
\end{align}
Composing with $\Phi$, we find that
\begin{align*}
\abs{e^ {2s} \eta^ \frac{1}{15}  \p_{11}K(s)}
&\leq  \abs{ \eps^{-2} \eta^ \frac{1}{15}  \p_{11}K(-\log \eps)} \exp\left(- \int_{s_0}^s   D_{K}^{(2,0,0)}\circ \Phi(s')  \,ds'\right) \notag\\
&\qquad +\int_{s_0}^s\abs{ e^ {2s}\eta^ \frac{1}{15}  F_S^{(2,0,0)} \circ \Phi(s')}\exp\left(- \int_{s'}^s D_{K}^{(2,0,0)}\circ \Phi(s'') \,ds''\right) \,ds' \,.
\end{align*}
Thanks to \eqref{eq:p1W:PhiU} and  \eqref{eq:Deta:upper:bound}, we have that
\begin{align*} 
\exp\left(- \int_{s_0}^s   D_{K}^{(2,0,0)}\circ \Phi(s')  \,ds'\right) \les 1 \,,
\end{align*} 
and thus
using the third case in \eqref{eq:forcing_S}, and the initial datum assumption \eqref{eq:S_bootstrap:IC}, it follows that
\begin{align}
 \eta^\frac{1}{15}  e^{2s}\abs{\p_{11}  K\circ \Phi(s)}
&\les   \eps^ {\frac{1}{4}} 
+  \eps^ {\frac{1}{8}}
\int_{-\log\eps}^s  e^{\frac{s}{2}} \eta(\Phi(s'))^{-\frac{4}{15} } \ppp^{\frac{1}{4}}  (\Phi(s'),s')  \,ds'  \,.
\label{sstop7}
\end{align}
Now by definition of the weight $\ppp$,  we have that 
\begin{align*} 
  e^{\frac{s}{2}}\eta^{-\frac{4}{15} } \ppp^{\frac{1}{4}}   \circ \Phi
& \les  (e^{\frac{s}{2}}\eta^{-\frac{31}{60} } + e^{-\frac{s}{4}}\eta^{-\frac{1}{60}}) \circ \Phi\\
& \les  e^{\frac{s}{2}}\eta^{-\frac{31}{60} } \circ \Phi +\eps^ \frac{1}{60} e^{-{\frac{3}{10}} s}  \\
& \les e^{\frac{s}{2}} (1 + |\Phi| )^{-\frac{31}{30}}  +\eps^ \frac{1}{60}e^{-{\frac{3}{10}} s} 
\end{align*} 
where we used \eqref{e:space_time_conv} for the second inequality.   It follows that
\begin{align*} 
\int_{-\log\eps}^s  e^{\frac{s}{2}} \eta(\Phi(s'))^{-\frac{4}{15} } \ppp^{\frac{1}{4}}  (\Phi(s'),s')  \,ds' 
&\les
\int_{-\log\eps}^s  \!\!\!\! \!\!\!\! 
(e^{\frac{s'}{2}} (1 + |\Phi| )^{-\frac{31}{30}}  +\eps^ \frac{1}{60}e^{-{\frac{3}{10}} s'})   \,ds'  \les 1 \,,
\end{align*} 
where we have used the fact that $\int_{-\log\eps}^s  
\eps^ \frac{1}{60}e^{-{\frac{3}{10}} s'} \,ds'  \les \eps^ {\frac{19}{60}} $ as well as 
 \eqref{phi-lowerbound_conseq} with $\sigma_1= {\sfrac{1}{2}} $ and $\sigma_2 = {\frac{31}{30}}  $.
Hence,  
\begin{align*} 
 \eta^{\alpha} e^{2s}\abs{\p_{11}  K\circ \Phi(s)} \le\eps^ \frac{1}{6} 
\end{align*} 
which improves the fourth bootstrap assumption stated in \eqref{eq:S_bootstrap}.

\subsection{Estimates on $A$}
We can now close the bootstrap bounds \eqref{eq:A_bootstrap} for $\p^\gamma A$.   The bounds for the case that $\gamma_1=0$ and 
$\abs{\check \gamma}=0,1,2$ follow the same argument as given in (10.14) in \cite{BuShVi2019b}, whereas 
the estimate for $\p_1 A$ makes used of estimates for the vorticity.

\begin{lemma}[Relating $A$ and $\Omega$]
\label{lem:remarkable:sheep:structure}
With the self-similar specific vorticity $\Omega$ given by \eqref{svort-trammy}, 
\begin{subequations}
\begin{align}
e^{\frac{3s}{2}} \Jcal \p_1 A_2  
&= (\alpha  e^ {-\frac{K}{2}}  \sound)^{\frac 1\alpha} \Omega \cdot \Tcal^3 + \tfrac 12 \Tcal^2_\mu \left(\p_\mu W + e^{\frac s2} \p_\mu Z\right) - e^{\frac s2} \Ncal_\mu \p_\mu A_2 \notag\\
&\qquad - \tfrac 12 \left( \kappa + e^{-\frac s2} W + Z\right) (\operatorname{curl}_{\tilde x} \Ncal) \cdot \Tcal^3 - A_2 (\operatorname{curl}_{\tilde x} \Tcal^2) \cdot \Tcal^3 \\
e^{\frac{3s}{2}} \Jcal \p_1 A_3  
&= - (\alpha e^ {-\frac{K}{2}}  \sound)^{\frac 1\alpha} \Omega \cdot \Tcal^2 + \tfrac 12 \Tcal^3_\mu \left(\p_\mu W + e^{\frac s2} \p_\mu Z\right) - e^{\frac s2} \Ncal_\mu \p_\mu A_3 \notag\\
&\qquad + \tfrac 12 \left( \kappa + e^{-\frac s2} W + Z\right) (\operatorname{curl}_{\tilde x} \Ncal) \cdot \Tcal^2 - A_3 (\operatorname{curl}_{\tilde x} \Tcal^3) \cdot \Tcal^2 
\,.
\end{align} 
\end{subequations}
\end{lemma}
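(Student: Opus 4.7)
\textbf{Proof proposal for Lemma~\ref{lem:remarkable:sheep:structure}.}
The plan is to read off the identity directly from the definition of the specific vorticity and an explicit decomposition of $\operatorname{curl}_{\tilde x}\tilde u$ in the orthonormal frame $(\Ncal,\Tcal^2,\Tcal^3)$. By \eqref{sv-sheep}, \eqref{svort-trammy}, and the relation $\tilde\zeta=\tilde\omega/\tilde\rho=(\operatorname{curl}_{\tilde x}\tilde u)/\tilde\rho$, together with $\tilde\rho=(\alpha\,\tilde\sigma)^{1/\alpha}e^{-\tilde\scal/(2\alpha)}$ coming from \eqref{sigma1}, we have in self-similar variables
\[
(\alpha e^{-\frac{K}{2}}\sound)^{\frac{1}{\alpha}}\,\Omega=\operatorname{curl}_{\tilde x}\tilde u\quad\text{(at the corresponding points).}
\]
So the lemma will follow by taking the $\Tcal^3$ and $\Tcal^2$ components of this identity, solving algebraically for $\Jcal e^{3s/2}\p_1 A_2$ and $\Jcal e^{3s/2}\p_1 A_3$.

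The key computation is the frame decomposition of the curl. Writing $\tilde u=(\tilde u\cdot\Ncal)\Ncal+a_2\Tcal^2+a_3\Tcal^3$ and using $\operatorname{curl}(fX)=\nabla f\times X+f\operatorname{curl}X$ together with the orthonormality relations from the footnote near \eqref{tangent} ($\Ncal\times\Tcal^3=-\Tcal^2$, $\Tcal^2\times\Tcal^3=\Ncal$, etc.), one obtains
\[
\operatorname{curl}_{\tilde x}\tilde u\cdot\Tcal^3
=-\p_{\Tcal^2}(\tilde u\cdot\Ncal)+\p_{\Ncal}a_2
+(\tilde u\cdot\Ncal)(\operatorname{curl}_{\tilde x}\Ncal)\cdot\Tcal^3
+a_2(\operatorname{curl}_{\tilde x}\Tcal^2)\cdot\Tcal^3
+a_3(\operatorname{curl}_{\tilde x}\Tcal^3)\cdot\Tcal^3,
\]
with the analogous identity for the $\Tcal^2$ component. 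A short direct verification using the explicit formulas \eqref{tangent} (e.g.\ expressing $\Tcal^3_i$ in terms of $f_{,\nu}$ and $\Jcal$) shows that $(\operatorname{curl}_{\tilde x}\Tcal^3)\cdot\Tcal^3=0$ and $(\operatorname{curl}_{\tilde x}\Tcal^2)\cdot\Tcal^2=0$, which is the reason only the off-diagonal curvature terms $A_2(\operatorname{curl}_{\tilde x}\Tcal^2)\cdot\Tcal^3$ and $A_3(\operatorname{curl}_{\tilde x}\Tcal^3)\cdot\Tcal^2$ appear in the statement.

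It remains to rewrite the tangential and normal derivatives in the self-similar variables. Because $\Ncal,\Tcal^\nu$ depend only on $\check x$, the identities below \eqref{tangent} give $\p_{\Tcal^\nu}\tilde\varphi=\Tcal^\nu_\mu\p_{x_\mu}\varphi$ and $\p_{\Ncal}\tilde\varphi=\Jcal\,\p_{x_1}\varphi+\Ncal_\mu\p_{x_\mu}\varphi$ for any $\tilde\varphi(\tilde x)=\varphi(x)$ as in \eqref{usigma-sheep}. Combined with \eqref{tildeu-dot-N}, \eqref{w_ansatz}--\eqref{a_ansatz} and the scaling \eqref{eq:y:s:def}, which gives $\p_{x_1}=e^{3s/2}\p_1$ and $\p_{x_\mu}=e^{s/2}\p_\mu$ for $\mu\in\{2,3\}$, we obtain
\begin{align*}
\p_{\Tcal^2}(\tilde u\cdot\Ncal)&=\tfrac12 \Tcal^2_\mu\big(\p_\mu W+e^{\frac s2}\p_\mu Z\big),\qquad \tilde u\cdot\Ncal=\tfrac12\big(\kappa+e^{-\frac s2}W+Z\big),\\
\p_{\Ncal}a_2&=\Jcal e^{\frac{3s}{2}}\p_1 A_2+e^{\frac s2}\Ncal_\mu\p_\mu A_2.
\end{align*}
Substituting these into the frame decomposition above, dividing by the density factor, and solving for $\Jcal e^{3s/2}\p_1 A_2$ yields the first formula; the second formula is derived identically from the $\Tcal^2$ component of $\operatorname{curl}_{\tilde x}\tilde u$, with a sign flip coming from $\Ncal\times\Tcal^2=\Tcal^3$. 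The only non-routine step is the algebraic verification $(\operatorname{curl}_{\tilde x}\Tcal^\nu)\cdot\Tcal^\nu=0$; everything else is a direct application of the chain rule and the scaling ansatz \eqref{eq:ss:ansatz}.
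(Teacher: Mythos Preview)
Your overall route matches the paper's: identify $(\alpha e^{-K/2}\sound)^{1/\alpha}\Omega=\operatorname{curl}_{\tilde x}\tilde u$, take the $\Tcal^2$ and $\Tcal^3$ components in the orthonormal frame, and pass to self-similar variables via $\p_{\Tcal^\nu}=\Tcal^\nu_\mu\p_{x_\mu}$, $\p_\Ncal=\Jcal\p_{x_1}+\Ncal_\mu\p_{x_\mu}$. The paper does the frame computation by a direct product-rule expansion of $(\p_{\Tcal^3}\mru)\cdot\Ncal-(\p_\Ncal\mru)\cdot\Tcal^3$, packaging the geometric remainder as $\mru\cdot(\p_\Ncal\Tcal^3-\p_{\Tcal^3}\Ncal)$; your use of $\operatorname{curl}(fX)=\nabla f\times X+f\operatorname{curl}X$ is an equivalent bookkeeping.

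The step that fails is your claim $(\operatorname{curl}_{\tilde x}\Tcal^\nu)\cdot\Tcal^\nu=0$. This is \emph{false} for the frame \eqref{tangent}. Take $f=\tilde x_2\tilde x_3$ (i.e.\ $\phi_{23}=\phi_{32}=1$) and evaluate at $(\tilde x_2,\tilde x_3)=(1,0)$: there $f_{,2}=0$, $f_{,3}=1$, $\Jcal=\sqrt2$, so $\Tcal^2=(0,1,0)$, and since $\Tcal^2$ is independent of $\tilde x_1$ one gets $(\operatorname{curl}_{\tilde x}\Tcal^2)\cdot\Tcal^2=\p_{\tilde x_3}\Tcal^2_1=\p_{\tilde x_3}(f_{,2}/\Jcal)\big|_{(1,0)}=1/\sqrt2\neq0$. (What \emph{does} hold is $(\operatorname{curl}_{\tilde x}\Ncal)\cdot\Ncal=0$, because $\Jcal\Ncal=\nabla_{\tilde x}(\tilde x_1-f)$ is a gradient; no such potential exists for $\Tcal^\nu$.) Consequently your decomposition actually produces the extra term $-A_3(\operatorname{curl}_{\tilde x}\Tcal^3)\cdot\Tcal^3$ in the first identity and $+A_2(\operatorname{curl}_{\tilde x}\Tcal^2)\cdot\Tcal^2$ in the second, which you have incorrectly dropped. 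The paper's proof glosses over exactly the same point (it writes the remainder $\mru\cdot(\p_\Ncal\Tcal^3-\p_{\Tcal^3}\Ncal)$ and then asserts it ``yields the desired formula''), so the stated identities appear to be slightly inexact in their curvature terms. This is harmless for the only application, the $L^\infty$ bound \eqref{A-estimates-2}: by \eqref{eq:speed:bound}, \eqref{eq:A_bootstrap} and \eqref{e:bounds_on_garbage} the omitted terms are $\OO(|A|\,|\nabla_{\tilde x}\Tcal^\nu|)\les M^3\eps^2$. But as an exact identity your justification is wrong, and you should either retain the cross-terms or note explicitly that they are being absorbed as lower-order errors.
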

Propositions~\ref{prop:sound} and~\ref{prop:vorticity},  together with  the estimates \eqref{eq:W_decay}, \eqref{eq:Z_bootstrap}, \eqref{eq:A_bootstrap}, \eqref{e:space_time_conv} and \eqref{e:bounds_on_garbage}, and
Lemma~\ref{lem:remarkable:sheep:structure} show that
\begin{align} 
  e^{\frac{3}{2}s} \abs{ \p_1 A_\nu } \les \kappa_0^{\frac{1}{\alpha}}  \eps^ {\frac{1}{21}}   +  (1 + \eps^{\frac 12} M^{\frac 12} ) + (\kappa_0 + \eps^{\frac 16} + M \eps)  + M \eps \le M^ {\frac{1}{4}}   \,,  \label{A-estimates-2}
\end{align} 
for $M$ taken sufficiently large with respect to $\kappa_0^{\frac{1}{\alpha}}  C_{\kappa_0,\alpha}$.

\begin{proof}[Proof of Lemma~\ref{lem:remarkable:sheep:structure}]
We note that for the
velocity $\mathring u$ and with respect to the orthonormal basis $(\Ncal, \Tcal^2,\Tcal^3)$ we have that 
$$
\operatorname{curl}_{\tilde x} \mru = \left( \p_{\Tcal^3} \mru \cdot \Ncal - \p_{\Ncal} \mru \cdot \Tcal ^3\right)\Tcal^2
- \left( \p_{\Tcal^2} \mru \cdot \Ncal - \p_{\Ncal} \mru \cdot \Tcal ^2\right)\Tcal^3
+  \left( \p_{\Tcal^2} \mru \cdot \Tcal^3 - \p_{\Tcal^3} \mru \cdot \Tcal ^2\right)\Ncal \,.
$$
Now, from the definitions \eqref{eq:tilde:u:def}, \eqref{vort00}, \eqref{usigma-sheep},
\eqref{sv-sheep}, \eqref{s_ansatz}, \eqref{Sigma-trammy}, and  \eqref{svort-trammy}, we have that
\begin{align} 
( \alpha e^ {-\frac{K}{2}}  \sound)^ {\sfrac{1}{\alpha }} (y,s) \Omega(y,s) =  (\alpha e^ {-\frac{\mathring \scal}{2}}  \mathring \sigma( x, t))^ {\sfrac{1}{\alpha }}  \mathring \zeta(x, t) = \tilde \rho( \tilde x, t) \tilde \zeta( \tilde x, t) = \tilde \omega ( \tilde x, t) 
= \operatorname{curl} _{ \tilde x}  \tilde u ( \tilde x, t)  = \operatorname{curl} _{ \tilde x} \mathring u (x, t) \,,
\notag
\end{align} 
In particular, 
\begin{align} 
( \alpha e^ {-\frac{K}{2}}  \sound)^ {\sfrac{1}{\alpha }} (y,s) \Omega(y,s) = \operatorname{curl} _{\tilde x} \mathring u(x,t)
= \operatorname{curl} _{ \tilde x} \left( \mathring u (\tilde x_1 - f(\check{\tilde x}, t), \tilde x_2, \tilde x_3, t) \right)\,. \label{vort-correct1}
\end{align} 
We only establish the formula for $\p_1 A_3$, as the one for $\p_1 A_2$ is obtained identically. 
 To this end, we write
\begin{align*} 
\operatorname{curl}_{\tilde x} \mru \cdot \Tcal^2 & = \Tcal^3_j  \p_{\tilde x_j} \mru(x,t)     \cdot \Ncal -    \Ncal_j \p_{\tilde x_j} \mru(x,t)  \cdot \Tcal ^3  \,.
\end{align*} 
By the chain-rule and the fact that $\Ncal$ is orthogonal to $\Tcal^3$, we have that
\begin{align*} 
 \p_{\tilde x_j} \mru(x,t) \Tcal^3_j & = \p_{ x_1} \mru \Tcal^3_1 - f,_\nu \p_{x_1} \mru \Tcal^3_\nu + \p_{x_\nu} \mru \Tcal^3_\nu   =  \Jcal \Ncal \cdot \Tcal^3 \p_{x_1}\mru  +  \p_{x_\nu} \mru \Tcal^3_\nu   =  \p_{x_\nu} \mru(x,t) \Tcal^3_\nu \,.
\end{align*} 
The important fact to notice here is that no $x_1$ derivatives of $\mru$ remain.
Similarly, 
\begin{align*} 
 \p_{\tilde x_j} \mru(x,t) \Ncal_j & = \p_{ x_1} \mru \Ncal_1 - f,_\nu \p_{x_1} \mru \Ncal_\nu + \p_{x_\nu} \mru \Ncal_\nu  =  \Jcal \Ncal \cdot \Ncal \p_{x_1}\mru  +  \p_{x_\nu} \mru \Ncal_\nu   =  \Jcal \p_{x_1}\mru  + \p_{x_\nu} \mru(x,t) \Ncal_\nu \,.
\end{align*} 
Hence, it follows that
\begin{align} 
&\operatorname{curl}_{\tilde x} \mru \cdot \Tcal^2 \notag\\
& = \Tcal^3_\nu\p_{x_\nu} \mru(x,t)     \cdot \Ncal -   \Jcal \p_{x_1}(\mru\cdot \Tcal ^3) -   \Ncal_\nu\p_{x_\nu} \mru(x,t)  \cdot \Tcal ^3 \notag \\
&=\Tcal^3_\nu \p_{x_\nu} (\mru(x,t) \cdot \Ncal )  -   \Jcal \p_{x_1}a_3   - \Ncal_\nu    \p_{x_\nu}( \mru(x,t) \cdot \Tcal ^3)  - \mru(x,t)\cdot   \p_{x_\nu}  \Ncal\,  \Tcal^3_\nu  +   \mru(x,t)  \cdot \p_{x_\nu} \Tcal ^3\,  \Ncal_\nu \notag\\
&= \tfrac{1}{2} \Tcal^3_\nu \p_{x_\nu} (w+z ) -   \Jcal \p_{x_1}a_3   - \Ncal_\nu    \p_{x_\nu}a_3 
+\left( \tfrac{1}{2} (w+z) \Ncal + a_\nu \Tcal^\nu\right) \cdot (\p_{\Ncal}\Tcal ^3 - \p_{\Tcal^3} \Ncal) \label{vort-correct2}
\end{align}
where we have used \eqref{tildeu-dot-N}, \eqref{tildeu-dot-T}, and \eqref{tildewz}.   The identities \eqref{vort-correct1} and \eqref{vort-correct2} 
and the definition of the self-similar transformation in \eqref{eq:y:s:def} and \eqref{eq:ss:ansatz} yield the desired formula for $\p_1 A_3$.
\end{proof}

\section{Closure of $L^\infty$ based bootstrap for $W$}
\label{sec:W}

The goal of this section is to close the bootstrap assumptions which involve $W$, $\tilde W$ and their derivatives, stated in \eqref{eq:W_decay} and \eqref{eq:bootstrap:Wtilde}--\eqref{eq:bootstrap:Wtilde3:at:0}.

\subsection{Estimates for $\partial^{\gamma}\tilde W(y,s)$ for $\abs{y}\leq \ell$}

The estimates in this section closely mirror those given in Section 11.1 of \cite{BuShVi2019b}, as such will we simply summarize the argument.

\subsubsection{The fourth derivative}

Composing with the flow $\pw^{y_0}(s)$, we have that for $\abs{\gamma}=4$ that
\begin{align*}
\tfrac{d}{ds} \left(\p^\gamma \tilde W \circ \pw^{y_0}\right) + \left( D_{\tilde W}^{(\gamma)}\circ \pw^{y_0} \right) \left(  \p^\gamma \tilde W \circ \pw^{y_0}\right)  =  \tilde F_W^{(\gamma)} \circ \pw^{y_0} 
\, ,
\end{align*}
where 
\begin{align}
 D_{\tilde W}^{(\gamma)} := \tfrac{3\gamma_1 + \gamma_2 + \gamma_3-1}{2} 
+ \beta_\tau \Jcal  \left(\p_1  \bar W  + \gamma_1   \p_1 W \right)
 &\geq \tfrac 13  \,,
 \label{eq:p:gamma:tilde:W:damping}
\end{align}
which is a consequence of \eqref{eq:beta:tau} and \eqref{Jp1W}. Then as a consequence of \eqref{eq:Ftilde_4th_est}, \eqref{eq:p:gamma:tilde:W:damping}, and \eqref{eq:tilde:W:4:derivative} and the Gr\"onwall inequality we have that for all $\abs{y_0} \leq \ell$  and  all $s\geq -\log \eps$ such that $\abs{\pw^{y_0}(s)}\leq \ell$ the following estimate
\begin{align}
\abs{\p^\gamma \tilde W \circ \pw^{y_0}} 
\les \eps^{\frac18}
+ \eps^{\frac{1}{10}} (\log M)^{\abs{\check \gamma}-1}\,.
\label{eq:end:of:november}
\end{align}
Hence the bootstrap assumption \eqref{eq:bootstrap:Wtilde4} closes assuming the $\eps$ is chosen sufficiently small relative to $M$.

\subsubsection{Estimates for $\p^\gamma \tilde W$ with $\abs{\gamma}\leq 3$ and $\abs{y}\leq \ell$}

We first consider the estimate on $(\p^\gamma \tilde W )^0$ for $\abs\gamma =3$. Evaluating \eqref{eq:p:gamma:tilde:W:evo} at $y=0$ and applying \eqref{eq:bootstrap:Wtilde4}, \eqref{eq:bootstrap:Wtilde3:at:0},  \eqref{eq:GW:hW:0},  \eqref{e:forcing:W3}, and \eqref{eq:beta:tau} yields the estimate
\begin{align}
\abs{ \p_s (\p^\gamma \tilde W )^0   }
&\les e^{- (\frac 12- \frac{4}{2m-7})s}  + M (\log M)^{4} \eps^{\frac{1}{10}}  {e^{-s + \frac{4s}{2m-7}}} +  M  \eps^{\frac 14} e^{-s}  \les e^{- (\frac 12- \frac{4}{2m-7})s} \,. \label{cool-k}
\end{align}
Using the initial datum assumption \eqref{eq:tilde:W:3:derivative:0} and integrating in time, we may show
\begin{align}
\abs{\p^\gamma \tilde W(0,s)} \leq \tfrac{1}{10} \eps^{\frac 14}
 \label{eq:Schweinsteiger:2}
\end{align}
for all $\abs{\gamma}\leq 3$, and all $s\geq-\log\eps$, closing the bootstrap bound \eqref{eq:bootstrap:Wtilde3:at:0}.

The bootstraps \eqref{eq:bootstrap:Wtilde:near:0} corresponding to $0 \leq \abs{y} \le \ell$, then follow as a consequence  of constraints \eqref{eq:constraints} which imply
\[
\tilde W(0,s) =  \nabla \tilde W(0,s) = \nabla^2 \tilde W(0,s) = 0 \, ,
\]
together with  the estimates \eqref{eq:bootstrap:Wtilde4},  \eqref{eq:Schweinsteiger:2}, and the fundamental theorem of calculus, integrating from $y=0$.

Note that the bootstraps  \eqref{eq:bootstrap:Wtilde}, \eqref{eq:bootstrap:Wtilde1} and \eqref{eq:bootstrap:Wtilde2}, for the case  $\abs{y}\leq \ell$, follows as a consequence of \eqref{eq:bootstrap:Wtilde:near:0}, assuming $\eps$ is sufficiently small. 

\subsection{A framework for weighted estimates}\label{s:hydroxychloroquine}

Let us briefly recall the framework for weighted estimates introduced in Section 11.2 of \cite{BuShVi2019b}. For brevity will drop some intermediary calculations. Suppose some quantity $\RSZ$, satisfies an evolution equation of the form
\begin{equation}\label{eq:q_eq}
\p_s \RSZ + D_\RSZ \; \RSZ   + \mathcal{V}_W\cdot \nabla \RSZ= F_\RSZ \,.
\end{equation}
Weighting $\RSZ$ by $\eta^{\mu}$,
\[q :=\eta^{\mu}\, \RSZ,\]
then
 $q$ satisfies the evolution equation
\begin{align}
  \p_sq  + \underbrace{\left( D_{\RSZ} - \eta^{-\mu} \mathcal{V}_W \cdot \nabla \eta^\mu \right)}_{=: D_q} q + \mathcal{V}_W \cdot \nabla q &= \underbrace{\eta^{\mu}F_{\RSZ}}_{:=F_q} .\label{eq:tildeq}
\end{align}
where $\mathcal D_{q}$ may be expanded as 
\begin{align}
 D_{q}=D_\RSZ -3\mu+3\mu\eta^{-1}- 2 \mu
 \underbrace{\eta^{-1} \left(y_1 (\beta_\tau \Jcal W+G_W ) + 3 h_{W}^{\nu} y_\nu \abs{\check y}^4\right)\,.}_{=: \mathcal D_\eta}\,.
 \label{eq:Dq:def}
\end{align}
As a consequence of \eqref{eq:W_decay}, \eqref{e:space_time_conv}, \eqref{e:bounds_on_garbage_2}, \eqref{eq:beta:tau}, \eqref{e:G_W_estimates}, and \eqref{e:h_estimates} we have for all $s\geq -\log \eps$
\begin{align}
\abs{\mathcal{D}_\eta}
& \leq 5 \eta^{-\frac 13} +   e^{-\frac s3},
\label{eq:Deta:upper:bound}
\end{align}
assuming $\eps$ to be sufficiently small in order to absorb powers of $M$.

Using the evolution equation \eqref{eq:tildeq}, composing with the trajectories $\pw^{y_0}(s)$ such that $\pw^{y_0}(s_0) = y_0$ for some $s_0\geq - \log \eps$ with $\abs{y_0} \geq \ell$ and applying Gr\"onwall's inequality yields
\begin{align}
\label{eq:q:est}
\abs{q\circ \pw^{y_0}(s)}
&\leq   \abs{q(y_0)} \exp\left(- \int_{s_0}^s  D_{q}\circ\pw^{y_0}(s')  \,ds'\right) \notag\\
&\qquad +\int_{s_0}^s\abs{F_q\circ \pw^{y_0}(s')}\exp\left(- \int_{s'}^s D_{q}\circ\pw^{y_0}(s'') \,ds''\right) \,ds' \,.
\end{align}

For the special case $\ell \leq \abs{y_0} \leq \LLL$, we may may apply \eqref{eq:Deta:upper:bound},   \eqref{eq:escape_from_LA},  and the inequality $2 \eta(y) \geq 1 + \abs{y}^2 $ to conclude 
\begin{equation}
2 \mu \int_{s_0}^s  \abs{\mathcal D_\eta\circ\pw^{y_0}(s')}\,ds'
 \leq 70 \log \tfrac{1}{\ell} \,,
 \label{eq:Deta:est}
\end{equation}
 for all $\abs{\mu} \leq \frac 12$. Consequently, the estimates \eqref{eq:q:est} and \eqref{eq:Deta:est} yield
\begin{align}
\abs{q\circ \pw^{y_0}(s)}
&\leq \ell^{-70}    \abs{q(y_0)} \exp\left( \int_{s_0}^s \left( 3\mu - D_{\RSZ}- 3\mu\eta^{-1}  \right)\circ\pw^{y_0}(s')  ds'\right) \notag\\
& \qquad + \ell^{-70} \int_{s_0}^s\abs{F_q\circ \pw^{y_0}(s')}\exp\left(  \int_{s'}^s \left( 3\mu - D_{\RSZ}- 3\mu\eta^{-1}  \right)\circ\pw^{y_0}(s'')  ds''\right) \,ds' \, .
\label{eq:q:est:new}
\end{align}
We will need to consider two scenarios for the initial trajectory: either $s_0> -\log \eps$ and $\abs{y_0}=0$ or $s_0=-\log\eps$ and $\abs{y_0}\geq \ell$. We note that as long as $\abs{y_0}\geq \ell$, then $\abs{\pw^{y_0}(s)}\geq \ell$ for all $s>s_0$ as a consequence of Lemma~\ref{lem:escape} .

Now consider the case $\abs{y_0} \geq \LLL$. In place of \eqref{eq:Deta:est} for the case $\ell \leq \abs{y_0} \leq \LLL$, we have the stronger estimate
\begin{equation}
2 \mu \int_{s_0}^s  \abs{\mathcal D_\eta\circ\pw^{y_0}(s')}\,ds'
 \leq \eps^{\frac{1}{16}} \,,
 \label{eq:Deta:est:new}
\end{equation}
for $s_0\geq -\log\eps$, and $\abs{\mu} \leq \frac 12$.
 Hence \eqref{eq:q:est} and \eqref{eq:Deta:est:new} yield
\begin{align}
\abs{q\circ \pw^{y_0}(s)}
&\leq e^{\eps^{\frac{1}{16}}}    \abs{q(y_0)} \exp\left( \int_{s_0}^s \left( 3\mu - D_{\RSZ} - 3\mu\eta^{-1}  \right) \circ\pw^{y_0}(s') ds'\right) \notag\\
&\qquad  + e^{\eps^{\frac{1}{16}}} \int_{s_0}^s\abs{F_q\circ \pw^{y_0}(s')}\exp\left(  \int_{s'}^s \left( 3\mu - D_{\RSZ}  - 3\mu\eta^{-1}  \right) \circ \pw^{y_0} (s'')ds''\right) \,ds' \, .
\label{eq:q:est:new:new}
\end{align}

\subsection{Estimates of $\tilde W(y,s)$, $\partial_1\tilde W(y,s)$  and $\check \nabla \tilde W(y,s)$ for $\ell \leq \abs{y}\leq \LLL$}
\label{sec:tilde:W:middle}
The estimates of $\tilde W(y,s)$, $\partial_1\tilde W(y,s)$  and $\check \nabla \tilde W(y,s)$ for $\ell \leq \abs{y}\leq \LLL$ mimic those given in Section 11.3 - 11.4 in \cite{BuShVi2019b}. As such, we prove only an abridged summary of the arguments.

In order to close the bootstrap bound \eqref{eq:bootstrap:Wtilde}  on $\tilde W(y,s)$ for $\abs{y}\geq \ell$, we will use the framework in Section \ref{s:hydroxychloroquine} with $\RSZ = \tilde W$, $\mu = - \frac 16$. With these choices, the weighted quantity  $q=\eta^{-\frac16}\tilde W$, the quantity $3 \mu - D_{\RSZ} - 3\mu \eta^{-1}$ present in \eqref{eq:q:est:new} is $- \beta_\tau \Jcal \p_1 W + \frac 12 \eta^{-1}$ and $F_q=\eta^{-\frac 16} \tilde F_W$. 

Applying \eqref{eq:beta:tau}, \eqref{e:bounds_on_garbage}, \eqref{eq:escape_from_LA} and  \eqref{eq:W_decay}, we have
\begin{align}
\label{eq:Wtilde_damping}
\int_{s_0}^{s} \beta_\tau  \abs{\Jcal \p_1 W} \circ \pw^{y_0}(s') + \tfrac 12 \eta^{-1}\circ \pw^{y_0}(s') \,ds' 
\leq 40 \log \tfrac{1}{\ell}
\end{align}
for all $s\geq s_0 \geq -\log \eps$.  The estimate \eqref{eq:escape_from_LA} and \eqref{eq:Ftilde_est} yield the forcing estimate
\begin{align} 
\int_{s_0}^{s}\abs{\eta^{-\frac16} \tilde F_W}\circ \pw^{y_0}(s')\,ds'
\les 
\eps^{\frac 18} \log \tfrac{1}{\ell} 
\label{eq:Wtilde_forcing}
\end{align}
for all $s\geq s_0 \geq -\log \eps$, and $\ell \in (0,1/10]$. 

Combining the bounds \eqref{eq:Wtilde_damping} and \eqref{eq:Wtilde_forcing} into \eqref{eq:q:est:new},  and using
 the initial data assumption \eqref{eq:tilde:W:zero:derivative} if $s_0=-\log \eps$, or alternatively   \eqref{eq:bootstrap:Wtilde:near:0} if $s_0>-\log \eps$, we obtain
 \begin{align} 
\eta^{-\frac 16}(y) \abs{\tilde W(y,s)} \leq \tfrac{1}{10} \eps^{\frac{1}{11}}
\label{tildeWfinal}
\end{align} 
for all $\ell \leq \abs{y} \leq \LLL$ and all $s\geq -\log \eps$. Where we have employed small powers of $\eps$ to absorb all the $\ell$ and $M$ factors. The above estimate \eqref{tildeWfinal} closes the bootstrap \eqref{eq:bootstrap:Wtilde}. 

We now aim to close the bootstrap bound  \eqref{eq:bootstrap:Wtilde1} on  $\partial_1\tilde W(y,s)$ for $\ell \leq \abs{y}\leq \LLL$. For this case, we set $\RSZ = \p_1 \tilde W$, $\mu = \frac 13$ and hence $q=\eta^{\frac13} \p_1 \tilde W$. By \eqref{eq:p:gamma:tilde:W:evo} with $\gamma = (1,0,0)$, we have $3 \mu - D_{\RSZ}=- \beta_\tau \Jcal (\p_1 W + \p_1 \bar W)$, and $F_q =\eta^{\frac 13} \tilde F_W^{(1,0,0)}$. 

Similar to the estimate \eqref{eq:Wtilde_damping}), we may bound the the contributions to \eqref{eq:q:est:new} due to the damping term $3\mu -D_\RSZ$ by
\begin{align}
\label{eq:Wtilde_damping:d1}
\int_{s_0}^{s} \beta_\tau  \abs{\Jcal (\p_1 W + \p_1 \bar W)} \circ \pw^{y_0}(s')\,ds' 
\leq 80 \log \tfrac{1}{\ell} \,.
\end{align}
The contribution due to the forcing $F_q=\eta^{\frac 13} \tilde F_W^{(1,0,0)}$ is bounded using  \eqref{eq:escape_from_LA}  and \eqref{eq:Ftilde_d1_est} in order to attain
\begin{align} 
\int_{s_0}^{s}\abs{F_q}\circ \pw^{y_0}(s')\,ds'
 \les \eps^{\frac{1}{11}}  \log \tfrac{1}{\ell} \,.
\label{eq:Wtilde_forcing:d1}
\end{align}
Inserting \eqref{eq:Wtilde_damping:d1} and \eqref{eq:Wtilde_forcing:d1} into \eqref{eq:q:est:new}, and using our initial datum assumption \eqref{eq:tilde:W:p1=1} when $s_0 = -\log \eps$, respectively \eqref{eq:bootstrap:Wtilde4} for $s_0 > -\log \eps$, yields
\begin{align} 
\eta^{\frac 16}(y) \abs{\p_1 \tilde W(y,s)} \leq \tfrac{1}{10} \eps^{\frac{1}{12}}
\label{p1tildeWfinal}
\end{align} 
for all $\ell \leq \abs{y} \leq \LLL$ and all $s\geq -\log \eps$, where we again have used small powers of $\eps$ to absorb all the $\ell$ and $M$ factors. The above estimate closes the bootstrap \eqref{eq:bootstrap:Wtilde1}. 

Finally, we aim to close the bootstrap \eqref{eq:bootstrap:Wtilde2} on  $\check \nabla \tilde W(y,s)$ for $\abs{y}\geq \ell$. We set $\RSZ = \check\nabla    W$ and $\mu=0$, so that $q= \check \nabla \tilde W$. From \eqref{eq:p:gamma:tilde:W:evo} with $\gamma \in \{(0,1,0),(0,0,1)\}$, we have  $3 \mu - D_{\RSZ} = - \beta_\tau \Jcal  \p_1 \bar W$ and $F_q = \tilde F_W^{(\gamma)}$. 

The integral of the damping term arising in \eqref{eq:q:est:new} is bounded using \eqref{eq:Wtilde_damping} by $40 \log \ell^{-1}$. The  contribution due to the forcing $F_q$ is bounded using  \eqref{eq:escape_from_LA}  and \eqref{eq:Ftilde_dcheck_est} in order to attain
\begin{align} 
\int_{s_0}^{s}\abs{F_q}\circ \pw^{y_0}(s')\,ds'
 \les \eps^{\frac{1}{12}}  \log \tfrac{1}{\ell} \,.
\label{eq:Wtilde_forcing:dtilde}
\end{align}
Inserting \eqref{eq:Wtilde_damping} and \eqref{eq:Wtilde_forcing:dtilde} into \eqref{eq:q:est:new}, and using our initial datum assumption \eqref{eq:tilde:W:check=1}  and \eqref{eq:bootstrap:Wtilde4}, we arrive at
\begin{align} 
\abs{\check \nabla \tilde W(y,s)} \leq \tfrac{1}{10} \eps^{\frac{1}{13}}
\label{p2tildeWfinal}
\end{align} 
for all $\ell \leq \abs{y} \leq \LLL$ and all $s\geq -\log \eps$, thereby closing the bootstrap bound \eqref{eq:bootstrap:Wtilde2}.   We also note that the bootstrap
bound \eqref{eq:W_decay} for the cases that $\abs{\gamma}=0,1$ and $\ell\leq\abs{y}\leq \mathcal L$ follow as a consequence of \eqref{eq:tildeW_decay} together with the $\bar W$ bound (2.48) in \cite{BuShVi2019b}.

\subsection{Estimate for $\partial^{\gamma} W(y,s)$ with $\abs{\gamma}=2$ for $\abs{y}\geq \ell$}
We now consider the case $\abs{\gamma} = 2$, and establish  the third and  fifth bounds of \eqref{eq:W_decay}. 
Unlike  the bounds given in Section 11.6 of \cite{BuShVi2019b},  the bound for  $\partial_{11}W$ makes use of two weight functions, and requires a
new type of analysis.  
 
As such, we now consider the case that $\gamma_1=2$ and $\abs{\check \gamma}=0$.   We have that
\begin{align*}
&  \p_s (\eta^ {\frac{1}{3}} \p_{11}W)  + \bar{ \mathcal{D} }_W^{(2,0,0)}( \eta^ {\frac{1}{3}} \p_{11}W) + \mathcal{V}_W \cdot \nabla (\eta^ {\frac{1}{3}} \p_{11}W )= \eta^{{\frac{1}{3}} }F_W^{(2,0,0)} \\
& \qquad  \bar {\mathcal{D} }_W^{(2,0,0)}=\tfrac32+ \eta^{-1}- {\tfrac{2}{3}} 
 \underbrace{\eta^{-1} \left(y_1 (\beta_\tau \Jcal W+G_W ) + 3 h_{W}^{\nu} y_\nu \abs{\check y}^4\right)}_{=: \mathcal D_\eta}  \,,
\end{align*}
from which it follows that
\begin{align*}
&  \p_s (\eta^ {\frac{1}{3}} \ppp^ {-{\frac{1}{4}} }  \p_{11}W)  + \mathcal{D} _W^{(2,0,0)}( \eta^ {\frac{1}{3}}\ppp^ {-{\frac{1}{4}} } \p_{11}W) + \mathcal{V}_W \cdot \nabla (\eta^ {\frac{1}{3}} \ppp^ {-{\frac{1}{4}} }\p_{11}W )= \eta^{{\frac{1}{3}} }\ppp^ {-{\frac{1}{4}} }F_W^{(2,0,0)} \,,
\end{align*} 
where
\begin{align*} 
 \mathcal{D} _W^{(2,0,0)} 
 & =\tfrac32+ \eta^{-1}- {\tfrac{2}{3}} D_\eta
 - \tfrac{3}{4} e^{-3s} \ppp ^{-1} \eta +\tfrac{1}{2} \ppp^{-1} y_1(e^{-3s} -  \eta^{-2}) \mathcal{V} _W^1 + \tfrac{3}{2}  \ppp^{-1} \abs{\check y}^4 y_\mu (e^{-3s} -  \eta^{-2})\mathcal{V} _W^\mu   \\
& =\tfrac32+ \eta^{-1}- {\tfrac{2}{3}} D_\eta  -\tfrac{3}{4}  e^{-3s} \ppp ^{-1}
-\tfrac{3}{4} \ppp ^{-1} \tfrac{y_1^2+ \abs{\check y}^6}{\eta^2} 
\\
& \qquad\qquad \qquad \qquad 
+ \tfrac{1}{2}  \underbrace{\ppp^{-1}  \left(e^{-3s} - \eta^{-2} \right)\left( y_1( \beta_\tau \Jcal W +G_W) 
+ 3 \abs{\check y}^4  y_\mu h_W^\mu\right)}_{=:\mathcal{D} _\ppp}
 \,,
\end{align*}
and therefore
\begin{align}
\abs{\eta^ {\frac{1}{3}} \ppp^ {-{\frac{1}{4}} }  \p_{11}W\circ \pw^{y_0}(s)}
&\leq   \abs{\eta^ {\frac{1}{3}} \ppp^ {-{\frac{1}{4}} }  \p_{11}W(y_0)} \exp\left(- \int_{s_0}^s   \mathcal{D} _W^{(2,0,0)} \circ\pw^{y_0}(s')  \,ds'\right) \notag\\
&\qquad +\int_{s_0}^s\abs{\eta^{{\frac{1}{3}} }\ppp^ {-{\frac{1}{4}} }F_W^{(2,0,0)}\circ \pw^{y_0}(s')}\exp\left(- \int_{s'}^s  \mathcal{D} _W^{(2,0,0)} \circ\pw^{y_0}(s'') \,ds''\right) \,ds' \,.   \label{chicken-leg}
\end{align}

Since $\ppp^{-1}\leq \eta$, we then have that
$$
\ppp ^{-1} \tfrac{ y_1^2+ \abs{\check y}^6 }{ \eta^2 } \le 1 \,.
$$
Moreover, using \eqref{e:space_time_conv}, we see that
\begin{align*} 
e^{-3s} \ppp ^{-1} \le e^{-3s} \eta \le 40\eps\,,
\end{align*} 
and thus,  we have that
\begin{align} 
\tfrac32 - \tfrac{3}{4}  \ppp ^{-1} \tfrac{y_1^2+ \abs{\check y}^6}{\eta^2} - e^{-3s} \ppp ^{-1}  \ge 0 \,. \label{chicken-wing}
\end{align} 
Again, since $\ppp^{-1}\leq \eta$, then   \eqref{e:space_time_conv}  yields
\begin{align*} 
\abs{\ppp^{-1}  \left(e^{-3s} - \eta^{-2} \right) } \le \tfrac{4}{3}\eta^{-1} \,,
\end{align*} 
Therefore, we see from the definition \eqref{eq:Dq:def} of $\abs{\mathcal{D} _\eta}$, that  $\abs{\mathcal{D} _\ppp}\leq \frac{4}{3}\abs{\mathcal{D} _\eta}$.
It follows from \eqref{eq:Deta:est} that for all $\abs{y_0} \geq \ell$, 
\begin{align} 
 \int_{s_0}^s  \abs{(\tfrac{2}{3} \mathcal D_\eta + \tfrac{1}{2} \mathcal{D} _\ppp)\circ\pw^{y_0}(s')}\,ds'\leq \tfrac{4}{3}\int_{s_0}^s  \abs{\mathcal D_\eta \circ\pw^{y_0}(s')}\,ds'
  \leq 140 \log \tfrac{1}{\ell} \,.
 \label{eq:Dphi:est}
\end{align} 
By \eqref{chicken-wing} and \eqref{eq:Dphi:est}, we see that \eqref{chicken-leg} is bounded as
\begin{align}
\abs{\eta^ {\frac{1}{3}} \ppp^ {-{\frac{1}{4}} }   \p_{11}W\circ \pw^{y_0}(s)}
&\leq  \ell^{-140} \abs{\eta^ {\frac{1}{3}} \ppp^ {-{\frac{1}{4}} }   \p_{11}W(y_0)} 
+ \ell^{-140}\int_{s_0}^s\abs{\eta^{{\frac{1}{3}} }\ppp^ {-{\frac{1}{4}} } F_W^{(2,0,0)}\circ \pw^{y_0}(s')} ds'\,.  \label{chicken-neck}
\end{align}
With the estimate
 \eqref{eq:forcing_W} for $F_W^{(2,0,0)} $, we obtain that
\begin{align*} 
\abs{\eta^{{\frac{1}{3}} }\ppp^ {-{\frac{1}{4}} } F_W^{(2,0,0)} }
\les \eta^{-\frac{3}{20}+\frac{8}{3(2m-7)}} \les \eta^ {-\frac{1}{10}}   \,.
\end{align*} 
Hence, following \eqref{eq:Deta:est:new}, we see that for $\abs{y_0} \ge \ell $, 
\begin{align*} 
\int_{s_0}^s\abs{\eta^{{\frac{1}{3}} }\ppp^ {-{\frac{1}{4}} } F_W^{(2,0,0)}\circ \pw^{y_0}(s')} ds'\
\leq \int_{s_0}^{\infty} \left( 1+ \ell^2 e^{\frac 25 (s'-s_0)}\right)^{-\frac 1{10}} ds' \les \ell^ {-\frac{1}{5}}  \,.
\end{align*} 
By appealing to our initial datum assumption~\eqref{eq:W:gamma=2:p2}  if $s_0 = -\log \eps$, and to \eqref{eq:bootstrap:Wtilde:near:0} when $s> -\log \eps$,
the bound \eqref{chicken-neck} shows that
\begin{align}\label{chicken-fat}
\abs{\eta^ {\frac{1}{3}} \ppp^ {-{\frac{1}{4}} }   \p_{11}W\circ \pw^{y_0}(s)}\les \ell^{-141}\les M^{\frac14}\,.
\end{align}
By choosing first $M$ sufficiently large, the bootstrap assumption \eqref{eq:W_decay} is then improved by
\eqref{chicken-fat}.

It remains to consider the case $\abs\gamma=2$ and $\abs{\check\gamma}=1,2$. The arguments will mimic those given in Section 11.6 of \cite{BuShVi2019b}, and as such, we provide an abridged version of those arguments. For the case $\abs{\check\gamma}=1$ and $\gamma_1=1$, we set $\mu=\frac13$, whereas, for the case $\abs{\check\gamma}=2$ and $\gamma_1=0$, we set $\mu=\frac16$.  Consequently, the damping term $3 \mu - D_{\RSZ}$ present in \eqref{eq:q:est:new} is given by
\begin{align}
3 \mu - D_{\RSZ} 
= 
\begin{cases}
-\tfrac{1}{2} -  \beta_\tau \Jcal \p_1 W\, ,\qquad &\mbox{for} \qquad \abs{\check\gamma}=1 \; \mbox{  and  } \; \gamma_1 = 1 \, ,\\
-  \beta_\tau \Jcal \p_1 W \, ,\qquad &\mbox{for} \qquad \abs{\gamma}=2 \; \mbox{  and  } \; \gamma_1 = 0 \, .
\end{cases}
\label{eq:Magic:Johnson}
\end{align}

Let us first restrict to the case  $\gamma_1=0$ and $\abs{\check \gamma} =2$. Analogous to \eqref{eq:Wtilde_damping},  we have 
\begin{align}
\int_{s_0}^{s} \beta_\tau  \abs{\Jcal \p_1 W} \circ \pw^{y_0}(s')\,ds' 
\leq 40 \log \tfrac{1}{\ell}
\label{eq:Magic:Johnson:1}
\end{align}
and analogously to \eqref{eq:Wtilde_forcing}, applying \eqref{eq:forcing_W}, we have
\begin{align} 
\int_{s_0}^{s}\abs{\eta^{\frac16} F_W^{(\gamma)}}\circ \pw^{y_0}(s')\,ds'
\leq  
M^{\frac56} \log \tfrac{1}{\ell} .
\label{eq:Magic:Johnson:2}
\end{align}
Substituting the bounds \eqref{eq:Magic:Johnson:1} and \eqref{eq:Magic:Johnson:2} into \eqref{eq:q:est:new}, and utilizing our initial datum assumption \eqref{eq:W:gamma=2}  when $s_0 = - \log \eps$, and to \eqref{eq:bootstrap:Wtilde:near:0} when $s> -\log \eps$,
we deduce
\begin{align*}
\eta^{\frac 17}(y) \abs{\check\nabla^2 W(y,s)} 
&\leq \ell^{-110} \eta^{\frac 16}(y_0) \abs{\check\nabla^2 W(y_0,s_0)} +  M^{\frac 56} \ell^{-110}  \log \tfrac{1}{\ell} \notag\\
&\leq \tfrac{1}{10} M^{\frac 16} 
\end{align*}
where we have assumed that $M$ is sufficiently large, used our choice $\ell = (\log M)^{-5} $ and assumed $\eps$ is sufficiently small relative to $M$. Thus we close the bootstrap \eqref{eq:W_decay} for the case $\gamma_1=0$ and $\abs{\check \gamma} =2$.

We now turn our attention to the case $\abs{\check\gamma}=1$, with $\gamma_1 =1$. Applying \eqref{eq:Magic:Johnson} and \eqref{eq:Magic:Johnson:1}, yields the damping bound
\begin{align}
\exp\left( \int_{s'}^s \left( 3\mu - D_{\RSZ}\circ\pw^{y_0}(s'') \right) ds'' \right)\leq \ell^{-120} e^{\frac{s'-s}{2}} 
\label{eq:Magic:Johnson:3}
\end{align}
for any $s> s' > s_0 \geq -\log \eps$.   Substituting  \eqref{eq:Magic:Johnson:3}, together with the forcing estimate  \eqref{eq:forcing_W} into \eqref{eq:q:est:new}, and appealing to our initial datum  assumption~\eqref{eq:W:gamma=2:p1}  if $s_0 = -\log \eps$, and to \eqref{eq:bootstrap:Wtilde:near:0} when $s> -\log \eps$, we deduce
\begin{align}
\eta^{\frac 13}(y) \abs{\p^\gamma W(y,s)} 
&\leq \tfrac{1}{10} M^{\frac 16}
\label{D2Wfinal}
\end{align}
where we have assumed that $M$ is sufficiently large, used our choice $\ell = (\log M)^{-5} $ and assumed $\eps$ is sufficiently small relative to $M$. Thus we close the bootstrap \eqref{eq:W_decay} for the case $\abs{\check\gamma}=1$, with $\gamma_1 =1$.

\subsection{Estimate of $W(y,s)$, $\p_1 W(y,s)$ and $\check \nabla W(y,s)$ for $ \abs{y}\geq \LLL$}

The estimates of $W(y,s)$, $\p_1 W(y,s)$ and $\check \nabla W(y,s)$ for $ \abs{y}\geq \LLL$ are nearly identical to those given in Section 11.7, 11.8 and 11.9 of \cite{BuShVi2019b}. As such, we prove only an abridged summary of the arguments.

Consider first the estimate on $W(y,s)$. We set $\mu = -\frac 16$ and $\RSZ =W$, so that $q = \eta^{-\frac 16} \tilde W$. We have $3 \mu - D_{\RSZ} - 3\mu \eta^{-1}=\frac 12 \eta^{-1}$, and $F_q=\eta^{-\frac 16} ( F_W - e^{-\frac s2} \beta_\tau \dot \kappa)$. The contribution of the damping in \eqref{eq:q:est:new:new} gives us
\begin{align*}
\int_{s_0}^{s}  \tfrac 12 \eta^{-1}\circ \pw^{y_0}(s') \,ds' 
\leq  \LLL^{-\frac 23} = \eps^{\frac{1}{16}},
\end{align*}
and we have from \eqref{eq:F_WZ_deriv_est} and \eqref{eq:acceleration:bound} the forcing bound \begin{align*}
 \int_{s_0}^s \abs{F_q \circ \pw^{y_0}}(s') ds'\les \eps^{\frac 12}.
\end{align*}
Substituting the above two estimates into \eqref{eq:q:est:new:new}, we obtain
\begin{align*}
\abs{\eta^{-\frac 16}   W\circ \pw^{y_0}(s)} \leq 1 + \eps^{\frac{1}{19}}\,,
\end{align*}
where for the case $s_0> -\log \eps$, we used \eqref{eq:bootstrap:Wtilde} and $\bar W$ bound (2.48) in \cite{BuShVi2019b}, and for the case $s_0 = -\log \eps$, we use the  initial data assumption \eqref{eq:rio:de:caca:1}. Thus we close the bootstrap bound in the first line of \eqref{eq:W_decay}.

For the case $\p_1 W(y,s)$ we set $q = \eta^{\frac 13} \p_1 W$, so that $3 \mu - D_{\RSZ} - 3\mu \eta^{-1} \leq - \beta_\tau \Jcal \p_1 W$ and  $F_q\eta^{\frac 13}  F_W^{(1,0,0)}$.  Applying \eqref{eq:W_decay},  and Lemma~\ref{lem:escape}, yields
\begin{align}
\int_{s_0}^s \left( 3\mu - D_{\RSZ} - 3\mu\eta^{-1}  \right) \circ\pw^{y_0}(s') ds'
\leq \eps^{\frac{1}{16}}\,.
\label{eq:need:a:label}
\end{align}
As a consequence of \eqref{eq:forcing_W} and the fact that  $\abs{y_0} \geq \LLL$, we obtain
$$
\int_{s_0}^s \abs{F_q \circ\pw^{y_0}(s')}  ds'\les
 \eps^ {3\alpha } \,.
$$
Substituting the above two estimates into \eqref{eq:q:est:new:new}, we obtain
\begin{align*}
\abs{\eta^{\frac 13}   \p_1W\circ \pw^{y_0}(s)} \leq   \tfrac 32 \,.
\end{align*}
where for the case $s_0> -\log \eps$, we used \eqref{eq:bootstrap:Wtilde1} and the $\bar W$ bound (2.48) in \cite{BuShVi2019b}, and for the case $s_0 = -\log \eps$, we use the  initial data assumption \eqref{eq:rio:de:caca:2}. Thus we close the bootstrap bound in the second line of \eqref{eq:W_decay}.

Finally, we consider the estimate of $\check \nabla W(y,s)$ for $ \abs{y}\geq \LLL$. We set $\mu = 0$ and $q = \check \nabla W$. The damping term is $3 \mu - D_{\RSZ} - 3\mu \eta^{-1} = - \beta_\tau \Jcal \p_1 W$ and so we may reuse the estimate \eqref{eq:need:a:label}. The forcing term $F_q$ may be bounded directly using the third case in \eqref{eq:forcing_W}, which yields
$$
\int_{s_0}^s \abs{F_q \circ \pw^{y_0}(s')} ds' \leq  \eps^{\frac 18}  \,.
$$
We deduce from \eqref{eq:q:est:new:new} that 
\begin{align*}
\abs{\check \nabla W\circ \pw^{y_0}(s)} \leq   \tfrac56\,.
\end{align*}
where for the case $s_0> -\log \eps$, we used \eqref{eq:bootstrap:Wtilde2} and the $\bar W$ bound (2.48) in \cite{BuShVi2019b}, and for the case $s_0 = -\log \eps$, we use the  initial data assumption \eqref{eq:rio:de:caca:3}. Thus we close the bootstrap bound in the second line of \eqref{eq:W_decay}.

\section{Constraints and evolution of modulation variables}
\label{sec:constraints}

\subsection{Solving for the dynamic modulation parameters} 
\label{sec:implicit:crap}
In Section~\eqref{sec:constraints:initial} we have used the evolution equations for $W$, $\nabla W$ and $\nabla^2 W$ at $y=0$ to derive implicit equations for the time derivatives our modulation parameters. The goal of this subsection is to show that these implicit equations are indeed solvable with the initial conditions \eqref{eq:modulation:IC}.
For this purpose it convenient to introduce the notation 
$$
\PP_{\diamondsuit}({\mathsf{b}}_1,\ldots,{\mathsf{b}}_n  \big|  {\mathsf{c}}_1,\ldots, {\mathsf{c}}_n) \qquad  \text{ and } \qquad
\mathcal{R}_{\diamondsuit}({\mathsf{b}}_1,\ldots,{\mathsf{b}}_n  \big|  {\mathsf{c}}_1,\ldots, {\mathsf{c}}_n)
$$
to denote a linear function in the parameters $ {\mathsf{c}}_1,\ldots, {\mathsf{c}}_n$ with  coefficients  which depend on 
${\mathsf{b}}_1,\ldots,{\mathsf{b}}_n$ through smooth polynomial (for $\PP_\diamondsuit$), respectively,  rational functions (for $\mathcal{R} _\diamondsuit$), 
and on the derivatives of $Z$, $A$, and $K$ evaluated at $y=0$. 
The subscript 
$\diamondsuit$ denotes a label, used to 
distinguish the various functions $\PP_\diamondsuit$ and  $\mathcal{R} _\diamondsuit$.
We note that all of the denominators in $ \mathcal{R} _\diamondsuit$ are
bounded from below by a universal constant.   It is important to note that the notation $\PP_\diamondsuit$ and  $\mathcal{R} _\diamondsuit$ is never used when explicit bounds are required.
Throughout this section, we will use  the bootstrap assumptions in Section~\ref{sec:bootstrap} to establish  uniform bounds   on the  coefficients, which in turn,  yields local well-posedness of the coupled system of  
ODE for the modulation variables.

The definition of $\dot\kappa$ in \eqref{eq:dot:kappa:1} may be written schematically using the notation introduced above as  
\begin{align}
\dot \kappa = \PP_\kappa \left(\kappa, \phi \, \big| \,  \dot{Q}, \tfrac{1}{\beta_\tau} e^{\frac s2} h_W^{,0}, \tfrac{1}{\beta_\tau} e^{\frac s2} G_W^{0}  \right)  \, ,
  \label{eq:dot:kappa:2}
\end{align}
where we have used the explicit formula \eqref{eq:FW:0:a} to determine the dependence of $\PP_\kappa$. 
Once we compute $h_W^{,0}$ and $G_W^{0}$ (cf.~\eqref{eq:GW:def:1}--\eqref{eq:hj:def:1} below) we will return to the formula \eqref{eq:dot:kappa:2}.  We point out at this stage that in \eqref{eq:GW:hW:0} below we will show that both $h_W^{,0}$ and $G_W^{0}$ decay at a rate which is strictly faster than $e^{-\frac s2}$, which shows that their contribution to $\dot \kappa$ will be under control. 

Similarly, the definition of $\dot\tau$ in \eqref{eq:dot:tau:1} may be written schematically  as  
\begin{align}
\dot \tau = \PP_{  \tau} \left(\kappa,\phi  \, \big| \,  e^{-2s} \dot{Q},  \tfrac{1}{\beta_\tau} h_W^{,0} \right)   \, ,
  \label{eq:dot:tau:2}
\end{align}
where we have used the explicit formulae \eqref{eq:GW:0:b} and \eqref{eq:FW:0:b} to determine the dependence of $\PP_{\tau}$.  

The schematic dependence of $\dot Q_{1,\nu}$ is determined from  \eqref{eq:check_derivative_GW}. Using \eqref{eq:GW:0:c} and \eqref{eq:FW:0:c} and placing the leading order term in  $\dot Q$ on one side, we obtain
\begin{align}
\dot Q_{1 \nu} 
&= - e^{-\frac s2} \dot Q_{1\mu} \p_\nu A_\mu^0 +   e^{-s} \dot{Q}_{\mu \zeta} A_\zeta^0   \phi_{\mu \nu}  +  e^{-s} \dot{Q}_{\mu \nu} A_\zeta^0 \phi_{\zeta \mu} - \tfrac{\beta_2}{2 \beta_1} e^{\frac s2} \p_\nu Z^0 
+    e^{-s} A_\mu^0 \dot{\phi}_{\mu \nu} \notag\\
&  + \tfrac{\beta_3}{2 \beta_1} \left( (\kappa - Z^0) \p_{\nu \mu} A_\mu^0 - \p_\nu Z^0 \p_\mu A_\mu^0 \right)
 + \tfrac{\beta_3}{\beta_1} e^{-\frac s2}  Z^0   \p_\nu Z^0   (\phi_{22} + \phi_{33}) + \tfrac{\beta_3}{2 \beta_1}  e^{-s} \left(\kappa - Z^0 \right)  A_\zeta^0 \Tcal^{\zeta,0}_{\mu, \mu\nu}  
\notag\\
&   +  e^{-\frac s2}  \left( ( \p_\nu A_\mu^0 - \tfrac 12 e^{-\frac s2} ( \kappa + Z^0) \phi_{\mu\nu}) A_\gamma^0 \right) \phi_{\gamma \mu}  +  \tfrac{1}{2\beta_1\beta_\tau} h_W^{\mu,0} \p_\nu A_\gamma^0   \phi_{\gamma \mu}   +   \left(\tfrac{1}{2\beta_1\beta_\tau} e^{\frac s2} h_W^{\gamma,0} - A_\gamma^0 \right) \phi_{\gamma \nu}
\notag\\
& - \tfrac 14  \beta_4 (\kappa - Z^0) \left( (\kappa - Z^0) ( e^s \p_{1\nu}K^0   - e^{-\frac s2} \p_\mu K \phi_{\mu\nu}) - 2 \p_\nu Z^0 e^s \p_1 K \right)   \, ,
\label{eq:dot:Q:1}
\end{align}
which may be written schematically as
\begin{align}
\dot{Q}_{1\nu} = \PP_{Q,\nu} \left(\kappa, \phi  \, \big| \,  \tfrac{1}{\beta_\tau} e^{\frac s2} h_W^{,0}, e^{-s} \dot \phi, e^{-s} \dot Q \right)
\label{eq:dot:Q:2} \, . 
\end{align}
Note that once $\dot Q_{1\nu}$ is known, we can determine $\dot{n}_2$ and $\dot{n}_3$ by recalling from \cite[Equations (A.4)--(A.5)]{BuShVi2019b} that
\begin{align}
\left[
\begin{matrix} 
1 + \tfrac{n_2^2}{n_1(1+n_1)} & \tfrac{n_2 n_3}{n_1(1+n_1)} \\
\tfrac{n_2 n_3}{n_1(1+n_1)} & 1 + \tfrac{n_3^2}{n_1(1+n_1)}   \\
\end{matrix}\right]  
\left[
\begin{matrix} 
\dot{n}_2  \\
\dot{n}_3 \\
\end{matrix}\right]  
=
\left(\Id + \tfrac{\check n \otimes \check n}{n_1(1+n_1)} \right) \dot{\check n}
= 
\left[
\begin{matrix} 
\dot{Q}_{12} \\
\dot{Q}_{13} \\
\end{matrix}\right]  \,,
\label{eq:dot:n:def}
\end{align}
where $n_1 = \sqrt{1- n_2^2-n_3^2}$. Since the vector $\check n$ is small (see~\eqref{eq:speed:bound} below), and the matrix on the left side is an $\OO(|\check n|^2)$ 
perturbation of the identity matrix, we obtain from \eqref{eq:dot:n:def} a definition of $\dot n$, as desired.

Next, we determine the dependence of $h_W^{\mu,0}$ and $G_W^{0}$. Inspecting \eqref{eq:GW:0:d}--\eqref{eq:GW:0:e} and \eqref{eq:FW:0:d}--\eqref{eq:FW:0:e} and inserting them into \eqref{eq:hj:def:1}, we obtain the   dependence
$$
\tfrac{1}{\beta_\tau}  h_W^{\mu,0}  = e^{-\frac s2} \mathcal{R} _{h,\mu}\left(\kappa, \phi  \, \big| \,  e^{-s} \dot Q, e^{-2s} \dot \phi \right) - \tfrac{1}{\beta_{\tau}} h_W^{\gamma,0} ({\mathcal H}^0)^{-1}_{\mu i}  \phi_{\zeta \gamma} \p_{1i} A_\zeta^0 \, .
$$
Note that although $h_W^{,0}$ appears on both sides of the above, in view of \eqref{eq:A:higher:order} the dependence on the right side is paired with a factor less than $e^{-s} \leq \eps$, and the functions $\phi_{\zeta\gamma}$ are themselves expected to be $\leq \eps$ for all $s\geq - \log \eps$ (cf.~\eqref{eq:speed:bound} below). This allows us to solve for $h_W^{\mu,0}$ and schematically write 
\begin{align}
\tfrac{1}{\beta_\tau}  h_W^{\mu,0}  = e^{-\frac s2} \mathcal{R} _{h,\mu}\left(\kappa, \phi  \, \big| \,  e^{-s} \dot Q, e^{-2s} \dot \phi \right) \,.
\label{eq:hj:def:2}
\end{align}
Returning to \eqref{eq:GW:def:1},
inspecting \eqref{eq:GW:0:d}--\eqref{eq:GW:0:e} and \eqref{eq:FW:0:d}--\eqref{eq:FW:0:e}, and using \eqref{eq:hj:def:2} we also obtain the dependence
\begin{align}
\tfrac{1}{\beta_\tau} G_W^0 =  e^{-\frac s2} \mathcal{R} _{h,\mu}\left(\kappa, \phi  \, \big| \,  e^{-s} \dot Q, e^{-2s} \dot \phi \right) \label{eq:GW:def:2}  \, .
\end{align}
 
 Next, we determine the dependence of $\dot \xi_j$. From \eqref{eq:GW:def:1}--\eqref{eq:hj:def:1}, \eqref{eq:hj:0:a}, \eqref{eq:GW:0:a},    and the fact that $R R^T = \Id$ we deduce that 
\begin{align}
 \dot{\xi}_j 
 = R_{ji}  ( R^T \dot \xi)_i 
 = R_{j1} \left( \tfrac{1}{2\beta_1} (\kappa + \beta_2 Z^0) - \tfrac{1}{2\beta_1\beta_\tau} e^{-\frac s2} G_W^0 \right) + R_{j\mu} \left(A_\mu^0 - \tfrac{1}{2\beta_1\beta_\tau} e^{\frac s2} h_W^{\mu,0} \right) 
\label{eq:dot:xi:def:1}
\end{align}
for $j \in \{1,2,3\}$. Using \eqref{eq:hj:def:2} and \eqref{eq:GW:def:2}, we may then schematically write
\begin{align}
 \dot{\xi}_j  =\mathcal{R} _{\xi,j} \left(\kappa, \phi  \, \big| \,  e^{-s} \dot Q,  e^{-2s} \dot \phi \right) \, .
\label{eq:dot:xi:def:2} 
\end{align}

Lastly, note that $\dot \phi_{\nu\gamma}$ is determined  in terms of $e^{\frac s2} \p_{\nu\gamma} G_W^0$ (which we rewrite in terms of $G_W^0$, $h_W^{\mu,0}$ and $\p_{\nu\gamma}F_W^0$ via \eqref{eq:vanilla:candle}) through the first term on the right side of~\eqref{eq:GW:0:f}
\begin{align}
\dot{\phi}_{\gamma \nu} &=  -  \tfrac{1}{ \beta_\tau}  e^{\frac s2}  \left(G_W^0 \p_{1\nu\gamma} W^0 + h_W^{\mu,0} \p_{\mu\nu\gamma} W^0 - \p_{\nu\gamma} F_W^0 \right)+  \beta_2  e^s \p_{\gamma \nu} Z^0 -   2\beta_1 (\dot{Q}_{\zeta  \gamma} \phi_{\zeta \nu} +  \dot{Q}_{\zeta \nu} \phi_{\zeta \gamma}) \notag\\
&\quad + \left(\tfrac{1}{\beta_\tau} e^{-\frac s2} G_W^0 -  \kappa - \beta_2 Z^0 \right)  \Ncal_{1,\gamma\nu}^0   + \Jcal_{,\gamma\nu}^0 \tfrac{1}{\beta_\tau} e^{-\frac s2} G_W^0  \,,
\label{eq:dot:phi:def:1}
\end{align}
and \eqref{eq:GW:def:1} is used to determine $G_W^0$. In light of \eqref{eq:FW:0:f}, \eqref{eq:GW:def:2} and of \eqref{eq:dot:phi:def:1}, we may schematically write
\begin{align*}
\dot{\phi}_{\gamma \nu} =\mathcal{R} _{\phi,\gamma\nu} \left(\kappa, \phi  \, \big| \,  e^{-s}\dot{Q}, e^{-s} \dot \phi  \right)   - \dot Q_{\zeta\gamma} \phi_{\zeta\nu} -  \dot Q_{\zeta\nu} \phi_{\zeta\gamma}\,,
\end{align*}
which may be then combined with \eqref{eq:dot:Q:2} and \eqref{eq:hj:def:2}
to yield
\begin{align}
\dot{\phi}_{\gamma \nu} = \mathcal{R} _{\phi,\gamma\nu} \left(\kappa, \phi  \, \big| \,  e^{-s}\dot{Q}, e^{-s} \dot \phi  \right)  \,,
\label{eq:dot:phi:def:2}
\end{align}
thus spelling out the dependences of $\dot \phi$ on the other dynamic variables.

The  equations \eqref{eq:dot:kappa:2}, \eqref{eq:dot:tau:2}, \eqref{eq:dot:Q:2}, \eqref{eq:dot:xi:def:2}, and \eqref{eq:dot:phi:def:2} only {\em implicitly} define  $\dot \kappa, \dot \tau, \dot Q_{1\nu}, \dot \xi_j$, and $\dot \phi_{\gamma\nu}$. We may however spell out this implicit dependence and arrive at an autonomous system of ODEs for all $10$ of our modulation parameters, cf.~\eqref{eq:dot:phi:dot:n}--\eqref{eq:dot:kappa:dot:tau} below.

By combining \eqref{eq:dot:Q:2} and \eqref{eq:hj:def:2} with \eqref{eq:dot:n:def}, and recalling \eqref{eq:dot:phi:def:2} we obtain that 
\[
\dot{\phi}_{\gamma \nu} = \mathcal{R} _{\phi,\gamma\nu} \left(\kappa, \phi, \check n  \, \big| \,  e^{-s}\dot{\check n}, e^{-s} \dot \phi  \right) 
\quad \mbox{and} \quad 
\dot{n}_\nu = \mathcal{R} _{n,\nu} \left(\kappa, \phi, \check n  \, \big| \,  e^{-s}\dot{\check n}, e^{-s} \dot \phi  \right) \,.
\]
Therefore, since $e^{-s} \leq \eps$, and the functions $\PP_{\phi,\gamma\nu}$ and $\PP_{n,\nu}$ are linear in $e^{-s} \dot{\check n}$ and $e^{-s} \dot \phi$, then as long as $\kappa$, $\phi$, and $\check n$ remain bounded,  and $\eps$ is taken to be sufficiently small (in particular, for short time after $t= -\log \eps$), we may analytically solve for $\dot \phi$ and $\dot n$ as rational functions (with bounded denominators) of $\kappa, \phi$, and $\check n$, with coefficients which only depend on the derivatives of $Z, A, K$ at $y=0$. We write this schematically as 
\begin{align}
\dot{\phi}_{\gamma \nu} = \EE_{\phi,\gamma\nu} \left(\kappa, \phi, \check n  \right) 
\quad \mbox{and} \quad 
\dot{n}_\nu = \EE_{n,\nu} \left(\kappa, \phi, \check n    \right) \,.
\label{eq:dot:phi:dot:n}
\end{align}
Here the $\EE_{\phi,\gamma\nu}(\kappa,\phi,\check n)$ and $\EE_{n,\nu}(\kappa,\phi,\check n)$ are suitable smooth functions of their arguments, as described above.
With \eqref{eq:dot:phi:dot:n} in hand, we return to \eqref{eq:dot:kappa:2} and \eqref{eq:dot:tau:2}, which are to be combined with \eqref{eq:hj:def:2}, and with \eqref{eq:dot:xi:def:2} to obtain that
\begin{align}
\dot{\kappa} = \EE_{\kappa} \left(\kappa, \phi, \check n  \right) 
\,, \qquad
\dot{\tau} = \EE_{\tau} \left(\kappa, \phi, \check n    \right) \,
\quad \mbox{and} \quad 
\dot{\xi}_j = \EE_{\xi,j} \left(\kappa, \phi, \check n    \right)\,.
\label{eq:dot:kappa:dot:tau}
\end{align}
for suitable smooth functions $\EE_{\kappa}, \EE_{\tau},$ and $\EE_{\xi,j}$ of $(\kappa,\phi,\check n)$,  with coefficients  which depend on the derivatives of $Z, A$, and $K$ at $y=0$. 

\begin{remark}[Local solvability]
\label{rem:local:constraints}
The system of ten nonlinear ODEs described in \eqref{eq:dot:phi:dot:n} and \eqref{eq:dot:kappa:dot:tau} are used to determine the time evolutions of our $10$ dynamic modulation variables. The local in time solvability of this system is ensured by the fact that $\EE_{\phi,\gamma\nu}, \EE_{n,\nu}, \EE_{\kappa},  \EE_{\tau}, \EE_{\xi,j}$ are rational functions of $\kappa, \phi, n_2$, and $n_3$, with coefficients that only depend on $\p^\gamma Z^0, \p^\gamma A^0$ and $\p^\gamma K^0$ with $\abs{\gamma} \leq 3$, and moreover that these functions are smooth in the neighborhood of the initial values given by \eqref{eq:modulation:IC}; hence, unique $C^1$ solutions exist for a sufficiently small time.
We emphasize that these functions are explicit.
\end{remark}

\subsection{Closure of bootstrap estimates for the dynamic variables}
\label{sec:dynamic:closure}
Once one traces back the identities in Sections~\ref{sec:implicit:crap} and Appendix~\ref{sec:explicit:crap} we may close  the bootstrap assumptions for the modulation parameters, \eqref{mod-boot}.

The starting point is to obtain bounds for $G_W^0$ and $h_W^{\mu,0}$, by appealing to \eqref{eq:GW:def:1}--\eqref{eq:hj:def:1}. The matrix ${\mathcal H}^0$ defined in \eqref{eq:d1W:Hessian} can be rewritten as
\begin{align*}
{\mathcal H}^0(s) = (\p_1\nabla^2 W)^0(s)  =  (\p_1\nabla^2 \bar W)^0 + (\p_1\nabla^2 \tilde W)^0(s) = {\rm diag}(6,2,2) + (\p_1\nabla^2 \tilde W)^0(s).
\end{align*}
From the bootstrap assumption \eqref{eq:bootstrap:Wtilde3:at:0}  we have that 
$\abs{(\p_1\nabla^2 \tilde W)^0(s)} \leq \eps^{\frac 14}$ for all $s\geq - \log \eps$, and thus 
\begin{align}
\abs{({\mathcal H}^0)^{-1}(s)} \leq 1
\label{eq:inverse:Hessian}
\end{align}
for all $s\geq -\log \eps$. Next, we estimate $\p_1 \nabla F_W^0$. Using \eqref{eq:FW:0:d}, \eqref{eq:FW:0:e}, the bootstrap assumptions  \eqref{eq:speed:bound}--\eqref{eq:beta:tau}, the bounds \eqref{eq:Z_bootstrap}--\eqref{eq:K:higher:order}, and the fact that $\sabs{\Tcal^{\zeta,0}_{\mu,\mu\nu} }\leq \abs{\phi}^2$, after a computation we arrive at
\begin{align}
\abs{\p_1 \nabla F_W^0 }
&\les M \eps^{\frac 12} e^{-s} + M^2 e^{- \frac{3}{2} (1- \frac{4}{2m-5})s} + \sabs{h_W^{\cdot,0}} M^3 \eps e^{- \frac{3}{2} (1- \frac{4}{2m-5})s} +  M e^{-(1-{\frac{5}{2m-7}} )s}  \notag\\
&\les \eps^2 \sabs{h_W^{\cdot,0}}  +  M e^{-(1-{\frac{5}{2m-7}} )s}  \, .
\label{eq:F_WZ_deriv_est:replace:1}
\end{align}
Moreover,  from \eqref{eq:GW:0:d}, \eqref{eq:GW:0:e}, \eqref{eq:speed:bound}, \eqref{eq:acceleration:bound},  the first line in \eqref{eq:Z_bootstrap}, and the previously established bound \eqref{eq:F_WZ_deriv_est:replace:1} we establish that
\begin{align}
\abs{\p_{1}\nabla G_W^{0}} + \abs{\p_{1} \nabla F_W^{0}} 
&\les e^{\frac s2} \abs{\p_1 \nabla Z^0} +  M^4 \eps^{\frac 32} e^{-\frac{3s}{2}}  + \eps^2  \sabs{h_W^{\cdot,0}}+ M e^{-(1-{\frac{5}{2m-7}} )s}   \notag\\
& \les   \eps^2  \sabs{h_W^{\cdot,0}} +  M e^{-(1-{\frac{5}{2m-7}} )s} \,.
\label{eq:rainy:Halloween}
\end{align}
The bounds  \eqref{eq:inverse:Hessian} and \eqref{eq:rainy:Halloween}, are then inserted  into \eqref{eq:GW:def:1}--\eqref{eq:hj:def:1}. After absorbing the $\eps^2  \sabs{h_W^{\cdot,0}}$ term into the left side, we obtain  to estimate  
\begin{align}
 \abs{G_W^{0}(s)} + \abs{h_W^{\mu,0}(s)} \les M e^{-(1-{\frac{5}{2m-7}} )s}  \,.
\label{eq:GW:hW:0} 
\end{align}
The bound \eqref{eq:GW:hW:0} plays a crucial role in the following subsections. We note that for $m\geq 18$ we have $1-{\frac{5}{2m-7}} > \frac 45$, and hence so the bound \eqref{eq:GW:hW:0}  implies that $ \sabs{G_W^{0}(s)} + \sabs{h_W^{\mu,0}(s)} \les M e^{-\frac{4s}{5}}$. 

\subsubsection{The $\dot \tau$ estimate}
From \eqref{eq:dot:tau:1}, the definition of $\p_1 G_W^0$ in \eqref{eq:GW:0:b},   the definition of $\p_1 F_W^0$ in \eqref{eq:FW:0:b} , the bootstrap estimates \eqref{eq:speed:bound}--\eqref{eq:beta:tau}, \eqref{eq:Z_bootstrap}--\eqref{eq:S_bootstrap}, and the previously established bound \eqref{eq:GW:hW:0}, we  obtain that
\begin{align}
\abs{\dot \tau} &\les \abs{\p_1 G_W^0} + \abs{\p_1 F_W^0} \notag\\
&\les e^{\frac s2} \abs{\p_1 Z^0}  +  e^{-\frac s2} \abs{\check \nabla A^0} + M \abs{\check \nabla \p_1 A^0} + M^2 \eps^{\frac 12} e^{-\frac s2}\abs{\p_1 A^0} + M^2 \eps  e^{-2s} \abs{A^0} + M^3 \eps e^{-s} \notag\\
&\qquad +  M e^s |\p_{11}K^0| + M e^{\frac s2} |\p_1S^0| \notag\\ 
&\les M^{\frac 12} e^{-s} + M \eps^{\frac 12} e^{-s} + M e^{- \frac 32 (1 - \frac{2}{2m-5})s} + M^3 \eps e^{s} +  M \eps^{\frac 18} e^{-s}   \notag\\
&\leq \tfrac{M}{4} e^{-s}\,, \label{tau-final}
\end{align}
where we have used a power of $M$ to absorb the implicit constant in the first inequality above. This improves the bootstrap bound for $\dot \tau$ in \eqref{eq:acceleration:bound} by a factor of $4$. Integrating in time from $-\eps$ to $T_*$, where $\abs{T_*} \leq \eps$, we also improve the $\tau$ bound in \eqref{eq:speed:bound} by a factor of $2$, thereby closing the $\tau$ bootstrap.

\subsubsection{The $\dot \kappa$ estimate} 
From \eqref{eq:dot:kappa:1}--\eqref{eq:beta:tau}, the bound \eqref{eq:GW:hW:0}, the definition of $F_W^0$ in \eqref{eq:FW:0:a},   the estimates \eqref{eq:Z_bootstrap}--\eqref{eq:S_bootstrap}, and the fact that $\frac{5}{2m-7} < \frac 15$, we deduce that 
\begin{align*}
\abs{\dot \kappa} 
&\les  e^{\frac s2} \abs{G_W^0} + e^{\frac s2} \abs{F_W^0}  
\notag\\
&\les M  e^{-\frac s2 + \frac{5s}{2m-7}}  + (\kappa_0 + M\eps)  M \eps^{\frac 12} e^{-\frac s2} + M^3 \eps^{\frac 32} e^{-\frac s2} + M^4 \eps^2 e^{-\frac s2}  + e^{-\frac s2} (\kappa_0^2 + M^2 \eps^2) M^2 \eps \notag\\
&\qquad +  (\kappa_0+M\eps) \eps^{\frac 14} e^{-\frac s2} 
\notag\\
&\leq \tfrac{1}{2}  e^{-\frac{3s}{10}} \,.
\end{align*}
Here we have used a small ($m$-dependent) power of $\eps$ to absorb the implicit constant in the second  estimate above, thereby improving the $\dot \kappa$ bootstrap bound in \eqref{eq:acceleration:bound} by a factor of $2$. Integrating in time, we furthermore deduce that 
\begin{equation}\label{kappa-kappa0}
\abs{\kappa(t) - \kappa_0} \leq  \eps^{\frac{13}{10}} 
\end{equation} 
since $\abs{T_*} \leq \eps$. Upon taking $\eps$ to be sufficiently small in terms of $\kappa_0$, we improve the $\kappa$ bound in \eqref{eq:speed:bound}.

\subsubsection{The $\dot \xi$ estimate}
In order to bound the $\dot \xi$ vector, we appeal to \eqref{eq:dot:xi:def:1}, to \eqref{eq:GW:hW:0}, to the $\abs{\gamma}=0$ cases in \eqref{eq:Z_bootstrap} and \eqref{eq:A_bootstrap},  to the bound $\abs{R-\Id} \leq \eps$, and to the $\abs{\check n}$ estimate in \eqref{eq:speed:bound}, to deduce that 
\begin{align}
\sabs{\dot{\xi}_j} 
\les \kappa_0 + \abs{Z^0}  +  e^{-\frac s2} \abs{G_W^0} +  \abs{A_\mu^0} + e^{\frac s2} \sabs{h_W^{\mu,0}} 
\les \kappa_0 + M \eps + M e^{-\frac s2 + \frac{5s}{2m-7}} \les \kappa_0 \,, \label{xi-final}
\end{align}
upon taking $\eps$  sufficiently small in terms of $M$ and $\kappa_0$. The bootstrap estimate for $\dot \xi$ in \eqref{eq:acceleration:bound} is then improved by taking $M$ sufficiently large, in terms of $\kappa$, while the bound on $\xi$ in \eqref{eq:speed:bound} follows by integration in time.

\subsubsection{The $\dot{\phi}$ estimate}
Using \eqref{eq:dot:phi:def:1}, the fact that  $\abs{\Ncal_{1,\mu\nu}^0} + \abs{\Jcal_{,\mu\nu}^0} \les |\phi|^2$, the bootstrap assumptions \eqref{eq:speed:bound}, \eqref{eq:acceleration:bound},  \eqref{eq:bootstrap:Wtilde3:at:0}, the bounds \eqref{eq:dot:Q}, and the previously established estimate \eqref{eq:GW:hW:0}, we obtain  
\begin{align}
\sabs{\dot{\phi}_{\gamma \nu}}  
&\les   e^{\frac s2}  \left( M  e^{-s(1 - \frac{5}{2m-7})  } + \abs{\p_{\nu\gamma} F_W^0} \right)+ e^s \abs{\p_{\gamma \nu} Z^0}  \notag\\
&\qquad  + M^4 \eps^{\frac 32} + \left(  M e^{-\frac{3s}{2}+\frac{5s}{2m-7}}  + \kappa_0 + \abs{Z^0}\right)  M^4 \eps^2  + M^5 \eps^2  e^{-\frac{3s}{2}+\frac{5s}{2m-7}}   \,. 
\label{eq:going:slightly:mad}
\end{align}
Using the definition of $\check \nabla^2 F_W^0$ in \eqref{eq:FW:0:f}, appealing to the bootstrap assumptions (and their consequences) from Section~\ref{sec:bootstrap}, the previously established estimate \eqref{eq:GW:hW:0}, and the fact that $\sabs{\Tcal^{\zeta,0}_{\mu, \gamma\nu}} +\sabs{\Ncal_{1,\mu\nu}^0} + \sabs{\Jcal_{,\mu\nu}^0} + \sabs{\Ncal^0_{\zeta,\mu\nu\gamma}} \les |\phi|^2$, after a lengthy computation one may show that 
\[
 \abs{\p_{\nu\gamma} F_W^0} \les e^{-\frac s2} \,,
\]
which shows that the term $e^{\frac s2} \abs{\p_{\nu\gamma} F_W^0}$ in \eqref{eq:going:slightly:mad} is subdominant when compared to $e^s \abs{\p_{\gamma \nu} Z^0} \les M$ present in \eqref{eq:going:slightly:mad}. In establishing the above estimate it was crucial that $e^s\abs{\p_{1\gamma\nu}K^0} \les e^{-\frac s2} $, which from \eqref{eq:K:higher:order} since $m\geq 18$. 
Combining the above two estimates with the  $Z$ bounds in \eqref{eq:Z_bootstrap}, 
we derive
\begin{align}
 \sabs{\dot{\phi}_{\gamma \nu}}  \les   e^{\frac s2}  \left(M  e^{-\frac{4s}{5}} + e^{-\frac s2} \right)+ M + M^4 \eps^{\frac 32} + \left(M e^{-s} + \kappa_0 + \eps M \right)  M^4 \eps^2  + M^5 \eps^2  e^{-s}  \les M \,.  \label{phi-final}
\end{align}
Taking $M$ sufficiently large to absorb the implicit constant, we deduce $|\dot \phi| \leq \frac 14 M^2$, which improves the $\dot \phi$ bootstrap in \eqref{eq:acceleration:bound} by a factor of $4$. Integrating in time on $[-\eps,T_*)$, an interval of length $\leq 2 \eps$,  and using that $\abs{\phi(-\log \eps)}\leq \eps$  we improve the $\phi$ bootstrap in \eqref{eq:speed:bound} by a factor of $2$.

\subsubsection{The $\dot{n}$ estimate}
First we obtain estimates on $|\dot Q_{1\nu}|$, by appealing to the identity \eqref{eq:dot:Q:1}. 
Using the bootstrap assumptions \eqref{eq:speed:bound}, \eqref{eq:acceleration:bound}, \eqref{eq:Z_bootstrap}--\eqref{eq:S_bootstrap}, the estimates \eqref{eq:dot:Q} and \eqref{eq:GW:hW:0}, and the fact that $\sabs{\Tcal^{\zeta,0}_{\mu, \mu\nu}} \les |\phi|^2$, we obtain 
\begin{align}
\sabs{\dot Q_{1 \nu} }
&\les  M^2 \eps^{\frac 12} e^{-\frac s2}  \abs{\p_\nu A_\mu^0} +   M^4 \eps^{\frac 32} e^{-s}  \abs{A^0} + e^{\frac s2} \abs{\check \nabla Z^0} 
+ M^2   e^{-s} \abs{A^0}  
 \notag\\
& \quad +   \left(M \abs{\check \nabla^2A^0} +\abs{\check \nabla Z^0} \abs{\check \nabla A^0} \right)
 +  M^2 \eps e^{-\frac s2}  \abs{Z^0}  \abs{\check \nabla Z^0}   + M^5 \eps^2 e^{-s}  \abs{A^0} \notag\\
& \quad  +  e^{-\frac s2}  \left( ( \abs{\check \nabla A^0} + M^3 \eps e^{-\frac s2} ) \abs{A^0} \right) M^2 \eps  +  M^3 \eps e^{-s} \abs{\check \nabla A^0}
+ M^2 \eps  \left(M e^{-\frac s2} + \abs{A^0} \right) 
\notag\\
&\quad +   (\kappa_0 + M \eps) \left( (\kappa_0 + M\eps) (e^s \abs{\p_1 \check \nabla K^0} + M^2 \eps e^{-\frac s2} \abs{\check\nabla K^0}) - 2 \abs{\check \nabla Z^0} e^s \abs{\p_1 K^0}\right) 
\notag\\
&\les  M \eps^{\frac 12} 
\label{eq:dot:Q:bootstrap:close}
\,,
\end{align}
upon taking $\eps$ sufficiently small, in terms of $M$.
Moreover, using the bootstrap assumption $\abs{\check n} \leq M \eps^{\frac 32}$, we deduce that the matrix on the left side of \eqref{eq:dot:n:def} is within $\eps$ of the identity matrix, and thus so is its inverse. We deduce from \eqref{eq:dot:n:def} and \eqref{eq:dot:Q:bootstrap:close} that 
\begin{align} 
\abs{\dot{\check n}} \leq \tfrac{M^2 \eps^{\frac 12}}{4}.   \label{n-final}
\end{align} 
upon taking $M$ to be sufficiently large to absorb the implicit constant. The closure of the $\check n$ bootstrap is then achieved by integrating in time on $[-\eps, T_*)$.

\section{Conclusion of the proof: Theorems~\ref{thm:main:S-S} and~\ref{thm:open:set:IC}}\label{sec:conclusion-of-proof}

We first note that the system \eqref{euler-ss} for $(W,Z,A,K)$, with initial data $(W_0, Z_0, Z_0,K_0)$ chosen to satisfy the conditions of the theorem, is locally well-posed. To see this, we note that the transformations from \eqref{eq:Euler2} to \eqref{euler-ss} are smooth for sufficiently short time, and that \eqref{eq:Euler2} is locally well-posed in the Sobolev space $H^k$, for $k\geq 3$. Here we have implicitly used that the system of ten nonlinear ODEs  
\eqref{eq:dot:phi:dot:n} and \eqref{eq:dot:kappa:dot:tau} which specify the modulation functions have local-in-time existence and uniqueness as discussed in 
Remark \ref{rem:local:constraints}. Moreover, solutions to \eqref{eq:Euler2} satisfy the following continuation principle (see, for example,  \cite{Ma1984}): Suppose $(u,\sigma,\scal)\in C([-\eps,T), H^k)$ is a solution to $\eqref{eq:Euler2}$ satisfying the uniform bound 
$\norm{u( \cdot , t)}_{C^1}+\norm{\sigma( \cdot , t)}_{C^1}+\norm{\scal( \cdot , t)}_{C^1} \le K < \infty $, 
then if in addition $\sigma$ is uniformly bounded from below on the interval $[-\eps,T)$, there there exists $T_1>T$ such that $(u,\sigma,\scal)$ extends  to 
a unique solution of \eqref{eq:Euler2} on $[0,T_1)$. Consequently, the solution $(W,Z,A,K)$ in self-similar variables may be continued so long as   $(W,Z,A,K)$ remain uniformly bounded in $H^k$, the modulation functions remain bounded, and the density remains bounded from below.

In Sections \ref{sec:Lagrangian}--\ref{sec:constraints}, we close the bootstrap  assumptions on $W$, $Z$, $A$, $K$ and  on the modulation functions.
By
 Proposition \ref{prop:sound},  the density remains uniformly strictly positive and  bounded.
Thus, as a consequence of the continuation principle stated above, we obtain a global in self-similar 
time solution  $(W,Z,A, K) \in C([-\log\eps,+ \infty ); H^m) \cap C^1([-\log\eps,+ \infty ); H^{m-1})$ to  \eqref{euler-ss} for $m \ge 18$. This solution satisfies the bounds stated in Sections~\ref{subsec:support}--\ref{sec:spiny-anteater0}. The asymptotic stability of $W(y,s)$ follows from:
\begin{theorem}[Convergence to stationary solution]\label{thm-2week-delay}
There exists a $10$-dimensional symmetric $3$-tensor $\mathcal{A}$ such that, with $\bar W_ \mathcal{A} $ defined in Appendix~\ref{sec:delay}, we have that the solution $W(\cdot,s)$ of \eqref{eq:euler:ss:a} satisfies
$$
\lim_{s\to \infty } W(y,s)  = \bar W_ \mathcal{A} (y) 
$$
for any fixed $y \in \RR^3$.
\end{theorem}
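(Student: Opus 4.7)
My plan is to prove the theorem in three steps: (i) define the tensor $\mathcal{A}$ via limits of third derivatives of $W$ at the origin, (ii) invoke Proposition~\ref{prop-stationary-burgers} from Appendix~\ref{sec:delay} to construct $\bar W_{\mathcal A}$, and (iii) establish pointwise convergence $W(\cdot,s)\to \bar W_{\mathcal A}$.

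For step (i), I would evaluate the evolution equation \eqref{euler_for_Linfinity:a} at $y=0$ for each multi-index $\gamma$ with $|\gamma|=3$. Using the constraints \eqref{eq:constraints}, namely $\p_1 W(0,s)=-1$, and using $\Jcal^0=1$, the damping coefficient $\tfrac{3\gamma_1 + \gamma_2 + \gamma_3-1}{2} + \beta_\tau (1+\gamma_1 {\bf 1}_{\gamma_1\ge 2}) \Jcal \p_1 W$ at $y=0$ reduces to $\tfrac{3\gamma_1+\gamma_2+\gamma_3-1}{2} - \beta_\tau(1+\gamma_1{\bf 1}_{\gamma_1\ge 2})$, which is $O(\eps)$ for every $|\gamma|=3$ except $\gamma_1=1$ (where it equals $1+O(\eps)$ and in fact forces $\p^\gamma W(0,s)\to \p^\gamma \bar W(0)$). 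The transport contribution at $y=0$ is $g_W^0 \p_1\p^\gamma W(0,s) + h_W^{\mu,0}\p_\mu \p^\gamma W(0,s)$. Combining the decay \eqref{eq:GW:hW:0} for $g_W^0, h_W^{\mu,0}$, the bound \eqref{eq:bootstrap:Wtilde4} on fourth derivatives at $y=0$, and the sharp forcing bound \eqref{e:forcing:W3}, I show that $\p_s \p^\gamma W(0,s)$ is integrable in $s\in[-\log\eps,\infty)$, which yields existence of the limit $\mathcal{A}_\gamma:=\lim_{s\to\infty} \p^\gamma W(0,s)$ together with a quantitative rate $|\p^\gamma W(0,s) - \mathcal{A}_\gamma| \lesssim e^{-(\frac12 - \frac{4}{2m-7})s}$.

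For step (ii), the bootstrap \eqref{eq:bootstrap:Wtilde3:at:0} gives $|\mathcal{A}_\gamma - \p^\gamma \bar W(0)|\le \eps^{1/4}$. Since $\bar W$ satisfies \eqref{genericity}, the small perturbation $\mathcal{A}$ also satisfies it, so Proposition~\ref{prop-stationary-burgers} produces a $C^\infty$ stable stationary solution $\bar W_{\mathcal A}$ of \eqref{eq:Burgers:self:similar} with $\p^\gamma \bar W_{\mathcal A}(0)=\p^\gamma \bar W(0)$ for $|\gamma|\le 2$ and $\p^\gamma \bar W_{\mathcal A}(0)=\mathcal{A}_\gamma$ for $|\gamma|=3$. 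For step (iii), I set $\hat W:=W - \bar W_{\mathcal A}$; subtracting the stationary equation from \eqref{eq:euler:ss:a} gives
\begin{align*}
\p_s \hat W + (\beta_\tau \Jcal \p_1 \bar W_{\mathcal A} - \tfrac12)\hat W + (\mathcal{V}_W\cdot\nabla)\hat W = \hat F_W,
\end{align*}
with $\hat F_W := F_W - e^{-s/2}\beta_\tau \dot\kappa + ((\beta_\tau \Jcal -1)\bar W_{\mathcal A} - G_W)\p_1 \bar W_{\mathcal A} - h_W^\mu \p_\mu \bar W_{\mathcal A}$. By construction $\p^\gamma \hat W(0,s)\to 0$ for all $|\gamma|\le 3$ (the first three orders via the constraints \eqref{eq:constraints}; the third order via step (i)). For $|y|\le \ell$, I combine Taylor expansion in $y$ around $0$ with the uniform fourth-derivative bound \eqref{eq:bootstrap:Wtilde4} and smoothness of $\bar W_{\mathcal A}$ to conclude $|\hat W(y,s)|\to 0$; for $\ell\le |y|\le \mathcal{L}$, I propagate this through the weighted estimates framework from Subsection~\ref{s:hydroxychloroquine}, running the Duhamel formula \eqref{eq:q:est:new} backward along $\pw^y$ and using that trajectories escape via Lemma~\ref{lem:escape} combined with the decay of $\hat F_W$ (controlled by Lemmas~\ref{lemma_g}--\ref{lem:forcing}); for $|y|\ge \mathcal{L}$ I exploit that both $W$ and $\bar W_{\mathcal A}$ track the same Burgers asymptotics at infinity (using the estimates of Section~\ref{sec:tilde:W:middle} applied to $\hat W$).

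The main obstacle will be step (iii), specifically combining the near-zero damping of $\hat W$ (the coefficient $\beta_\tau \Jcal \p_1 \bar W_{\mathcal A} - \tfrac12$ is not strictly positive and can be of order $-1/2$ near $y=0$) with the slow decay of the forcing and the $\ell^{-C}$ loss arising from the weighted Gr\"onwall estimates in \eqref{eq:q:est:new}. The cleanest resolution is a two-scale bootstrap: first prove that for each $\delta>0$ there exists $s_\delta$ such that $|\hat W(y,s)|<\delta$ uniformly on $|y|\le \ell$ for $s\ge s_\delta$ (using Taylor and step (i)), then use this as Cauchy data for the transport equation above to propagate smallness to $\ell\le |y|\le \mathcal L$ via Lagrangian flow, absorbing the $\ell^{-C}$ factors against $\delta$; finally extend to $|y|\ge \mathcal L$ via the Section~\ref{sec:tilde:W:middle}-style analysis for $\hat W$.
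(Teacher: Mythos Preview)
Your overall strategy (steps (i)--(iii)) matches the approach in \cite[Theorem~13.4]{BuShVi2019b}, to which the paper defers. Steps (i) and (ii) are essentially correct; the integrability of $\p_s(\p^\gamma W)(0,s)$ for $|\gamma|=3$ is exactly \eqref{cool-k}. You should, however, work with \eqref{eq:p:gamma:tilde:W:evo} rather than \eqref{euler_for_Linfinity:a}: in the $\tilde W$-equation the damping at $y=0$ is $\tfrac{3\gamma_1+|\check\gamma|-1}{2}-(1+\gamma_1)\beta_\tau=O(\eps)$ for \emph{all} $|\gamma|=3$, so your case distinction for $\gamma_1=1$ (and the claim that it forces convergence to $\p^\gamma\bar W(0)$) is unnecessary and not quite right.

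There is a genuine gap in step (iii). Your Taylor argument on $|y|\le\ell$ only yields
\[
|\hat W(y,s)|\le \sum_{|\gamma|=3}\tfrac{|\p^\gamma\hat W(0,s)|}{\gamma!}|y|^{3} + C_4|y|^4,
\]
where the cubic sum tends to $0$ but $C_4\sim\sup_{|y|\le\ell,\,|\gamma|=4}|\p^\gamma\hat W|$ is bounded via \eqref{eq:bootstrap:Wtilde4} together with $|\p^4(\bar W-\bar W_{\mathcal A})|=O(\eps^{1/4})$, both of which are uniform in $s$ and do \emph{not} decay. Hence you obtain only $\limsup_s|\hat W(y,s)|\le C\eps^{1/10}|y|^4$, not $|\hat W(y,s)|\to0$. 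Your two-scale bootstrap then cannot close: the Cauchy data at $|y_0|=\ell$ carries the fixed residue $C\eps^{1/10}\ell^4$, and propagation through \eqref{eq:q:est:new} amplifies it by $\ell^{-C}$ with $C\gg4$ (cf.\ the exponents in \eqref{eq:Deta:est}, \eqref{eq:Wtilde_damping}), so the output on $\ell\le|y|\le\LLL$ is not small, let alone vanishing.

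The missing ingredient is that the mechanism behind \eqref{cool-k} can be \emph{iterated in $|\gamma|$}. Working with the $\hat W$-equation (the analogue of \eqref{eq:p:gamma:tilde:W:evo} with $\bar W$ replaced by $\bar W_{\mathcal A}$), one shows inductively that $\p^\gamma\hat W(0,s)\to0$ for all $|\gamma|\le N$: at $y=0$ the damping equals $\tfrac{|\gamma|-2}{2}+O(\eps)>0$ once $|\gamma|\ge3$; the transport coefficients $G_W^0,h_W^{\mu,0}$ decay by \eqref{eq:GW:hW:0}; and every commutator term in $\hat F_W^{(\gamma),0}$ involves $\p^\beta\hat W(0,s)$ with $|\beta|<|\gamma|$, which vanishes (for $|\beta|\le2$, by \eqref{eq:constraints}) or decays (for $3\le|\beta|<|\gamma|$, by induction). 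Crucially the top-order commutator $\check\nabla(\Jcal W)\,\p_1\p^\beta\hat W$ vanishes at $y=0$ since $\check\nabla W(0,s)=0$ and $\check\nabla\Jcal|_0=0$. This pushes the Taylor remainder to order $N+1$; one then either takes $N$ large enough relative to the $\ell^{-C}$ loss, or---more economically---uses compactness: the uniform local $C^4$ bounds give subsequential $C^3_{\rm loc}$ limits of $W(\cdot,s_n)$, each of which solves \eqref{eq:Burgers:self:similar} (since $F_W,G_W,h_W,(\beta_\tau\Jcal-1)$ all vanish pointwise as $s\to\infty$) with third-order Taylor data $\mathcal A$ at $0$, hence equals $\bar W_{\mathcal A}$ by the uniqueness in Proposition~\ref{prop-stationary-burgers}.
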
 
\noindent 
We note that
the proof of Theorem \ref{thm-2week-delay} is the same as  the proof of Theorem 13.4 in \cite{BuShVi2019b} once we include the contributions
of the entropy function $K$, which can be estimated using \eqref{eq:S_bootstrap}.
The limiting profile $\bar W_ \mathcal{A} $ satisfies the conditions stated in Theorem 
\ref{thm:main:S-S} due to Proposition \ref{prop-stationary-burgers}.

The remaining  conclusions of Theorem \ref{thm:main:S-S} follow from the  statements given in Sections~\ref{sec:spiny-anteater1} and \ref{sec:spiny-anteater2} (for the time and location of the singularity, and the regularity of the solution at this time), Proposition~\ref{cor:L2} (for the vanishing of derivatives of $A$, $Z$, and $K$ as $s\to \infty$),  Proposition \ref{prop:vorticity} (for the vorticity upper bounds), and Theorem \ref{thm:vorticity:creation} (for the vorticity creation estimates).

The proof of Theorem~\ref{thm:open:set:IC} is the same as the proof of Theorem~3.2 in~\cite{BuShVi2019b}. The addition of entropy does not necessitate modifications to that proof as the assumptions on the initial entropy in Theorem~\ref{thm:main:S-S} (see~\eqref{eq:S_bootstrap:IC} and \eqref{eq:data:Hk}) are stable with respect to small perturbations.

\appendix

\section{Appendix}\label{sec:toolshed}

\subsection{A family of self-similar solutions to the 3D Burgers equation}
\label{sec:delay}

\begin{proposition}[Stationary solutions for self-similar 3D Burgers]\label{prop-stationary-burgers}
Let $ \mathcal{A} $ be  a symmetric $3$-tensor   such that $ \mathcal{A}_{1jk} = \mathcal{M} _{jk}$ with $ \mathcal{M} $  a positive definite
symmetric matrix.   Then, there exists a $C^ \infty $ solution $\bar W_ \mathcal{A} $ to 
\begin{align}
- \tfrac 12 \bar W_{\mathcal A} + \left( \tfrac{3y_1}{2} + \bar W_{\mathcal A} \right) \p_1 \bar W_{\mathcal A} + \tfrac{\check y}{2} \cdot\check \nabla \bar W_{\mathcal A} = 0 \,,
\label{eq:EQUATION}
\end{align}
which has the following properties:
\begin{itemize} 
\item  $\bar W_{\mathcal A}(0) = 0$, $\p_1 \bar W_{\mathcal A}(0) = -1$, $\p_2 \bar W_{\mathcal A}(0) = 0$, 
\item $\p^\alpha \bar W_{\mathcal A}(0) = 0$ for  $|\alpha|$  even, 
\item  $\p^\alpha \bar W_{\mathcal A}(0) = \mathcal{A}_ \alpha  $  for $|\alpha| = 3$.   
 \end{itemize} 
\end{proposition}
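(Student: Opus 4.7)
The plan is to construct $\bar W_\mathcal{A}$ as the global smooth solution of a cubic implicit polynomial equation whose coefficients encode $\mathcal{A}$. To the symmetric $3$-tensor $\mathcal{A}$ I associate the homogeneous degree-$3$ polynomial
\[
Q_\mathcal{A}(y_2, y_3, w) := \sum_{a+b+c=3} q_{abc}\, y_2^a y_3^b w^c,
\]
with the $10$ coefficients $q_{abc}$ chosen so that the $10$ cubic derivatives at the origin of the intended solution match $\mathcal{A}$. Since the intended leading-order expansion near $y = 0$ is $\bar W_\mathcal{A}(y) = -y_1 + \OO(|y|^3)$, substituting $w = -y_1$ into $Q_\mathcal{A}$ shows that the map $\{q_{abc}\} \mapsto \{\p^\gamma \bar W_\mathcal{A}(0)\}_{|\gamma| = 3}$ is diagonal (with a factor $(-1)^c \gamma!$) and hence bijective. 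I then define $\bar W_\mathcal{A}$ implicitly by
\[
F(y, \bar W) := y_1 + \bar W + Q_\mathcal{A}(y_2, y_3, \bar W) = 0.
\]

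Global smoothness comes from positive definiteness of $\mathcal{M} = (\mathcal{A}_{1jk})$ via the identity
\[
\p_w Q_\mathcal{A}(y_2, y_3, w) = \tfrac12\, \mathbf v^\top \mathcal M\, \mathbf v, \qquad \mathbf v = (w, -y_2, -y_3)^\top,
\]
which I would verify by reading off the correspondences $q_{003} = \mathcal M_{11}/6$, $q_{201} = \mathcal M_{22}/2$, $q_{021} = \mathcal M_{33}/2$, $q_{102} = -\mathcal M_{12}/2$, $q_{012} = -\mathcal M_{13}/2$, $q_{111} = \mathcal M_{23}$ from the matching above (the remaining four coefficients $q_{300}, q_{210}, q_{120}, q_{030}$ are free and parameterize the components $\mathcal{A}_{222}, \mathcal{A}_{223}, \mathcal{A}_{233}, \mathcal{A}_{333}$). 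Positive definiteness gives $\p_w F = 1 + \p_w Q_\mathcal{A} \geq 1 > 0$ on all of $\RR^3 \times \RR$; combined with $F(y, \cdot) \to \pm\infty$ as $w \to \pm\infty$ (controlled by the leading term $q_{003} w^3$ with $q_{003} > 0$), the equation $F(y, \cdot) = 0$ has a unique real root $\bar W_\mathcal{A}(y)$ for every $y \in \RR^3$, and the implicit function theorem yields $\bar W_\mathcal{A} \in C^\infty(\RR^3)$ (in fact real-analytic). To verify $\bar W_\mathcal{A}$ solves~\eqref{eq:EQUATION}, I differentiate the implicit relation to get $\p_1 \bar W_\mathcal{A} = -(1 + \p_w Q_\mathcal{A})^{-1}$ and $\p_\nu \bar W_\mathcal{A} = -\p_{y_\nu} Q_\mathcal{A}\, (1 + \p_w Q_\mathcal{A})^{-1}$, substitute into the left side of~\eqref{eq:EQUATION}, clear the common denominator $1 + \p_w Q_\mathcal{A}$, and apply the Euler identity $y_2 \p_{y_2} Q_\mathcal{A} + y_3 \p_{y_3} Q_\mathcal{A} + w \p_w Q_\mathcal{A} = 3 Q_\mathcal{A}$ (valid by degree-$3$ homogeneity): the result collapses to $-\tfrac{3}{2}(y_1 + \bar W_\mathcal{A} + Q_\mathcal{A})$, which vanishes by the defining equation.

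The prescribed values at $y = 0$ follow by successive differentiation of $F(y, \bar W_\mathcal{A}) = 0$ at the origin: from $F(0, w) = w + q_{003} w^3$ and the signs of the $q_{abc}$'s one reads off $\bar W_\mathcal{A}(0) = 0$, $\p_1 \bar W_\mathcal{A}(0) = -1$, $\check\nabla \bar W_\mathcal{A}(0) = 0$, and (since $Q_\mathcal{A}$ vanishes to order $3$) $\nabla^2 \bar W_\mathcal{A}(0) = 0$, while $\p^\gamma \bar W_\mathcal{A}(0) = \mathcal{A}_\gamma$ for $|\gamma| = 3$ holds by the construction of $Q_\mathcal{A}$. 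For the vanishing of $\p^\alpha \bar W_\mathcal{A}(0)$ with $|\alpha|$ even and $|\alpha| \ge 4$, I would use the Taylor recursion obtained by plugging $\bar W_\mathcal{A}(y) = \sum_\alpha a_\alpha y^\alpha/\alpha!$ into~\eqref{eq:EQUATION} and matching the coefficient of $y^\beta$: after extracting the pieces proportional to $a_\beta$ (from $\alpha = 0, e_1, \beta$), this takes the form
\[
\tfrac{|\beta| - 3}{2}\, a_\beta = -\sum_{\substack{\alpha + \alpha' = \beta \\ \alpha \notin \{0, e_1, \beta\}}} \binom{\beta}{\alpha}\, a_\alpha\, a_{\alpha' + e_1}.
\]
For even $|\beta| \neq 3$ and under the inductive hypothesis that $a_\alpha = 0$ for every even $|\alpha| < |\beta|$, each remaining summand has $|\alpha| + |\alpha' + e_1| = |\beta| + 1$ odd, so one of the two factors has strictly smaller even order (the borderline case $|\alpha| = 1, \alpha \neq e_1$ is killed by the constraints $a_{e_2} = a_{e_3} = 0$), and hence $a_\beta = 0$.

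The main obstacle I anticipate is purely bookkeeping: I need to pin down the signs in the correspondence $\{q_{abc}\} \leftrightarrow \mathcal{A}$ so that the quadratic form $\p_w Q_\mathcal{A}$ really equals $\tfrac12 \mathbf v^\top \mathcal M \mathbf v$ with the vector $\mathbf v = (w, -y_2, -y_3)^\top$ dictated by the leading behavior $\bar W_\mathcal{A} \approx -y_1$. Once that identity is secured, the positivity of $\mathcal{M}$ hands us global monotonicity of $F$ in $w$ for free, and everything else reduces to the short algebraic manipulations and single-parameter induction described above.
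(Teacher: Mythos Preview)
Your proposal is correct and follows essentially the same construction as the one the paper cites from \cite{BuShVi2019b}: $\bar W_{\mathcal A}$ is defined as the unique real root of a cubic $y_1 + w + Q_{\mathcal A}(y_2,y_3,w)=0$ with $Q_{\mathcal A}$ homogeneous of degree three, the Euler identity reduces \eqref{eq:EQUATION} to the defining relation, and positive definiteness of $\mathcal M$ is used exactly as you describe to make $\partial_w F \ge 1$ globally. Your bookkeeping for the coefficients $q_{abc}$, the quadratic-form identity for $\partial_w Q_{\mathcal A}$, and the Taylor recursion $\tfrac{|\beta|-3}{2}\,a_\beta = -\sum' \binom{\beta}{\alpha} a_\alpha a_{\alpha'+e_1}$ for the even-order vanishing are all accurate.
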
 
See Appendix A.1 in \cite{BuShVi2019b} for the proof of Proposition \ref{prop-stationary-burgers}.

\subsection{Interpolation}
\label{sec:interpolation}
The following is taken from \cite[Appendix A.3]{BuShVi2019b}.  We include the inequalities here for convenience to the reader. 

\begin{lemma}[Gagliardo-Nirenberg-Sobolev]\label{lem:GN} 
Let $u: \mathbb{R}  ^d \to \mathbb{R}  $.  Fix $1\le q , r \le \infty $ and $j,m \in \mathbb{N}  $,   and $ \tfrac{j}{m}  \le 
\alpha \le 1$.  Then,  if
$$
\tfrac{1}{p}  = \tfrac{j}{d} + \alpha \left( \tfrac{1}{r} - \tfrac{m}{d} \right)  + \tfrac{1- \alpha }{q} \,,
$$
then
\begin{align} 
\| D^ju\|_{L^p} \le C \| D^m u\|_{L^r}^ \alpha \|u\|^{1 - \alpha }_{L^q} \,. 
\label{eq:special0}
\end{align} 
\end{lemma}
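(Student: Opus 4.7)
The plan is to prove this classical inequality by first verifying that the stated relation between the exponents is exactly the scaling-critical condition, then establishing two distinguished endpoint cases, and finally interpolating between them by H\"older's inequality. The first observation is that under the dilation $u(x) \mapsto u_\lambda(x) := u(\lambda x)$, both sides of \eqref{eq:special0} must scale by the same power of $\lambda$ for a universal constant $C$ to exist; equating exponents yields precisely $\tfrac{1}{p} = \tfrac{j}{d} + \alpha(\tfrac{1}{r} - \tfrac{m}{d}) + \tfrac{1-\alpha}{q}$. This consistency check motivates the two-endpoint strategy.

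First I would establish the endpoint $\alpha = j/m$, $q = r$, which reduces to the logarithmic-convexity bound $\snorm{D^j u}_{L^r} \lesssim \snorm{D^m u}_{L^r}^{j/m}\snorm{u}_{L^r}^{1-j/m}$. This follows by iterating the one-derivative estimate $\snorm{D^j u}_{L^r}^2 \lesssim \snorm{D^{j-1}u}_{L^r}\snorm{D^{j+1}u}_{L^r}$, which is a standard integration-by-parts combined with H\"older. Second, I would establish the pure Sobolev endpoint $\alpha = 1$, namely $\snorm{D^j u}_{L^{p_\ast}} \lesssim \snorm{D^m u}_{L^r}$ with $\tfrac{1}{p_\ast} = \tfrac{j}{d} + \tfrac{1}{r} - \tfrac{m}{d}$. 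Writing $D^j u$ as a convolution with the Riesz kernel of order $m-j$ applied to $D^m u$ and invoking the Hardy--Littlewood--Sobolev inequality yields this bound when $r(m-j) < d$.

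With the two endpoints in hand, the remaining step is interpolation. For a given $\alpha \in [j/m, 1]$, express $\tfrac{1}{p}$ as a convex combination of the two endpoint exponents, split $\snorm{D^j u}_{L^p}^p = \int \abs{D^j u}^{\alpha p}\abs{D^j u}^{(1-\alpha) p}$, and apply H\"older's inequality with exponents matched so that the two factors are controlled by the pure Sobolev endpoint and by the $L^q$ factor, respectively. Raising to the appropriate power yields \eqref{eq:special0} with a constant $C$ depending only on $d,j,m,r,q,\alpha$.

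The main obstacle is handling the borderline regimes where the Hardy--Littlewood--Sobolev step degenerates: when $r \geq d/(m-j)$, the Riesz potential is not bounded on $L^r$ and one must either invoke a $BMO$-type replacement or restrict to a modified range of $\alpha$. For the present paper these borderline cases are avoided, since the inequality is always applied with $r = 2$ and small $j$ relative to $m\ge 18$ in $d = 3$, comfortably inside the subcritical range. A secondary technical point is uniformity of the constant as $\alpha \to j/m$ or $\alpha \to 1$, which is automatic from the interpolation procedure but should be checked to avoid hidden blow-up in the constants used throughout the paper.
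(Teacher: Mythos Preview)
The paper does not prove this lemma: it is the classical Gagliardo--Nirenberg--Sobolev inequality, stated without proof and referenced to~\cite[Appendix~A.3]{BuShVi2019b}. So there is no paper proof to compare against.

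That said, your outline has a real gap. Your $\alpha=j/m$ endpoint is established only with $q=r$ (log-convexity in $L^r$), and your H\"older interpolation between that endpoint and the pure Sobolev endpoint $\alpha=1$ then only produces the inequality with $q=r$. The phrase ``so that the two factors are controlled by the pure Sobolev endpoint and by the $L^q$ factor'' hides the problem: both factors in your splitting are $L^{p_i}$ norms of $D^j u$, and neither can be bounded by $\norm{u}_{L^q}$ alone when $j\geq 1$. What you actually obtain is
\[
\norm{D^j u}_{L^p}\lesssim \norm{D^m u}_{L^r}^{\alpha}\norm{u}_{L^r}^{1-\alpha},
\]
not the stated bound with general $q$. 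In particular the paper's workhorse consequence~\eqref{eq:special1} (the case $r=2$, $q=\infty$, $\alpha=j/m$, $p=2m/j$) is \emph{not} covered by your argument. To reach general $q$ you need a genuinely different $\alpha=j/m$ endpoint---for instance Nirenberg's one-dimensional lemma iterated over coordinate directions, or a Littlewood--Paley argument---rather than the $L^r$ log-convexity bound.
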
 
We shall make use of \eqref{eq:special0} for the case that $p= \tfrac{2m}{j} $, $r=2$, $q= \infty $, which yields
\begin{align} 
\norm{D^j \varphi}_{L^{\frac{2m}{j}}} \les  \norm{\varphi}_{\dot{H}^m}^{\frac{j}{m}}\norm{\varphi}_{L^\infty}^{1 - \frac{j}{m}} \,,
\label{eq:special1}
\end{align} 
whenever $\varphi \in H^m(\RR^3)$ has compact support. The above estimate and the Leibniz rule  classically imply the Moser inequality
\begin{align}
\norm{ \phi \,  \varphi}_{\dot{H}^m}  \les  \norm{\phi}_{L^\infty} \norm{\varphi}_{\dot H^{m}} + 
 \norm{\phi}_{\dot H^{m}} \norm{\varphi}_{L^\infty}\,.
 \label{eq:Moser:inequality}
\end{align}
for all $\phi, \varphi \in H^m( \mathbb{R}^3  ) $ with compact support.
At various stages in the proof we also appeal to the following special case  of \eqref{eq:special0}
\begin{align} 
\snorm{\varphi}_{ \dot H^{m-2}} &\les \snorm{ \varphi}_{\dot H^{m-1} }^\frac{2m-7}{2m-5}  \snorm{  \varphi}_{L^\infty }^\frac{2}{2m-5}  \,, \label{eq:special3} 
\end{align} 
for $\varphi \in H^{m-1}( \mathbb{R}^3  ) $ with compact support.
Lastly, in Section~\ref{sec:energy} we make use of:
\begin{lemma}
\label{lem:tailored:interpolation}
Let $m\geq 4$ and $0 \le l \le m-3$.   Then for $\acal + \bcal = 1 - \frac{1}{2m-4} \in (0,1)$,  and  $q=\frac{6(2m-3)}{2m-1}$,
\begin{align} 
\snorm{D^{2+l} \phi \, D^{m-1-l} \varphi}_{L^2}  \les \snorm{D^m \phi}_{L^2}^\acal \snorm{D^m \varphi}_{L^2}^\bcal 
\snorm{D^2 \phi}_{L^q}^{1-\acal}  \snorm{D^2 \varphi}_{L^q}^{1-\bcal}  \,. \label{cor1}
\end{align} 
\end{lemma}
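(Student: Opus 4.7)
The plan is to prove \eqref{cor1} by a standard two-step procedure: H\"older's inequality to split the $L^2$ norm of the product, followed by Gagliardo--Nirenberg--Sobolev interpolation (Lemma~\ref{lem:GN}) on each factor separately. First I would apply H\"older with conjugate exponents $p_1, p_2 \in (2,\infty)$ satisfying $\frac{1}{p_1}+\frac{1}{p_2} = \frac{1}{2}$ to obtain
$$\snorm{D^{2+l} \phi \, D^{m-1-l} \varphi}_{L^2} \le \snorm{D^{2+l}\phi}_{L^{p_1}} \snorm{D^{m-1-l}\varphi}_{L^{p_2}}.$$
Then I would interpolate each factor between the two \emph{endpoint norms} appearing on the right side of \eqref{cor1}, namely $\snorm{D^m\cdot}_{L^2}$ and $\snorm{D^2\cdot}_{L^q}$, using Lemma~\ref{lem:GN}.

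Concretely, in dimension $d=3$, Lemma~\ref{lem:GN} applied with $j_1 = 2+l$, $r=2$, the low-order norm $\snorm{D^2\phi}_{L^q}$, and interpolation parameter $\alpha_1 = \acal$ gives
$$\snorm{D^{2+l}\phi}_{L^{p_1}} \les \snorm{D^m \phi}_{L^2}^{\acal} \snorm{D^2 \phi}_{L^q}^{1-\acal}$$
provided the scaling relation
$$\tfrac{1}{p_1} = \tfrac{2+l}{3} + \acal\bigl(\tfrac{1}{2} - \tfrac{m}{3}\bigr) + (1-\acal)\bigl(\tfrac{1}{q} - \tfrac{2}{3}\bigr)$$
holds, with $\acal \in [\frac{2+l}{m-2},1]$ (so that the low-order term is actually $D^2$ and not lower). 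The analogous statement for the $\varphi$-factor uses $j_2 = m-1-l$ and parameter $\bcal$. Summing the two scaling relations and imposing $\frac{1}{p_1}+\frac{1}{p_2} = \frac{1}{2}$ produces one algebraic identity in $(\acal+\bcal, q)$; a direct computation shows that the choice $q = \frac{6(2m-3)}{2m-1}$ makes this identity equivalent to $\acal+\bcal = 1 - \frac{1}{2m-4}$, which is precisely the hypothesis of the lemma. Multiplying the resulting bounds then yields \eqref{cor1}.

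The main (and only) obstacle is the bookkeeping required to verify admissibility of all interpolation parameters. Concretely, for each $l \in \{0,\dots,m-3\}$ one must split the prescribed sum $\acal+\bcal = 1 - \frac{1}{2m-4}$ into individual values satisfying the GN constraints $\acal \in [\frac{2+l}{m-2},1]$ and $\bcal \in [\frac{m-1-l}{m-2},1]$, and simultaneously ensure $p_1, p_2 \ge 2$ so that the ensuing H\"older bound is valid; a natural symmetric choice such as $\acal = \frac{2+l}{m-2}(1-\frac{1}{2m-4})$, $\bcal = \frac{m-3-l}{m-2}(1-\frac{1}{2m-4})$ is expected to work, but one has to verify case by case that the admissibility ranges are respected for all $l$ and all $m \geq 4$. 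Since $m\geq 4$ makes the interval $[\frac{2+l}{m-2},1]$ nonempty for all $l \in [0,m-3]$, the algebra goes through, and no subtle analytic issue arises --- the compact-support hypothesis (as in \eqref{eq:special1}) is what makes Lemma~\ref{lem:GN} applicable without boundary terms.
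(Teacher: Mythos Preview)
Your approach---H\"older followed by Gagliardo--Nirenberg on each factor, applied to $D^2\phi$ and $D^2\varphi$---is the standard one and is exactly how the cited reference \cite{BuShVi2019b} proceeds; the present paper does not give an independent proof.

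Two minor bookkeeping slips, both easily repaired. First, the correct admissibility range is $\acal\in[\tfrac{l}{m-2},1]$ (and $\bcal\in[\tfrac{m-3-l}{m-2},1]$), obtained by applying Lemma~\ref{lem:GN} to $u=D^2\phi$ with $j'=l$ and $m'=m-2$; your stated lower bound $\tfrac{2+l}{m-2}$ is too large, and with it the two lower bounds would sum to $\tfrac{m+1}{m-2}>1>\acal+\bcal$, leaving no admissible pair. Second, your ``symmetric choice'' does not sum to $1-\tfrac{1}{2m-4}$: one computes $\tfrac{(2+l)+(m-3-l)}{m-2}\cdot\tfrac{2m-5}{2m-4}=\tfrac{m-1}{m-2}\cdot\tfrac{2m-5}{2m-4}\neq\tfrac{2m-5}{2m-4}$. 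With the correct ranges the sum of the lower bounds is $\tfrac{m-3}{m-2}=\tfrac{2m-6}{2m-4}$, which is strictly below the target $\tfrac{2m-5}{2m-4}$ by exactly $\tfrac{1}{2m-4}$, so for instance $\acal=\tfrac{l}{m-2}+\tfrac{1}{4(m-2)}$ and $\bcal=\tfrac{m-3-l}{m-2}+\tfrac{1}{4(m-2)}$ is an admissible splitting for every $0\le l\le m-3$. With this correction the rest of your argument goes through verbatim.
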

See \cite{BuShVi2019b} for the proof.

\subsection{The functions $G_W, F_W$ and their derivatives at $y=0$}
\label{sec:explicit:crap}

Using \eqref{def_f}, 
the definition of $G_W$ in \eqref{eq:gW}, and the constraints in \eqref{eq:constraints}, we deduce that\footnote{Here we have used the identities: $\Tcal_{\mu,\nu}^{\gamma,0} = 0$, $\Ncal_{\mu,\nu \gamma}^0 = 0$,  and $\Tcal_{1,\nu \gamma}^{\zeta,0} = 0$, $ \Ncal_{1,\nu}^0 =  0$, and $ \Ncal_{\mu,\nu}^0 =  -\phi_{\mu\nu}$, $\Ncal_{\zeta,\mu\nu}^0 = 0$.}
\begin{subequations}
\label{Birds_are_rad}
\begin{align}
\tfrac{1}{\beta_\tau}\p_1 G_W^0 &=  \beta_2 e^{\frac s2} \p_1 Z^0 \label{eq:GW:0:b}\\
\tfrac{1}{\beta_\tau}\p_\nu G_W^0 &=    
\beta_2  e^{\frac s2} \p_\nu Z^0  + 2  \beta_1  ( \dot Q_{1 \nu} + A_\gamma^0 \phi_{\gamma\nu})   -  e^{\frac s2} \tfrac{1}{\beta_\tau} h_W^{\gamma,0}   \phi_{\gamma \nu}  \label{eq:GW:0:c}\\
\tfrac{1}{\beta_\tau}\p_{11}G_W^0 &=  \beta_2  e^{\frac s2} \p_{11} Z^0\label{eq:GW:0:d} \\
\tfrac{1}{\beta_\tau}\p_{1 \nu} G_W^0 &=    \beta_2 e^{\frac s2}\p_{1\nu} Z^0   - 2  \beta_1 e^{-\frac{3s}{2}} \dot{Q}_{\gamma 1} \phi_{\gamma \nu}   \label{eq:GW:0:e} \\
\tfrac{1}{\beta_\tau}\p_{\gamma \nu} G_W^0 &=    e^{- \frac s2} \left(-  \dot{\phi}_{\gamma \nu} 
+ \beta_2 e^s \p_{\gamma \nu} Z^0  -2\beta_1(  \dot{Q}_{\zeta  \gamma} \phi_{\zeta \nu} +  \dot{Q}_{\zeta \nu} \phi_{\zeta \gamma} +  R_{j1} \dot{\xi_j} \Ncal_{1,\gamma\nu}^0 )  + e^{-\frac s2} \tfrac{G_W^0}{ \beta_\tau} \Jcal_{,\gamma\nu}^0   \right) 
\, .\label{eq:GW:0:f}
\end{align} 
\end{subequations}

Appealing to \eqref{def_f} and \eqref{eq:FW:def} which is equivalent to 
\begin{align*}
\tfrac{1}{\beta_\tau} {F}_{ W}&= - 2 \beta_3  \sound \Tcal^\nu_\mu \partial_{\mu} A_\nu
 +  2 \beta_1   e^{-\frac s2}   A_\nu \Tcal^\nu_i \dot{\Ncal}_i
 +  2 \beta_1  e^{-\frac s2} \dot Q_{ij} A_\nu \Tcal^\nu_j \Ncal_i \notag \\
 & \quad
 +\left(\tfrac{1}{\beta_\tau} h_W^\mu - \beta_3 e^{- \frac s2} \Ncal_\mu \left( \kappa + e^{-\frac s2} W - \tfrac{\beta_1+\beta_2}{\beta_3} Z\right)\right) A_\gamma \Tcal^\gamma_i \Ncal_{i,\mu}
 - 2 \beta_3  e^{-\frac s2} \sound \left( A_\nu \Tcal^\nu_{\mu,\mu} + U\cdot \Ncal \Ncal_{\mu,\mu} \right) \notag \\
 & \quad + \beta _4 \sound^2( \Jcal e^s \p_1 K  + \Ncal_\mu \p_\mu K) \,,
\end{align*}
we may derive the following explicit expressions\footnote{Here we have used the identities: $\Ncal_{\mu,\mu}^0 = - \phi_{22} - \phi_{33}$, $\Tcal^{\nu,0}_{\mu,\mu} = 0$, $\dot \Ncal_i^0 = 0$, $\dot \Ncal_{1,\nu}^0 =0$, $\dot \Ncal_{\mu,\nu}^0 = - \dot{\phi}_{\mu\nu}$, $\Tcal^{\gamma,0}_{1,\nu} = \phi_{\gamma\nu}$, $\Tcal^{\gamma,0}_{i,\nu} \Ncal_{i,\mu}^0 = 0$, $\Tcal_i^{\gamma,0} \Ncal_{i,\mu \nu}^0 = 0$, $\Ncal_{\mu,\mu\nu}^0 = 0$, $\dot{\Ncal}_{\zeta,\nu\gamma} = 0$,  and $\Jcal_{,\nu \gamma}^0 = \phi_{2\nu} \phi_{2\gamma} + \phi_{3\nu} \phi_{3\gamma}$.}   
\begin{subequations}
\label{Bears_are_not_rad}
\begin{align}
\tfrac{1}{\beta_\tau} F_W^0 &= -\beta_3 \left(\kappa - Z^0\right) \p_\mu A_\mu^0  + 2\beta_1 e^{-\frac s2} \dot Q_{1\mu} A_\mu^0 - \tfrac{1}{\beta_\tau} h_W^{\mu,0} A_\zeta^0 \phi_{\zeta \mu}  \notag\\
&\quad + \tfrac 12 \beta_3  e^{-\frac s2} (\kappa - Z^0) (\kappa + Z^0) (\phi_{22}+ \phi_{33})
+ \tfrac 14  \beta_4   (\kappa - Z^0)^2  e^s \p_1 K^0 
 \label{eq:FW:0:a}  \\
\tfrac{1}{\beta_\tau} \p_1 F_W^0 &= \beta_3 \left(e^{-\frac s2} + \p_1 Z^0\right) \p_\mu A_\mu^0 - \beta_3  \left(\kappa - Z^0\right) \p_{1\mu} A_\mu^0   + 2\beta_1  e^{-\frac s2} \dot Q_{1\mu} \p_1 A_\mu^0 \notag\\
&\quad -   \left(\tfrac{1}{\beta_\tau} h_W^{\mu,0}  \p_1 A_\zeta^0
+ 2 \beta_1  e^{-\frac s2} ( \p_1 A_\mu^0 + e^{-\frac{3s}{2}} \dot{Q}_{\mu 1})  A_\zeta^0 \right) \phi_{\zeta \mu}  \notag\\
&\quad  - \tfrac 12 \beta_3  e^{-s} 
\left( (1 + e^{\frac s2} \p_1 Z^0) (\kappa + Z^0)  + (\kappa - Z^0) (1 - e^{\frac s2}\p_1 Z^0)  \right) (\phi_{22}+ \phi_{33}) 
\notag\\
&\quad+ \tfrac 14 \beta_4(\kappa - Z^0)  \left( (\kappa - Z^0) e^s \p_{11}K^0 - 2 (e^{-\frac s2} + \p_1 Z^0)   e^s  \p_1 K^0  \right)
 \label{eq:FW:0:b} \\
\tfrac{1}{\beta_\tau} \p_\nu F_W^0&= - \beta_3 ( (\kappa - Z^0) \p_{\nu \mu} A_\mu^0 - \p_\nu Z^0 \p_\mu A_\mu^0 )
- 2 \beta_1  e^{-s} A_\mu^0 \dot{\phi}_{\mu \nu} + 2 \beta_1  e^{-\frac s2} \dot Q_{1\mu} \p_\nu A_\mu^0 
\notag\\
&\quad - 2 \beta_1 e^{-s} \dot{Q}_{\mu \zeta} A_\zeta^0   \phi_{\mu \nu}  - \beta_3 e^{-\frac s2}  Z^0   \p_\nu Z^0   (\phi_{22} + \phi_{33}) - \beta_3  e^{-s} \left(\kappa - Z^0 \right)  A_\zeta^0 \Tcal^{\zeta,0}_{\mu, \mu\nu}  
\notag\\
&\quad  - 2 \beta_1 e^{-\frac s2}  \left( (e^{-\frac s2} \dot{Q}_{\mu \nu} + \p_\nu A_\mu^0 - \tfrac 12 e^{-\frac s2} ( \kappa + Z^0) \phi_{\mu\nu}) A_\gamma^0 \right) \phi_{\gamma \mu} 
- \tfrac{1}{\beta_\tau} h_W^{\mu,0} \p_\nu A_\gamma^0  \phi_{\gamma \mu}
\notag\\
&\quad + \tfrac 14  \beta_4 (\kappa - Z^0) \left( (\kappa - Z^0) ( e^s \p_{1\nu}K^0   - e^{-\frac s2} \p_\mu K \phi_{\mu\nu}) - 2 \p_\nu Z^0 e^s \p_1 K \right)
\label{eq:FW:0:c}\\
\tfrac{1}{\beta_\tau} \p_{11} F_W^0 &= \beta_3  \left(e^{-\frac s2} + \p_1 Z^0\right) \p_\mu A_\mu^0 - \beta_3 \left(\kappa - Z^0\right) \p_{1\mu} A_\mu^0   + 2\beta_1  e^{-\frac s2} \dot Q_{1\mu} \p_{11} A_\mu^0 \notag\\
&\quad -  \left(2 \beta_1   e^{-\frac s2} + \tfrac{1}{\beta_\tau} h_W^{\mu,0} \right) \p_{11} A_\zeta^0 \phi_{\zeta \mu}  - 4 \beta_1   e^{-\frac s2}   ( \p_1 A_\mu^0 + e^{-\frac{3s}{2}} \dot{Q}_{\mu 1})  \p_1 A_\zeta^0  \phi_{\zeta \mu} \notag\\
&\quad  - \beta_3 e^{- \frac{s}{2}}  \left( Z^0 \p_{11} Z^0  - e^{-s} (1 - e^{s} (\p_1 Z^0)^2)  \right) (\phi_{22}+ \phi_{33})  + \tfrac 12  \beta_4 (e^{-\frac s2} + \p_1 Z^0)^2 \p_1 K^0 e^s
\notag\\
&\quad +  \beta_4 (\kappa - Z^0) \left(\tfrac 14 (\kappa - Z^0) \tempred{e^s \p_{111}K^0}  - (e^{-\frac s2} + \p_1 Z^0) e^s  \p_{11}K^0 - \tfrac 12 \p_{11}Z^0 e^s \p_{1}K  \right)
 \label{eq:FW:0:d}\\
\tfrac{1}{\beta_\tau} \p_{1\nu} F_W^0 &= - \beta_3  \left( (\kappa - Z^0) \p_{1 \nu \mu} A_\mu^0 - \p_{1\nu} Z^0 \p_\mu A_\mu^0 - \p_\nu Z^0 \p_{1\mu} A_\mu^0 - (e^{-\frac s2} + \p_1 Z^0) \p_{\nu \mu} A_\mu^0 \right) \notag\\
&\quad - 2 \beta_1  e^{-s} \p_1 A_\mu^0 \dot{\phi}_{\mu \nu} + 2 \beta_1  e^{-\frac s2} \dot Q_{1\mu} \p_{1\nu} A_\mu^0  - 2 \beta_1  e^{-s} \dot{Q}_{\mu \zeta} \p_1 A_\zeta^0 \phi_{\mu \nu} 
\notag\\
&\quad   - \beta_3 e^{-\frac s2}  ( \p_1 Z^0   \p_\nu Z^0  + Z^0 \p_{1\nu} Z^0) (\phi_{22} + \phi_{33}) \notag\\
&\quad - \beta_3  e^{-s} \left( (\kappa - Z^0) \p_1 A_\zeta^0  - (e^{-\frac s2} + \p_1 Z^0) A_\zeta^0 \right) \Tcal^{\zeta,0}_{\mu, \mu\nu}  
\notag\\
&\quad  - 2 \beta_1  e^{-\frac s2} \left( (e^{-\frac s2} \dot{Q}_{\mu \nu} + \p_\nu A_\mu^0) \p_1 A_\gamma^0 + (e^{-\frac{3s}{2}} \dot{Q}_{\mu 1} + \p_1 A_\mu^0) \p_\nu A_\gamma^0 + A_\mu^0  \p_{1 \nu} A_\gamma^0 \right) \phi_{\gamma \mu} \notag\\
&\quad  - \tfrac{1}{\beta_\tau} h_W^{\mu,0} \p_{1 \nu} A_\gamma^0  \phi_{\gamma \mu} +   \beta_1  e^{-s} \left(   ( \kappa + Z^0) \p_1 A_\gamma^0 - (e^{-\frac s2} - \p_1 Z^0) A_\gamma^0   \right) \phi_{\mu\nu}    \phi_{\gamma \mu}  
\notag\\
&\quad - \tfrac 12 \beta_4  (\kappa -Z^0) \left( \p_{1\nu}Z^0 e^s \p_1 K^0 + \p_\nu Z^0 e^s \p_{11} K^0 + (e^{-\frac s2} + \p_1 Z^0) (e^s \p_{1\nu} K^0 - \phi_{\mu \nu} e^{-\frac s2} \p_\mu K^0) \right)
\notag\\
&\quad + \tfrac 12  \beta_4 (e^{-\frac s2} + \p_1 Z^0) \p_\nu Z^0 \p_1 K^0 e^s 
+ \tfrac 14 \beta_3\beta_4 (\kappa - Z^0)^2 \left(\tempred{e^s \p_{11\nu} K^0} - \phi_{\mu \nu} e^{-\frac s2} \p_{1\mu}K^0 \right) 
\label{eq:FW:0:e}\\
\tfrac{1}{\beta_\tau} \p_{\gamma\nu} F_W^0 
&=  - 2 \beta_3   ( \p_{\nu \gamma} (K \p_\mu A_\mu))^0 - \beta_3  e^{-s} (\kappa - Z^0) \p_{\mu} A_\zeta^0 \Tcal^{\zeta,0}_{\mu,\nu\gamma} \notag\\
&\quad - 2 \beta_1  e^{-s} \p_\nu A_\mu^0 \dot{\phi}_{\mu \gamma}  - 2 \beta_1  e^{-s} \p_\gamma A_\mu^0 \dot{\phi}_{\mu \nu}  -  \beta_3 e^{-\frac s2}  \p_\gamma Z^0 \p_\nu Z^0   (\phi_{22} + \phi_{33}) \notag\\
&\quad + 2 \beta_1  e^{-\frac s2} \dot{Q}_{1\mu} \p_{\gamma \nu} A_\mu^0 - 2 \beta_1 e^{-s} \dot{Q}_{\zeta \mu} \p_\nu A_\mu^0   \phi_{\zeta\gamma}- 2 \beta_1  e^{-s} \dot{Q}_{\zeta \mu } \p_\gamma A_\mu^0   \phi_{\zeta \nu}  \notag\\
&\quad +2 \beta_1   e^{-\frac{3s}{2}} A_\mu^0 \left( \dot{Q}_{1\zeta} (\phi_{\nu \mu} \phi_{\zeta\gamma} + \phi_{\mu\gamma} \phi_{\zeta\nu} + \phi_{\nu \gamma} \phi_{\mu \zeta} +  \Tcal^{\mu,0}_{\zeta,\nu\gamma}) + \dot{Q}_{1\mu} \Ncal_{1,\nu\gamma}^0 \right) \notag\\
&\quad - \beta_3 e^{-s} \left( (\kappa - Z^0)  \p_\nu A_\zeta^0 - \p_\nu Z^0 A_\zeta^0 \right)\Tcal^{\zeta,0}_{\mu, \mu\gamma}  - \tfrac 12 \beta_3   e^{-\frac{3s}{2}} (\kappa - Z^0) (\kappa + Z^0) \Ncal^0_{\mu,\mu\nu\gamma}
\notag\\
&\quad  - 2 \beta_1 e^{-\frac s2} \left(  e^{-\frac s2} \dot{Q}_{\mu \nu} \p_\gamma A_\zeta^0 + e^{-\frac s2} \dot{Q}_{\mu \gamma} \p_\nu A_\zeta^0 + \p_{\nu\mu} A_\mu^0 A_\zeta^0 + \p_\mu A_\mu^0 \p_\nu A_\nu^0 + \p_\nu A_\mu^0 \p_\mu A_\nu^0\right) \phi_{\zeta \mu} \notag\\
&\quad + 2 \beta_1 e^{-s} \left( \p_\nu ( (U\cdot \Ncal) A_\zeta)^0 \phi_{\mu \gamma} \phi_{\zeta\mu} + \p_\gamma ( (U\cdot \Ncal) A_\zeta)^0 \phi_{\mu \nu} \phi_{\zeta\mu}\right)   - 2\beta_1 e^{-\frac{3s}{2}} A_\iota^0 A_\zeta^0 \Tcal^{\zeta,0}_{\mu, \nu \gamma} \phi_{\iota\mu}
\notag\\
&\quad - \tfrac{1}{\beta_\tau} h_{W}^{\mu,0} \p_{\nu\gamma} A_\zeta^0  \phi_{\zeta \mu} +  e^{-s} \tfrac{1}{\beta_\tau} h_W^{\mu,0} A_\iota^0 \left( \phi_{\iota\nu} \Ncal^0_{1,\mu\gamma} + \phi_{\iota \gamma} \Ncal^0_{1,\mu\nu} + \Ncal^0_{\alpha,\mu\nu\gamma} \right)
\notag\\
&\quad  + \tfrac 14  \beta_4 (\kappa-Z^0)^2 \left(\tempred{e^s \p_{1\gamma\nu}K^0} + (\phi_{2\nu} \phi_{2\gamma} + \phi_{3\nu}\phi_{3\gamma}) \p_1 K^0 - e^{-\frac s2}(\phi_{\mu \nu} \p_{\mu \gamma}K^0 + \phi_{\mu\gamma} \p_{\mu\nu}K^0)\right) 
\notag\\
&\quad  - \tfrac 12  \beta_4 (\kappa-Z^0) \left( \p_{\gamma}Z^0 (e^s \p_{1\nu} K^0 - \phi_{\mu\nu} e^{-\frac s2} \p_\mu K^0) + \p_\nu Z^0 (e^s \p_{1\gamma} K^0 - \phi_{\mu\gamma} e^{-\frac s2} \p_\mu K^0)\right)  
\notag\\
&\quad  +\tfrac 12  \beta_4 \left(\p_\gamma Z^0 \p_\nu Z^0- (\kappa-Z^0) \p_{\gamma \nu}Z^0\right) e^s \p_1S^0  
 \label{eq:FW:0:f}
\end{align} 
\end{subequations}

\subsection*{Acknowledgments} 
T.B.\ was supported by the NSF grant DMS-1900149 and a Simons Foundation Mathematical and Physical Sciences Collaborative Grant.   S.S.\ was supported by the Department of Energy Advanced Simulation and Computing (ASC) Program. V.V.\ was supported by the NSF grant DMS-1911413.

\begin{bibdiv}
\begin{biblist}

\bib{Al1999b}{article}{
      author={Alinhac, S.},
       title={Blowup of small data solutions for a class of quasilinear wave
  equations in two space dimensions. {II}},
        date={1999},
        ISSN={0001-5962},
     journal={Acta Math.},
      volume={182},
      number={1},
       pages={1\ndash 23},
         url={https://doi.org/10.1007/BF02392822},
      review={\MR{1687180}},
}

\bib{Al1999a}{article}{
      author={Alinhac, S.},
       title={Blowup of small data solutions for a quasilinear wave equation in
  two space dimensions},
        date={1999},
        ISSN={0003-486X},
     journal={Ann. of Math. (2)},
      volume={149},
      number={1},
       pages={97\ndash 127},
         url={https://doi.org/10.2307/121020},
      review={\MR{1680539}},
}

\bib{BuShVi2019a}{article}{
      author={Buckmaster, T.},
      author={Shkoller, S.},
      author={Vicol, V.},
      title={Formation of shocks for {2D} isentropic compressible {E}uler},
      year={2019},
      eprint={arXiv:1907.03784},
      archivePrefix={arXiv},
      primaryClass={math.AP}
}

\bib{BuShVi2019b}{article}{
      author={Buckmaster, T.},
      author={Shkoller, S.},
      author={Vicol, V.},
       title={Formation of point shocks for {3D}  compressible {E}uler},
      year={2019},
      eprint={arXiv:1912.04429},
      archivePrefix={arXiv},
      primaryClass={math.AP}
}

\bib{CaErHoLa1993}{article}{
   author={Caflisch, R.E.},
   author={Ercolani, N.},
   author={Hou, T.Y.},
   author={Landis, Y.},
   title={Multi-valued solutions and branch point singularities for
   nonlinear hyperbolic or elliptic systems},
   journal={Comm. Pure Appl. Math.},
   volume={46},
   date={1993},
   number={4},
   pages={453--499},
   issn={0010-3640},
   review={\MR{1211738}},
   doi={10.1002/cpa.3160460402},
}

\bib{CaSmWa1996}{article}{
      author={Cassel, K.W.},
      author={Smith, F.T.},
      author={Walker, J.D.A.},
       title={The onset of instability in unsteady boundary-layer separation},
        date={1996},
     journal={Journal of Fluid Mechanics},
      volume={315},
       pages={223\ndash 256},
}

\bib{Ch2007}{book}{
      author={Christodoulou, D.},
       title={The formation of shocks in 3-dimensional fluids},
      series={EMS Monographs in Mathematics},
   publisher={European Mathematical Society (EMS), Z\"{u}rich},
        date={2007},
        ISBN={978-3-03719-031-9},
         url={https://doi.org/10.4171/031},
      review={\MR{2284927}},
}

\bib{Ch2019}{book}{
      author={Christodoulou, D.},
       title={The shock development problem},
      series={EMS Monographs in Mathematics},
   publisher={European Mathematical Society (EMS), Z\"{u}rich},
        date={2019},
        ISBN={978-3-03719-192-7},
         url={https://doi.org/10.4171/192},
      review={\MR{3890062}},
}

\bib{ChMi2014}{book}{
      author={Christodoulou, D.},
      author={Miao, S.},
       title={Compressible flow and {E}uler's equations},
      series={Surveys of Modern Mathematics},
   publisher={International Press, Somerville, MA; Higher Education Press,
  Beijing},
        date={2014},
      volume={9},
        ISBN={978-1-57146-297-8},
      review={\MR{3288725}},
}

\bib{CoGhMa2018}{article}{
      author={Collot, C.},
      author={Ghoul, T.-E.},
      author={Masmoudi, N.},
       title={Singularity formation for {B}urgers equation with transverse
  viscosity},
     year={2018},
      eprint={arXiv:1803.07826},
      archivePrefix={arXiv},
      primaryClass={math.AP}
}

\bib{Da2010}{book}{
      author={Dafermos, C.~M.},
       title={Hyperbolic conservation laws in continuum physics},
     edition={Third},
      series={Grundlehren der Mathematischen Wissenschaften [Fundamental
  Principles of Mathematical Sciences]},
   publisher={Springer-Verlag, Berlin},
        date={2010},
      volume={325},
        ISBN={978-3-642-04047-4},
         url={https://doi.org/10.1007/978-3-642-04048-1},
      review={\MR{2574377}},
}

\bib{Euler1757}{article}{
	Author = {Euler, L.},
	Date-Added = {2020-05-14 16:22:06 -0400},
	Date-Modified = {2020-05-14 16:22:06 -0400},
	Journal = {Acad\'emie Royale des Sciences et des Belles Lettres de Berlin, M\'emoires},
	Pages = {274--315},
	Title = {{P}rincipes g\'en\'eraux du mouvement des fluides},
	Volume = {11},
	Year = {1757}}

\bib{John1974}{article}{
      author={John, F.},
       title={Formation of singularities in one-dimensional nonlinear wave
  propagation},
        date={1974},
        ISSN={0010-3640},
     journal={Comm. Pure Appl. Math.},
      volume={27},
       pages={377\ndash 405},
         url={https://doi.org/10.1002/cpa.3160270307},
      review={\MR{0369934}},
}

\bib{Lax1964}{article}{
      author={Lax, P.~D.},
       title={Development of singularities of solutions of nonlinear hyperbolic
  partial differential equations},
        date={1964},
        ISSN={0022-2488},
     journal={J. Mathematical Phys.},
      volume={5},
       pages={611\ndash 613},
         url={https://doi.org/10.1063/1.1704154},
      review={\MR{0165243}},
}

\bib{Li1979}{article}{
      author={Liu, T.~P.},
       title={Development of singularities in the nonlinear waves for
  quasilinear hyperbolic partial differential equations},
        date={1979},
        ISSN={0022-0396},
     journal={J. Differential Equations},
      volume={33},
      number={1},
       pages={92\ndash 111},
         url={https://mathscinet.ams.org/mathscinet-getitem?mr=540819},
      review={\MR{540819}},
}

\bib{LuSp2018}{article}{
      author={Luk, J.},
      author={Speck, J.},
       title={Shock formation in solutions to the 2{D} compressible {E}uler
  equations in the presence of non-zero vorticity},
        date={2018},
        ISSN={0020-9910},
     journal={Invent. Math.},
      volume={214},
      number={1},
       pages={1\ndash 169},
         url={https://doi.org/10.1007/s00222-018-0799-8},
      review={\MR{3858399}},
}

\bib{Ma1984}{book}{
      author={Majda, A.},
       title={Compressible fluid flow and systems of conservation laws in
  several space variables},
      series={Applied Mathematical Sciences},
   publisher={Springer-Verlag, New York},
        date={1984},
      volume={53},
        ISBN={0-387-96037-6},
         url={https://doi.org/10.1007/978-1-4612-1116-7},
      review={\MR{748308}},
}

\bib{Merle96}{article}{
      author={Merle, F.},
       title={Asymptotics for {$L^2$} minimal blow-up solutions of critical
  nonlinear {S}chr\"{o}dinger equation},
        date={1996},
        ISSN={0294-1449},
     journal={Ann. Inst. H. Poincar\'{e} Anal. Non Lin\'{e}aire},
      volume={13},
      number={5},
       pages={553\ndash 565},
         url={https://mathscinet.ams.org/mathscinet-getitem?mr=1409662},
      review={\MR{1409662}},
}

\bib{MeRa05}{article}{
      author={Merle, F.},
      author={Raphael, P.},
       title={The blow-up dynamic and upper bound on the blow-up rate for
  critical nonlinear {S}chr\"{o}dinger equation},
        date={2005},
        ISSN={0003-486X},
     journal={Ann. of Math. (2)},
      volume={161},
      number={1},
       pages={157\ndash 222},
         url={https://mathscinet.ams.org/mathscinet-getitem?mr=2150386},
      review={\MR{2150386}},
}

\bib{MeRaRoSz2019a}{article}{
      author={Merle, F.},
      author={Raphael, P.},
      author={Rodnianski, I.},
      author={Szeftel, J.},
      title={On the implosion of a three dimensional compressible fluid},
      year={2019},
      eprint={arXiv:1912.11009},
      archivePrefix={arXiv},
    primaryClass={math.AP}
}

\bib{MeRaRoSz2019b}{article}{
      author={Merle, F.},
      author={Raphael, P.},
      author={Rodnianski, I.},
      author={Szeftel, J.},
      title={On smooth self similar solutions to the compressible Euler equations},
     year={2019},
     eprint={arXiv:1912.10998},
     archivePrefix={arXiv},
     primaryClass={math.AP}
}

\bib{MeRaSz2018}{article}{    
    author = {Merle, F.} 
    author = {Rapha\"el, P.}
    author = {Szeftel, J.},
    title = {On Strongly Anisotropic Type I Blowup},
    journal = {International Mathematics Research Notices},
    volume = {2020},
    number = {2},
    pages = {541-606},
    year = {2018},
    issn = {1073-7928},
    doi = {10.1093/imrn/rny012},
    url = {https://doi.org/10.1093/imrn/rny012},
}

\bib{MeZa97}{article}{
      author={Merle, F.},
      author={Zaag, H.},
       title={Stability of the blow-up profile for equations of the type
  {$u_t=\Delta u+|u|^{p-1}u$}},
        date={1997},
        ISSN={0012-7094},
     journal={Duke Math. J.},
      volume={86},
      number={1},
       pages={143\ndash 195},
         url={https://mathscinet.ams.org/mathscinet-getitem?mr=1427848},
      review={\MR{1427848}},
}

\bib{Mi2018}{article}{
   author={Miao, S.},
   title={On the formation of shock for quasilinear wave equations with weak
   intensity pulse},
   journal={Ann. PDE},
   volume={4},
   date={2018},
   number={1},
   pages={Paper No. 10, 140},
   issn={2524-5317},
   review={\MR{3780823}},
   doi={10.1007/s40818-018-0046-z},
}

\bib{MiYu2017}{article}{
      author={Miao, S.},
      author={Yu, P.},
       title={On the formation of shocks for quasilinear wave equations},
        date={2017},
        ISSN={0020-9910},
     journal={Invent. Math.},
      volume={207},
      number={2},
       pages={697\ndash 831},
         url={https://doi.org/10.1007/s00222-016-0676-2},
      review={\MR{3595936}},
}

\bib{Ri1860}{article}{
      author={Riemann, B.},
       title={{\"{U}}ber die {F}ortpflanzung ebener {L}uftwellen von endlicher
  {S}chwingungsweite},
        date={1860},
     journal={Abhandlungen der K\"oniglichen Gesellschaft der Wissenschaften in
  G\"ottingen},
      volume={8},
       pages={43\ndash 66},
         url={http://eudml.org/doc/135717},
}

\bib{Si1985}{article}{
      author={Sideris, T.~C.},
       title={Formation of singularities in three-dimensional compressible
  fluids},
        date={1985},
        ISSN={0010-3616},
     journal={Comm. Math. Phys.},
      volume={101},
      number={4},
       pages={475\ndash 485},
         url={http://projecteuclid.org/euclid.cmp/1104114244},
      review={\MR{815196}},
}

\bib{Sp2016}{book}{
   author={Speck, J.},
   title={Shock formation in small-data solutions to 3D quasilinear wave
   equations},
   series={Mathematical Surveys and Monographs},
   volume={214},
   publisher={American Mathematical Society, Providence, RI},
   date={2016},
   pages={xxiii+515},
   isbn={978-1-4704-2857-0},
   review={\MR{3561670}},
}

\bib{Sp2018}{article}{
   author={Speck, J.},
   title={Shock formation for $2D$ quasilinear wave systems featuring
   multiple speeds: blowup for the fastest wave, with non-trivial
   interactions up to the singularity},
   journal={Ann. PDE},
   volume={4},
   date={2018},
   number={1},
   pages={Paper No. 6, 131},
   issn={2524-5317},
   review={\MR{3740634}},
   doi={10.1007/s40818-017-0042-8},
}

\end{biblist}
\end{bibdiv}

\end{document}